\newtheorem{thm}{Theorem}[section]
\newtheorem{lem}{Lemma}[section]
\newtheorem{Prop}{Proposition}[section]
\newtheorem*{St*}{Statement}
\newtheorem{Dfi}{Definition}
\newtheorem*{theorem*}{Theorem}
\theoremstyle{remark}
\theoremstyle{definition}
\theoremstyle{remark}
\newtheorem{oss}{Remark}[section]
\newcommand{\be}{\begin{equation}}
\newcommand{\ee}{\end{equation}}
\newcommand{\R}{\mathbb{R}}
\newcommand{\N}{\mathbb{N}}
\newcommand{\He}{\mathbb{H}_{\varepsilon}}
\newcommand{\G}{\Gamma}
\newcommand{\ca}[2]{\mathcal{#1}_{#2}}
\newcommand{\Greg}{\text{gen-reg}\,}
\newcommand{\spt}[1]{\text{spt}\,\|#1\|}
\newcommand\res{\mathop{\hbox{\vrule height 7pt width .5pt depth 0pt
\vrule height .5pt width 6pt depth 0pt}}\nolimits}
\def\eps{\mathop{\varepsilon}}
\def\e{\mathop{\varepsilon}}
\def\Oc{\mathop{\mathcal{O}}}
\def\Fce{\mathop{\mathcal{F}_{\varepsilon}}}
\newcommand{\Het}{\mathbb{H}}
\newcommand{\OHet}{\overline{\mathbb{H}}}
\def\Fce{\mathop{\mathcal{F}_{\varepsilon}}}
\def\Ece{\mathop{\mathcal{E}_{\varepsilon}}}
\def\Hc{\mathop{\mathcal{H}}}
\def\t{\tau}
\def\e{\varepsilon}
\def\t{\tau}
\def\s{\sigma}
\def\om{\omega}
\def\p{\partial}
\def\dm{d_{\overline{M}}}
\newcommand{\Rc}[1]{\text{Ric}_{#1}} 
\def\eps{\mathop{\varepsilon}}
\def\om{\omega}
\def\p{\partial}
\DeclareMathAlphabet{\mathscr}{OT1}{pzc}{m}{it}
\begin{document} 
\title{\textbf{The inhomogeneous Allen--Cahn equation and the existence of prescribed-mean-curvature hypersurfaces}}
\author{Costante Bellettini\thanks{Partially supported by the EPSRC under the grant EP/S005641/1.} \, and Neshan Wickramasekera}
\date{}

\maketitle

\abstract{\begin{footnotesize} We prove that for any given compact Riemannian manifold $N$ of dimension $n~+~1~\geq~3$ and any non-negative Lipschitz function $g$ on $N$, there exists a quasi-embedded, boundaryless 
hypersurface  $M \subset N,$ of class $C^{2, \alpha}$ for any $\alpha \in (0,1),$ such that $M$ is the image of a two-sided immersion whose mean curvature is given by $g\nu$
%and the mean curvature vector of $M$ 
%with respect 
for an appropriate choice of continuous unit normal $\nu$ to the immersion; and moreover, the singular set $\Sigma = \overline{M}   \setminus M$ is empty if $2 \leq n \leq 6,$ finite if $n=7$ and satisfies ${\mathcal H}^{n-7 + \gamma}(\Sigma) = 0$ for every $\gamma >0$ if $n \geq 8$.  Here quasi-embedded means that 
near every non-embedded point, $M$ is the union of two embedded $C^{2, \alpha}$ disks intersecting 
tangentially with each disk lying on one side of the other. 
If $g >0$ then $\overline{M}$ is the closure of the reduced boundary of a Caccioppoli set. 

%is given by $g \nu.$ 

Our proof of this theorem is PDE theoretic, and considers first the case that $g$ is a positive function of class $C^{1, 1}$. The theorem for non-negative Lipschitz $g$ follows by approximation, based on the estimates we establish. In the case of positive $g$ of class $C^{1, 1}$, the argument is based on: (i) a construction, using a simple mountain pass lemma, of 
%uniformly energy bounded 
a min-max solution $u_{\e}$ to the inhomogeneous Allen--Cahn equation $-\e \Delta u + \e^{-1}W^{\prime}(u) = \sigma g$ satisfying appropriate bounds independent of $\e$, where $\e>0$ is small, $\sigma$ is a fixed normalising constant and $W$ is a fixed double-well potential; and (ii) a proof of regularity  of any limit-varifold $V$ that arises, in the limit $\e_{j} \to 0^{+},$  from a sequence $(u_{\e_{j}})$ of such solutions---in fact from any sequence of bounded solutions $u_{\e_{j}}$ with uniform Morse-index and energy upper bounds. Parts of $V$ may be minimal (i.e.\ have zero mean curvature), but regularity of $V$ ensures that minimal portions, if there are any, can be smoothly excised. The remaining part of $V$ is a 
mean-curvature $g$ hypersurface $M$ as desired, unless  $V$ is entirely supported on a minimal hypersurface, a possibility we do not rule out. 
If this possibility arises and $V$ is a minimal hypersurface $M_{0}$ (with multiplicity, necessarily, even integer valued and locally constant), we appeal to PDE techniques again. Working at the level of the Allen--Cahn approximation, we use a semi-linear gradient flow (the Allen--Cahn flow) with carefully chosen initial data constructed from $M_{0}$  to produce, for each $j$,  a stable solution $v_{\e_{j}}$ to the 
above Allen--Cahn equation (with $\e = \e_{j}$) in such a way that the sequence $(v_{\e_{j}})$ leads  to a non-trivial limit-varifold that is not entirely supported on a minimal hypersurface. This new limit-varifold is regular by (ii), and after excision of its minimal portions we again obtain a mean-curvature $g$ hypersurface $M$ as desired.\end{footnotesize}} 

\tableofcontents

\section{Introduction}
%\subsection{The main theorem and the role of PDE in its proof} 
Our main purpose here is to establish the following theorem:

\begin{thm}
\label{thm:existence}
Let $N$ be a compact Riemannian manifold of dimension $n+1$ with $n\geq 2,$ and let $g:N\to [0,\infty)$ be a (non-negative) Lipschitz function. There exists a quasi-embedded hypersurface $M\subset N$ of class $C^{2, \alpha}$ for any $\alpha \in (0,1),$ with ${\rm dim}_{\mathcal H} \, \left(\overline{M}\setminus M\right)\leq n-7$ if $n \geq 8$, $\overline{M} \setminus M$ finite  if $n=7$ and $\overline{M} \setminus M = \emptyset$ if $2 \leq n \leq 6$, such that $M$ is the image of a two-sided immersion with mean curvature $H_{M}$ given by $H_{M} = g \nu$ for some choice of continuous unit normal $\nu$ to the immersion.
\end{thm}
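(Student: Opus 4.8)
The plan is to implement the strategy outlined in the abstract, which has two main phases: an existence/compactness phase producing a limit-varifold from Allen--Cahn approximations, and a regularity phase upgrading that varifold to the desired $C^{2,\alpha}$ quasi-embedded hypersurface. I would first reduce to the case that $g$ is a \emph{positive} function of class $C^{1,1}$, since the general $C^{0,\alpha}$ non-negative case will follow by approximating $g$ from above by a decreasing sequence $g_k \downarrow g$ of positive $C^{1,1}$ functions and passing to the limit in the hypersurfaces $M_k$, provided one has the right uniform estimates (curvature bounds, area bounds, and control on the singular set). So the core of the argument is: given $g \in C^{1,1}(N)$ with $g>0$, produce $M$.

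For the positive $C^{1,1}$ case, I would fix a double-well potential $W$ (e.g.\ $W(u)=\tfrac14(1-u^2)^2$) and the normalising constant $\sigma = \int_{-1}^{1}\sqrt{W(s)/2}\,ds$, and consider the inhomogeneous Allen--Cahn functional whose critical points solve $-\e\Delta u + \e^{-1}W'(u) = \sigma g$. The first step is a \textbf{mountain-pass construction}: I would show this functional has a mountain-pass geometry (the inhomogeneous term $\sigma g u$ is a lower-order perturbation that does not destroy the two-well structure for $\e$ small), apply a quantitative mountain pass lemma with Palais--Smale, and thereby obtain, for each small $\e>0$, a solution $u_\e$ with $\e$-energy uniformly bounded (independent of $\e$) and Morse index at most $1$. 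Standard Allen--Cahn varifold convergence theory (Hutchinson--Tonegawa type, adapted to the inhomogeneous right-hand side) then gives a subsequence $\e_j\to 0^+$ along which the energy measures converge to an integral varifold $V$ whose generalised mean curvature is controlled by $g$. The second step is the \textbf{regularity theory for $V$}: one invokes the sheet-separation/regularity machinery for limits of Morse-index- and energy-bounded inhomogeneous Allen--Cahn solutions to conclude that $V$ is, away from a set $\Sigma$ of the stated Hausdorff dimension, a smooth quasi-embedded hypersurface, each sheet of which either is minimal or has mean curvature $g\nu$; minimal sheets can be excised (they are open and closed in $\spt{V}$ minus $\Sigma$, by unique continuation and the strong maximum principle since $g>0$), leaving a hypersurface $M$ with $H_M = g\nu$ — \emph{unless} the whole of $V$ is supported on a single minimal hypersurface $M_0$ with even locally-constant multiplicity.

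To handle the remaining degenerate possibility, I would carry out the \textbf{gradient-flow replacement} step: take $M_0$ (smooth, closed, minimal, two-sided or with the even multiplicity reflecting a one-sided double cover) and build initial data $w_{\e_j}$ for the parabolic Allen--Cahn flow $\p_t v = \e\Delta v - \e^{-1}W'(v) + \sigma g$ by gluing the one-dimensional heteroclinic profile across $M_0$ but tilted/shifted so that the initial energy strictly decreases under the flow and the flow cannot relax back onto $M_0$ (here positivity of $g$ is essential: it forces genuine motion). Running the flow and using that it decreases energy while the energy stays bounded below, one extracts a \emph{stable} stationary solution $v_{\e_j}$ of the elliptic equation in the limit of large time, with energy bounds inherited from $M_0$; the construction is arranged so the new limit-varifold $V'$ is non-trivial and \emph{not} entirely minimal. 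Applying the regularity theory of the second step to $V'$ and excising minimal portions then yields the desired $M$.

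The main obstacle I anticipate is the regularity theory for the limit-varifold — step (ii) — because the inhomogeneous right-hand side $\sigma g$ breaks the scale invariance that underlies the usual Allen--Cahn regularity arguments, so the blow-up analysis must track the mean-curvature term carefully, and one must establish the quasi-embeddedness (two tangentially-touching sheets) rather than full embeddedness, which requires a delicate sheeting theorem together with a classification of the possible tangent cones in the low-dimensional case and an $\e$-regularity / dimension-reduction argument controlling $\Sigma$ in high dimensions. A secondary but substantial difficulty is ensuring in the gradient-flow step that the constructed $v_{\e_j}$ genuinely escape the minimal hypersurface and still satisfy uniform energy bounds — i.e.\ that the "push-off" initial data are simultaneously energy-efficient and dynamically non-degenerate — and that the final approximation argument ($g_k \downarrow g$) preserves all the structural conclusions, which is where the uniform $C^{2,\alpha}$ estimates in terms of $\|g\|_{C^{0,\alpha}}$ alone must be proven.
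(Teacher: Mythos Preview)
Your outline follows the paper's strategy closely --- the reduction to positive $g\in C^{1,1}$, the mountain-pass construction of index-$\leq 1$ solutions $u_\e$, the regularity/decomposition theorem for the limit varifold $V=V_0+V_g$, the gradient-flow replacement when $V_g=0$, and the final $C^{0,\alpha}$ approximation are exactly the paper's {\sc steps (i)--(iii)} plus Section~\ref{extension}. So the architecture is right.

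There is, however, a genuine gap in your treatment of the gradient-flow step. You frame the danger as the flow ``relaxing back onto $M_0$'', but the actual threat is the opposite: the flow from your push-off data may converge to the \emph{trivial} stable solution $b_\e$ (the constant near $+1$), which has vanishing Allen--Cahn energy and produces the zero varifold. The paper's mechanism for excluding this is not dynamical but variational: one constructs the initial datum $h$ together with a continuous path in $W^{1,2}(N)$ from $a_\e$ to $h$ along which $\frac{1}{2\sigma}\mathcal{F}_{\e,\sigma g}$ stays below $2\mathcal{H}^n(M_0)-\varsigma+\frac{1}{2\sigma}\int_N g$ for a fixed $\varsigma>0$ (this uses that $2|M_0|$ is not critical for $A-\mathrm{Vol}_{g}$ as an immersion from the double cover). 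If the flow from $h$ reached $b_\e$, concatenating would give an admissible path with max-value strictly below the min-max value $\mathcal{F}_{\e,\sigma g}(u_\e)\to (2\sigma)\,2\mathcal{H}^n(M_0)+\int_N g$, a contradiction. It is this min-max comparison, not any property of the flow itself, that forces $v_\e\neq b_\e$ and hence $\liminf\mathcal{E}_\e(v_\e)>0$. Your proposal does not contain this idea, and without it there is no lower energy bound on $v_\e$.

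A smaller point: in the approximation step you plan to pass $M_{g_k}\to M$ directly. The paper instead passes the \emph{full} limit varifolds $V^k=V^k_0+V^k_{g_k}$ to a limit $V$, because the needed $C^{2,\alpha}$ estimates (Theorem~\ref{estimates}) come from the Schoen--Tonegawa inequality inherited from the Allen--Cahn solutions and apply to $V^k$, not to $V^k_{g_k}$ alone (for which no stability or index information is available). Only after establishing locally-graphical $C^2$ convergence of $V^k$ does one read off the limit of the $g_k$-part.
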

\begin{oss}\label{quasi-embedded}
Here a quasi-embedded hypersurface is the image of a special type of immersion that is natural in the context of non-minimal prescribed-mean-curvature hypersurfaces. Specifically, in the case of $M$ as in Theorem~\ref{thm:existence}, quasi-embedded means that $M = \iota (S)$ for some $n$-dimensional $C^{2, \alpha}$ manifold $S$ and a $C^{2, \alpha}$ immersion 
$\iota \, : \, S \to N$, and for every $y\in M$ there exists a neighbourhood of $y$ (in $N$) in which $M$ is either an embedded $C^{2, \alpha}$ disk, or the union of two embedded $C^{2, \alpha}$ disks intersecting tangentially along a set containing $y$ with each disk lying on one side of the other and each disk equal to the image under $\iota$ of a disk in $S$ (see Definition~\ref{quasi-embedded-hyp}). The precise meaning of the assertion in the theorem concerning the mean curvature is that $\iota$ can be chosen to be a two-sided immersion with a choice of continuous (in fact $C^{1, \alpha}$) unit normal $\nu$ such that the mean curvature of $\iota$ is $g(\iota(x))\nu(x)$ for each $x \in S.$ 
\end{oss}

\begin{oss}
Theorem~\ref{thm:existence} remains valid if the prescribing function for mean curvature is a non-positive Lipschitz function rather than a non-negative one. For if $\widetilde{g}\,: \, N\to (-\infty,0]$ is Lipschitz, we may apply Theorem 1.1 with $g = -\widetilde{g}$. Reversing the choice of unit normal on the resulting quasi-embedded hypersurface  
provides the resolution of the existence question for $\tilde{g}$. 
\end{oss}

\begin{oss}
Once $C^2$ regularity is obtained for a two-sided immersion with mean curvature $g \nu$, higher regularity follows by standard elliptic theory, depending on the regularity assumed on $g$. In particular, if $g\in C^{k,\alpha}$ with $k\geq 1$ and $\alpha \in (0, 1)$, then the immersion is of class $C^{k+2, \alpha}$.
\end{oss}

\begin{oss}\label{positive-g}
We shall first prove the theorem assuming $g \in C^{1, 1}(N)$ and $g >0$. The theorem for non-negative Lipschitz $g$ will follow (in Section~\ref{extension}) from a straightforward approximation argument using the estimates we establish. 
In the case $g >0,$ we have the additional fact that $|M| = |\partial^{\star} E|$ for a Caccioppoli set $E \subset N$ (where $|T|$ denotes the multiplicity 1 varifold associated with an $n$-rectifiable set $T$) and that at every non-embedded point $y \in M$, the two disks (as in Remark~\ref{quasi-embedded}) whose union is $M$ in a neighborhood of $y$ intersect (only tangentially) along a closed set containing $y$ and contained in an $(n-1)$-dimensional embedded $C^{1}$ submanifold (see \cite[Remark~2.6]{BW1} and \cite[Remark~3.3]{BW2}). 
\end{oss}

\emph{In view of Remark~\ref{positive-g}, from now on until the end of Section~\ref{proof} we shall assume that $g \in C^{1, 1}(N)$ and $g >0$ unless stated otherwise explicitly.} 

%medskip

\subsection{The role of PDE in the proof of the main theorem}
\medskip
Our approach to Theorem~\ref{thm:existence}  is by means of an inhomogeneous Allen– Cahn approximation scheme. 
%Results from the general varifold regularity theory developed in our recent work \cite{BW1}, \cite{BW2} play a key role in this approach. For the readers' convenience, we have collected in Section~\ref{preliminaries} the key Geometric Measure Theory ingredients needed, including certain abstract varifold regularity theorems from \cite{BW2}; the rest of the article consists mainly of  PDE theoretic arguments which we have endeavoured to make self-contained. 
The case $g\equiv 0$ of the theorem corresponds to the celebrated, long known existence theory for minimal hypersurfaces in compact Riemannian manifolds. In this case, by employing a homogeneous Allen--Cahn approximation process, a strikingly simple new proof of existence of minimal hypersurfaces has been obtained in the recent work of Guaraco (\cite{Gua}).   
%In the work of Guaraco (\cite{Gua}), a strikingly simple new proof of existence of minimal hypersurfaces in compact Riemannian manifolds (i.e.\ the case $g\equiv 0$ of Theorem~\ref{thm:existence}) is given by employing a homogeneous Allen–Cahn approximation process. 
The original proof of existence of minimal hypersurfaces was based on 
%the combined work of Almgren~\cite{}, Pitts~\cite{}, De~Giorgi (\cite{}), Federer (\cite{}) and Schoen--Simon~\cite{SS} and had as its starting point 
a Geometric Measure Theory construction, due to Almgren (\cite{Alm65}) and Pitts (\cite{Pit}), that produces a min-max critical point of the area functional. (See \cite{ColDeL} and \cite{DeLTas} for a nicer implementation of this construction with some technical simplifications). For regularity conclusions this method relied on several fundamental GMT results of independent interest:  the Federer--Fleming existence theory for solutions to the oriented Plateau problem (\cite{FF}), the interior regularity theory for codimension 1 locally area 
minimizing rectifiable currents (due to the combined work of De~Giorgi (\cite{DG}), Federer (\cite{Fed}), Fleming (\cite{Fle}) and Simons (\cite{SJ})) and the Schoen--Simon compactness theory for area-bounded stable minimal hypersurfaces with small singular sets (\cite{SS}).  In the new approach Guaraco employs an elementary PDE mountain-pass lemma, as a replacement for the Almgren--Pitts varifold construction, to produce min-max solutions to the Allen--Cahn equation satisfying appropriate bounds independent of the parameter $\e$ in the equation. Producing the desired minimal hypersurface from these solutions requires proving stationarity and regularity of the limit varifold along which the Allen--Cahn energy of the solutions concentrates as $\e \to 0^{+}$  sequentially. Both of these follow fairly directly from prior work: stationarity (together with integrality) from the work of Hutchinson--Tonegawa (\cite{HT}), and regularity from an advance in the regularity theory for stable stationary varifolds (\cite{Wic}) and its application to the limit varifolds arising from stable Allen--Cahn solutions (\cite{TonWic}).

For the case $g >0$, our starting point is still a classical PDE min-max construction (given in Section~\ref{minmax_setup}) of solutions of the relevant inhomogeneous Allen--Cahn equation, together with the refinements of \cite{HT} provided by the work of R\"oger--Tonegawa (\cite{RogTon}) and Tonegawa (\cite{Ton1}). In subsequent steps of the proof however, in contrast to the case $g \equiv 0$, key difficulties arise from two new phenomena. 
One  is the possibility that the corresponding limit varifold may now have regular pieces intersecting tangentially on a set as large as having codimension 1, such as in the case of two 
touching unit cylinders in a Euclidean space (leading to the 
quasi-embeddedness conclusion in Theorem~\ref{thm:existence} as opposed to embeddededness). Therefore, a more comprehensive theory than \cite{Wic}, with hypotheses that allow this possibility, is needed to prove regularity of the limit varifold. 
The second new aspect is that the inhomogeneous Allen--Cahn approximation process may lead to a loss of mean curvature information in the limit. The proof of the theorem has to account for the difficulties arising from this latter phenomenon, including the possibility that the min-max process may fail to produce the desired prescribed-mean-curvature hypersurface and may instead just produce a minimal hypersurface. 

Despite these two issues, the spirit of simplicity  afforded by the PDE techniques,  already seen in the minimal hypersurface case, continues in our proof of Theorem~\ref{thm:existence}, with PDE arguments featuring heavily in overcoming both difficulties. 

First we obtain optimal regularity of the limit varifolds arising from min-max (or more generally, Morse index bounded) Allen--Cahn solutions. This regularity result says that up to a lower dimensional ``genuine'' singular set, the limit varifold is the image of a two-sided $C^2$ immersion without transverse self-intersections, and it allows: (i) for sheets  to touch without coinciding, and (ii) for the mean curvature of each connected component of the immersion either to be prescribed by $g$ or to vanish identically. This is proved (in Section~\ref{stable-regularity}) by an inductive argument (inducting on the multiplicity of the limit varifold) that makes key use of certain quasilinear elliptic PDE arguments that adapt and extend ideas from [BelWic-1],  in conjunction with the $C^{1, \alpha}$ varifold theory of [BelWic-2] (reproduced in Section~\ref{preliminaries}). The proof is also aided by a simple slicing argument to rule out, subject to a Morse index bound,  singularity formation on a codimension 1 set, and Almgren's generalised stratification of singular sets (\cite{Alm})---an elementary, standard Geometric Measure Theory fact used to further reduce the dimension of the singular set.
%(and given in full detail in Section~\ref{higher-reg}). 
%together with a fairly simple slicing argument  to rule out certain tangent cones (used in the proofs of Theorem~\ref{limit-regularity} and Theorem~\ref{estimates}) and an elementary, standard GMT fact, namely, the Almgren stratification of singular sets (also recalled in Section~\ref{preliminaries}).  

The second difficulty, arising from the fact that the mean curvature may conceivably vanish everywhere on the limit varifold, is handled (in Section~\ref{proof} below) by employing PDE principles again, this time from semilinear parabolic theory. More precisely, if the initial min-max Allen--Cahn solutions end up producing just a (non-empty) minimal hypersurface $M_0$, we are able to use that knowledge in conjunction with standard 
long-time existence and convergence results for the parabolic Allen–Cahn equation to obtain the desired mean-curvature-$g$ hypersurface from a different sequence of Allen--Cahn solutions. The initial conditions for the Allen--Cahn flow are constructed out of $M_0$, capitalising on the lower dimensionality of its singular set. 

In summary, once a general GMT regularity framework ([BelWic-1], [BelWic-2]) is in place, our proof of Theorem~\ref{thm:existence} proceeds essentially entirely via PDE arguments. In the remainder of this introduction and in Section 2, we shall provide a more detailed outline of the proof including the intermediate results and key technical aspects of the proof. For the readers’ convenience we have also collected, in Section~\ref{preliminaries}, the main GMT ingredients needed, including those from [BelWic-2], [RogTon08], [Ton05-2]; the rest of the article consists mainly of PDE theoretic arguments which we have endeavoured to make self-contained.

\subsection{An outline of the proof}
In more concrete terms, our proof of Theorem~\ref{thm:existence} can be described as follows. Let $N$ be a compact Riemannian manifold (without boundary) as in the theorem. We consider, for small positive $\e$,  the functional (energy) ${\mathcal F}_{\e, \, \sigma g}:W^{1,2}(N) \to \R$ defined by 
$${\mathcal F}_{\e, \, \sigma g}(u) = \int_N \eps \frac{|\nabla u|^2}{2} + \int_N \frac{W(u)}{\eps} -\int_N \sigma g\, u,$$
where $g \, : \, N\to \R$ is a positive $C^{1, 1}$ function and $W:\R\to [0,\infty)$ is a ``double-well'' potential, i.e.\ a non-negative function of class $C^2$ having precisely two non-degenerate minima at $\pm 1$ with values $W(\pm 1) = 0$ and  appropriate growth outside $[-2,2];$ we shall in fact require that $c \leq W^{\prime\prime}(t) \leq C$ for some constant $C, c >0$ and all $t \in {\mathbb R} \setminus [-2, 2].$  Given such $W$, $\sigma$ is a fixed positive normalising constant depending only on $W$. The first two terms of ${\mathcal F}_{\e, \sigma g}$ add up to give the usual Allen--Cahn energy 
$$\Ece(u) = \int_N \eps \frac{|\nabla u|^2}{2} + \int_N \frac{W(u)}{\eps}.$$ 

 The (relatively straighforward) starting point is to choose a sequence $\e_{j} \to 0^{+}$ and construct, in $W^{1,2}(N)$, a sequence of 1-parameter min-max critical points  $u_{\e_{j}}$ of ${\mathcal F}_{\e_{j}, \sigma g}$  with $\sup_{N}|u_{\e_{j}}|$ uniformly bounded independently of $j$, and the Allen--Cahn energies ${\mathcal E}_{\e_{j}}(u_{\e_{j}})$ uniformly bounded from above and below by positive numbers independently of $j$.  This is done by applying a mountain pass lemma based on the fact that ${\mathcal F}_{\e_{j}, \sigma g}$ satisfies a Palais--Smale condition. The Morse index of $u_{\e_{j}}$  will then automatically be $\leq 1$. By general principles, the energy bounds imply the existence of a non-zero Radon measure $\mu$ on $N$ and a function $u_{\infty} \in BV(N)$ with $u_{\infty}(x) = \pm 1$ for a.e.\ $x \in N$ such that after passing to a subsequence without relabeling, $$(2\sigma)^{-1}\left(\frac{\e_{j}}{2} |\nabla u_{\e_{j}}|^{2} + \e_{j}^{-1}W(u_{\e_{j}})\right) \, d{\rm vol}_{N} \to \mu$$ weakly and  $u_{\e_{j}} \to u_{\infty}$ in $L^{1}.$  Set $E = \{u_{\infty} = 1\}$ and note that $E$ is a Caccioppoli set in $N$ with its reduced boundary $\partial^{\star} E \subset {\rm spt} \, \mu$. 

By the combined results and ideas in the work of Ilmanen (\cite{Ilm}), Sch\"atzle (\cite{Sch2}), Hutchinson--Tonegawa (\cite{HT}), R\"oger--Tonegawa (\cite{RogTon}) and Tonegawa (\cite{Ton1}) (see Theorem~\ref{Roger-Tonegawa} below),  for any sequence $(u_{\eps_{j}})$ of critical points of ${\mathcal F}_{\eps_{j}, \sigma g}$ subject only to uniform upper bounds on 
$\sup_{N} \, |u_{\eps_{j}}|$ and ${\mathcal E}_{\eps_{j}}(u_{\eps_{j}}),$ we have the following results: with no sign condition (and less regularity than $C^{1, 1}$) on $g$, the limit measure $\mu$ is the weight measure $\|V\|$ of a (possibly zero) integral $n$-varifold $V$ on $N$ with first variation $\delta \, V  = -H_{V} \|V\|$ where  $H_{V}$, the generalized mean curvature vector of $V$, is locally bounded; if $ g> 0$ then $E \neq N$ unless $V = 0;$ and moreover:
\begin{itemize}
\item [(a)] if $V$ is of multiplicity 1, then $E$ is non-empty, $V = |\partial^{\star}E|$ (the multiplicity 1 varifold associated with the reduced boundary $\partial^{\star} E$), and $H_{V} = g\nu$ for ${\mathcal H}^{n}$~a.e.\ point of $\partial^{\star} E$ where $\nu$ is the unit normal to $\partial^{\star} E$ pointing into $E$; in other words, in this case, $V$ is a critical point of the functional $A - {\rm Vol}_{g}$ where 
$A$ is the area functional $A(V) = \|V\|(N)$ ($={\mathcal H}^{n}(\partial^{\star} E)$) and ${\rm Vol}_{g}$ is the enclosed $g$ volume ${\rm Vol}_{g}(V) =  \int_{E} g d{\mathcal H}^{n+1}$.  
\item [(b)] if $g \equiv 0$ then regardless of whether $V$ is of multiplicity 1, $H_{V} = 0$, i.e.\ $V$ is a critical point of $A$. 
\end{itemize}

Putting aside for the moment the obvious regularity questions, in the \emph{absence} of the multiplicity 1 assumption on $V$ as in (a), a combination of the conclusions in (a) and (b) may conceivably occur (even if $g >0$); that is to say, $V$ may consist of a higher multiplicity part $V_{0}$ on which $H_{V} = 0$ a.e.\ and the multiplicity 1 part 
$V_{g} = |\partial^{\star} \, E|$ on which $H_{V} = g\nu$ a.e. The two parts $V_{0}$ and $V_{g}$ may merge together in a potentially very complicated manner on not too small a set (see Figure~\ref{fig:necks} in Section~\ref{overview}). This would prevent their separation from each other as 
critical points, in $N$, of $A$ and $A - {\rm Vol}_{g}$ respectively.

In light of this, to prove Theorem~\ref{thm:existence}, one approach would be to first rule out higher multiplicity in $V$ when $g >0$ and $u_{\e_{j}}$ are min-max critical points. If this succeeds we would be in case (a). As regards regularity in this case, 
the central difficulty would be that eventhough $V$ is of multiplicity 1, a priori $V$ may still have a large (but ${\mathcal H}^{n}$-null) set of singularities with higher multiplicity planar tangent cones. However this possibility can be ruled out by a direct application of  \cite[Theorem~5.1]{BW2}, \cite[Theorem~9.1]{BW2} and \cite[Theorem~6.4]{BW2}. From there, it is in fact a few easy steps to the full regularity conclusion claimed in Theorem~\ref{thm:existence}. 

In the generality in which we work here though, showing that no part of $V$ can develop higher multiplicity appears to be difficult. 
In the absence of Morse index control in the limiting process higher multiplicity can in fact occur (see the example in \cite[Section~6.3]{HT}; see also \cite{Sch1} for a related example, which according to \cite{Sch1} has originally appeared in \cite{GB}, where a sequence of CMC hypersurfaces of Euclidean space with fixed mean curvature converges as varifolds to a multiplicity 2 plane). In any case we here follow a strategy that avoids altogether the need to rule out higher multiplicity. 

At the core of this strategy is establishing regularity of $V$ \emph{allowing multiplicity}. For this we employ key results (Theorem~\ref{BWregularity} and Theorem~\ref{nonvar-SS} below) from the regularity theory in our recent work (\cite{BW1}, \cite{BW2}),  as well as extensions of ideas therein, which are in particular sufficiently general to account for the non-variational nature of $V$, i.e.\ the presence of both a region with $H_{V} = 0$ and a region with $H_{V} = g\nu$. This first step is perhaps of interest independently of Theorem~\ref{thm:existence}, and produces the following general result concerning Allen--Cahn limit varifolds $V$ arsing from Morse index bounded critical points, which says that $V_{0}$ and $V_{g}$ are indeed critical points, in $N,$ of 
$A$ and $A - {\rm Vol}_{g}$ respectively; moreover, they are embedded and quasi-embedded respectively up to a codimension $7$ singular set. (See Figure~\ref{fig:PMC_structures} for a depiction of the possible local structure of $V$ at a point away from the singular set.) Completeness of $N$ is not necessary for this result. 

\begin{thm}[Theorem~\ref{thm:regularity} below]\label{thm:AC-regularity}
Let $g \in C^{1, 1}(N)$ with $g >0$ and $N$ not assumed complete. Let $\mu$ be a non-zero Radon measure on $N$ such that 
$$\mu_{j} = (2\sigma)^{-1}\left(\frac{\e_{j}}{2} |\nabla u_{\e_{j}}|^{2} + \e_{j}^{-1}W(u_{\e_{j}})\right) \, d{\rm vol}_{N} \to \mu$$ 
where $\sigma = \int_{-1}^{1}\sqrt{\frac{W(s)}{2}} \, ds$, $\e_{j} \to 0^{+}$ and $u_{\e_{j}}$ is a critical point of ${\mathcal F}_{\e_{j}, \sigma g}$ for each $j$, with 
$\limsup_{j \to \infty} \, |u_{\e_{j}}| < \infty.$ Suppose further that for any open subset 
$U \subset N$ with compact closure, the Morse index of $u_{\e_{j}}$  on $U$ is bounded independently of $j.$ Then 
$$\mu = {\mathcal H}^{n} \res M_{g}  + q{\mathcal H}^{n} \res M_{0}$$
where:
\begin{itemize}
\item[{\rm (a)}] $M_{g}$ is a (possibly empty) immersed,  quasi-embedded hypersurface bounding a Caccioppoli set $E \subset N$  such that its mean curvature $H_{M_{g}} = g\nu$ where $\nu$ is the unit normal pointing into $E$, and such that its singular set $\overline{M_{g}} \setminus M_{g}$ is empty if $2 \leq n \leq 6$, discrete if $n=7$ and has Hausdorff dimension $\leq n-7$ if $n \geq 7;$ in fact $E = \{u_{\infty} = 1\}$ where $u_{\infty}$ is the $L^{1}_{\rm loc}$ limit of $u_{\e_{j}}$, with $u_{\infty}(x) = \pm1$ for a.e.\ $x \in N$ and $E \neq N$ (since $\mu \neq 0$). Moreover, the multiplicity 1 varifold $V = |M_{g}|$ has first variation $\delta \, V = -H_{M_{g}}\|V\|$ in $N$.    
\item[{\rm(b)}]  $M_{0}$ is a (possibly empty) embedded minimal hypersurface of $N$ with its singular set $\overline{M_{0}} \setminus M_{0}$ empty if $2 \leq n \leq 6$, discrete if $n=7$ and having Hausdorff dimension $\leq n-7$ if $n \geq 7;$ the multiplicity function $q$ is equal to a constant positive even integer on each connected component of $M_{0},$ and the multiplicity 1 varifold $|M_{0}|$ is stationary in $N;$ 
\end{itemize}
Additionally, we also have that $\overline{M_{0}} \cap \overline{M_{g}} = \left(M_{0} \cap M_{g}\right) \cup \left((\overline{M_{0}} \setminus M_{0}) \cap (\overline{M_{g}} \setminus M_{g})\right)$, i.e.\ $\overline{M_{0}}$ and that 
$\overline{M_{g}}$ intersect only at common regular (quasi-embedded) points  or common singular points; moreover, $M_{0}$ and $M_{g}$ can only have tangential intersection, with $M_{0} \cap M_{g}$ locally contained in the union of at most two embedded $(n-1)$-dimensional $C^{1}$ submanifolds. 
\end{thm}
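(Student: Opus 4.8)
The plan is to deduce Theorem~\ref{thm:AC-regularity} from the varifold regularity theory of \cite{BW1}, \cite{BW2} applied to the integral varifold $V$ with $\|V\| = \mu$, combined with the structural information about Allen--Cahn limits supplied by Theorem~\ref{Roger-Tonegawa} and the Morse-index bound. First I would recall that $\mu = \|V\|$ for an integral $n$-varifold $V$ with locally bounded generalized mean curvature $H_V$ and that $\delta V = -H_V\|V\|$; by the structure theory the density of $V$ is, $\mathcal H^n$-a.e., a positive integer, and the contribution of the multiplicity-$1$ part is exactly $\mathcal H^n \res \partial^\star E$ with $E = \{u_\infty = 1\}$, where on $\partial^\star E$ the mean curvature equals $g\nu$; on the region where the density is $\geq 2$ one knows from the combination of results quoted (essentially the parity/locally-constant-multiplicity phenomenon for Allen--Cahn limits together with $H_V$ being $\|V\|$-a.e.\ tangential) that $H_V = 0$. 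So set-theoretically $\mu$ decomposes as $\mathcal H^n\res T_g + (\text{higher multiplicity part})$, and the task is to upgrade this measure-theoretic decomposition to the asserted geometric regularity.

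The heart of the argument is regularity of $V$ allowing multiplicity. The key input is that, by the Morse-index bound, $V$ is a \emph{stable} varifold away from finitely many points (the Morse index $\leq$ some fixed $m$ localizes: off $m$ points $V$ has no negative direction), so the hypotheses of the sheeting/regularity theorems of \cite{BW2} are met in a punctured neighbourhood of every point; this must be combined with an extension of the \cite{BW1} arguments to accommodate the two-phase nature of $V$ — namely that $H_V$ equals $g\nu$ on one part and $0$ on the other — which is exactly what the excerpt flags as ``extensions of ideas therein to account for the non-variational nature of $V$''. Granting this, one applies \cite[Theorem~5.1]{BW2}, \cite[Theorem~9.1]{BW2} and \cite[Theorem~6.4]{BW2} to conclude: (i) $V$ is, away from a closed set $\Sigma$ of Hausdorff dimension $\leq n-7$ (empty if $n\leq 6$, discrete if $n=7$), a smooth immersed quasi-embedded hypersurface, with touching only tangential and the touching set locally contained in finitely many $(n-1)$-dimensional $C^1$ submanifolds; (ii) the multiplicity function is locally constant on the regular part of each phase. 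Then the regular part splits into the (multiplicity-$1$) locus $M_g$ where $H = g\nu$ — which by the first-variation identity and $g>0$ bounds the Caccioppoli set $E$, giving $M_g = \partial^\star E$ up to $\mathcal H^n$-null sets, and which is embedded-or-quasi-embedded — and the locus $M_0$ where $H=0$ with even constant multiplicity $q$ on each component, so $|M_0|$ is stationary. Schauder estimates bootstrap the $C^{1,\alpha}$ quasi-embeddedness to $C^{2,\alpha}$ since $g\in C^{1,1}$.

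For the final displayed assertion — that $\overline{M_0}$ and $\overline{M_g}$ meet only at common regular points or common singular points, and that $M_0 \cap M_g$ is a tangential intersection contained locally in two $(n-1)$-dimensional $C^1$ submanifolds — I would argue as follows. A point $y \in \overline{M_0}\cap\overline{M_g}$ that is a regular point of $V$ lies on a finite union of smooth embedded sheets; the multiplicity (density) being locally constant on each sheet forces each such sheet to belong entirely to one phase near $y$, so near $y$ the set $M_0\cap M_g$ is a union of pairwise tangential intersections of a multiplicity-$1$ sheet with an even-multiplicity sheet, and the \cite{BW1}/\cite{BW2} touching-set structure (tangential contact, contact set in $(n-1)$-dim $C^1$ submanifolds, at most two sheets through a non-embedded point) gives precisely the claimed local picture. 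If instead $y$ is a singular point of $V$, then $y\in(\overline{M_0}\setminus M_0)\cap(\overline{M_g}\setminus M_g)$ by definition of the singular sets. The remaining possibility to exclude is a point $y$ which is a regular point of $M_g$ but a limit of $M_0$ (or vice versa): there a neighbourhood of $y$ in $\operatorname{spt}\mu$ is a single multiplicity-$1$ sheet (plus possibly a second quasi-embedded sheet), so there is no room for a higher-multiplicity sheet to accumulate, contradiction; hence $\overline{M_0}\cap M_g = \varnothing$ and symmetrically $M_0 \cap \overline{M_g}\setminus M_g = \varnothing$, which together with the regular-point analysis yields the stated identity.

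The main obstacle I anticipate is step two: verifying that the hypotheses of the \cite{BW2} regularity theorems genuinely apply to $V$ here. Two-sidedness, the stability-off-finitely-many-points consequence of the Morse bound, and local boundedness of $H_V$ are all needed, but the essential new difficulty is the two-phase structure — one must show that the blow-up analysis and the no-classical-singularities / sheeting arguments of \cite{BW1} go through when the mean curvature is $g\nu$ on part of $V$ and $0$ on the rest, i.e.\ that the ``non-variational'' mixture does not create new tangent-cone behaviour; this is where the ``extension of techniques from \cite{BW1}'' does the real work. Everything after that — the parity of $q$, the stationarity of $|M_0|$, identifying $M_g$ with $\partial^\star E$, the $C^{2,\alpha}$ bootstrap, and the intersection structure in the last display — is comparatively routine once the regularity of $V$ allowing multiplicity is in hand.
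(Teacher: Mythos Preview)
Your strategy is correct and matches the paper's: localize via the Morse-index bound to get stability of (a subsequence of) $u_{\e_j}$ in a ball around each point, apply the \cite{BW1}/\cite{BW2} machinery there, and then assemble the local quasi-embedded PMC$(g,0)$ structure into the global decomposition $V_0 + V_g$ using a partition of unity. You also correctly locate the main obstacle. But two of your steps hide substantial work that the paper carries out explicitly, and one of them is a genuine gap in what you wrote.

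First, the stability hypothesis needed to feed into \cite[Theorem~5.1]{BW2} is not the ordinary intrinsic stability inequality on the embedded part of $V$ but an \emph{ambient Schoen inequality} (the paper's Lemma~\ref{Schoen-Tonegawa}). The point is that when $g>0$ two $C^2$ sheets of $V$ can touch tangentially along a set of positive $(n-1)$-dimensional measure, so stability of the embedded part alone yields no useful estimate across the coincidence set; the paper derives the needed ambient Schoen inequality directly from Allen--Cahn stability of $u_{\e_j}$ (not from varifold stability of $V$), and the Riemannian version requires a careful choice of Fermi coordinates. Second, and more seriously, your sentence ``Schauder estimates bootstrap the $C^{1,\alpha}$ quasi-embeddedness to $C^{2,\alpha}$'' is where the proposal has a real gap. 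The output of \cite[Theorem~9.1]{BW2} is only an ordered family of $C^{1,\alpha}$ graphs $u_1 \le \cdots \le u_q$, and on the coincidence set one does not know a priori which of the three mean-curvature PDEs (scalar mean curvature $g$, $-g$, or $0$) each $u_j$ satisfies, so Schauder cannot be applied sheet-by-sheet. The paper's Theorem~\ref{thm:inductive_higher_reg} is the missing ingredient: it uses Hopf's boundary point lemma to rule out $\ell$-fold touching for $\ell \ge 4$, uses the phase information from Theorem~\ref{Roger-Tonegawa} (the Caccioppoli set $E$ and the parity constraint on multiplicities) to determine which PDE each sheet solves on each component off the coincidence set, and then uses the $W^{2,2}$ bound coming from the varifold stability inequality~(\ref{var-stability}) to extend the weak PDE for the differences $u_i - u_j$ across the ($\mathcal{H}^n$-null) coincidence set; only after that does elliptic regularity yield $C^2$. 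This step, run inductively in $q$ together with \cite[Theorem~9.1]{BW2}, is precisely the ``extension of techniques from \cite{BW1}'' you anticipated but did not supply, and it is also what produces the five local pictures (Definition~\ref{PMC-structure}) from which the intersection statement in the last paragraph of the theorem is read off directly.
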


The crucial advantage Theorem~\ref{thm:AC-regularity} affords is the following:  it says that in order to prove Theorem~\ref{thm:existence}, one only has to address the possibility that the limit varifold arising from $(u_{\e_{j}})$ is \emph{entirely} a minimal hypersurface with  a small singular set and multiplicity $\geq 2$, i.e.\ the possibility that 
$\mu = q{\mathcal H}^{n} \res M_{0},$ where $q \geq 2$ and $M_{0}$ is a minimal hypersurface embedded away from a closed set of codimension $\geq 7$.
For if this is not the case, then we will have produced the desired hypersurface $M$ in Theorem~\ref{thm:existence}, by taking $M = M_{g}$ with $M_{g}$ as in 
Theorem~\ref{thm:AC-regularity}. 

In the final step of our proof of Theorem 1.1, we show not that the possibility $\mu=q \mathcal{H}^n \res M_0$ cannot arise, but that \textit{if it does} then we can produce the desired hypersurface 
$M$ in Theorem \ref{thm:existence} by a different method: rather than insisting on obtaining it from saddle-type critical points of ${\mathcal F}_{\eps_{j}, \sigma g}$, we will obtain it from stable critical points $v_{\e_{j}}$ (as in Proposition~\ref{Prop:main-intro} below). The construction of these stable critical points uses a negative gradient flow of ${\mathcal F}_{\e_{j}, \sigma g}$ with a well-chosen initial condition built from $M_{0}$. For this construction embeddedness of $M_{0}$ away from a small set is important, and it is carried out in such way that there is a fixed non-empty open set $\Omega \subset \{v_{\e_{j}} >3/4\}$ for each $j,$ ensuring that $\Omega \subset \{v_{\infty} = 1\}$ and hence $\{v_{\infty} = 1\} \neq \emptyset$. We may then apply Theorem~\ref{thm:AC-regularity} again, this time to the sequence  
$(v_{\e_{j}}).$ The fact that the original functions $u_{\e_{j}}$  leading to $M_{0}$ are min-max critical points guarantees that the limit measure corresponding to $(v_{\e_{j}})$ is non zero. Thus 
$\partial^{\star} \, \{v_{\infty} = 1\} \neq \emptyset$, so by the regularity and separation property guaranteed by Theorem~\ref{thm:AC-regularity} for the limit varifold corresponding to $(v_{\e_{j}}),$ we can take the regular (i.e.\ quasi-embedded) part of 
$|\partial^{\star} \, \{v_{\infty} = 1\}|$ to be the desired hypersurface $M$ in Theorem~\ref{thm:existence}.

In summary, our proof of Theorem~\ref{thm:existence}  consists of the following three main steps: 
\begin{enumerate}
\item[{\sc step (i)}] a proof of Theorem~\ref{thm:AC-regularity} above (Theorem~\ref{thm:regularity} below).  

\item[{\sc step (ii)}] a min-max construction of a critical point $u_{\e}$ of ${\mathcal F}_{\e, \sigma g}$  for each sufficiently small $\e,$ with the property that $0 < L \leq \Ece(u_{\e}) \leq K < \infty$ for constants $L$, $K$, independent of $\e$ and $g$.  (Proposition~\ref{Prop:mountainpass}, Lemma~\ref{lem:upperbound} and Lemma~\ref{lem:lowerbound}). This step is based on standard PDE tools: an application of a mountain pass lemma based on the fact that ${\mathcal F}_{\e, \sigma g}$ satisfies the Palais--Smale condition. It is then automatically true that the Morse index of $u_{\e}$ is at most 1. Taking $\e = \e_{j}$ in this construction for a sequence $\e_{j} \to 0^{+}$, the uniform upper bound on ${\mathcal E}_{\e_{j}}(u_{\e_{j}})$ implies that along a subsequence $\mu_{j} \to \mu$ (notation as in Theorem~\ref{thm:AC-regularity}) for some Radon measure $\mu$ on $N,$ with $\mu \neq 0$ in view of the positive lower bound on ${\mathcal E}_{\e_{j}}(u_{\e_{j}}).$  If $E = {\rm int} \, \{u_{\infty} = 1\} \neq \emptyset$, Theorem~\ref{thm:existence} holds in view of Theorem~\ref{thm:AC-regularity}, by just setting $M$ to be the quasi-embedded part of $\partial E.$ 

\item[{\sc step (iii)}]  a proof of the following result (Proposition~\ref{Prop:main} below) which leads to the conclusion of Theorem~\ref{thm:existence} if in {\sc step (ii)} we have that $\{u_{\infty} = 1\} = \emptyset$.

\end{enumerate}

\begin{Prop}
\label{Prop:main-intro}
Let $(u_{\e_{j}})$ be the sequence as {\sc step (ii)}, constructed as described in Section~\ref{minmax_setup}. Let $\mu$ be the (non-zero) limit Radon measure corresponding to (a subsequence of) $(u_{\e_{j}}),$ and suppose that $\mu =  q{\mathcal H}^{n} \res M_{0}$ where  $M_{0}$ and $q$ are as in Theorem~\ref{thm:AC-regularity}(b).  
Then there exist $v_{{\eps}_{j}}:N\to \R$ satisfying $\ca{F'}{{\eps}_j, \sigma g}(v_{{\eps}_j})=0$ and $\ca{F''}{{\eps}_j, \sigma g}(v_{{\eps}_{j}})\geq 0$ (i.e.\ stable critical points of ${\mathcal F}_{\e_{j}, \sigma g}$) with $\liminf_{{\eps}_j\to 0} \ca{E}{{\eps}_{j}}(v_{{\eps}_{j}})>0$ and $\limsup_{{\eps}_{j}\to 0} \ca{E}{{\eps}_j}(v_{{\eps}_j})<\infty$; moreover, there exists a (fixed) non-empty open set contained in $\{v_{{\eps}_{j}}>\frac{3}{4}\}$ for all ${\eps}_{j}$.
\end{Prop}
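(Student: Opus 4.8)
The plan is to realise each $v_{\e_{j}}$ as a \emph{stable} steady state of the rescaled Allen--Cahn flow $\partial_{t}v=\Delta v-\e_{j}^{-2}W'(v)+\e_{j}^{-1}\sigma g$ --- the $L^{2}$--gradient flow of $\e_{j}^{-1}\mathcal{F}_{\e_{j},\sigma g}$, along which $\mathcal{F}_{\e_{j},\sigma g}$ is non-increasing --- issuing from an initial datum manufactured out of the minimal hypersurface $M_{0}$. Write $\mathcal{G}_{\e}(u)=-\e\Delta u+\e^{-1}W'(u)-\sigma g$, so that critical points of $\mathcal{F}_{\e,\sigma g}$ are exactly the solutions of $\mathcal{G}_{\e}(u)=0$; for small $\e$ let $z^{\pm}_{\e}$ be the locally unique solution of $\mathcal{G}_{\e}=0$ near the constant $\pm1$. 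These are strict local minima of $\mathcal{F}_{\e,\sigma g}$ (stable because $W''(\pm1)>0$), with $\mathcal{F}_{\e,\sigma g}(z^{+}_{\e})\to-\sigma\int_{N}g$ and $\mathcal{F}_{\e,\sigma g}(z^{-}_{\e})\to\sigma\int_{N}g$ as $\e\to0^{+}$.

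\emph{Step 1: a sub-solution modelled on $M_{0}$.} Since $M_{0}$ is embedded away from the closed set $\overline{M_{0}}\setminus M_{0}$ of Hausdorff dimension $\le n-7$, fix a connected, relatively compact, embedded piece $M_{0}'\Subset M_{0}$ carrying a tubular neighbourhood $M_{0}'\times(-\rho_{1},\rho_{1})$, and a still smaller piece $M_{0}''\Subset M_{0}'$. For small $\rho\in(0,\rho_{1})$ let $\Omega$ be the (fixed, $j$--independent) open $\tfrac{\rho}{2}$--collar of $M_{0}''$ inside this tube, and build $\underline v_{\e_{j}}\in C^{2}(N)$ with $\underline v_{\e_{j}}\equiv1$ on $\Omega$, $\underline v_{\e_{j}}\equiv-1$ outside the $\rho$--collar of $M_{0}'$, and in between equal to the one-dimensional Allen--Cahn profile of $W$ composed with a mollification of the signed distance to the boundary of the $\tfrac{3\rho}{4}$--collar of $M_{0}''$, the ``side'' over $\partial M_{0}''$ being rounded at scale $\rho$. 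Because $M_{0}$ is minimal, the two ``faces'' of this collar have mean curvature $O(\rho)$; hence, choosing $\rho$ small depending only on $n$, $W$ and $\min_{N}g>0$, the standard profile computation gives $\mathcal{G}_{\e_{j}}(\underline v_{\e_{j}})\le0$ for all small $\e_{j}$ --- strictly $<0$ along the transition, and in the viscosity sense at the rounded edges. Note $-1\le\underline v_{\e_{j}}\le1<z^{+}_{\e_{j}}$, $\underline v_{\e_{j}}\equiv1$ on the fixed open set $\Omega$, and $\limsup_{\e_{j}\to0}\mathcal{F}_{\e_{j},\sigma g}(\underline v_{\e_{j}})\le 4\sigma\mathcal{H}^{n}(M_{0}'')+\sigma\int_{N}g+C\rho$.

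\emph{Step 2: the flow, its limit, stability, and the upper bound.} Run the flow from $\underline v_{\e_{j}}$. Since $\underline v_{\e_{j}}$ is a sub-solution, $\partial_{t}v$ solves the linearised equation with non-negative initial data, so $v(\cdot,t)$ is non-decreasing in $t$; being bounded above by the barrier $z^{+}_{\e_{j}}$, it increases to $v_{\e_{j}}$, the minimal solution of $\mathcal{G}_{\e_{j}}=0$ lying above $\underline v_{\e_{j}}$. Thus $\mathcal{F}'_{\e_{j},\sigma g}(v_{\e_{j}})=0$, and $v_{\e_{j}}>\tfrac34$ on $\Omega$ since $v_{\e_{j}}\ge\underline v_{\e_{j}}\equiv1$ there. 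It is moreover stable: were the principal eigenvalue $\lambda_{1}$ of $-\e_{j}\Delta+\e_{j}^{-1}W''(v_{\e_{j}})$ negative, then for small $s>0$ the function $v_{\e_{j}}-s\phi_{1}$ ($\phi_{1}>0$ the first eigenfunction) would be a super-solution lying strictly above $\underline v_{\e_{j}}$ --- here the strong maximum principle, applicable since $\underline v_{\e_{j}}$ is a \emph{strict} sub-solution, gives $v_{\e_{j}}-\underline v_{\e_{j}}\ge\delta>0$ on $N$ --- whose flow would decrease to a solution of $\mathcal{G}_{\e_{j}}=0$ inside $[\underline v_{\e_{j}},\,v_{\e_{j}}-s\phi_{1}]$, contradicting minimality of $v_{\e_{j}}$; hence $\lambda_{1}\ge0$, i.e.\ $\mathcal{F}''_{\e_{j},\sigma g}(v_{\e_{j}})\ge0$. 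Finally, since $\mathcal{F}_{\e_{j},\sigma g}$ decreases along the flow and $v_{\e_{j}}\le z^{+}_{\e_{j}}\le2$,
\[ \mathcal{E}_{\e_{j}}(v_{\e_{j}})=\mathcal{F}_{\e_{j},\sigma g}(v_{\e_{j}})+\sigma\int_{N}g\,v_{\e_{j}}\ \le\ \mathcal{F}_{\e_{j},\sigma g}(\underline v_{\e_{j}})+2\sigma\int_{N}g, \]
whose right-hand side is bounded uniformly in $j$, giving $\limsup_{j}\mathcal{E}_{\e_{j}}(v_{\e_{j}})<\infty$.

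\emph{Step 3: non-triviality, via the min-max; the main difficulty.} It remains to prove $\liminf_{j}\mathcal{E}_{\e_{j}}(v_{\e_{j}})>0$. By the classical lower energy bound for non-trivial critical points (via the monotonicity formula for the associated integral varifold, whose generalised mean curvature is bounded by $\sigma\|g\|_{\infty}$), there is a universal $c_{0}>0$ such that any solution of $\mathcal{G}_{\e_{j}}=0$ with $\mathcal{E}_{\e_{j}}<c_{0}$ equals $z^{+}_{\e_{j}}$ or $z^{-}_{\e_{j}}$. As $v_{\e_{j}}>\tfrac34$ on $\Omega$ we have $v_{\e_{j}}\ne z^{-}_{\e_{j}}$, so it suffices to rule out $v_{\e_{j}}=z^{+}_{\e_{j}}$. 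Here the min-max enters: $u_{\e_{j}}$ is a mountain-pass critical point, so $c_{\e_{j}}:=\mathcal{F}_{\e_{j},\sigma g}(u_{\e_{j}})=\inf_{\gamma}\max_{\tau}\mathcal{F}_{\e_{j},\sigma g}(\gamma(\tau))$ over continuous paths $\gamma$ joining the wells $z^{-}_{\e_{j}}$ and $z^{+}_{\e_{j}}$, and --- from the hypothesis $\mu=q\,\mathcal{H}^{n}\res M_{0}$, whence $u_{\infty}\equiv-1$ --- $c_{\e_{j}}\to2\sigma q\,\mathcal{H}^{n}(M_{0})+\sigma\int_{N}g$. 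Choosing $M_{0}''$ a sufficiently small piece and $\rho$ small, Step 1 together with $q\ge2$ and $\mathcal{H}^{n}(M_{0})>0$ gives $\limsup_{j}\mathcal{F}_{\e_{j},\sigma g}(\underline v_{\e_{j}})<2\sigma q\,\mathcal{H}^{n}(M_{0})+\sigma\int_{N}g$, and $\underline v_{\e_{j}}$ can be joined to $z^{-}_{\e_{j}}$ by a path on which $\mathcal{F}_{\e_{j},\sigma g}$ stays below $c_{\e_{j}}$ (shrink the collar to zero width, then deform to $z^{-}_{\e_{j}}$). If $v_{\e_{j}}=z^{+}_{\e_{j}}$, concatenating this path with the flow trajectory from $\underline v_{\e_{j}}$ to $v_{\e_{j}}$ --- along which $\mathcal{F}_{\e_{j},\sigma g}$ strictly decreases, hence stays $<c_{\e_{j}}$ --- would produce a path from $z^{-}_{\e_{j}}$ to $z^{+}_{\e_{j}}$ on which $\mathcal{F}_{\e_{j},\sigma g}<c_{\e_{j}}$, contradicting the definition of $c_{\e_{j}}$. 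Hence $v_{\e_{j}}\ne z^{+}_{\e_{j}}$, so $\mathcal{E}_{\e_{j}}(v_{\e_{j}})\ge c_{0}$, and $\liminf_{j}\mathcal{E}_{\e_{j}}(v_{\e_{j}})\ge c_{0}>0$. The technical heart of the whole argument is Step 1: on a general (possibly ``small'') $N$ there need be no Allen--Cahn sub-solution that is $\approx+1$ on a fixed open set, because the forcing $\sigma g>0$ can be overwhelmed by the mean curvature of any small enclosing hypersurface; it is precisely the minimality of $M_{0}$ (and its embeddedness off a set of dimension $\le n-7$, which makes an embedded piece available) that rescues the collar construction, at the cost of careful estimates near the rounded edges and near $\overline{M_{0}}\setminus M_{0}$. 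Conceptually, the subtle point is the tension between \emph{stability} of $v_{\e_{j}}$ --- which forces the monotone-flow/minimal-steady-state construction, a priori liable to collapse to the trivial state $z^{+}_{\e_{j}}$ --- and \emph{non-triviality} of the limit varifold; this is resolved only by feeding in the mountain-pass level $c_{\e_{j}}$ of the original min-max solutions $u_{\e_{j}}$, which confines the flow to the ``$z^{-}_{\e_{j}}$ side'' of the pass.
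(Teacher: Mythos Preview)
Your overall strategy---run the monotone Allen--Cahn flow from a carefully designed sub-solution, and use the min-max level to rule out collapse to $z^{+}_{\e_j}$---is exactly the paper's. The gap is in Step~1: your single small collar over $M_0''\Subset M_0$ is \emph{not} a sub-solution. You correctly note that the two faces at height $\pm 3\rho/4$ have mean curvature $O(\rho)$ (by minimality of $M_0$), which is dominated by $\sigma g$; but the ``side'' over $\partial M_0''$ and the rounded corners have mean curvature of order $1/r$ (with $r$ the radius of $M_0''$) and $1/\rho$ respectively, pointing \emph{into} the $\{+1\}$ region. The profile computation there gives
\[
\mathcal G_{\e_j}(\underline v_{\e_j})\ \approx\ \e_j\,\overline{\mathbb H}_{\e_j}'(\cdot)\cdot\bigl(-\Delta(\text{signed dist})\bigr)-\sigma g\ \sim\ \tfrac{c}{r}-\sigma g\ >\ 0
\]
for $r$ (or $\rho$) small, so the inequality you need fails precisely where the boundary is curved. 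You cannot repair this by taking $r$, $\rho$ large enough to beat $1/(\sigma g)$ without losing control of the energy of $\underline v_{\e_j}$ relative to the min-max level in the critical case $q=2$.

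The paper's resolution is not to localise to a small piece at all. Its initial datum $h$ is the \emph{full} multiplicity-$2$ approximation $G_0^\e$ of $M_0$---so there is no side, and the sub-solution property follows from minimality of $M_0$ and Proposition~\ref{Prop:sign_lapl}---modified by two local ``bumps'' over disjoint geodesic balls $D_1,D_2\subset M_0$, with the bumping direction chosen so that $A-\mathrm{Vol}_{g/\sigma}$ strictly decreases (first variation $-\int g\chi<0$) while the mean curvature stays below $\tfrac{1}{20}\min_N g$ (Remark~\ref{oss:choice_t0}), preserving the sub-solution inequality there. The price is that the naive path from $a_\e$ to $h$ passes through $G_0^\e$, whose $\mathcal F_\e$ value equals (for $q=2$) the min-max level $c_{\e_j}$; the paper's ``avoiding the peak'' trick (Section~\ref{two_parameter}) alternates: at every moment along the path at least one of the two disks is either removed or already bumped out, guaranteeing $\tfrac{1}{2\sigma}\mathcal F_\e\le 2\mathcal H^n(M_0)+\int_N\tfrac{g}{2\sigma}-\varsigma$ for a fixed $\varsigma>0$. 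Your single-collar path, even if the sub-solution issue were fixed by taking $M_0''=M_0$, would (when $q=2$) reach the min-max level as the collar width shrinks to zero, and the contradiction in Step~3 would not go through.
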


In certain cases, for instance if $g$ is constant and $N$ has positive Ricci curvature, it follows from {\sc step (iii)} of our proof that the possibility $V_{g} = 0$ cannot happen for the sequence of min-max critical points $(u_{\e_{j}})$ constructed in {\sc step (ii)} (see Remark~\ref{oss:Ric_pos} below). It is also conceivable that this possibility does not occur for more general metrics, for instance for $n \leq 6$ and for metrics on $N$ for which all minimal hypersurfaces are non-generate (a dense subset of the set of smooth metrics by a theorem of White (\cite{Whi})). We emphasize that here we bypass this question altogether by proceeding as in {\sc step (iii)}.

We end this introduction with remarks on some recent and old work related to the present work.

\begin{oss}
By techniques very different from those used here---specifically, by an Almgren--Pitts min-max construction---the existence of hypersurfaces with mean curvature prescribed by an ambient function $g$ ($\not\equiv 0$) on a compact manifold $N^{n+1}$ has recently been addressed in the following cases: for $2\leq n \leq 6$ when $g$ is any non-zero constant by Zhou--Zhu (\cite{ZZ1}); for $2\leq n \leq 6$ when $g:N \to \R$ is smooth and satisfies certain conditions on its nodal set $\{g=0\}$ (the resulting collection of functions being generic in a Baire sense) by Zhou--Zhu (\cite{ZZ2}); for $n\geq 7$ and  
$g$ any non-zero constant by Dey (\cite{D}). 

For $n \leq 6$, the compactness theorem needed in these works to prove regularity of the minmax varifolds is a straighforward consequence of the pointwise curvature estimates due to Schoen--Simon--Yau (\cite{SSY}) (for $n \leq 5$) and Schoen--Simon (\cite{SS}) (for $n=6$). For general dimensions such estimates do not hold and the corresponding compactness theorems have to be obtained by 
different methods. The results of \cite{BW1}, \cite{BW2} and \cite{SS} provide a complete (regularity and) compactness theory in general dimensions needed for both the Almgren--Pitts approach as in \cite{D} as well as our approach in the present work. 

We point out some methodological differences between the Almgren--Pitts minmax construction and the Allen--Cahn method pursued here. Just like in the original 
Almgren--Pitts method used to construct minimal hypersurfaces, the arguments in 
\cite{ZZ1}, \cite{ZZ2}, \cite{D} seem to require a ``pull-tight'' procedure (to make up for the lack of a Palais--Smale condition), the fulfilment of an ``almost-minimizing'' condition (as a substitute for uniform Morse index control), and the use of ``stable replacements'' (to obtain the regularity conclusions). None of these steps are required in the Allen--Cahn approach, which instead capitalises on elementary, general PDE principles and a sharp varifold regularity theory of independent interest.

We further point out that our regularity results (specifically, Theorem~\ref{BWregularity} and Theorem~\ref{estimates} below) allow us to extend the existence theorem in a straightforward manner from the case of positive $g \in C^{1,1}(N)$ to the case of arbitrary non-negative Lipschitz $g$ by means of an approximation argument (see Section~\ref{extension}). This argument uses in particular the local uniform $C^{2, \alpha}$ estimates of Theorem~\ref{estimates} which assumes that $g \in C^{1, 1}$ but yields constants in its conclusion that depend on $g$ only through an upper bound on the $C^{1}$ norm of $g$. The applicability of these estimates relies crucially on the fact that the Allen--Cahn min-max solutions satisfy a uniform Morse index bound, which implies stability of the approximating hypersurfaces in fixed-sized balls. 
%Whether an analogous approximation argument can be implemented in the Almgren--Pitts method remains to be seen, and in any event it may not be as straightforward in that setting since some form of Morse index control would presumably have to be obtained first.
%to do in that setting because of the absence of local stability in balls of uniform size independent of the approximating hypersurfaces. 

\end{oss}

\begin{oss}
Historically the use of the Allen--Cahn energy to approximate the area functional goes back to an idea of De Giorgi and to the work of Modica--Mortola \cite{MM}, which considered the case of minimizers. 
In particular minimizers of ${\mathcal F}_{\e_{j}, \sigma g}$ converge in $L^{1}$ to  a $BV$ limit taking values $\pm1,$ and whose $+1$ phase $E$ is a minimizer of the functional $\text{Per}(E)-\int_E g.$ Higher multiplicity issues (including the precense of a ``touching set'') that need to be addressed here do not arise in the case of minimizers, and regularity of the limit in the form of embeddedness away from a codimension 7 singular set follows from the work of 
Gonzalez--Massari--Tamanini (\cite{GMT1}, \cite{GMT2}).
\end{oss}

\section{The case $g \equiv 0$, new difficulties when $g >0$ and technical aspects of their resolution}
\label{overview}
In this section we provide a brief description of the Allen--Cahn approach to the case $g \equiv 0$ of Theorem~\ref{thm:existence} (i.e.\ the existence of minimal hypersurfaces), and a more detailed overview of {\sc step(i), step (ii)} and {\sc step (iii)}  above in the case $g >0$.
%highlighting the differences between that case and the case $g \equiv 0$. 

 %In recent years, the Allen--Cahn approach has been shown to provide a simple PDE theoretic alternative to the classical Almgren--Pitts geometric measure theory machinery originally used to produce min-max minimal hypersurfaces. 
 For the case $g \equiv 0$ of Theorem~\ref{thm:existence}, the analogue of {\sc step (ii)} is carried out in the work of Guaraco \cite{Gua} which shows, via a classical mountain pass lemma, the existence of a critical point $u_{\eps}$ of $\Ece$ for small $\e \in (0, 1),$ with Morse index at most 1 and with uniform upper and lower energy bounds.  Choosing next $\e_{j} \to 0^{+}$ and applying a theorem of Hutchinson--Tonegawa (\cite{HT}), one obtains a sequence of $n$-varifolds $(V^{j})$ associated with the measures $\mu_{j} = (2\sigma)^{-1}\left(\frac{\e_{j}}{2}|\nabla u_{\e_{j}}|^{2}  + \e_{j}^{-1}W(u_{\e_{j}})\right) d{\rm vol}_{N}$ such that along a subsequence, $V^{j}$ converges in the first instance to a non-trivial area-stationary integral $n$-varifold $V$ on $N,$ with its weight measure $\|V\| = \mu = \lim_{j \to \infty} \, \mu_{j}$.  The proof is completed by deducing regularity of $\mu$ as follows: the uniform Morse index bound on $(u_{\e_{j}})$ implies 
 that for each point $p \in N$ there is a number $r>0$ such that for each $\delta \in (0, r)$, a subsequence of $(u_{\e_{j}})$ is stable (i.e.\ has index zero) in the annulus 
 ${\mathcal N}_{r}(p) \setminus \overline{{\mathcal N}_{\delta}(p)}.$ 
 %in a neighborhood of each point in $N$ a subsequence of $(u_{\e_{j}})$ is stable (i.e.\ has index zero). 
 Hence  by a theorem of Tonegawa (\cite{Ton}), locally near every point of ${\rm spt} \, \mu$ the embedded part of ${\rm spt} \, \mu$ is stable with respect to the area functional for compactly supported normal variations (of the embedded part). Regularity of $\mu$ (i.e.\ that $\mu = q{\mathcal H}^{n} \res M_{0}$ where $M_{0}$ is a minimal hypersurface smoothly embedded away from a closed singular set ($=\overline{M_{0}} \setminus M_{0}$) of Hausdorff dimension $\leq n-7$, and $q,$ the multiplicity function, is positive integer valued and locally constant on $M_{0}$) follows by using local stability of the sequence $(u_{\e_{j}})$ to rule out a certain very specific type of singularities of $\mu$ (classical singularities, where at least one tangent cone is made up of three or more half-hyperplanes meeting along a common axis) (\cite{TonWic}), and applying the regularity theory for stable codimension 1 stationary varifolds (\cite{Wic}). Note that whether the set $E = \{u_{\infty} = 1\}$ is empty or not is irrelevant in the case of minimal hypersurfaces, and hence an additional step like {\sc step (iii)} above is not necessary in this case. (We remark that in dimension $n=2$, more recent work of Chodosh--Mantoulidis (\cite{CM}) provides an alternative proof of the regularity of the limit surface by establishing strong convergence of the level sets of the Allen--Cahn solutions.)

For the case $g >0$, {\sc step (ii)} is again, in essence, an application of a standard PDE mountain pass lemma. For each small $\e \in (0, 1)$, this step produces two functions 
$a_{\e}, b_{\e} \in W^{1,2}(N)$ close to the constant functions $-1$ and $1$ respectively, and a critical point $u_{\e}$ of ${\mathcal F}_{\e, \sigma g}$ in $W^{1, 2}(N)$ such that ${\mathcal F}_{\e, \sigma g}(u_{\e})$ is the min-max width of ${\mathcal F}_{\e, \sigma g}$ over all continuous paths in $W^{1, 2}(N)$ joining $a_{\e}$ to $b_{\e},$ i.e.\ 
$${\mathcal F}_{\e, \sigma g}(u_{\e}) = \min_{\gamma \in \Gamma_{\e}} \max_{t \in [-1, 1]} {\mathcal F}_{\e, \sigma g}(\gamma(t))$$
where $\Gamma_{\e} = \{\mbox{continuous} \;\;  \gamma \, : \,  [-1, 1] \to W^{1, 2}(N)\;\;  \mbox{with $\gamma(-1) = a_{\e}, \gamma(1)  = b_{\e}$}\}.$ 

We shall now provide a discussion of the key new aspects of {\sc step (i)} in the case $g >0$, followed by an overview of {\sc step (iii)}.

The starting point of {\sc step (i)} is the work of R\"oger--Tonegawa (\cite{RogTon}) and Tonegawa (\cite{Ton1}) (Theorem~\ref{Roger-Tonegawa} below). These works, which extend \cite{HT}, imply that the limit varifold $V$ is an integral $n$-varifold having locally bounded generalized mean curvature $H_{V}$ in $N$, that the (possibly empty) set $E$ ($= \{u_{\infty} = 1\}$) is a Caccioppoli set in $N$ with reduced boundary $\partial^{\star} \, E \subset {\rm spt} \, \mu$, that $H_{V}(x)  = 0$ for $\mu$-a.e.\ $x \in N \setminus \overline{E}$ and $H_{V}(x) = g(x) \nu(x)$ for $\mu$ a.e.\ $x \in \partial^{\star} \, E$ where $\nu$ is the unit normal to $\partial^{\star} \, E$ pointing into $E,$ and that the density $\Theta \, (\mu, x)  = 1$ for $\mu$ a.e.\ $x \in \partial^{\star} \, E$; moreover, in the case of positive $g$ as considered here, we have that ${\rm spt} \, \mu \subset N \setminus E$ and $\Theta \, (\mu, x)$ is an even integer for $\mu$ a.e.\ $x \in {\rm spt} \, \mu \setminus \overline{E}.$

As regards regularity of $\mu$, the first important difference between the case $g \equiv 0$ and the case $g >0$ is that as mentioned above, a limit varifold $V$ in the latter case may not in its entirety be a solution to a variational problem (whereas in the former case it is, namely a critical point of $n$-dimensional area). As the next best option we would \emph{like} to say that 
the part of $V$ corresponding to the reduced boundary (i.e.\ the multiplicity 1 varifold $|\partial^{\star} \, E|$, called the phase boundary) and the complementary part (i.e.\ $V \res N \setminus \overline{E}$, called the hidden boundary) are \emph{separately} critical points, in $N$,  of 
the functionals $A - {\rm Vol}_{g}$ and $A$ respectively, where $A$ is the $n$-dimensional area functional and ${\rm Vol}_{g}$ is the enclosed $g$-volume. However even under a uniform Morse index bound on $u_{\e_{j}}$ this does not follow apriori from the results of \cite{RogTon}, \cite{Ton1}.  In addition to the lack of regularity at this stage, a serious difficulty impeding such a decomposition is the topologically complicated ways in which the two parts $|\partial^{\star}\, E|$ and $V \res (N \setminus \overline{E})$ may merge together on a set of points $y$ of positive $(n-1)$-dimensional Hausdorff measure where $V$ has planar tangent cones of mutiplicity $> 1$, e.g.\ as depicted schematically in Figure~\ref{fig:necks}. \

\begin{figure}[h]
\centering
\includegraphics[scale=0.2]{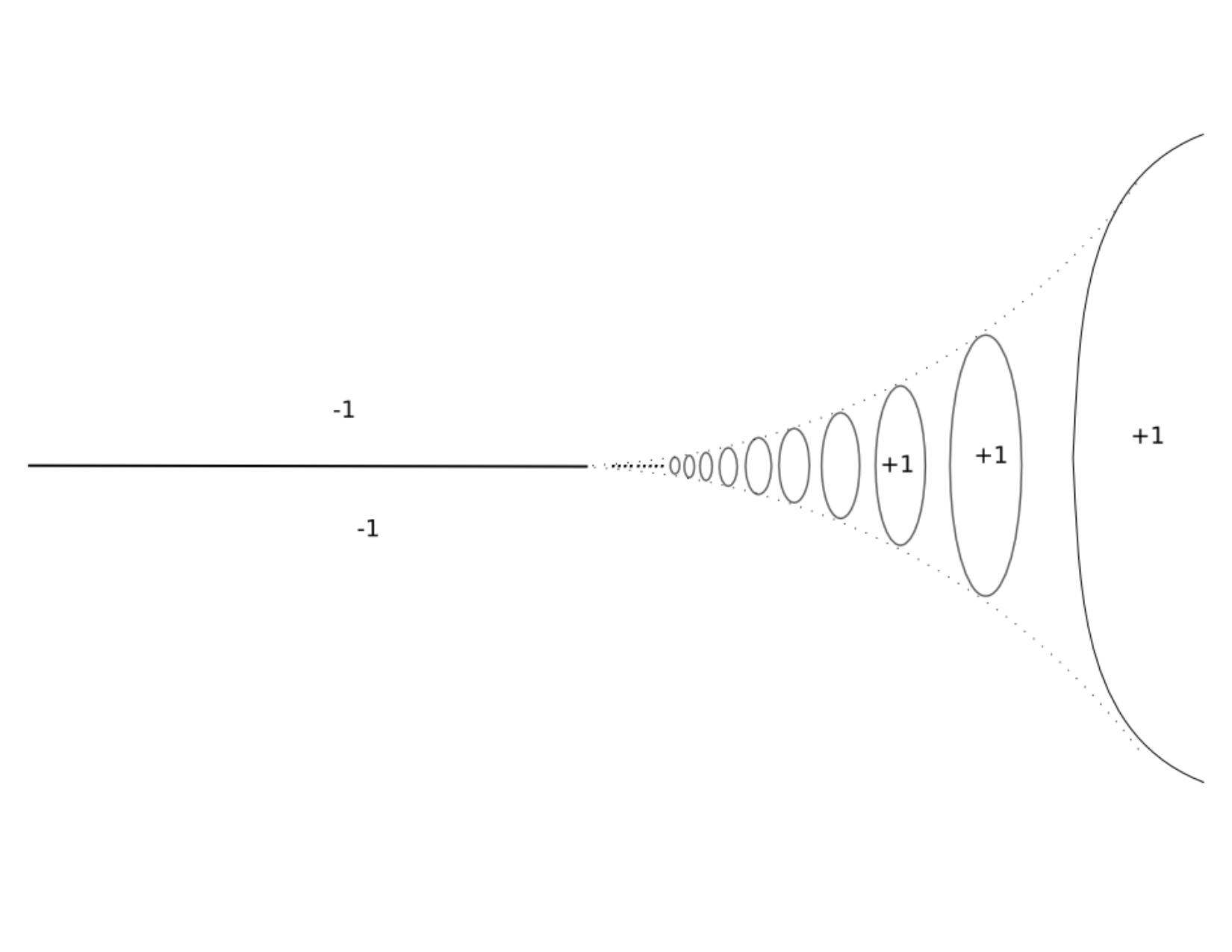}
\includegraphics[scale=0.2]{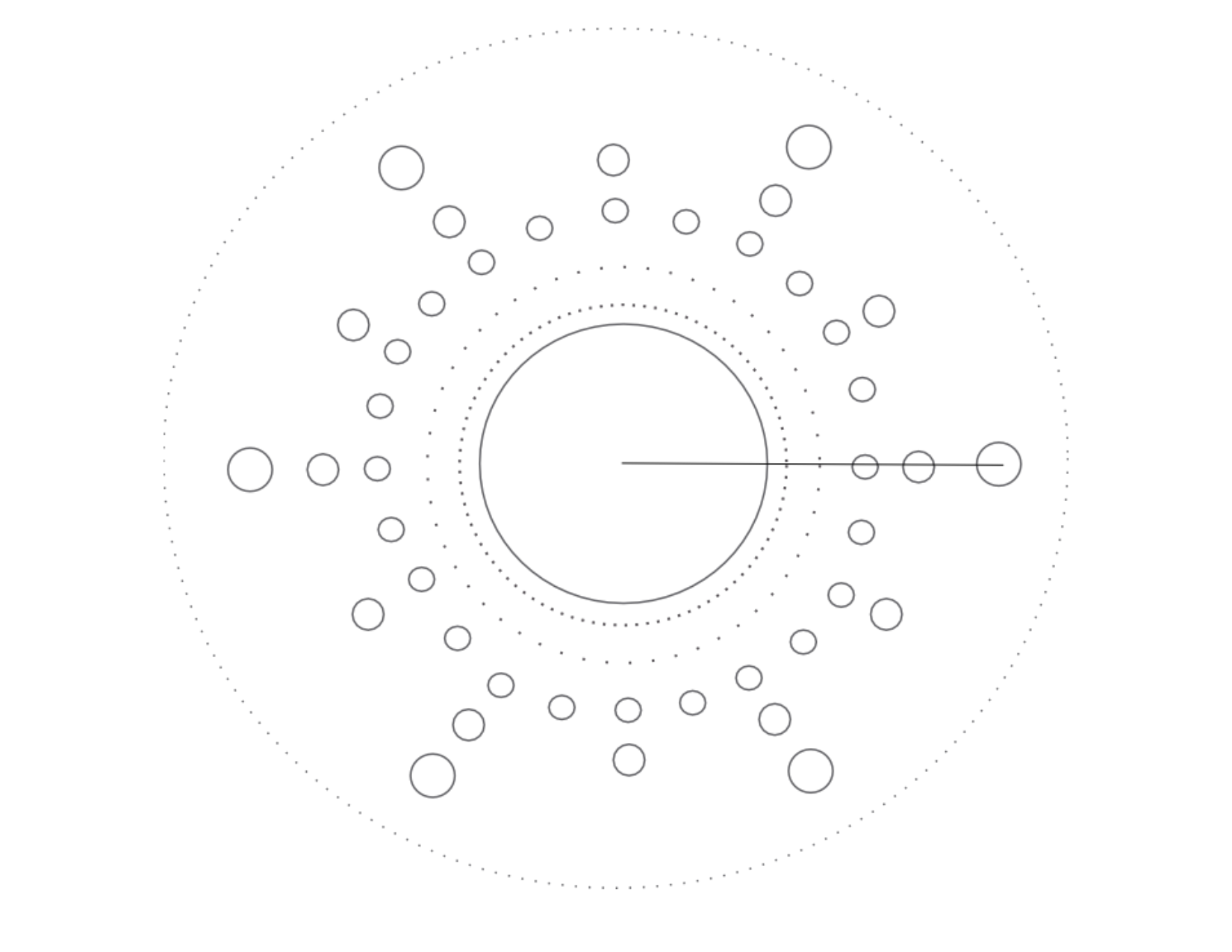}
\caption{Potential topological complexity (``neck accumulation'') at points of a limit varifold where pieces of zero and non-zero mean curvature meet. The picture on the right is the ``top'' view, i.e.\ the projection of the surface onto the tangent plane. The disk in the middle has multiplicity 2 and zero mean curvature. The picture on the left represents a slice along a half-plane (represented by the line segment in the picture on the right) perpendicular to the tangent plane. The numbers $\pm 1$ indicate the two phases $\{u_{\infty} = 1\}$ and $\{u_{\infty} = -1\}$.}
\label{fig:necks}
\end{figure}

In an inductive argument to resolve this regularity question, we first apply a key varifold regularity result (Theorem~\ref{BWregularity} below) from our recent work (\cite{BW1}, \cite{BW2})  to show (in Theorem~\ref{limit-regularity}) that subject to a uniform bound on the Morse index of $u_{\e_{j}}$, such topologically complicated behavior indeed does not occur; in fact an ordered $C^{1, \alpha}$ graph structure for $V$ must hold near such a point $y$.  Once this is achieved, we combine (in Theorem~\ref{thm:inductive_higher_reg}) local stability with an adaptation of a PDE argument from \cite{BW1} to prove $C^{2}$ (and hence $C^{3, \alpha}$) regularity of these graphs near $y$ (Theorem~\ref{limit-regularity}, part (ii)). The difficulty in this higher regularity question is that the union of the graphs need not be embedded; indeed, two graphs can touch a priori on a large (but ${\mathcal H}^{n}$-null) set, and the size of this coincidence set is only shown to be lower dimensional (in fact to be locally contained in an $(n-1)$-dimensional $C^{1}$ submanifold) after $C^{2}$ regularity of the graphs is established.

Once $C^{2}$ regularity is established in a neighbourhood of a point $y$ where $V$ has a planar tangent cone,  it follows that ${\rm spt} \, |\partial^{\star} \, E|$ and ${\rm spt} \, \mu \res (N \setminus \overline{E})$ must merge at $y$, if at all, smoothly (i.e.\ as $C^{3, \alpha}$ graphs each of which is entirely minimal or entirely PMC with mean curvature $g$) and tangentially; moreover, this can happen only in one of the two possible ways described in Definition~\ref{PMC-structure} parts (iv), (v) and Figure~\ref{fig:PMC_structures}. This result, together with the absence of classical singularities in $V$ (Theorem~\ref{limit-regularity}, part (i)), leads to the global decomposition and regularity conclusion for $\mu$ as in Theorem~\ref{thm:AC-regularity}. 

There is also a key difference between the case $g \equiv 0$ and the case $g >0$ in the way in which local stability of $u_{\e_{j}}$ (implied by the uniform Morse index bound on $u_{\e_{j}}$) is used in the  analysis of the limit varifolds $V$ described above. In either case, by \cite{Ton}, stability of (a subsequence of) $u_{\e_{j}}$ in a ball implies that the limit varifold $V$ in that ball admits a generalized second fundamental form  satisfying an ``ambient'' stability inequality, namely, inequality~(\ref{var-stability-reg}); this inequality has the  form of the usual ``intrinsic'' stability inequality for embedded stable hypersurfaces, but with one important difference: it is valid only for ambient $C^{1}$ compactly supported test functions with the \emph{ambient gradient} (of the test function) appearing where the intrinsic (hypersurface) gradient appears in the intrinsic stability inequality.  The ambient and intrinsic stability inequalities are equivalent on the embedded part of a hypersurface (any $C^{1}$ test function $\varphi$ supported on the embedded part has a $C^{1}$ compactly supported  extension $\widetilde{\varphi}$ to the ambient space such that on the hypersurface the ambient gradient of $\widetilde{\varphi}$ agrees with the intrinsic gradient of $\varphi$). For this reason, and also since the regularity theory of \cite{Wic} only requires stability of the embedded part of the varifold, ambient stability inequality for limit varifolds is sufficient in the case $g \equiv 0$.

This is not so in the case $g >0$. In this case, unlike in the case $g\equiv 0,$ two $C^{2}$ pieces of the limit $n$-varifold may intersect tangentially along a set (the coincidence set) of finite, positive $(n-1)$-dimensional measure 
(consider for instance two touching unit cylinders in Euclidean space). Because of this possibility stability of the embedded part alone cannot be expected to yield any useful regularity estimates for the $C^{2}$ quasi-embedded part. (A coincidence set of positive $(n-1)$-dimensional measure, or even infinite $(n-2)$-dimensional measure for that matter, is too large to be removable for the (intrinsic) stability inequality on the embedded part.)  Stability of the quasi-embedded part as an immersion, which would imply the intrinsic stability inequality on the quasi-embedded part, would suffice in the present context, but this does not follow from the ambient stability inequality (i.e.\ inequality (\ref{var-stability-reg}) below, which is implied by stability of $(u_{\e_{j}})$) eventhough the latter holds on the quasi-embedded part. (A $C^{1}$ function on a quasi-embedded hypersurface is not necessarily the restriction to the hypersurface of an ambient $C^{1}$ function.) The question then is which alternative stability condition, valid across the coincidence set, would suffice.

As we identify in the present work, of crucial significance is the fact that it suffices to have an ``ambient Schoen inequality'' on the quasi-embedded part. 
This is of course fully consistent with the case $g \equiv 0$. In that case the quasi-embedded part is the embedded part, and the (stronger) intrinsic Schoen inequality (\cite[Lemma 1]{SS}) on the embedded part is valid and is implied by the intrinsic stability inequality. This implication is the extent to which the intrinsic stability plays a role in the proof of the main estimate in 
\cite[Theorem 1]{SS}, and the application of \cite[Theorem 1]{SS} is primarily the way the stability condition enters the proof in \cite{Wic}. In the present case of $g >0$ we do not have an intrinsic Schoen inequality on the quasi-embedded part, but instead we obtain its ambient analogue where any ambient test function and its ambient gradient take the place of the intrinsic test function and its intrinsic gradient. This ambient Schoen inequality is obtained \textit{directly from the stability of the Allen--Cahn critical points $u_{\e}$}. For the Euclidean ambient space (and for $g\equiv 0$), the derivation of this inequality was carried out by Tonegawa in \cite{Ton}. We here  (in Lemma~\ref{Schoen-Tonegawa} below) generalise  this to Riemannian manifolds (and to $g\not \equiv 0$). (This generalisation  turns out to be somewhat subtle and require a careful choice of coordinates; we provide a full account of it in Lemma~\ref{Schoen-Tonegawa}.) 

In \cite{BW2} a ``non-variational'' version of our regularity theory (namely, \cite[Theorem~9.1 and Theorem~5.1]{BW2}, also recalled in Section~\ref{nonvar-regularity} below) is established taking this Schoen--Tonegawa inequality as the stability assumption on the quasi-embedded region.  With these results from \cite{BW2} in hand, as well as extensions of some of the arguments in \cite[Section 7]{BW1} 
to account for the presence of both minimal and PMC parts, in Sections~\ref{higher-reg} and~\ref{limit-reg-proof} below we complete the  proof of Theorem~\ref{thm:AC-regularity}.

Finally we briefly describe {\sc step (iii)}. This is the way our argument produces a hypersurface with mean curvature $g$ even in the event that the limit measure $\mu$ arising from the min-max critical points $u_{{\varepsilon}_j}$ constructed in {\sc step (ii)} ends up being supported entirely on a minimal hypersurface $M_0$. This step is in part inspired by the ideas in the recent work of the first author (\cite{B1}), where it is shown that if the ambient manifold has positive Ricci curvature, then any minimal hypersurface arising from a sequence of $1$-paramater min-max
critical points of the homogeneous Allen--Cahn equation must have multiplicity $1$. 
Our argument here requires the construction, for each sufficiently small $\eps$, of a function $h=h^{{\eps}}$. The construction takes $M_0$ (and its regularity) as starting point; $h$ is smooth for $n\leq 6$, while for $n\geq 7$ it is Lipschitz. The function $h$ (after an appropriate smoothing in case $n\geq 7$) is then used as initial data for a negative ${\mathcal F}_{\eps, \sigma g}$-gradient flow. This flow is then proved to converge to a stable solution $v_{\e}$ satisfying the properties given in Proposition~\ref{Prop:main-intro}. Special care has to be taken to ensure that $h$ meets certain conditions, namely:

\begin{itemize}
\item [(I)] a mean convexity condition, namely, $-\ca{F'}{\eps, \sigma g}(h)>0$ (where the first variation $-\ca{F'}{\eps, \sigma g}(h)=\eps \Delta h - \frac{W'(h)}{\eps} +g$ should be understood distributionally: when $n\geq 7$, $-\ca{F'}{\eps, \sigma g}(h)$ is in fact a Radon measure, while for $n\leq 6$ it is a smooth function);

\item [(II)] the existence (for all sufficiently small $\eps$) of a continuous path in $W^{1,2}(N)$ that joins $a_{{\eps}}$ (the valley point close to $-1$ identified in {\sc step (ii)}) to $h$, such that all along the path we have $(2\sigma)^{-1}{\mathcal F}_{\eps, \sigma g}  \leq 2\mathcal{H}^n(M_0)-\varsigma/2$, where $\varsigma>0$ depends only on the geometric data $M_0\subset N$. 
\end{itemize}

The construction of $h$ is based on a certain deformation of the minimal immersion $\iota$ of the oriented double cover of $M_0$ into $N$, that covers $M_0$ twice. This deformation is schematically depicted in Figure \ref{fig:avoid_peak}: in (i), two disks $D_1$ and $D_2$ are removed from $M_0$, leaving a double cover of $M_0\setminus (D_1 \cup D_2)$ (an immersion with boundary); in (ii), one of the disks, say $D_1$, is pasted back in (with multiplicity $2$); in (iii) the double disk $2|D_1|$ is deformed \textit{as an immersion}: since $M_0$ has vanishing mean curvature, and is therefore not stationary for the functional $\text{Area}-\text{Vol}_g$, this deformation decreases the value of $\text{Area}-\text{Vol}_g$; in (iv) and (v) the same type of operations are performed on $D_2$. The deformation represented in Figure \ref{fig:avoid_peak} is constructed in order to ensure that the value of $\text{Area}-\text{Vol}_g$ as we go from (i) to (v) stays strictly below $2\mathcal{H}^n(M_0)$, by an amount $\varsigma>0$ that only depends on the geometric properties of $M_0$ in $N$. Note that, for $\iota$, $\text{Vol}_g$ vanishes and therefore $\text{Area}-\text{Vol}_g$ is just $2\mathcal{H}^n(M_0)$. 

For any $\eps$ sufficiently small, we exhibit a continuous path in $W^{1,2}(N)$ using functions that replicate, in a suitable sense, the geometric behaviour identified by the deformations represented in Figure \ref{fig:avoid_peak}. These functions are identically $-1$ or identically $+1$ on certain regions and the transitions between the values $\pm 1$ happen in a small neighborhood of the hypersurfaces depicted in Figure \ref{fig:avoid_peak}. Moreover, the energy $(2\sigma)^{-1}{\mathcal F}_{\eps, \sigma g}$ of these functions is a very close approximation of the value attained by evaluating $\text{Area}-\text{Vol}_g+\frac{1}{2}\int_N g$ on the hypersurfaces depicted in Figure \ref{fig:avoid_peak}--- with an error of size $O(\eps|\log\eps|)$. An extremely effective feature of the Allen--Cahn framework is that the transitions from (i) to (ii) and from (iv) to (v) can be replicated \textit{continuously}, while the corresponding geometric deformations present discontinuities (with the sudden appearance of a portion). In similar spirit, we can find a continuous path that joins $a_{\eps}$ to the function corresponding to the immersion with boundary in (i). (The geometric counterpart is the sudden appearance of the hypersurface $M_0\setminus (D_1 \cup D_2)$ with multiplicity $2$.) The path ``from $a_{\eps}$ to (i)'' is produced so that ${\mathcal F}_{\eps, \sigma g}$ is almost increasing, i.e.~it is equal to an increasing function plus a function bounded in modulus by $O(\eps|\log\eps|)$. The function $h$ mentioned above corresponds to the immersion represented in (v) and it is close to $+1$ in the two domains obtained by deforming the double disks $D_1$ and $D_2$ and close to $-1$ in their complement. Moreover, the construction ensures the mean convexity condition in (I). The mean convexity implies that the negative ${\mathcal F}_{\eps, \sigma g}$-gradient flow starting at $h$ evolves towards a stable solution $v_{\eps}$ of 
$\ca{F'}{\eps, \sigma g}=0$, that has to be $\geq h$. The assumption that the min-max solutions give rise to $\mu$ supported entirely on $M_0$ with multiplicity $\geq 2$ implies that $v_{\eps}$ cannot be the second valley point $b_{\eps}$. Indeed, if $v_{\eps}=b_{\eps}$, we would have found a continuous path in $W^{1,2}(N)$ that connects $a_{\eps}$ to $b_{\eps}$  such that along this path, $(2\sigma)^{-1}{\mathcal F}_{\eps, \sigma g}$ stays below the minmax value $(2\sigma)^{-1}{\mathcal F}_{\eps, \sigma g}(u_{\eps})$ by a fixed amount $\varsigma/2$ (recall that we have assumed $(2\sigma)^{-1}{\mathcal F}_{\eps, \sigma g}(u_{\eps})\to |\mu|(N)+\frac{1}{2}\int_N g$); this contradicts the min-max characterisation. Very crucially, the fact that $v_{\e} \neq b_{\e}$ leads to a uniform positive lower bound on ${\mathcal E}_{\e}(v_{\e})$ independent of $\e$, while  the energy decreasing property of the flow implies a uniform upper bound on $\Ece(v_{\eps})$. 
Moreover, the condition $v_{\eps}\geq h$ guarantees the presence of a fixed open set (corresponding to the two domains obtained by deforming the double disks $D_1$ and $D_2$) on which $v_{\eps}\geq 3/4$ for all $\eps$, as claimed in Proposition~\ref{Prop:main-intro}. Then we can consider the varifold associated to $v_{\eps}$ and apply Theorem \ref{thm:AC-regularity} (with $v_{\eps_{j}}$ in place of $u_{\eps_{j}}$ for a sequence $\e_{j} \to 0^{+}$) to obtain the desired hypersurface with mean curvature $g$.

We remark that the arguments given in Section \ref{proof} for {\sc step (iii)} become considerably shorter if $n\leq 6$,
since in that case $M_0$ has no singularities. In general dimensions, while the basic geometric
idea remains the same, a finer analysis of the distance function to $M_0$ is necessary to handle the presence of a small singular set in $M_0$ (as well as some extra technical asides).

\section{Preliminaries: Allen--Cahn solutions, limit $(g,0)$-varifolds and GMT regularity results} \label{preliminaries}

In this section and in Section~\ref{stable-regularity} the ambient Riemannian manifold $N$ need not be complete. We shall continue to assume that ${\rm dim} \, N  = n+1$ with $n\geq 2$, and use notation as in the preceding sections.

\subsection{Allen--Cahn solutions and limit $(g, 0)$-varifolds}\label{ac-solutions}

 For $\eps\in(0,1)$ let $u_{\eps}:N\to \R$ satisfy $\sup_{N} \, |u_{\e}| + \Ece(u_{\eps})\leq K$, for $K>0$ independent of $\eps$. Let $w_{\eps} = \Phi(u_{\eps})$ for $\Phi(s) = \int_0^s \sqrt{\frac{W(\widetilde{s})}{2}}d\widetilde{s},$ and let $\sigma=\int_{-1}^{1} \sqrt{\frac{W(s)}{2}}ds$. The $BV$-norm of $w_{\eps}$ is bounded from above by $K$ (see \cite{HT} (and \cite{MM}). This allows to obtain a subsequential BV-limit $w_\infty$ of $w_{\eps}$, as $\eps\to 0$. We also denote $u_\infty = \Phi^{-1}(w_\infty)$, and we have $u_{\eps}\to u_\infty$ in $L^1$ (see section \ref{minmax_setup}).
We define, as in \cite{HT}, the $n$-varifold $V^{u_{\eps}}$ associated to $u_{\eps}$ by
$$V^{u_{\eps}}(A) = \int_{-\infty}^\infty|\{w_{\eps}=t\}|(A) dt, $$
where $|\{w_{\eps}=t\}|$ is the multiplicity-$1$ varifold associated to the level set $\{w_{\eps}=t\}$. 
%Since $u_{\eps}$ is smooth by elliptic theory, for a.e.~$t$ this level set is smooth. 

\medskip

We recall the main results of \cite{RogTon}, and of \cite{Ton1} (which rely on \cite{HT}), concerning $V$ in the case $g >0.$

\begin{thm}(\cite[Theorem~3.2]{RogTon} and \cite[Theorem~2.2]{Ton1}) \label{Roger-Tonegawa} 
Let $g \in C^{1, 1}(N)$,  $g_{j} \in C^{1, 1}(N)$ with $g_{j} \to g$ locally in $C^{1, 1}$ and let $\e_{j} \to 0^{+}.$ Let 
$u_{\e_{j}} \in W^{1, 2}_{\rm loc}(N)$ be a critical point of ${\mathcal F}_{\e_{j}, \sigma g_{j}}$ for each $j$, and suppose that 
$\lim_{j \to \infty} \, V^{u_{\e_{j}}} = V$ for some $n$-varifold $V$ on $N$, where $V^{u_{\e_{j}}}$ is the $n$-varifold on $N$ associated with $u_{\e_{j}}$ as described above. Let $u_{\infty} \in BV_{\rm loc}(N; \{-1, 1\})$ be such that $u_{\e_{j}} \to u_{\infty}$ locally in $L^{1},$ and note that such $u_{\infty}$ exists possibly after passing to a subsequence of $(\e_{j}).$ 
Let  $E = \{x \in N \, : \, u_{\infty}(x) = 1\}$. We have the following: 
\begin{itemize} 
\item[{\rm (i)}] $\sigma^{-1}V$ is an integral $n$-varifold, with $V$ having locally bounded generalised mean curvature $H_{V}$ and first variation $\delta V = -H_{V} \|V\|$ in $N;$ the set $E$ is a Caccioppoli set in $N$ with  reduced boundary $\partial^{\star} E \subset {\rm spt} \, \|V\| \subset 
N \setminus {\rm int}\, E;$ 
\end{itemize}
if additionally $g >0$ and $V \neq 0$ then: 
\begin{itemize}
\item[{\rm (ii)}] $E \neq N;$
\item[{\rm (iii)}] $\sigma^{-1}\Theta \, (\|V\|, x)  = 1$ and $H_{V}(x) \cdot \nu(x) = g(x)$ for ${\mathcal H}^{n}$-a.e.\ $x \in \partial^{\star}E$ where $\nu$ is the inward pointing unit normal to $\partial^{\star} E$ (i.e.\ if $\nu = \frac{\nabla \, \chi_{E}}{|\nabla\chi_{E}|}$);
\item[{\rm (iv)}]  $H_{V}(x) = 0$ for ${\mathcal H}^{n}$-a.e.\ $x \in {\rm spt} \, \|V\|\setminus \partial^{\star} E;$
\item[{\rm (v)}] $\sigma^{-1}\Theta \, (\|V\|, x)$ is an even integer $\geq 2$ for ${\mathcal H}^{n}$-a.e.\ $x \in {\rm spt} \, \|V\| \setminus \partial^{\star} E.$ 
\end{itemize}
\end{thm}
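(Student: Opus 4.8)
This theorem collects the conclusions of Röger--Tonegawa \cite{RogTon} and Tonegawa \cite{Ton1}, which build on Hutchinson--Tonegawa \cite{HT} and Ilmanen \cite{Ilm}; the plan is to reproduce the mechanism of those proofs. First I would write the Euler--Lagrange equation as $-\e_j\Delta u_{\e_j} + \e_j^{-1}W'(u_{\e_j}) = \sigma g_j$ and put $w_{\e_j} = \P(u_{\e_j})$, so that $|\nabla w_{\e_j}| \le \tfrac{\e_j}{2}|\nabla u_{\e_j}|^2 + \e_j^{-1}W(u_{\e_j})$ with equality precisely when the \emph{discrepancy} $\xi_{\e_j} := \bigl(\tfrac{\e_j}{2}|\nabla u_{\e_j}|^2 - \e_j^{-1}W(u_{\e_j})\bigr)\,d\mathrm{vol}_N$ vanishes. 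The first analytic input is a Modica-type gradient bound: applying the maximum principle to a suitable variant of $\e_j|\nabla u_{\e_j}|^2 - 2\e_j^{-1}W(u_{\e_j})$, adjusted so that the curvature of $N$ and the $C^{1,1}$-norm of $g_j$ are absorbed into controllable lower-order terms, yields $\xi_{\e_j}^+ \le C\e_j$ pointwise; an integrated version then gives $\xi_{\e_j} \to 0$ as Radon measures, and, crucially, that the limit of $\xi_{\e_j}$ carries no mass on $\mathrm{spt}\,\|V\|$. This ``equipartition of energy'' is the technical core of the Riemannian, inhomogeneous adaptation and is what ties $\|V\|$ to $\sigma$ times the weight of the level-set varifolds.

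Next I would multiply the PDE by $\nabla u_{\e_j}\cdot X$ for $X\in C^1_c(N;TN)$ and integrate by parts to obtain the first-variation identity
$$\delta V^{u_{\e_j}}(X) = \sigma\int_N g_j\,(\nabla u_{\e_j}\cdot X)\,d\mathrm{vol}_N + \int_N \mathrm{div}_N X\;d\xi_{\e_j} + (\text{curvature error});$$
letting $j\to\infty$ and using $\xi_{\e_j}\to 0$ shows that $V$ has locally bounded generalised mean curvature with $\delta V = -H_V\|V\|$ and $|H_V|\le\sup|g|$ locally, which is (i) apart from integrality. The first-variation bound yields an almost-monotonicity formula for $r\mapsto e^{\Lambda r}r^{-n}\|V\|(B_r(x))$, hence $\Theta(\|V\|,\cdot)$ exists everywhere and is upper semicontinuous, $\mathrm{spt}\,\|V\|$ is $n$-rectifiable, and $\partial^{\star}E\subset\mathrm{spt}\,\|V\|$. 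One obtains $\mathrm{spt}\,\|V\|\subset N\setminus\mathrm{int}\,E$ because on any ball where $u_\infty\equiv\pm1$ a.e.\ the solutions converge strongly and the energy density vanishes there; the same observation gives (ii), since if $E=N$ then $u_{\e_j}\to1$ strongly and $\|V\|\equiv0$ --- this is where $g>0$ enters, as near the constant $1$ one has $u_{\e_j}=1+O(\e_j)$, carrying $o(1)$ energy.

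The integrality and the splitting into a phase-boundary and a hidden-boundary part I would then get from a blow-up analysis at $\|V\|$-a.e.\ point: rescaling $(u_{\e_j})$ at the density-ratio scale and using the monotonicity-derived density bounds together with compactness for Allen--Cahn solutions of linear energy growth, the tangent measure is identified as a finite stack of parallel transition layers each modelled on the one-dimensional heteroclinic profile $\PI$, so $\sigma^{-1}\Theta(\|V\|,x)\in\N$. The sign of $g$ determines the rest. At a point of $\partial^{\star}E$ the $BV$ limit $u_\infty$ has a genuine jump of height $2$, forcing exactly one transition layer, so $\sigma^{-1}\Theta=1$; passing to the limit in the PDE along this single layer and dividing by the layer profile gives $H_V\cdot\nu=g$ a.e.\ on $\partial^{\star}E$, which is (iii). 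Away from $\partial^{\star}E$ the net jump of $u_\infty$ vanishes, so the layers occur in oppositely oriented, cancelling pairs: this forces $\sigma^{-1}\Theta(\|V\|,x)$ to be an even integer $\ge2$, which is (v); and in the forcing term $\sigma\int g_j(\nabla u_{\e_j}\cdot X)$ the two members of each pair contribute with opposite sign, so, $g_j$ being continuous and the layers accumulating at a single point, these contributions cancel in the limit and $H_V=0$ a.e.\ on $\mathrm{spt}\,\|V\|\setminus\partial^{\star}E$, which is (iv).

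The main obstacle is twofold. First, proving $\xi_{\e_j}\to0$ and that its limit charges no mass on $\mathrm{spt}\,\|V\|$ in a curved ambient space with the forcing $g_j$ present requires a careful choice of the auxiliary function in the maximum-principle step, and the integrated estimate must be ordered carefully against the monotonicity formula it feeds into. Second, and more fundamentally, the blow-up and layer-counting argument that separates the phase boundary (multiplicity $1$, mean curvature $g$) from the hidden boundary (even multiplicity, mean curvature $0$) is delicate: it is precisely the point at which mean-curvature information is lost in the limit, and it is where positivity of $g$ is essential to pin down the exact multiplicity on $\partial^{\star}E$. Both steps are carried out in full in \cite{HT}, \cite{RogTon}, \cite{Ton1}.
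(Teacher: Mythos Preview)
The paper does not give its own proof of this theorem: it simply cites \cite{RogTon} and \cite{Ton1} and adds the remark that the adaptation from Euclidean to Riemannian ambient space is routine. Your sketch is therefore considerably more detailed than what the paper provides, and it correctly identifies the main ingredients of those references (discrepancy vanishing, first-variation identity, monotonicity, blow-up to a stack of one-dimensional heteroclinics, layer counting to separate phase and hidden boundary).

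There is, however, one genuine slip. You write that ``on any ball where $u_\infty\equiv\pm1$ a.e.\ the solutions converge strongly and the energy density vanishes there.'' This is false for the $-1$ phase: the hidden boundary lives precisely in $\{u_\infty=-1\}$, where energy concentrates even though $u_{\e_j}\to-1$ in $L^1$. The inclusion $\mathrm{spt}\,\|V\|\subset N\setminus\mathrm{int}\,E$ concerns only the $+1$ phase, and even there the mechanism is not ``strong convergence'' in any elementary sense. It comes out of the layer analysis together with the sign condition $g>0$: in a blow-up with $u_\infty=+1$ on both sides the outer layers would be forced, by the orientation of the forcing, to have mean curvature vectors pointing towards one another, which is incompatible with the limiting configuration. (Indeed the paper's introduction flags $\mathrm{spt}\,\mu\subset N\setminus E$ as a consequence of $g>0$, even though the theorem lists it under (i).) Relatedly, your argument for (iii) --- ``a genuine jump of height $2$, forcing exactly one transition layer'' --- is too quick: a jump forces an \emph{odd} number of layers, and ruling out $3,5,\ldots$ again requires the sign of $g$, exactly as you acknowledge in your final paragraph.
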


\begin{oss} In \cite{RogTon} and \cite{Ton1}, these conclusions are established in the case of Euclidean ambient space. Adaptation of the arguments to the case of Riemannian ambient space is 
routine. 
\end{oss} 

Under the hypotheses of Theorem~\ref{Roger-Tonegawa}, no regularity result for the limit varifold $V$ is known beyond the fact that the regular set (i.e.\ the $C^{1, \alpha}$ embedded part of ${\rm spt} \, \|V\|$) is dense in ${\rm spt} \, \|V\|$ (which follows from Allard's regularity theorem). Theorem~\ref{Roger-Tonegawa}  says that the minimal portions (also referred to as 
``hiddden boundary''), if there are any,  always appear with even multiplicity and lie in the 
$\{u_\infty=-1\}$-phase whenever $g >0$. In principle, minimal and non-minimal portions may come together in irregular fashion (e.g.\ as depicted in Figure~\ref{fig:necks}). More threatening to the success of the 
min-max approach to Theorem~\ref{thm:existence} is the possibility that the limit interface ends up being completely minimal, i.e.\ the possibility that ~$u_\infty\equiv -1$ a.e.\ on $N$ and $\spt{V}$ all consists of hidden boundary. This possibility can in fact arise under the hypotheses of Theorem~\ref{Roger-Tonegawa} even when $g\equiv 1$ and $N=\R^n$; see \cite[Section~6.3]{HT}.

Throughout the rest of the article, it will be convenient to use the terminology defined as follows:

\noindent
\begin{Dfi}[{\sc limit $(g, 0)$-varifolds and stable limit $(g, 0)$-varifolds}]\label{limit-varifold}
Let $g \in C^{1, 1}(N)$.  We say that an $n$-varifold $V$ on $N$ is a \emph{limit $(g,0)$-varifold on $N$} if there are a sequence of numbers $\e_{j} \to 0^{+}$ and for each $j$, a function $g_{j} \in C^{1, 1}(N)$ and a critical point $u_{\e_{j}} \in W^{1, 2}_{\rm loc}(N)$ of ${\mathcal F}_{\e_{j}, \sigma g_{j}}$ such that $g_{j} \to g$ locally in $C^{1, 1}$ and $V = \lim_{j \to \infty} \, V^{u_{\e_{j}}},$ where $V^{u_{\e_{j}}}$ is the $n$-varifold on $N$ associated with $u_{\e_{j}}$ as described above.  We say that $V$ is a \emph{stable limit $(g,0)$-varifold on $N$} if $V$ is a limit $(g,0)$-varifold on $N$ and the associated critical points $u_{\e_{j}}$ of ${\mathcal F}_{\e_{j}, \sigma g_{j}}$ are stable in $N$, i.e.\ satisfy 
 $\left.\frac{d^{2}}{d s^{2}} \right|_{s=0} \, {\mathcal F}_{\eps_{j}, \sigma g_{j}}(u_{\eps_{j}} + s \varphi) \geq 0$ for 
each $\varphi \in C^{1}_{c}(N).$  
\end{Dfi}
This terminology is motivated by Theorem~\ref{Roger-Tonegawa}, according to which a varifold $V$ which is the limit of a sequence of varifolds $V^{u_{\e_{j}}}$ as described above admits generalised mean curvature, and consists of an oriented portion on which the generalized scalar mean curvature is equal to $g$ a.e., and a complementary portion where the generalized mean curvature is $0.$

%\subsection{GMT preliminaries} 

\subsection{A non-variational varifold regularity theory} 
\label{nonvar-regularity}
The central ingredient of our proof of regularity of limit $(g, 0)$-varifolds associated with Morse index bounded Allen--Cahn solutions (Theorem~\ref{limit-regularity} and Theorem~\ref{estimates} below) is the general varifold regularity theory 
comprising Theorem~\ref{BWregularity} and Theorem~\ref{nonvar-SS} below. Both these theorems have non-variational hypotheses; that is to say, the varifolds to which the theorems are applicable are not assumed to be critical points of a functional. This is important for the present application since 
limit $(g, 0)$-varifolds may consist of both stationary (i.e.\ zero mean-curvature) and prescribed-mean-curvature parts merging together. 

Theorem~\ref{BWregularity}, which we state next, is a Riemannian counterpart of \cite[Theorem~9.1]{BW2} and it follows directly from the latter (see Remark~\ref{proof-rmk} below). In this theorem, an important role is played by a very specific type of singularities defined as follows: 

\begin{Dfi}[{\sc classical singularity}]\label{classical-sing} 
Let $V$ be an $n$-varifold on a Riemannian manifold $N$ of dimension $n+1$. A point $Y \in {\rm spt} \, \|V\|$ is a {\rm classical singularity} of $V$  if there exists $\rho >0$ such that, for some 
$\alpha \in (0, 1]$,  we have that $\spt{V} \cap {\mathcal N}_{\rho}(Y) = \cup_{j=1}^{k} M_{j}$ where: $k \in \N$, $k \geq 3$; each $M_{j}$ is an embedded $C^{1,\alpha}$ hypersurface-with-boundary in 
${\mathcal N}_{\rho}(Y);$ there is an $(n-1)$-dimensional embedded $C^{1, \alpha}$ submanifold $\gamma$ of ${\mathcal N}_{\rho}(Y)$ with $Y \in \gamma$ such that $\partial \, M_{j} = \gamma$  for $j=1, 2, \ldots, k;$ the $M_{j}$'s meet pairwise only along $\gamma$ with at least one pair meeting transversely everywhere along $\gamma$.  
\end{Dfi}

In the statement of Theorem~\ref{BWregularity}, it is convenient to use the following terminology associated with any integral $n$-varifold $V$ on $N$ such that $V$ has generalised mean curvature $H_{V} \in L^{p}_{\rm loc}(\|V\|)$ for some $p > n$ and first variation $\delta \, V = -H_{V} \|V\|$ in some open subset $U_{V} \subset N.$ 

\begin{Dfi}[{\sc ($q, \beta$)-separation property}] \label{qbseparation} Let $q \in {\mathbb N}$, and $\beta \in (0, 1)$. Let $V$ be as above. 
%Let $X \in U_{V}$ and $\rho \in (0, \min\{1, {\rm inj}_{X} N, {\rm dist} \, (X, \partial U_{V})\}].$ 
We say that $V$ has the \emph{$(q, \beta)$-separation property} provided the following implication holds: if 
\begin{itemize}
\item[{\rm (i)}] $X \in U_{V};$  $\rho \in (0, \min\{1, {\rm inj}_{X} N, {\rm dist} \, (X, \partial U_{V})\}];$ $Q \, : \, T_{X} \, N \approx  {\mathbb R}^{n+1} \to T_{X} \, N$ is an orthogonal rotation;
\item[{\rm (ii)}] the varifold  $\widetilde{V} \equiv \left(Q \circ {\rm exp}_{X}^{-1}\right)_{\#} \, V \res \left({\mathcal N}_{{\rm inj}_{X} N}(X) \cap U_{V}\right)$ satisfies  
$$\omega_{n}^{-1}\|\eta_{0, \rho \, \#} \, \widetilde{V}\|(B_{1}^{n+1}(0)) \leq q+ 1/2,$$ 
$$q-1/2 \leq \left(\frac{\omega_{n}}{2^{n}}\right)^{-1}\|\eta_{0, \rho \, \#} \widetilde{V}\|((B_{1/2}^{n}(0) \times {\mathbb R}) \cap 
B_{1}^{n+1}(0)) \leq q + 1/2,$$ and 
\begin{eqnarray*}
\int_{(B_{1/2}^{n}(0) \times {\mathbb R}) \cap B_{1}^{n+1}(0)} |x^{n+1}|^{2} \, d\|\eta_{0, \rho \#} \widetilde{V}\| && \\
&&\hspace{-3in} + \rho\left(\int_{(B_{1/2}^{n}(0) \times {\mathbb R}) \cap B_{1}^{n+1}(0)} |H_{V}(\exp_{X}(Q^{-1}(\rho Y))|^{p} \, d\|\eta_{0, \rho \, \#} \widetilde{V}\|(Y)\right)^{1/p} + \rho < \beta;
\end{eqnarray*} 
\item[{\rm (iii)}] $Y \in B^{n}_{\rho/2}(0) \times \{0\} \subset {\mathbb R}^{n+1}$; $\t \in (0, \rho/2];$  
\item[{\rm (iv)}]  $\Theta \, (\|V\|, \xi) < q$ for each $\xi \in {\rm exp}_{X}(B_{\t}^{n}(Y) \times {\mathbb R});$
\item[{\rm (v)}] the varifold $W \equiv \eta_{Y, \t \, \#} \, \widetilde{V}$ satisfies  
$$(\omega_{n})^{-1}\|W\|(B_{1}^{n+1}(0)) \leq q+ 1/4,$$ 
$$q-1/4 \leq \left(\frac{\omega_{n}}{2^{n}}\right)^{-1}\|W\|((B_{1/2}^{n}(0) \times {\mathbb R}) \cap 
B_{1}^{n+1}(0)) \leq q + 1/4,$$ and 
\begin{eqnarray*}
 \int_{(B_{1/2}^{n}(0) \times {\mathbb R}) \cap B_{1}^{n+1}(0)} |x^{n+1}|^{2} \, d\|W\| && \\
&&\hspace{-3in} + \rho\left(\int_{(B_{1/2}^{n}(0) \times {\mathbb R}) \cap B_{1}^{n+1}(0)} |H_{V}(\exp_{X}(Q^{-1}(\t Z))|^{p} \, d\|W\|(Z)\right)^{1/p}  < \beta/2;
\end{eqnarray*} 
\end{itemize}
then we have that $W \res ((B_{1/4}^{n}(0) \times {\mathbb R}) \cap B_{1}^{n+1}(0))  = \sum_{j=1}^{q} |{\rm graph} \, u_{j}|$ for some $u_{j} \in C^{2}(B_{1/4}^{n}(0))$, $j=1, 2, \ldots, q,$ with  
$u_{1} \leq u_{2} \leq \ldots \leq u_{q}.$ 
\end{Dfi}

\begin{thm}(\cite[Theorem~9.1]{BW2})\label{BWregularity}
Let $N$ be an $(n+1)$-dimensional  Riemannian manifold, $q$ be a positive integer, $\beta \in (0, 1)$ and $p >n$. Let ${\mathcal V}$ be a class of integral $n$-varifolds $V$ on $N$ satisfying the following properties ${\rm (a)}$-${\rm (c)}$: 
\begin{itemize}
\item[{\rm (a)}] corresponding to each $V \in {\mathcal V}$ there is an open set $U_{V} \subset N$ such that $V$ has generalised mean curvature $H_{V} \in L^{p}_{\rm loc} (\|V\|)$ and first variation $\delta \, V  = -H_{V} \|V\|$ with respect to $U_{V}$, i.e.\  $V$ satisfies 
$$\delta \, V({\psi}) = -\int_{N}<H_{V} , \psi> d\|V\|$$ 
for some $H_{V} \in L^{p}_{\rm loc}(\|V\|)$ and any compactly supported $C^{1}$ vector field $\psi \, : \, U_{V} \to TN;$ 
\item[{\rm (b)}] if $V \in {\mathcal V}$ then no point $Y \in {\rm spt} \, \|V\| \cap U_{V}$ with $\Theta \, (\|V\|, Y) = q$ is a classical singularity of $V$;
\item[{\rm (c)}]  if $V \in {\mathcal V}$ then $V$ satisfies the $(q, \beta)$-separation property; 
%if $V \in {\mathcal V}$, $X \in U_{V}$, $\widetilde{V} = \left(\Gamma \circ {\rm exp}_{X}^{-1}\right)_{\#} \, V \res \left({\mathcal N}_{{\rm inj}_{X} N}(X) \cap U_{V}\right)$ for some orthogonal rotation $\Gamma \, : \, T_{X} \, N \approx  {\mathbb R}^{n+1} \to T_{X} \, N$, and if 
%$$\rho \in (0, \min\{1, {\rm inj}_{X} N, {\rm dist} \, (X, \partial U_{V})\}],$$ $$(\omega_{n}2^{n})^{-1}\|\eta_{0, \rho \, \#} \, \widetilde{V}\|(B_{1}^{n+1}(0)) \leq q+ 1/2,$$ 
%$$q-1/2 \leq \omega_{n}^{-1}\|\eta_{0, \rho \, \#} \widetilde{V}\|((B_{1/2}^{n}(0) \times {\mathbb R}) \cap 
%B_{1}^{n+1}(0)) \leq q + 1/2,$$ 
%$\Theta \, (\|V\|, Y) < q$ for each $Y \in {\mathcal N}_{\rho}(X)$ and 
%\begin{eqnarray*}
%\hat{E}_{\rho} \equiv \int_{(B_{1/2}^{n}(0) \times {\mathbb R}) \cap B_{1}^{n+1}(0)} |x^{n+1}|^{2} \, d\|\eta_{0, \rho \#} \widetilde{V}\| && \\
%&&\hspace{-3in} + \rho\left(\int_{(B_{1/2}^{n}(0) \times {\mathbb R}) \cap B_{1}^{n+1}(0)} |H_{V}(\exp_{X}(\G^{-1}(\rho Y))|^{p} \, d\|\eta_{0, \rho \, \#} \widetilde{V}\|(Y)\right)^{1/p} + \rho < \beta,
%\end{eqnarray*} then 
%$\eta_{0, \rho \, \#} \widetilde{V} \res ((B_{1/4}^{n}(0) \times {\mathbb R}) \cap B_{1}^{n+1}(0))  = \sum_{j=1}^{q} |{\rm graph} \, u_{j}|$ for some $u_{j} \in C^{2}(B_{1/4}^{n}(0))$, $j=1, 2, \ldots, q,$ with  
%$u_{1} \leq u_{2} \leq \ldots \leq u_{q}.$ 
\end{itemize}
Conclusion : there exists $\epsilon = \epsilon (n, p, q, N, \beta, {\mathcal V}) \in (0, 1)$ such that 
if $V \in {\mathcal V}$, $X_{0} \in U_{V}$, $\widetilde{V} = \left(Q \circ {\rm exp}_{X_{0}}^{-1}\right)_{\#} \, V \res \left({\mathcal N}_{{\rm inj}_{X_{0}} N}(X_{0}) \cap U_{V}\right)$ for some orthogonal rotation $Q \, : \, T_{X_{0}} \, N \approx  {\mathbb R}^{n+1} \to T_{X_{0}} \, N$, and if $\rho \in (0, \min\{1, {\rm inj}_{X_{0}} N, {\rm dist} \, (X_{0}, \partial U_{V})\}]$, $$\omega_{n}^{-1}\|\eta_{0, \rho \, \#} \, \widetilde{V}\|(B_{1}^{n+1}(0)) \leq q+ 1/2,$$ 
$$q-1/2 \leq \left(\frac{\omega_{n}}{2^{n}}\right)^{-1}\|\eta_{0, \rho \, \#} \widetilde{V}\|((B_{1/2}^{n}(0) \times {\mathbb R}) \cap 
B_{1}^{n+1}(0)) \leq q + 1/2,$$ 
and 
\begin{eqnarray*}
\hat{E}_{\rho} \equiv \int_{(B_{1/2}^{n}(0) \times {\mathbb R}) \cap B_{1}^{n+1}(0)} |x^{n+1}|^{2} \, d\|\eta_{0, \rho \#} \widetilde{V}\| && \\
&&\hspace{-2.75in} + \rho\left(\int_{(B_{1/2}^{n}(0) \times {\mathbb R}) \cap B_{1}^{n+1}(0)} |H_{V}(\exp_{X}(Q^{-1}(\rho Y))|^{p} \, d\|\eta_{0, \rho \, \#} \widetilde{V}\|(Y)\right)^{1/p} + \rho < \e,
\end{eqnarray*}
then 
$$\eta_{0, \rho \, \#} \widetilde{V} \res ((B_{1/4}^{n}(0) \times {\mathbb R}) \cap B_{1}^{n+1}(0))  = \sum_{j=1}^{q} |{\rm graph} \, u_{j}|$$ 
for some $u_{j} \in C^{1, \alpha}(B_{1/4}^{n}(0))$, $j=1, 2, \ldots, q,$ with $u_{1} \leq u_{2} \leq \ldots \leq u_{q},$ 
where $\alpha = \alpha(n, p) \in (0, 1).$ Furthermore, we have that $$\|u_{j}\|_{C^{1, \alpha}(B_{1/4}(0))} \leq C \sqrt{\hat{E}_{\rho}}$$ 
for each $j \in \{1, 2, \ldots, q\},$ where $C = C(N, n, p, q).$ 
\end{thm} 

\begin{oss}The content of the preceding theorem (for a given varifold) can roughly be described as follows: fix a positive integer $q$. In the absence of classical singularities (as in hypothesis (b)), if an integral $n$-varifold with generalized mean curvature locally in $L^{p}$ for some $p > n$ has the property that its flat regions where density is $\leq q-1$ are ``well-behaved'' (as in hypothesis (c)), then the varifold is well-behaved also near any point where there is a tangent plane of multiplicity $q$.  Note the difference in the meaning of ``well-behaved'' in the hypotheses and in the conclusion: in the hypotheses (i.e.\ in hypothesis (c)) it means separation into ordered $C^{2}$ graphs, whereas in the conclusion it means separation into ordered $C^{1, \alpha}$ graphs. If $H_{V} = 0$ (the case handled in \cite{Wic}) the  Hopf boundary point lemma ensures $C^{1, \alpha} \implies C^{2}$, but simple examples show that in general $C^{2}$ conclusion need not hold, even if $|H_{V}| = 1$ (see e.g.\ \cite[Remark~2.14]{BW1}). In the present application (i.e.\ in Theorem~\ref{limit-regularity}) however, where $V$ is a limit $(g, 0)$-varifold arising from index bounded Allen--Cahn solutions, $C^{2}$ conclusion does hold (as we show in Section~\ref{higher-reg} below) because of the additional constraints on $V$ imposed by Theorem~\ref{Roger-Tonegawa}, including the fact that the set $E = \{u_{\infty} = 1\}$ (notation as in Theorem~\ref{Roger-Tonegawa}) is a Caccioppoli set.  
\end{oss}

\begin{oss}[\emph{\cite[Theorem~9.1]{BW2} $\implies$ Theorem~\ref{BWregularity}}]\label{proof-rmk}
The preceding theorem indeed follows very directly from \cite[Theorem~9.1]{BW2}, taken with ${\mathcal V}$ therein to be the collection of varifolds $\widetilde{\mathcal V}$ consisting of all varifolds 
$\widetilde{V} \res B_{1}^{n+1}(0),$ where (for $X_{0} \in N$ fixed as in the conclusion of the present theorem),  
$$\widetilde{V} = \left(\eta_{0, \rho} \circ Q \circ {\rm exp}_{X}^{-1}\right)_{\#} \, V \res {\mathcal N}_{{\rm inj}_{X_{0}} N}(X_{0})$$ 
for some $V \in {\mathcal V}$ (with ${\mathcal V}$ as in the present theorem), 
$X \in {\mathcal N}_{{\rm inj}_{X_{0}} N}(X_{0}) \cap U_{V}$, $Q \, : \, {\mathbb R}^{n+1} \to {\mathbb R}^{n+1}$ an orthogonal rotation, and $\rho \in (0, \min\{{\rm inj}_{X} N, {\rm dist} \, (X, \partial \, U_{V})\})$.  For the class $\widetilde{\mathcal V},$ hypothesis (a) of \cite[Theorem~9.1]{BW2} (with ${\hat H}_{\widetilde{V}} = \rho H_{V} \circ \exp_{X}$ for ${\widetilde V}$ as above, and for fixed constants $\kappa$, $\kappa_{1}$ depending only on $N,$ and for ``change-of-base-point'' diffeomorphisms $\varphi_{Y} \, : \, B_{1}^{n+1}(0) \to B_{1}^{n+1}(0)$, $Y \in B_{1/2}^{n}(0)$, defined by appropriate composition of scalings, rotations, exponential maps and their inverses as in \cite[Remark~3.1]{BW2}) follows from hypothesis (a) of the present theorem, as pointed out in \cite[Remark~9.1]{BW2}; hypothesis (b) of \cite[Theorem~9.1]{BW2}  follows directly from hypothesis (b) of the present theorem; and hypothesis (c) of \cite[Theorem~9.1]{BW2}  follows from hypothesis (c) of the present theorem, taking into account the following additional observation: let $u_{j}$ be as in hypothesis (c) of the present theorem. Then the requirement of hypothesis~(c) of \cite[Theorem~9.1]{BW2} that the $C^{1, \alpha_{1}}$ norm of $u_{j}$ are bounded (in the specified manner therein) for some fixed $\alpha_{1} \in (0, 1)$ holds automatically. Indeed, we have (by assumption) that $u_{j} \in C^{2}(B_{1/4}(0))$ for each $j$ and therefore that $M_{j} = \exp_{X} \circ Q^{-1} \circ \eta_{0, \rho^{-1}}({\rm graph}(u_{j}))$ is a $C^{2}$ hypersurface of $N;$ hence, by the discussion in \cite[Section~3.1]{BW2}, inequality $(A)$ in \cite[Section~5.1]{BW2} (taken with $\rho_{0} = 1$ and for $\kappa$, $\kappa_{1}$ fixed depending only on $N$) holds with $\eta_{0, 1/4 \, \#}|{\rm graph} \, u_{j}|$ in place of $V$  
\emph{for each $j$ separately}.  On the other hand, it is easy to see that the argument of Allard's regularity theorem (for the simpler case of $C^{2}$ graphs) carries over with inequality $(A)$ in place of the first variation hypothesis of Allard's theorem (requiring an $L^{p}$ bound on the mean curvature), leading to the estimate $$\|u_{j}\|_{C^{1, \alpha}(B_{1/8}^{n}(0))} \leq C \sqrt{\hat{E}_{\rho}}$$ 
for each $j$, where $\alpha = \alpha(n, p) \in (0, 1)$ and $C= (N, n, p).$ 
\end{oss}

We next state Theorem~\ref{nonvar-SS} which will serve to verify hypothesis (c) of Theorem~\ref{BWregularity} when applying Theorem~\ref{BWregularity} to certain limit $(g, 0)$-varifolds (in the proofs of Theorem~\ref{limit-regularity} and Theorem~\ref{estimates}). 
%varifolds obtained in the ${\eps}_j \to 0$ limit from a sequence of stable (or finite-index) solutions to $\ca{F'}{{\eps}_j, \sigma g_j}=0$, with $g_j \to g$, as in Theorem \ref{Roger-Tonegawa}. 
This result can be viewed as a Riemannian formulation of \cite[Theorem 5.1]{BW2} and it follows directly from the latter---see Remark~\ref{oss:5.1} below; \cite[Theorem~5.1]{BW2} in turn is a non-variational version of \cite[Theorem~1]{SS}, whose proof follows closely the argument of \cite[Theorem~1]{SS}. 

In Theorem~\ref{nonvar-SS} and subsequently, we shall use the terminology below (in Definitions~\ref{quasi-embedded}-\ref{reg}) associated with any integral $n$-varifold $V$ of a Riemannian manifold $N$ of dimemsion $n+1$: 

\begin{Dfi} [{\sc quasi-embedded point}] \label{quasi-embedded} A point $Y \in {\rm spt} \, \|V\|$ is said to be a {\rm quasi-embedded point} of $V$ if there exist $k \in \N$; $\rho>0;$ a hyperplane $L\subset \R^{n+1} \approx T_{Y} \, N$ with unit normal vector $\nu_L;$ $C^2$ functions $u_j:L\cap B_{\rho}^{n+1}(0) \to L^\perp$ for $j\in \{1, \ldots, k\}$ satisfying, whenever $k \geq 2$, $u_j\cdot \nu_L \leq u_{j+1}\cdot \nu_L$ for $j\in \{1, \ldots, k-1\},$ such that $$V \res {\mathcal N}_{\rho}(Y) 
= (\exp_{Y})_{\#} \left(\sum_{j=1}^{k} |\text{\rm graph}(u_j) \cap B_{\rho}^{n+1}(0)|\right).$$
\end{Dfi} 

\begin{Dfi}[{\sc generalised regular set} $\Greg \, V$]\label{greg}  The generalised regular set of $V$, denoted $\Greg \, V$, is the set of quasi-embedded points of $V$. 
%$Y \in {\rm spt} \, \|V\|$ such that locally near $Y$, $V$ is given by finitely many ordered $C^2$ graphs. More precisely, $Y \in \Greg \, V$ if 
%$Y \in {\rm spt} \, \|V\|$ and there exist $k \in \N$; $\rho>0;$ a hyperplane $L\subset \R^{n+1} \approx T_{Y} \, N$ with unit normal vector $\nu_L;$ $C^2$ functions $u_j:L\cap B_{\rho}^{n+1}(0) \to L^\perp$ for $j\in \{1, \ldots, k\}$ satisfying, whenever $k \geq 2$, $u_j\cdot \nu_L \leq u_{j+1}\cdot \nu_L$ for $j\in \{1, \ldots, k-1\};$ such that $$V \res {\mathcal N}_{\rho}(Y) 
%= (\exp_{Y})_{\#} \left(\sum_{j=1}^{k} |\text{\rm graph}(u_j) \cap B_{\rho}^{n+1}(0)|\right).$$
\end{Dfi} 

\begin{Dfi}[{\sc singular set} ${\rm sing} \, V$]\label{sing} The singular set of $V$, denoted ${\rm sing} \, V$, is defined by ${\rm sing} \, V = {\rm spt} \, \|V\| \setminus \Greg \, V.$ 
\end{Dfi} 

\begin{Dfi}[{\sc regular set} ${\rm reg} \, V$] \label{reg} The regular set of $V$, denoted ${\rm reg} \, V,$ is the set of $C^{2}$ embedded points of ${\rm spt} \, \|V\|$. More precisely, $Y \in {\rm reg} \, V$ if $Y \in {\rm spt} \, \|V\|$ and there exist $\rho >0$ such that ${\rm spt} \, \|V\| \cap {\mathcal N}_{\rho}(Y)$ is an embedded, connected $C^{2}$ submanifold $M$ of ${\mathcal N}_{\rho}(Y).$ 
\end{Dfi}

\begin{thm}\cite[Theorem 5.1]{BW2}
\label{nonvar-SS}
Let $\lambda,  \overline{\lambda} \in [0,\infty)$, $p>n$, $\Lambda \in [0, \infty)$, $q \in \N$ and $\rho_{0} \in (0, \infty)$. There exist a constant $K= K(n, \lambda, \overline{\lambda}, \Lambda, N, \rho_{0})  \in (0, \infty)$ depending only on 
$n$, $\lambda$, $\overline{\lambda}$, $\Lambda,$ $N$ and $\rho_{0}$, and a constant $\e = \e(n, \lambda, \overline{\lambda}, \Lambda, p, q, n, N, \rho_{0}) \in (0, K\rho_{0}/2)$ depending only on $n$, $\lambda$, $\overline{\lambda}$, $\Lambda,$ $p$, $q$, $n,$ $N$ and $\rho_{0}$, such that the following holds. Suppose that $X_{0} \in N$, ${\rm inj}_{X_{0}} \, N \geq \rho_{0},$ $V$ is an integral $n$-varifold on ${\mathcal N}_{\rho_0}(X_{0})$ and that:  
\begin{itemize}
%there exists a closed set $\Sigma$ with ${\rm dim}_{\mathcal H} (\Sigma) \leq n-7$ if $n \geq 7$ and $\Sigma = \emptyset$ if $n \leq 6$ such that locally away from $\Sigma$, $V$ is given by finitely many ordered $C^2$ graphs that may intersect only tangentially. More precisely, for every $y\in \spt{V} \cap ({\mathcal N}_{\rho_0}(X_{0}) \setminus \Sigma)$ there exist a neighbourhood $\widetilde{\Oc}$ of $y$, an open set $\Oc \subset \R^{n+1}$ containing the origin, a smooth map $\psi:\Oc \to \widetilde{\Oc}$, $\ell \in \N$, a hyperplane $L\subset \R^{n+1}$ with normal vector denoted by $\nu_L$, and $C^2$ functions $u_j:L\cap \Oc \to L^\perp$ for $j\in \{1, \ldots, \ell\}$ with $u_j\cdot \nu_L \leq u_{j+1}\cdot \nu_L$ for $j\in \{1, \ldots, \ell-1\}$, such that $$V \res \widetilde{\Oc} = \psi_{\#} \left(\sum_{j=1}^\ell |\text{\rm graph}(u_j)|\right);$$
\item[(a)]  $V$ has  first variation $\delta \, V  = -H_{V} \|V\|$ in ${\mathcal N}_{\rho_{0}}(X_{0})$, i.e.\
$$\delta V (\psi) = - \int H_{V} \cdot \psi d\|V\|$$  
for every vector field $\psi \in C^1_c({\mathcal N}_{\rho_0}(X_{0}); T N),$ where the generalised mean curvature $H_{V} \in L^{p}(\|V\|)$ with 
$\left(\rho_{0}^{p-n}\int_{{\mathcal N}_{\rho_{0}}(X_{0})}|H_{V}|^{p} \, d\|V\|\right)^{1/p} \leq \Lambda$; 
%where $H_{V}$ is 
%the generalised mean curvature of $\Greg \, V$ in ${\mathcal N}_{\rho_{0}}(X_{0})$, and $|H_{V}|\in L^{p}_{\rm loc} (\|V\|);$
%and (which is well defined for ${\mathcal H}^{n}$-a.e.\ on $\Greg \, V$ by the definition of $\Greg \, V$) and  $\delta V$ is the first variation of $V$ in ${\mathcal N}_{\rho_{0}}(X_{0});$  
%$\|\hat{H}\|_{L^p} \leq \Gamma$.
\end{itemize}
additionally, suppose that $X \in {\mathcal N}_{\rho_{0}/2}(X_{0})$, $\rho \in (0, K^{-1}\e)$ and that:
\begin{itemize}
\item[(b)] ${\rm dim}_{\mathcal H} ({\rm sing} \, V \res {\mathcal N}_{\rho}(X)) \leq n-7$ if $n \geq 7$ and ${\rm sing} \, V \res {\mathcal N}_{\rho}(X) = \emptyset$ if $n \leq 6;$

\item[(c)] (Schoen--Tonegawa inequality) for every $y\in\spt{V} \cap {\mathcal N}_{\rho}(X)$ and every smooth, embedded $n$-dimensional disk $D \subset N$ containing $y,$ the following holds: if for some $\delta \in\left(0, \frac{1}{10}\text{dist}(y,\p {\mathcal N}_{\rho}(X))\right)$ and $r=\text{dist}(y,\p {\mathcal N}_{\rho}(X))-\delta,$ 
$\varphi_{y}: B^{n}_r(0) \times (-\delta, \delta) \to {\mathcal N}_{\rho}(X)$ is a coordinate chart induced by a choice of Fermi coordinates around $D$ with $\varphi_{y}(0)=y$ and with the pull back metric on $B^{n}_r \times (-\delta, \delta)$ coinciding at $0$, to first order, with the Euclidean metric, then
$$\int |A|^2 \zeta^2 d\|\widetilde{V}\| \leq \lambda  \int  (1-(\tilde{\nu} \cdot e_{n+1})^2)|\nabla \zeta|^2 d\|\widetilde{V}\| + \overline{\lambda} \int \zeta^2 d\|\widetilde{V}\|$$
for every $\zeta \in C^1_c\left((B^{n}_r(0) \times (-\delta, \delta))\setminus \varphi_{y}^{-1}({\rm sing} \, V)\right);$ here $\widetilde{V}=(\varphi_{y}^{-1})_{\#} V$, $\nabla$ is the gradient in $B^{n}_{r}(0) \times (-\delta, \delta)$, $A$ is the second fundamental form of $\widetilde{V}$ and $\tilde{\nu}$ is the unit normal to $\spt{\widetilde{V}}$ (all quantities with respect to the pull-back metric), and note that it suffices for $\tilde{\nu}$ to be defined up to sign and for all said quantities to be defined in the complement of $\varphi_{y}^{-1}({\rm sing} \, V)$;
\end{itemize}

\noindent
finally, suppose that with $\varphi_{X}$ equal to a chart as in (c) corresponding to $y=X$, and with $\widetilde{V}=(\varphi_{X}^{-1})_{\#} \, V$:
$$(\omega_{n})^{-1}\|\eta_{0, \rho \, \#} \, \widetilde{V}\|(B_{1}^{n+1}(0)) \leq q+ 1/2;$$ 
$$q-1/2 \leq \left(\frac{\omega_{n}}{2^{n}}\right)^{-1}\|\eta_{0, \rho \, \#} \, \widetilde{V}\|\left((B_{1/2}^{n}(0) \times {\mathbb R}) \cap 
B_{1}^{n+1}(0)\right) \leq q + 1/2$$ 
%\end{itemize}
%We denote by $E$ the following ``excess'', computed in the coordinate chart $\psi_X$ corresponding to $y=X$ in (c) --- as in (c), we denote by $\widetilde{V}=(\psi^{-1})_\sharp V$, and with $\rho$ as in (d) and $\sigma$ a constant that can be explicited in terms of $\kappa, C$:
and 
\begin{eqnarray*}
E \equiv \int_{(B_{1/2}^{n}(0) \times {\mathbb R}) \cap B_{1}^{n+1}(0)} |x^{n+1}|^{2} \, d\|\eta_{0, \rho \, \#} \, \widetilde{V}\| && \\
&&\hspace{-2.5in} + \rho^{2}\left(\int_{(B_{1/2}^{n}(0) \times {\mathbb R}) \cap B_{1}^{n+1}(0)} |H_{V}\circ \varphi_{X_{0}}(Y)|^{p} \, d\|\eta_{0, \rho \, \#} \, \widetilde{V}\|(Y)\right)^{2/p} < \e.
\end{eqnarray*} 
%Conclusion: there exists $\epsilon=\epsilon(\kappa, C, p, q, n) \in (0, 1)$ such that if $E<\epsilon$
 Then 
$$\eta_{0, \rho \, \#} \widetilde{V} \res ((B_{1/4}^{n}(0) \times {\mathbb R}) \cap B_{1}^{n+1}(0))  = \sum_{j=1}^{q} |{\rm graph} \, u_{j}|$$ 
for some $u_{j} \in C^{2}(B_{1/4}^{n}(0))$, $j=1, 2, \ldots, q,$ with $u_{1} \leq u_{2} \leq \ldots \leq u_{q};$ 
moreover,  
$$\|u_{j}\|_{C^{1, \alpha}(B_{1/4}(0))} \leq C \sqrt{E}$$
for each $j\in \{1, 2, \ldots, q\},$ where $\alpha = \alpha(n, p) \in (0, 1)$ and $C = C(p, q, N) \in (0, \infty).$ 
\end{thm}

\begin{oss}[\textit{\cite[Theorem 5.1]{BW2} $\Rightarrow$ Theorem \ref{nonvar-SS}}]
\label{oss:5.1}
Assumption (a) in Theorem \ref{nonvar-SS} implies condition (A)  of \cite[Theorem 5.1]{BW2}, as explained in \cite[Section 5.2]{BW2}. Assumption (c) in Theorem \ref{nonvar-SS} implies the condition (C) of \cite[Theorem 5.1]{BW2} by an elementary computation (see the next remark). The rest of the assumptions of Theorem~\ref{nonvar-SS} are the remaining assumptions of   \cite[Theorem 5.1]{BW2}.
\end{oss}

\begin{oss}
One needs to check that the inequality in (c), which is expressed in terms of Riemannian quantities, implies the validity of the same inequality, with an additional multiplicative constant on the right-hand-side that depends only on the metric, and with $A$, $\tilde{\nu}$, $\nabla$ and $\cdot$ replaced by their Euclidean counterparts. Indeed,   by virtue of the fact that the coordinates are Fermi, we have that everywhere in $B^n_r(0)\times (-\delta, \delta),$
the vector $\p_{n+1}$ is unit both with respect to both the Euclidean metric and the Riemannian metric, and moreover, $\p_{n+1}$ is the metric gradient, with respect to either metric, of the (coordinate) function $x^{n+1}$. Denote the metric gradients associated with the Euclidean and Riemannian metrics respectively by $\nabla^{\text{Eucl}}$ and $\nabla$, and let $
M={\rm reg}\,V.$ For any smooth function $f$, define $\nabla^{\text{Eucl},M} f$ and $\nabla^M f$ at a point on $X \in M$ as orthogonal projections of $\nabla^{\text{Eucl}} f$ and $\nabla f$ onto the tangent space to $T_{X} \, M$, where orthogonality is with respect to the corresponding metric (i.e.\ the Euclidean metric and the Riemannian metric respectively). Then we have (i) $1- (\nu \,\underline{\cdot} \, e_{n+1})^2= |\nabla^{\text{Eucl},M} x^{n+1}|^2$, where $\nu$ is the Euclidean unit normal to $M$, $\underline{\cdot}$ is the Euclidean scalar product and the norm on the right-hand-side is Euclidean, and (ii)  $1- (\tilde{\nu} \cdot e_{n+1})^2)^2= |\nabla^{M} x^{n+1}|^2$, where $\tilde{\nu}$ is the Riemannian unit normal to $M$, $\cdot$ is the Riemannian scalar product and the norm on the right-hand-side is Riemannian. On the other hand, $\frac{1}{\tilde{C}} |\nabla^{M} x^{n+1}|^2\leq |{\nabla}^{\text{Eucl}, M} x^{n+1}|^2\leq \tilde{C}|\nabla^{M} x^{n+1}|^2$ for a fixed positive constant $\tilde{C}$ depending only on the Riemannian metric. To this end we observe that ${\nabla}^{\text{Eucl}, M} f$ agrees with the gradient of $\left.f\right|_M$ with respect to the Riemannian metric on $M$ induced by the ambient Euclidean metric. Similarly, $\nabla^M f$ is the gradient of $\left.f\right|_M$ with respect to the metric on $M$ induced by the ambient Riemannian metric. In other words, $|\nabla^{M} x^{n+1}|^2$ and $|\nabla^{\text{Eucl}, M} x^{n+1}|^2$ can be computed pointwise on $M$ in intrinsic fashion (from the two metrics on $M$). The inequalities then follow, and equivalently we have $\frac{1}{\tilde{C}} \left(1- (\nu \,\underline{\cdot} \, e_{n+1})^2\right)\leq 1- (\tilde{\nu} \cdot e_{n+1})^2 \leq \tilde{C} \left(1- (\nu \,\underline{\cdot} \, e_{n+1})^2\right)$.
To complete the verification of the Schoen--Tonegawa inequality with Euclidean quantities, it suffices to observe the pointwise inequalities $|A|^2 \geq {\widetilde C}^{-1}|A_{{\rm Eucl}}|^{2}  - \widetilde{\gamma}$ and $|\nabla\phi|^2 \leq \widetilde{C} |\nabla^{{\rm Eucl}} \, \phi |^{2}$ where 
${\rm Eucl}$ denotes the Euclidean quantities, and $\widetilde{C}>1$ and $\widetilde{\gamma} >0$ are fixed constants that depend only on the Riemannian metric. ($\widetilde{C}$ can be made arbitrarily close to $1$ and $\widetilde{\gamma}$ arbitrarily close to zero by making the geodesic ball ${\mathcal N}_{\rho_{0}}(X_{0})$ in Theorem \ref{nonvar-SS} sufficiently small).

%; moreover, $\tilde{C}$ is close to $1$ if the geodesic ball in Theorem \ref{nonvar-SS} is sufficiently small, since the two metrics agree at the origin.

\end{oss}

%Note also that in the abstract form above Theorem~\ref{BWregularity}  requires no stability hypothesis, but in the present application stability plays a key role. 
%Indeed, in the proof of Theorem~\ref{limit-regularity}, we employ an inductive argument in which we apply Theorem~\ref{BWregularity} to a stable limit $(g,0)$-varifold $V$ near a point where one of its tangent cones is a multiplicity $q$ hyperplane. Validity of hypotheses (b) and (c) of Theorem~\ref{BWregularity} in this application is a consequence of stability of the critical points $u_{\e_{j}}$ associated with $V$. 
%\subsection{A non-variational Schoen--Simon theory}

\subsection{Further GMT preliminaries}
For an $m$-dimensional stationary integral cone ${\mathbf C}$ on ${\mathbb R}^{n+1}$, let $S({\mathbf C}) = \{Y  \in {\mathbb R}^{n+1}\, : \, \Theta \, (\|{\mathbf C}\|, Y) = \Theta \, (\|{\mathbf C}\|, 0)\}$. It is well-known that $\Theta \, (\|{\mathbf C}\|, Y) \leq \Theta \, (\|{\mathbf C}\|, 0)$ for every $Y \in {\mathbb R}^{n+1},$ 
$S({\mathbf C})$ is a vector subspace of ${\mathbb R}^{n+1}$ of some dimension $\leq m$ and that ${\mathbf C}$ is invariant under translation by any element in $S({\mathbf C}).$  
 
 We shall need the case $m=n$ of the following result:
 
\begin{lem}\label{stratification}{\rm [Generalised stratification of singular sets]} 
Let $V$ be an integral $m$-varifold on $N$ with generalised mean curvature $H_{V} \in L^{p}_{\rm loc}(\|V\|)$ for some $p >m$ and first variation $\delta \, V = -H_{V}\|V\|$ in $N$. 
For $k \in \{0, 1, 2, \ldots, m\}$, let ${\mathcal S}_{k} = \{Y  \in {\rm spt} \, \|V\|\, : \, {\rm dim} \, S({\mathbf C}) \leq k \;\; \mbox{for every tangent cone ${\mathbf C}$ to $V$ at $Y$}\}.$ 
Then ${\rm dim}_{\mathcal H} \, ({\mathcal S}_{k}) \leq k.$
\end{lem}

This result is due to Almgren; see \cite[Remark~2.28]{Alm}, where it is proved assuming $p = \infty$ and $N$ is a Euclidean space. In view of \cite[Remark~4.4]{A} and the validity of the 
monotonicity formula when $p >m$ (see \cite[Theorem~8.5]{A} or \cite[Section~4]{Sim}), the same argument establishes the above $L^{p}$ version (although for our purposes here the case $p=\infty$ suffices). See \cite[Chapter 4]{S} for a nice, concise exposition of the argument in the context of energy minimizing maps.   

We shall employ Lemma~\ref{stratification} together with the following result to deduce (in Theorem~\ref{limit-regularity}) the dimension bound on the singular set of limit $(g, 0)$-varifolds arising from index bounded Allen--Cahn solutions.

\begin{thm}\label{classification}{\rm [Classification of stable, stationary hypercones]}
Let ${\mathbf C}$ be a stationary integral hypercone on ${\mathbb R}^{n+1}$ such that ${\mathbf C}$ has no classical singularities and 
${\rm reg} \, {\mathbf C}$ (i.e.\ the embedded part of ${\rm spt} \, \|{\mathbf C}\|$) is stable in the sense that the stability inequality 
$\int |{\mathbf B}_{\mathbf C}|^{2} \varphi^{2} \, d{\mathcal H}^{n} \leq \int |\nabla^{{\rm reg} \, \mathbf C} \, \varphi|^{2} \, d{\mathcal H}^{n}$ holds for all 
$\varphi \in C^{1}_{c}({\rm reg} \, {\mathbf C}),$ where ${\mathbf B}_{\mathbf C}$ is the second fundamental form of ${\rm reg} \, {\mathbf C}$ and $\nabla^{{\rm reg}\, {\mathbf C}}$ is the gradient operator on ${\rm reg} \, {\mathbf C}.$ We then have the following:
\begin{itemize}
\item [{\rm (i)}] if $1 \leq n \leq 6$, then ${\mathbf C} = q|P|$ for some positive integer $q$ and a hyperplane $P$ of ${\mathbb R}^{n+1}$;
\item[{\rm (ii)}] if $n \geq 7$, then either ${\mathbf C} = q|P|$  for some positive integer $q$ and a hyperplane $P$ of ${\mathbb R}^{n+1},$ or ${\rm dim} \, S({\mathbf C}) \leq n-7;$ 
\item[{\rm (iii)}] if $n \geq 7$ then ${\rm dim}_{\mathcal H} \, ({\rm sing} \,{\mathbf C}) \leq n-7$.  
\end{itemize}  
\end{thm}
\begin{proof} The case $n=1$ is trivial, so assume $n \geq 2$. In case ${\rm sing} \, {\mathbf C} \subset \{0\},$ part (i) is a well-known theorem of Simons (\cite{SJ}). 
%and in case $n=2$ part (i) follows from the fact that the link of ${\mathbf C}$ on $S^{2}$ is a stationary integral $1$-varifold of $S^{2}$ and hence, since ${\mathbf C}$ has no classical singularities, the link must correspond to a geodesic $S^{1}$. If  
The general case follows from this and the results of \cite{Wic} as follows:  let $k$ be the smallest integer $\geq2$ for which there is a $k$-dimensional hypercone $\widetilde{\mathbf C}$ such that  the hypotheses of the theorem are satisfied with ${\mathbf C} = {\widetilde{\mathbf C}}$ but the conclusion in part (i) fails, i.e.\ $\widetilde{\mathbf C}$ is not supported on a hyperplane. If $Y \in {\rm spt} \, \|\widetilde{\mathbf C}\| \setminus \{0\}$ is arbitrary, then by \cite[Theorem~3.3]{Wic} and \cite[Theorem~3.4]{Wic}, any tangent cone ${\mathbf C}_{1}$ to $\widetilde{\mathbf C}$ at $Y$ has stable regular part and no classical singularities. Also, being invariant under translation along $\{tY \, : \, t \in {\mathbb R}\}$, ${\mathbf C}_{1}$ has the form  ${\mathbf C}_{1}^{(0)} \times {\mathbb R}$ 
after a rotation. Thus the ``cross-section'' ${\mathbf C}_{1}^{(0)}$ is a hypercone in ${\mathbb R}^{k}$ with stable regular part and no classical  singularities, and so by the definition of $k,$ the cone 
${\mathbf C}_{1}^{(0)}$ is supported on a $k-1$ dimensional plane and consequently ${\mathbf C}_{1}$ is supported on a $k$ dimensional plane. Hence by \cite[Theorem~3.3]{Wic} $Y$ is a regular point of $\widetilde{\mathbf C}.$ Thus ${\rm sing} \, \widetilde{\mathbf C} \subset \{0\}$ and hence by Simons' theorem, we must have that $k \geq 7$.  Part (ii) follows directly from part (i) by considering the cross-section of ${\mathbf C}$. Since (by \cite[Theorem~3.3]{Wic} and \cite[Theorem~3.4]{Wic}) every tangent cone to ${\mathbf C}$ has stable regular part and no classical singularities, part (iii) follows from part (ii), \cite[Theorem~3.3]{Wic} and Lemma~\ref{stratification}.
\end{proof}

\section{Regularity of limit $(g,0)$-varifolds}\label{stable-regularity} 
In this section we use the $C^{1, \alpha}$ varifold regularity theory described in the preceding section (namely, Theorem~\ref{nonvar-SS} and Theorem~\ref{BWregularity}), together with an adaptation of a PDE argument from \cite{BW1} (described fully in Section~\ref{higher-reg} below), and Lemma~\ref{stratification} and Theorem~\ref{classification},  to completely analyse the limit $(g,0)$-varifolds arising from Morse index bounded sequences of Allen--Cahn solutions. Results in this section may be of interest in contexts other than that of Theorem~\ref{thm:existence}. 

\subsection{Main theorems: smooth excision of hidden boundary}\label{smooth-excision}
Our main regularity theorem concerning limit varifolds (Theorem~\ref{thm:regularity} below) says that if $g \in C^{1,1}(N)$ is positive, then a limit $(g, 0)$-varifold $V$ for which the associated sequence $(u_{\e_{j}})$ is such that $u_{\e_{j}}$ has uniformly bounded Morse index with respect to ${\mathcal F}_{\e_{j}, \sigma g_{j}}$ is regular (i.e.\ has \emph{quasi-embedded $PMC \, (g, 0)$ structure} as in Definition~\ref{PMC-structure} below) away from a closed set of codimension $\geq 7$, and moreover, that the hidden boundary $V \res \left(N \setminus \overline{E}\right)$ (on which $H_{V} = 0$) and the phase boundary  $|\partial \, E|$ (where $H_{V} = g\nu$, with $\nu$ denoting the inward pointing unit normal to $\partial^{\star} E$) can be separated smoothly globally, i.e.\ neither the hidden boundary nor the phase boundary has singular first variation in $N$ (and each is separately regular away from a closed set of codimension $\geq 7$).

\noindent
\begin{Dfi}[{\sc quasi-embedded $PMC\, (g, 0)$ structure}]  \label{PMC-structure}
Let $g : N \to {\mathbb R}$ be a positive continuous function and let $V$ be an integral $n$-varifold on $N.$ We say that $V$ has \emph{quasi-embedded PMC$(g, 0)$ structure at $Y \in {\rm spt}  \, \|V\|$} if one of the following (depicted in figure~\ref{fig:PMC_structures}) holds:   
\begin{itemize}
\item[{\rm (i)}] $V$ in a neighborhood of $Y$ is equal to the multiplicity 1 varifold $|D|$ for some $C^{2}$ embedded disk $D$ with a choice of continuous unit normal with respect to which the scalar mean curvature of $D$ is equal to $g$ everywhere;
\item[{\rm (ii)}] $V$ in a neighborhood of $Y$ is equal to $q|D^{\prime}|$ for some even integer $q$ and a $C^{2}$ embedded minimal disk $D^{\prime}$;
\item[{\rm (iii)}] $\Theta \, (\|V\|, Y) = 2$ and $V$ in a neighborhood of $Y$ is equal to $|D_{1}| + |D_{2}|$ for two distinct $C^{2}$ embedded disks $D_{1}$, $D_{2}$ having only tangential intersection, with each disk lying on one side of the other, having mean curvature vector pointing away from the other and having scalar mean curvature (with respect to the unit normal in the direction of the mean curvature vector) equal to $g$ everywhere;
\item[{\rm (iv)}] $\Theta \, (\|V\|, Y) = q$ and $V$ in a neighborhood of $Y$ is equal to $|D| + (q-1)|D^{\prime}|$ for some odd integer $q \geq 3$ and two distinct $C^{2}$ embedded disks $D$, $D^{\prime}$ having only tangential intersection, with each disk lying on one side of the other, $D$ having mean curvature vector pointing away from $D^{\prime}$ and scalar mean curvature (with respect to the unit normal in the direction of the the mean curvature vector) equal to $g$ everywhere, and with $D^{\prime}$ being minimal;
\item[{\rm (v)}] $\Theta \, (\|V\|, Y) = q$ and $V$ in a neighborhood of $Y$ is equal to $|D_{1}| + |D_{2}| + (q-2)|D^{\prime}|$ for some even integer $q \geq 4$ and three distinct $C^{2}$ embedded disks $D_{1}$, $D_{2}$, $D^{\prime}$ with each pair of disks having only tangential intersection, and where $D_{1}$, $D_{2}$ are precisely as in {\rm (iii)} and $D^{\prime}$ is minimal and lies between $D_{1}$ and $D_{2}$.
\end{itemize} 
\end{Dfi} 

\begin{Dfi}[{\sc quasi-embedded hypersurface}]\label{quasi-embedded-hyp}
The image $M = \iota(S)$ of a $C^{2}$ immersion $\iota \, : \, S \to N$ of an $n$ dimensional $C^{2}$ manifold is said to be a {\rm quasi-embedded hypersurface} of $N$ if every point $Y \in {\rm spt} \, \|\iota_{\#} \, |S|\|$ is a quasi-embedded point of the varifold $\iota_{\#} \, |S|$ (see Definition~\ref{quasi-embedded}) and we may choose, in the notation of Definition~\ref{quasi-embedded}, $\Oc$ and $u_{j}$ for $j \in \{1, \ldots, k\}$ so that ${\rm graph} \, u_{j} = \iota(D_{j})$ for some $n$-dimensional open disk $D_{j} \subset S.$ 
\end{Dfi}

\begin{figure}[h]
\centering
\includegraphics[scale=0.4]{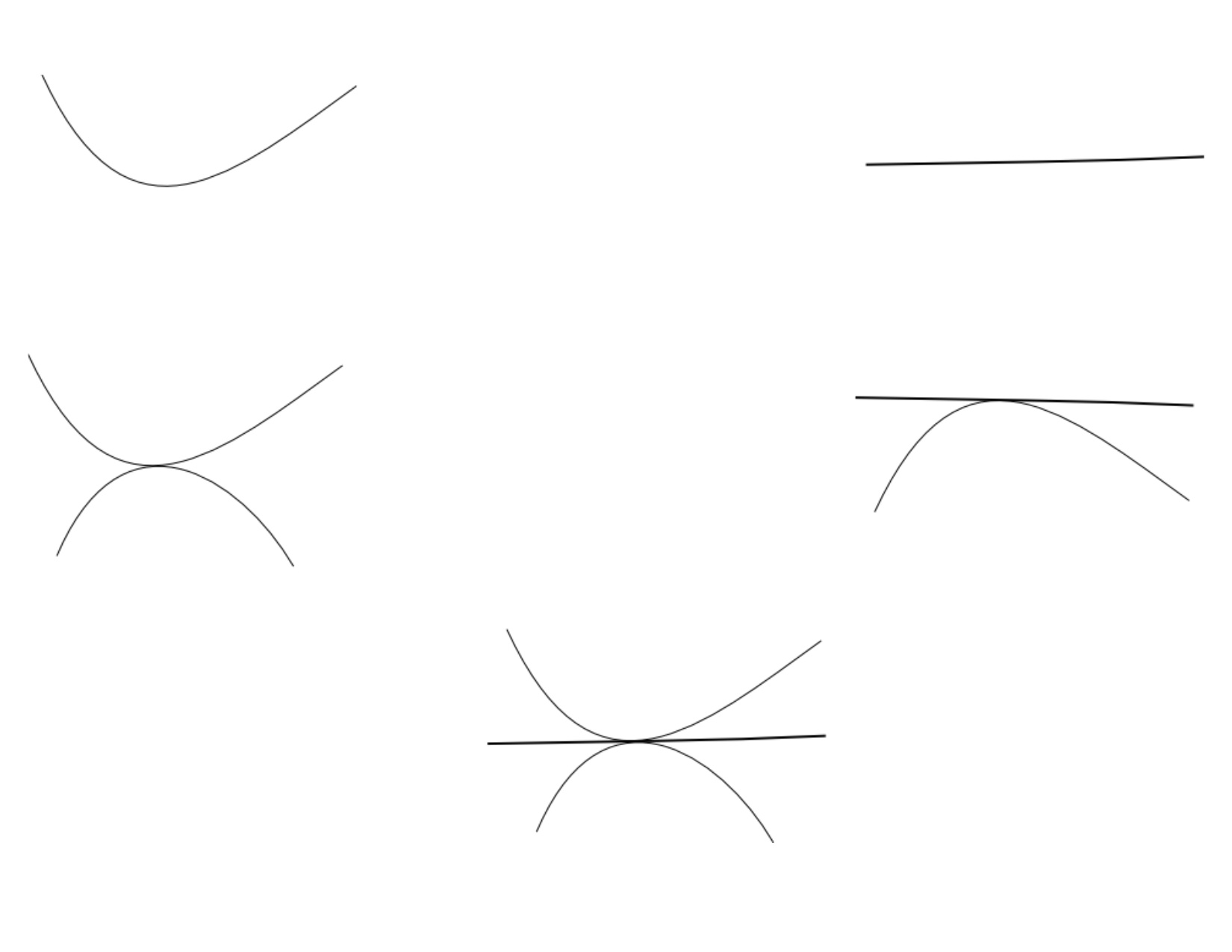}
\put (-275, 190){\tiny{$-1$}}
\put (-275, 208){\tiny{$+1$}}
\put (-60, 195){\tiny{$-1$}}
\put (-60, 211){\tiny{$-1$}}
\put (-283, 140){\tiny{$+1$}}
\put (-313, 125){\tiny{$-1$}}
\put (-258, 125){\tiny{$-1$}}
\put (-283, 110){\tiny{$+1$}}
\put (-60, 150){\tiny{$-1$}}
\put (-95, 135){\tiny{$-1$}}
\put (-60, 130){\tiny{$+1$}}
\put (-30, 135){\tiny{$-1$}}
\put (-155.5, 63){\tiny{$+1$}}
\put (-155.5, 40){\tiny{$+1$}}
\put (-129, 59){\tiny{$-1$}}
\put (-129, 49){\tiny{$-1$}}
\put (-189, 58){\tiny{$-1$}}
\put (-189, 48){\tiny{$-1$}}
\put (-280, 175){\tiny{(i)}}
\put(-65,175){\tiny{(ii)}}
\put(-280,90){\tiny{(iii)}}
\put(-65,90){\tiny{(iv)}}
\put(-160,10){\tiny{(v)}}
\caption{Quasi-embedded PMC$(g, 0)$ structure of a varifold $V$. In each case, $V$ corresponds to a certain number of disks, depicted by curves. The disks depicted by thin curves have multiplicity $1$ and mean curvature $g\nu$ for a choice of unit normal $\nu$, and those depicted by thick curves are minimal and have even multiplicity. The numbers $+1$ and $-1$ indicate the phase values, i.e.\ the values of $u_{\infty},$ when $V$ is the limit varifold arising from a sequence of Allen--Cahn critical points $u_{\e_{j}}$ with $g >0.$ In this case $\nu$ always points into the $+1$ phase.}
\label{fig:PMC_structures}
\end{figure}

%\begin{Dfi}\label{def-sing}
%For a limit $(g, 0)$-varifold $V$ on $N$ (Definition~\ref{limit-varifold}), we define the singular set ${\rm sing} \, V$ to be the set of points $Y \in {\rm spt} \, \|V\|$ such that there is no neighborhood of $Y$ in which $V$ has quasi-embedded PMC\,$(g, 0)$ structure. 
%Note that this definition differs from the definition of ${\rm sing} \, V$ in \cite{BW2} (in that the latter is a larger set defined to be the set of all non-embedded points). 
%\end{Dfi}

%\begin{oss}
%\label{gen-reg-def}
%Let $W$ be an integral $n$-varifold on $N$ and let $g$ be a positive $C^{1,1}$ function on $N$. In Theorem~\ref{thm:regularity} and subsequently, we denote by 
%${\Greg} \, W$ the set of points $Y \in {\rm spt} \, \|W\|$  near which $W$ has 
%the structure as in (i) or (iii) of Definition~\ref{PMC-structure}. Thus near a point in ${\Greg} \, W$, 
%either ${\rm spt} \, \|W\|$ is a $C^{2}$ embedded disk, or is equal to precisely two $C^{2}$ embedded disks intersecting tangentially with each disk on one side of the other, and in either case with
%the scalar mean curvature given by $g$. 
%Note that this terminology is consistent with that of \cite[Defintion~1.7]{BW2}.  
%\end{oss} 

\begin{thm}
\label{thm:regularity}
Let $g \in C^{1,1}(N)$ be positive, and let $V \neq 0$ be a limit $(g, 0)$-varifold on $N$ with associated sequences $\e_{j} \to 0^{+}$, $g_{j} \in C^{1, 1}(N)$ and $u_{\e_{j}} \in W^{1, 2}_{\rm loc}(N)$ where $g_{j} \to g$ locally in $C^{1, 1}$ and $u_{\e_{j}}$ is a critical point of ${\mathcal F}_{\e_{j}, \sigma g_{j}}$ for each $j.$  
Suppose further that the sequence $(u_{\e_{j}})$ converges to $u_\infty \in BV_{\rm loc}(N; \{-1, 1\})$ locally in $L^{1}$, noting that such $u_{\infty}$ exists possibly after passing to a 
subsequence of $(u_{\e_{j}})$. Let $E = \{x \in N \, : \, u_{\infty}(x) = 1\}$ and note, by Theorem~\ref{Roger-Tonegawa}, that $E$ is a Caccioppoli set with $E \neq N$. Finally  suppose that  that the Morse index of $u_{\e_{j}}$ with respect to ${\mathcal F}_{\e_{j}, \sigma g_{j}}$ is bounded from above independently of $j.$ Then we have $\sigma^{-1}V = V_{0} + V_{g}$, where: 
\begin{itemize}
\item[{\rm (i)}]  $V_{0}$ is a (possibly zero) stationary integral $n$-varifold on $N$ with ${\rm sing} \, V_{0} = \emptyset$ if $2 \leq n \leq 6$, ${\rm sing} \, V_{0}$ discrete if $n=7$ and  ${\rm dim}_{\mathcal H} \, \left({\rm sing} \, V_{0}\right) \leq n-7$ if $n \geq 8;$ moreover, $V_{0}$ has locally constant even multiplicity on ${\rm reg} \, V_{0} (=\Greg \, V_{0})$ and 
${\rm spt} \, \|V_{0}\| \subset N \setminus {\rm int} \, (E).$ 
\item[{\rm (ii)}] if $E = \emptyset$, then $V_{g} = 0;$ if $E \neq  \emptyset$  then $V_{g} = |\partial^{\star} \, E| \neq 0$ and we have that ${\rm sing} \, V_{g} = \emptyset$ if $2 \leq n\leq 6$, 
${\rm sing} \, V_{g}$ is discrete if $n=7$ and ${\rm dim}_{\mathcal H} \, ({\rm sing} \, V_{g}) \leq n-7$ if $n \geq 8;$ moreover, the (classical) mean-curvature of the immersed hypersurface ${\Greg} \, V_{g}$ is given by $H_{V_{g}} = g\nu$ where $\nu$ is the unit normal vector to ${\Greg} \, V_{g}$ pointing inward (i.e.\ towards $E$);  $H_{V_{g}}$ is also the generalized mean curvature of $V_{g}$ in $N$, and the first variation of $V_{g}$ is given by $\delta \, V_{g} = -H_{V_{g}} \|V_{g}\|$ on $N$. 
\end{itemize}
Additionally, $\sigma^{-1}V$ has quasi-embedded $PMC (g, 0)$ structure near each point 
$y \in \Greg \, V$ (see Figure~\ref{fig:PMC_structures} below), and $\Greg \, V_{g}$ is a quasi-embedded hypersurface of $N.$  
\end{thm}

For the proof of Theorem~\ref{thm:regularity},  it will be convenient to first establish Theorem~\ref{limit-regularity} below which collects several key facts concerning varifolds $V$ satisfying the hypotheses of Theorem~\ref{thm:regularity}. Theorem~\ref{thm:regularity} will be a direct consequence of Theorem~\ref{limit-regularity} and Theorem~\ref{Roger-Tonegawa} above.

\begin{thm}\label{limit-regularity}
 Let $g \in C^{1, 1}(N)
 $, $g >0$, and let $V$ be a limit $(g, 0)$-varifold on $N$ with associated sequences $\e_{j} \to 0^{+}$, $g_{j} \in C^{1, 1}(N)$ and $u_{\e_{j}} \in W^{1, 2}_{\rm loc}(N)$ where $g_{j} \to g$ locally in $C^{1, 1}$ and $u_{\e_{j}}$ is a critical point of ${\mathcal F}_{\e_{j}, \sigma g_{j}}$ for each $j.$  Suppose that  the Morse index of $u_{\e_{j}}$ with respect to ${\mathcal F}_{\e_{j}, \sigma g_{j}}$ is bounded from above independently of $j.$ Noting by Theorem~\ref{Roger-Tonegawa} that $\sigma^{-1}V$ is integral and that $V$ admits locally bounded generalized mean curvature, we have the following:   

\begin{itemize}
\item[{\rm (i)}] $V$ has no classical singularities; 
\item[{\rm (ii)}] if ${\mathbf C}$ is a tangent cone to $\sigma^{-1}V$, then ${\mathbf C}$ is a stationary integral hypercone with no classical singularities  and stable regular part;
\item[{\rm (iii)}] if $Y \in {\rm spt} \, \|V\|$ and if one tangent cone to $\sigma^{-1}V$ at $Y$ is supported on a hyperplane, then $\sigma^{-1}V$ has quasi-embedded PMC\,$(g, 0)$ structure at $Y$;
\item[{\rm (iv)}] ${\rm sing} \, V = \emptyset$ if $2 \leq n \leq 6$, ${\rm sing} \, V$ is discrete if $n=7$ and ${\rm dim}_{\mathcal H} \, ({\rm sing} \, V) \leq n-7$ if $n \geq 8$.
\end{itemize}
\end{thm}

When $V$ is a \emph{stable} limit $(g, 0)$-varifold, we have quantitative versions of parts (i) and (iii) of the preceding theorem, given by the uniform estimates in Theorem~\ref{estimates} below. These estimates will only be needed for the approximation argument we use in Section~\ref{extension} to extend Theorem~\ref{thm:existence} from the case of positive $g \in C^{1,1}$ to the case of non-negative Lipschitz $g$. 

\begin{thm} \label{estimates}
Let $q$ be a positive integer, $\Gamma >0$ and $\overline{\rho} \in (0, \infty)$. Let ${\mathbf C}$ be a stationary integral cone in ${\mathbb R}^{n+1}$ supported on a union of three or more $n$-dimensional half-hyperplanes meeting along  a common $(n-1)$-dimensional subspace. There are constants $\eta_{0} = \eta_{0}(n, {\mathbf C}, N, \Gamma, \overline{\rho}) \in (0, 1)$, $\e_{0} = \e_{0}(n, q, N, \Gamma, \overline{\rho}) \in (0, 1)$ and $\mu = \mu(n,N, \Gamma, \overline{\rho})$ such that whenever $g \in C^{1,1}(N)$ is a positive function with $\sup_{N} \, \left(|g| + |\nabla g| \right)\leq \Gamma$, $V$ is a stable limit $(g, 0)$-varifold on $N$, $y \in N$ with ${\rm inj}_{y} \, N \geq \overline{\rho}$ and 
$\widetilde{V} = \left(Q \circ \exp_{y}^{-1}\right)_{\#} \, \sigma^{-1} V \res {\mathcal N}_{\rho}(y)$ for some $\rho \in (0, {\rm inj}_{y} \, N)$ and some orthogonal rotation $Q \, : \, T_{y} \, N \approx {\mathbb R}^{n+1} \to T_{y} \, N$ (so that $\widetilde{V}$ is a varifold on 
$B_{\rho}^{n+1}(0) \subset {\mathbb R}^{n+1}$), we have the following conclusions:    
\begin{itemize}
\item[{\rm (i)}]  if $\frac{\|\widetilde{V}\|(B_{\rho}^{n+1}(0))}{\omega_{n}\rho^{n}} \leq \Theta \, (\|{\mathbf C}\|, 0) + 1/4$ then $$\mu\rho + \rho^{-1} {\rm dist}_{\mathcal H} \, ({\rm spt} \, \|{\mathbf C}\| \cap B_{\rho}^{n+1}(0), {\rm spt} \, \|\widetilde{V}\| \cap B_{\rho}^{n+1}(0))   \geq \eta_{0};$$  
\item[{\rm (ii)}] if $\frac{\|\widetilde{V}\|(B_{\rho}^{n+1}(0))}{\omega_{n}\rho^{n}} \leq q+ 1/2,$  
$q-1/2 \leq \frac{\|\widetilde{V}\|\left((B_{\rho/2}^{n}(0) \times {\mathbb R}) \cap B_{\rho}^{n+1}(0)\right)}{\omega_{n} (\frac{\rho}{2})^{n}} \leq q+1/2$ and  
$$E_{\rho} \equiv \mu\rho + \rho^{-n-2} \int_{(B_{\rho/2}^{n}(0) \times {\mathbb R}) \cap B_{\rho}^{n+1}(0)} |x^{n+1}|^{2} \, d\|\widetilde{V}\| < \e_{0}$$ then 
$$\widetilde{V} \res \left(B_{\rho/4}(0) \times {\mathbb R}\right) \cap B_{\rho}^{n+1}(0) = \sum_{j=1}^{q} |{\rm graph} \, u_{j}|$$ where $u_{j} \in C^{2, \alpha}(B_{\rho/4}^{n}(0))$ for any $\alpha \in (0, 1),$ $u_{1} \leq u_{2} \leq \ldots \leq u_{q}$ and \begin{align*} 
&\sup_{B_{\rho/4}^{n}(0)} \left(\rho^{-1} |u_{j}| + |Du_{j}| + \rho |D^{2}u_{j}|\right)\\
& \hspace{.5in}+ \rho^{1+\alpha} \sup_{x, y \in B_{\rho/4}^{n}(0), \, x \neq y} \frac{|D^{2}u_{j}(x) - D^{2}u_{j}(y)|}{|x - y|^{\alpha}} \leq C \sqrt{E_{\rho}}\\
\end{align*}
for some constant $C = C(n, q, \alpha, \Gamma)$ and each $j \in \{1, 2, \ldots, q\}.$
\end{itemize} 
\end{thm}

%The proofs of Theorem~\ref{limit-regularity} and Theorem~\ref{estimates} will rely on the general varifold regularity theory developed in our earlier work \cite{BW1}, \cite{BW2}.  
We shall give the proofs of Theorem~\ref{limit-regularity} and Theorem~\ref{estimates} in Section~\ref{limit-reg-proof} below after establishing some further necessary preliminary results. We point out next how Theorem~\ref{thm:regularity} follows from Theorem~\ref{limit-regularity}. 

\begin{proof}[Proof of Theorem~\ref{thm:regularity} assuming Theorem~\ref{limit-regularity}] 
%Let $y \in {\rm spt} \, \|V\|$ be arbitrary. Since Morse index of $u_{\e_{j}}$ is bounded independently of $j$, it follows from standard arguments that there is $\rho_{y}>0$ and a subsequence $(\e_{j_{k}})$ of $(\e_{j})$ such that $u_{\e_{j_{k}}}$ is stable in $B_{\rho_{y}}(y)$ for each $k = 1, 2, 3, \ldots,$ and consequently $\widetilde{V} = V \res B_{\rho_{y}}(y)$ is a stable limit $(g,0)$-varifold on $B_{\rho_{y}}(y).$ Hence 

By Theorem~\ref{limit-regularity}, part (iv) we have that ${\rm spt} \, \|V\| \setminus \Greg \, V = \emptyset$ if $2 \leq n \leq 6$, ${\rm spt} \, \|V\| \setminus \Greg \, V$ is discrete if $n=7$ and 
$${\rm dim}_{\mathcal H} \, \left({\rm spt} \, \|V\| \setminus \Greg\, V\right) \leq n-7$$ if $n \geq 8$. 
%By Theorem~\ref{limit-regularity}, part (ii) 
By the definition of $\Greg \, V$ and Theorem~\ref{Roger-Tonegawa} we have that for each $z \in \Greg \, V$, there is a ball $B_{\delta_{z}}(z) \subset N$ such that $\sigma^{-1}V \res B_{\delta_{z}}(z) = V_{0}^{(z)} + V_{g}^{(z)}$ where $V_{0}^{(z)}$ is a possibly zero stationary (i.e.\ zero-mean curvature) $n$-varifold on $B_{\delta_{z}}(z)$ with no singularities and constant even integer multiplicity and $V_{g}^{z} = |\partial ^{\star} \, \left(E \cap B_{\delta_{z}}(z)\right)|$ with $\left({\rm spt} \, \|V_{g}^{z}\| \setminus \Greg\, V_{g}^{(z)}\right) \cap B_{\delta_{z}}(z) = \emptyset$ and mean curvature $H$ of the immersion $\Greg\, V_{g}^{(z)}$ satisfying $H = g\nu$ where $\nu$ is the inward pointing unit normal to $\partial^{\star} \, E.$ 

Now define varifolds $V_{0}$, $V_{g}$ on $N \setminus ({\rm spt} \, \|V\| \setminus \Greg \, V)$ as follows: pick any $\varphi \in C^{0}_{c}\left(\left(N \setminus ({\rm spt} \, \|V\| \setminus \Greg \, V)\right)\times G(n)\right)$. If ${\rm spt} \, \varphi \subset B_{\delta_{z}}(z) \times G(n)$ for some $z \in \Greg \, V$, set $V_{0}(\varphi) = V_{0}^{(z)}(\varphi)$ and $V_{g}(\varphi) = V_{g}^{(z)}(\varphi),$ noting that these definitions are independent of the choice of $z$. For an arbitrary function $\varphi \in C^{0}_{c}\left( \left(N \setminus ({\rm spt} \, \|V\| \setminus \Greg \, V)\right) \times G(n)\right),$ set $V_{0}(\varphi) = \sum_{\alpha \in I} V_{0}(\psi_{\alpha} \varphi)$ and $V_{g}(\varphi) = \sum_{\alpha \in I}V_{g}(\psi_{\alpha} \varphi)$ where $\{\psi_{\alpha}\}_{\alpha \in I}$ is a smooth, locally finite partition of unity subordinate to the collection of open sets 
$$\left\{\{B_{\delta_{z}}(z) \, : \,  z \in \Greg\, V\}, N \setminus \overline{\cup_{z \in {\Greg\, V}}  B_{\delta_{z}}(z)}\right\},$$ where $I$ is some indexing set.  Since ${\rm spt} \, \|V_{0}\| \subset {\rm spt} \, \|V\|$, ${\rm spt} \, \|V_{g}\| \subset {\rm spt} \, \|V\|$ and  $V$ has locally bounded genralized mean curvature in $N$, it follows that $V_{0}$, $V_{g}$ have Euclidean volume growth everywhere. Since ${\mathcal H}^{n-1}({\rm spt} \, \|V\| \setminus \Greg \, V)  = 0$, it then follows that $V_{0}$ is stationary in $N$, and that $V_{g}$ has genralized mean curvature $H_{V_{g}}$ in $N$, given on $\Greg \, V_{g}$ by $H_{V_{g}} = g\nu$. We thus have that $V = V_{0} + V_{g}$ on $N$ with $V_{0}$, $V_{g}$ satisfying all of the requirements of the conclusion of the theorem.
\end{proof}

\begin{oss}
It follows from \cite[Remark~1.22]{BW2} that $\Greg \, V_{g}$ (where $V_{g}$ is as in Theorem~\ref{thm:regularity}) is the image of a $C^{3, \alpha}$ immersion. 
\end{oss}

As mentioned above, a key ingredient of the proofs of Theorem~\ref{limit-regularity} and Theorem~\ref{estimates} is Theorem~\ref{BWregularity}.  Though in its abstract form (given in Section~\ref{nonvar-regularity}) Theorem~\ref{BWregularity}  requires no stability hypothesis, in applying it to Theorem~\ref{limit-regularity} and Theorem~\ref{estimates} stability plays a key role. 
For instance, in the proof of Theorem~\ref{limit-regularity}, we employ an inductive argument (inducting on multiplicity $q$) in which Theorem~\ref{BWregularity} is applied to a limit $(g,0)$-varifold $V$ near a point where one of its tangent cones is a multiplicity $q$ hyperplane; the validity of hypotheses (b) and (c) of Theorem~\ref{BWregularity} in this application is a consequence of the uniform Morse index bound on the critical points $u_{\e_{j}}$ associated with $V$. 

Of fundamental importance in verifying hypothesis (c) in this setting is the existence of generalized second fundamental form of $V$ satisfying, locally near every point, a stability inequality (inequality~(\ref{var-stability}) below) as well as what we shall call the \emph{Schoen--Tonegawa inequality} (Lemma~\ref{Schoen-Tonegawa} below). These inequalities are respectively the analogues of the classical stability inequality and the Schoen inequality (\cite[Lemma~1]{SS}) that play a fundamental role in the regularity theory of embedded stable minimal hypersurfaces (\cite{SS}, \cite{Wic}). An important distinction between this classical setting and the present Allen--Cahn setting is that for stable limit $(g, 0)$-varifolds both inequalities hold for \emph{ambient} test functions, with their \emph{ambient gradient} (i.e.\ gradient on $N$) appearing where the intrinsic gradient (i.e.\ gradient on $V$) appears in the classical counterparts. Hence both inequalities are weaker in the present setting (although of course on the $C^{1}$ embedded part the ambient version is equivalent to the intrinsic one). Moreover, in the Allen--Cahn setting these inequalities are derived independently of each other unlike in the classical setting in which the Schoen inequality is derived from the stability inequality. These inequalities for the Allen--Cahn limit varifolds were first established by Tonegawa (\cite{Ton}), under the assumption that the ambient space is Euclidean and $g=0.$ 

We next discuss adaptation of the arguments of \cite{Ton} to the Riemannian ambient space (and for general $g$). While for the stability inequality this adaptation is standard and has appeared in several places in the literature, the derivation of the Schoen--Tonegawa inequality in the Riemannian setting is more subtle and requires care, and the right choice of coordinates. We provide a complete account of the latter in Lemma~\ref{Schoen-Tonegawa} below.

\subsection{Stability inequality and the Schoen--Tonegawa inequality}
\label{st-ineq}
Let $g \in C^{1, 1}(N)$ with $\sup_{N} \ |\nabla g| \leq \Gamma.$ \emph{In this subsection we assume no sign condition on $g$}. Let $V$ be a stable limit $(g,0)$-varifold on $N$, with associated sequence $\e_{j} \to 0^{+}$, functions $g_{\e_{j}} \in C^{1, 1}(N)$ converging locally in $C^{1, 1}$ to $g$, and associated sequence $(u_{\eps_{j}})$ of critical points of ${\mathcal F}_{\e_{j}, \sigma g_{\e_{j}}}.$ For each $j$, the function $u_{j}$ satisfies  the Euler--Lagrange equation 
\begin{equation}\label{ELeqn}
-\e_{j} \Delta \, u_{\e_{j}} + \e_{j}^{-1}W^{\prime}(u_{\e_{j}}) = g_{\e_{j}}
\end{equation}
weakly on $N.$ By elliptic regularity, $u_{\e_{j}} \in C^{3, \alpha}(N)$ for every $\alpha \in (0, 1)$, and the equation (\ref{ELeqn}) holds classically. Since $u_{\e_{j}}$ is stable, $u_{\e_{j}}$ additionally satisfies the inequality 
\begin{equation}\label{ACstability}
\int_{N} \e_{j} |\nabla \varphi|^{2} + \e_{j}^{-1} W^{\prime\prime}(u_{\e_{j}}) \varphi^{2} \geq 0
\end{equation} 
for all $\varphi \in C^{1}_{c}(N).$ 

We first outline the well-known derivation of the stability inequality on $V$ following \cite{Ton}. (See e.g.\ \cite{Hie} for details in the case of Riemannian ambient space). Replacing $\varphi$ in (\ref{ACstability}) with $|\nabla u_{\e_{j}}| \varphi$ (a valid choice by an approximation argument), and using in the resulting inequality 
the differentiated equation (\ref{ELeqn}) (i.e.\ $-\e_{j}^{2} \nabla \Delta \, u_{\e_{j}} + W^{\prime\prime}(u_{\e_{j}}) \nabla u_{\eps_{j}} = \e_{j} \nabla \, g$) and the Bochner identity 
$$\frac{1}{2} \Delta |\nabla \, u_{\e_{j}}|^{2} = |\nabla^{2}\, u_{\e_{j}}|^{2} + <\nabla \Delta \, u_{\e_{j}}, \nabla \, u_{\e_{j}}> + {\rm Ric}\left(\nabla \, u_{\e_{j}}, \nabla \, u_{\e_{j}}\right),$$ 
we obtain 
\begin{eqnarray}\label{pre-stability}
&&\int_{N} \left({\rm Ric}(\nabla u_{\e_{j}}, \nabla u_{\e_{j}}) + |\nabla^{2}\, u_{\e_{j}}|^{2}  - |\nabla|\nabla u_{\e_{j}}||^{2}\right) \e_{j}\varphi^{2}\nonumber\\ 
&&\hspace{1in}\leq \int_{N} |\nabla \varphi|^{2} \e_{j}|\nabla u_{\e_{j}}|^{2} + \int_{N} < \nabla g_{\e_{j}}, \nabla u_{\e_{j}}> \varphi^{2}.
\end{eqnarray}
Integrating by parts in the last term on the right, this leads to  

\begin{eqnarray}\label{stability}
&&\hspace{-.2in}\int_{N} \left(|\nabla^{2}\, u_{\e_{j}}|^{2}  - |\nabla|\nabla u_{\e_{j}}||^{2}\right) \e_{j}\varphi^{2} \leq c\int_{N} \varphi^{2} \e_{j} |\nabla u_{\e_{j}}|^{2} + \int_{N} |\nabla \varphi|^{2} \e_{j}|\nabla u_{\e_{j}}|^{2}\nonumber\\
&&\hspace{2.5in} -\int_{N} {\rm div} \, \left(\varphi^{2} \nabla g_{\e_{j}}\right) u_{\e_{j}}
\end{eqnarray}
where $c = c (N).$ This implies in the first instance that 
\begin{equation} \label{local-bounds}
\int_{B_{\rho_{0}}(0)} {\mathbf B}_{u_{\e_{j}}}^{2} \varphi^{2}\e_{j}|\nabla u_{\e_{j}}|^{2}  \leq C\sup\left( |\varphi|^{2} + |\nabla \varphi|^{2}\right)
\end{equation}
for each $\varphi \in C^{1}_{c}(N)$, where $C = C(N, g, \|V\|({\rm spt} \, \varphi)).$ Here, for a $C^{3}$ function $u$, the non-negative function ${\mathbf B}_{u}$ is defined by 
$${\mathbf B}_{u}^{2} = |\nabla u|^{-2}\left(|\nabla^{2} \, u|^{2}  - |\nabla|\nabla u||^{2}\right)$$  on $\{|\nabla u| > 0\}$ and ${\mathbf B}_{u} = 0$ on $\{|\nabla u| = 0\}$. It follows from (\ref{local-bounds}) (see also \cite{Hie}) that the limit varifold $V$ admits a (unique) generalized second fundamental form ${\mathbf B}_{V},$ and passing to the limit in (\ref{stability}) we obtain that 
$V$ satisfies the following ``ambient'' stability inequality:

\begin{equation}\label{var-stability}
\int_{N}|{\mathbf B}_{V}|^{2} \varphi^{2} d\|V\| \leq \int_{N} |\nabla \varphi|^{2} d\|V\| + c \int_{N}\varphi^{2} d\|V\|
\end{equation}
for all test functions $\varphi \in C^{1}_{c}(N)$ where $\nabla$ is the ambient gradient (i.e.\ the gradient on $N$), and where $c = c(N, \Gamma).$ 

\begin{oss}
Note that in the preceding argument we have used the hypotheses $g_{\e_{j}} \in C^{1, 1}(N)$ and $g_{\e_{j}} \to g$ in $C^{1, 1}$ to integrate by parts in the last term in inequality (\ref{pre-stability}) and to pass to the limit in inequality (\ref{stability}). These hypotheses will also be used again in a similar way in deriving  inequality (\ref{eq:limit_g_term}) below, a key step in the derivation of the Schoen--Tonegawa inequality (Lemma~\ref{Schoen-Tonegawa}). Note also that in passing to the limit in the last term in (\ref{stability}), we have used the fact that $u_{\e_{j}} \to u_{\infty}$ locally in $L^{1}$, $u_{\infty} = \pm1$ a.e.\ in $N$ and that $\{u_{\infty} = 1\}$ is a Caccioppoli set. 
\end{oss}

\begin{oss} Although we do not need it here, we note that by integrating by parts in the last term of (\ref{pre-stability}) and passing to the limit as $\e_{j} \to 0^{+}$, we obtain the following more precise ambient stability inequality: 
\begin{equation}\label{var-stability-reg}
\int_{N} \left({\rm Ric}(\nu, \nu) + |{\mathbf B}_{V}|^{2}\right) \varphi^{2} d\|V\| \leq \int_{N} |\nabla \varphi|^{2} d\|V\| + 2\int_{N \cap \partial^{\star} \{u_{\infty} = 1\}} \varphi^{2} D_{\nu}g d {\mathcal H}^{n} 
\end{equation}
for all $\varphi \in C^{1}_{c}\left(N \setminus {\rm sing} \, V\right),$ where $\nabla$ is the gradient on $N$, and $\nu$ is the unit normal to ${\rm spt} \, \|V\| \setminus {\rm sing} \, V$ which on $\partial^{\star} \{u_{\infty} = 1\}$ is ``outward pointing'' i.e.\ points away from $\{u_{\infty} = 1\}$.

\end{oss}

We next derive the Schoen--Tonegawa inequality, i.e.\ the Riemannian analogue of \cite[Theorem 4]{Ton}. For an arbitrary point $x\in N$ we consider in the first instance normal coordinates centred at $x$, pulling back the Riemannian metric to a ball $B_{{\rm inj}_{x}(N)}^{n+1}(0)\subset T_x N$ via the exponential map. Fix any unit vector $e \in T_{x} N$ and let $P\subset T_x N$ be the hyperplane $e^{\perp}$. Now we consider a Fermi system of coordinates centred at the disk $P\cap B_{\frac{3}{4}{\rm inj}_{x}(N)}^{n+1}(0)$. We will denote by $\{x_1, \ldots, x_{n+1}\}$ this coordinate system. These coordinates are chosen so that $\{x_1, \ldots, x_n, 0\}$ agree with the normal coordinates restricted to $P\cap B_{\frac{3}{4}{\rm inj}_{x}(N)}^{n+1}(0)$; on the other hand, the curve $s\in (-\sigma, \sigma) \to (x_1, \ldots, x_n, s)$ represents the geodesic orthogonal to $P\cap B_{\frac{3}{4}{\rm inj}_{x}(N)}^{n+1}(0)$ at the point $(x_1, \ldots, x_n, 0)$, and $\sigma>0$ is the semi-width of a tubular neighbourhood of $P\cap B_{\frac{3}{4}{\rm inj}_{x}(N)}^{n+1}(0)$. This coordinate system covers therefore an open subset (a cylinder) in $B_{{\rm inj}_{x}(N)}^{n+1}(0)\subset T_x N$. Note that $\p_{x_{n+1}}$ is a unit vector field (for the Riemannian metric induced from $N$), that is determined (up to sign) at $x$ by the initial choice of $e$. Moreover, the coordinate chart has differential equal to the identity at the point $(0, \ldots, 0)$ (which is mapped to $x$). 
%In view of this latter property, the area functional $A$, and more generally the functional $A-\text{Vol}_g$, can be written in the chosen Fermi coordinates by means of a non-parametric integrand $F$ as done in \cite[Section 3.1]{BW2}, maintaining the validity of \cite[(1.2)-(1.5)]{SS}. \footnote{In \cite{BW2} we chose to work in an exponential chart, however the only property that was needed was the fact that $D\text{exp} = Id$ at the centre of coordinates.} 

We shall henceforth write $B^{n} = P\cap B_{\frac{3}{4}{\rm inj}_{x}(N)}^{n+1}(0).$ 
The following lemma is the Riemannian analogue of \cite[Proposition 4]{Ton}.

\begin{lem}\label{prelim-ST}
Let $x\in N$ and $e\in T_x N$ be an arbitrarily chosen unit vector and fix the coordinate chart $B^n \times (-\sigma, \sigma)$ described in the previous discussion, with $(0, \ldots, 0)$ mapped to $x$ and with $\p_{x_{n+1}}$ identified with $e$ at $0$. Let $g_{\e} \in C^{1, 1}(N)$ and $u_{\eps}:B^n \times (-\sigma, \sigma) \to \R$ satisfy $\ca{F'}{\eps, g_{\e}}(u_{\eps})=0$ and $\ca{F''}{\eps, g_{\e}}(u_{\eps})\geq 0$. Then we have for any $\phi \in C^\infty_c(B^n \times (-\sigma, \sigma))$
\begin{eqnarray}\label{eq:Tonegawa_type_ineq}
&&\hspace{-.2in}\int \eps \left(|\nabla^2 u_{\eps}|^2 -|\nabla(\nabla u_{\eps} \cdot \p_{n+1})|^2  - \left|\nabla |\nabla u_{\eps} - (\nabla u_{\eps} \cdot \p_{n+1})\p_{n+1} | \right|^2\right) \phi^2\nonumber\\
&&\leq \int \eps  (|\nabla u_{\eps}|^2-(\nabla u_{\eps} \cdot \p_{n+1})^2) |\nabla \phi|^2 + \int \phi^2 \nabla g_{\e} \cdot \left(\nabla u_{\eps} - (\nabla u_{\eps} \cdot \p_{n+1}) \p_{n+1}\right)\nonumber\\
&&\hspace{1in}+ \int \eps \phi^2 \Rc{N}(\nabla u_{\eps}, \nabla u_{\eps}) + \int \phi^2 E,
\end{eqnarray}
where $|E|\leq C_N \eps |\nabla u_{\eps}|^2$. Here $\nabla^2 u_{\e}$ denotes the Hessian (with respect to the Levi--Civita connection), $\nabla u_{\eps}$ and $\nabla g_{\e}$ are metric gradients and the norms and scalar products are taken with respect to the Riemannian metric.
\end{lem}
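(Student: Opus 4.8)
The plan is to carry out the Riemannian analogue of Tonegawa's argument (\cite[Proposition 4]{Ton}): test the Allen--Cahn stability inequality against a test function built from the part of $\nabla u_{\eps}$ orthogonal to the Fermi direction $\p_{n+1}$, and then rewrite the resulting differential inequality via a Bochner computation and the differentiated Euler--Lagrange equation, arranged so that the $W''$-terms cancel. Write $\psi:=\nabla u_{\eps}\cdot\p_{n+1}$ and $p^{T}:=\nabla u_{\eps}-\psi\,\p_{n+1}$, so that $p^{T}\cdot\p_{n+1}=0$ and $|p^{T}|^{2}=|\nabla u_{\eps}|^{2}-\psi^{2}$. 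Inserting $\varphi=|p^{T}|\,\phi$ into the stability hypothesis $\ca{F''}{\eps,g_{\e}}(u_{\eps})\ge 0$, i.e.\ into $\int \eps|\nabla\varphi|^{2}+\eps^{-1}W''(u_{\eps})\varphi^{2}\ge 0$ for all compactly supported $\varphi$ — admissible after the standard regularisation of $|p^{T}|$ near its zero set, since $W''(u_{\eps})$ is bounded — and integrating the cross term by parts gives the Simons-type inequality
\[
\int \eps\,\phi^{2}\,|p^{T}|\,\Delta|p^{T}|\;\le\;\int \eps\,|p^{T}|^{2}\,|\nabla\phi|^{2}+\int \eps^{-1}W''(u_{\eps})\,|p^{T}|^{2}\,\phi^{2}.
\]
Everything then reduces to a sufficiently precise pointwise formula for $|p^{T}|\,\Delta|p^{T}|$.

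For that formula I would use $|p^{T}|\,\Delta|p^{T}|=\tfrac12\Delta|p^{T}|^{2}-|\nabla|p^{T}||^{2}=\tfrac12\Delta|\nabla u_{\eps}|^{2}-\psi\,\Delta\psi-|\nabla\psi|^{2}-|\nabla|p^{T}||^{2}$, apply the Bochner identity to $\tfrac12\Delta|\nabla u_{\eps}|^{2}$, expand $\Delta\psi=\Delta(\nabla u_{\eps}\cdot\p_{n+1})$ by the Leibniz rule for the Laplacian of an inner product of vector fields, and substitute the Euler--Lagrange equation $\Delta u_{\eps}=\eps^{-2}W'(u_{\eps})-\eps^{-1}g_{\e}$ (from $\ca{F'}{\eps,g_{\e}}(u_{\eps})=0$) together with its $\p_{n+1}$-derivative. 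Here the Fermi structure enters: $\p_{n+1}$ is a unit vector field (Gauss lemma), is geodesic ($\nabla_{\p_{n+1}}\p_{n+1}=0$), equals the gradient of the signed distance to the reference disk $P\cap B^{n+1}_{\frac34\mathrm{inj}(N)}(x)$, and the chart has identity differential at the centre; hence $|\nabla\p_{n+1}|$, $|\Delta^{\nabla}\p_{n+1}|$ and $|\mathrm{div}\,\p_{n+1}|$ are all bounded by some $C_{N}$. Collecting terms, the $\eps^{-2}W''(u_{\eps})$-contributions assemble into $\eps^{-2}W''(u_{\eps})|p^{T}|^{2}$ and the $\nabla g_{\e}$-contributions into $-\eps^{-1}\nabla g_{\e}\cdot p^{T}$, and one is left with an identity
\[
|p^{T}|\,\Delta|p^{T}|=\eps^{-2}W''(u_{\eps})|p^{T}|^{2}-\eps^{-1}\nabla g_{\e}\cdot p^{T}+|\nabla^{2}u_{\eps}|^{2}-|\nabla\psi|^{2}-|\nabla|p^{T}||^{2}+\Rc{N}(\nabla u_{\eps},\nabla u_{\eps})+R,
\]
in which every term of $R$ is pointwise $\le C_{N}|\nabla u_{\eps}|^{2}$ \emph{except} a single term, of the shape $-2\,\psi\,\langle\nabla^{2}u_{\eps},\nabla\p_{n+1}\rangle$, that is linear in the Hessian. (Up to this term, the computation parallels the full-gradient one leading to (\ref{pre-stability}).) Substituting into the Simons-type inequality, the $\eps^{-1}W''(u_{\eps})|p^{T}|^{2}$-terms cancel, and one obtains (\ref{eq:Tonegawa_type_ineq}) provided the contribution of this one Hessian-linear term can be put into $\int\phi^{2}E$; note that the Ricci term, being pointwise $\le C_{N}|\nabla u_{\eps}|^{2}$, may be freely moved between the two sides, so its sign is immaterial here.

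The hard part is precisely controlling $\int \eps\,\phi^{2}\,\psi\,\langle\nabla^{2}u_{\eps},\nabla\p_{n+1}\rangle$. Pointwise it is of size $\sim|\nabla u_{\eps}|\,|\nabla^{2}u_{\eps}|$; since $\int\eps\phi^{2}|\nabla^{2}u_{\eps}|^{2}$ is in general \emph{unbounded} as $\eps\to 0$, this term can be neither folded into the error $E$ (for which $\int\phi^{2}|E|$ must stay bounded) nor absorbed by Young's inequality into the good term $\int\eps\phi^{2}|\nabla^{2}u_{\eps}|^{2}$ without degrading its coefficient $1$. The way out is a double integration by parts. First move one derivative off $\nabla u_{\eps}$ in the above integral: this produces $\nabla\phi$-terms of the admissible type together with one remaining Hessian-linear integral, essentially $\int\eps\phi^{2}\,\langle\nabla_{\p_{n+1}}\nabla u_{\eps},\,\nabla_{\nabla u_{\eps}}\p_{n+1}\rangle$. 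Integrate \emph{this} once more, now differentiating along $\p_{n+1}$ and using crucially that $\nabla\p_{n+1}$ is a \emph{symmetric} endomorphism — which holds precisely because $\p_{n+1}$ is the gradient of a function in these coordinates, the reason Fermi coordinates adapted to the distance function are needed and plain normal coordinates are not enough — together with $|\p_{n+1}|\equiv 1$ and $\nabla_{\p_{n+1}}\p_{n+1}=0$: this reproduces the same integral with the opposite sign, so it equals one half of a sum of terms each $\le C_{N}|\nabla u_{\eps}|^{2}$ (plus admissible $\nabla\phi$-terms). Reassembling everything then yields (\ref{eq:Tonegawa_type_ineq}) with $|E|\le C_{N}\eps|\nabla u_{\eps}|^{2}$, completing the proof.
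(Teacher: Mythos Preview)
Your overall plan---test stability with $\varphi=|p^{T}|\phi$, rewrite via Bochner and the differentiated Euler--Lagrange equation so that the $W''$-contributions cancel---is exactly the paper's. The divergence is in how the term $\psi\,\partial_{n+1}(\Delta u)$ (equivalently the commutator of $\Delta$ with $\partial_{n+1}$) is handled. The paper passes to iterated \emph{covariant} derivatives and computes $\Delta(\nabla_{n+1}u)-\nabla_{n+1}(\Delta u)=\mathscr{g}^{ij}R^{a}_{i(n+1)j}\nabla_{a}u$, a pure curvature term bounded by $C_{N}|\nabla u|$; this delivers (\ref{eq:Tonegawa_type_ineq}) in one stroke with no Hessian-linear residue to absorb. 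You instead expand $\Delta(\nabla u\cdot\partial_{n+1})$ via the Leibniz rule for the Laplacian of an inner product, which throws off the second-order term $2\psi\langle\nabla^{2}u,\nabla\partial_{n+1}\rangle$, and then dispose of it by a double integration by parts exploiting that $S:=\nabla\partial_{n+1}$ is symmetric (as the Hessian of the Fermi distance) and satisfies $S(\partial_{n+1})=\nabla_{\partial_{n+1}}\partial_{n+1}=0$. Your mechanism is sound: after the first integration by parts, using $S(\nabla u)=S(p^{T})$, the surviving bad integral has the form $\int\eps\phi^{2}\langle\nabla_{\partial_{n+1}}\nabla u,\,S(p^{T})\rangle$, and the second integration by parts along $\partial_{n+1}$ combined with the self-adjointness of $S$ reproduces its own negative modulo terms bounded by $C_{N}\eps\phi^{2}|\nabla u|^{2}$. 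The only cost is that the $\nabla\phi$-terms shed along the way are of size $C_{N}\eps\phi|\nabla\phi|\,|p^{T}|(|\psi|+|p^{T}|)$, which after Young's inequality add $C_{N}\int\eps|\nabla\phi|^{2}|p^{T}|^{2}$ to the right-hand side; so your route yields the inequality with coefficient $1+C_{N}$ rather than the stated $1$ in front of $\int\eps|p^{T}|^{2}|\nabla\phi|^{2}$. This is harmless for the downstream Schoen--Tonegawa inequality (Lemma~\ref{Schoen-Tonegawa}), whose constant is $c(N)$ anyway. In sum: the paper's tensorial commutator computation is the short path; your Leibniz-plus-double-IBP route is longer but makes the role of the shape operator $\nabla\partial_{n+1}$ fully explicit, and your point that it is the \emph{symmetry} of $\nabla\partial_{n+1}$ (not merely its boundedness) that closes the estimate is a genuine structural observation absent from the paper's argument.
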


\begin{oss}
\label{oss:hessian}
Recall that $|\nabla^2 u|^2= \sum_{i=1}^{n+1}\sum_{j=1}^{n+1} \mathscr{g}^{ab} \mathscr{g}^{cd} {u}_{;ac} {u}_{;bd}$, where $\mathscr{g}$ stands for the Riemannian metric tensor, $u_{;ac}=\p^2_{ac}u - \Gamma_{ac}^s \p_s u$ and $\Gamma_{ac}^s$ are the Christoffel symbols for the Levi--Civita connection. 
\end{oss}

\begin{proof}
For notational ease we will write, within this proof, $u,$ $g$ instead of $u_{\eps}$, $g_{\e}$. Given $\phi \in C^\infty_c(B^n \times (-\sigma, \sigma))$, we use the stability assumption $\int  \eps |\nabla \varphi|^2 + \frac{W''(u)}{\eps}\varphi^2 \geq 0$ with the choice of test function $\varphi = \phi|\nabla u -(\nabla u \cdot \p_{n+1})\p_{n+1}|$. We remark that  the full justification of the argument would require the use of the test function $\varphi = \phi\sqrt{\delta^2 + |\nabla u|^2 -(\nabla u \cdot \p_{n+1})^2}$ for $\delta>0$, and then taking the limit $\delta\to 0$ (as in the proof of \cite[Proposition 4]{Ton}). With this understood, we will set straight away $\delta=0$, to make notation and ideas more transparent. We recall that $\p_{n+1}$ is a coordinate vector and has everywhere unit length. We get
\begin{eqnarray}\label{eq:ineq_1}
&&\int \eps \phi^2 \left|\nabla |\nabla u -(\nabla u \cdot \p_{n+1})\p_{n+1}|  \right|^2 + \int \eps |\nabla \phi|^2 \left(|\nabla u|^2-(\nabla u \cdot \p_{n+1})^2  \right)\nonumber\\
&&\hspace{.5in}+\int 2 \eps \phi \,|\nabla u -(\nabla u \cdot \p_{n+1})\p_{n+1}| \, \nabla \phi \cdot \nabla  |\nabla u -(\nabla u \cdot \p_{n+1})\p_{n+1}|\nonumber\\
&&\hspace{1in}+\int \frac{W''(u)}{\eps}\phi^2\left(|\nabla u|^2-(\nabla u \cdot \p_{n+1})^2  \right) \geq 0.
\end{eqnarray}
Differentiating the PDE $\eps \Delta u -  \frac{W'(u)}{\eps} = -g$ we have $\eps \nabla \Delta u -  \frac{W''(u)}{\eps}\nabla u  = -\nabla g$; taking the scalar product on both sides with $\nabla u -(\nabla u \cdot \p_{n+1})\p_{n+1}$ we obtain
\begin{eqnarray}\label{eq:diff_PDE}
&&\hspace{-.7in}\eps \nabla \Delta u \left(\nabla u -(\nabla u \cdot \p_{n+1})\p_{n+1}\right) + \nabla g \left(\nabla u -(\nabla u \cdot \p_{n+1})\p_{n+1}\right)\nonumber\\ 
&&\hspace{1.5in}= \frac{W''(u)}{\eps}\left(|\nabla u|^2-(\nabla u \cdot \p_{n+1})^2  \right).
\end{eqnarray}
We substitute (\ref{eq:diff_PDE}) in the last term of (\ref{eq:ineq_1}). Moreover we manipulate the third term of (\ref{eq:ineq_1}) by using the identity $f\nabla f = \frac{1}{2} \nabla f^2$ twice, and then performing an integration by parts: this term then becomes $-\int \eps \phi^2 \Delta \frac{|\nabla u|^2-(\nabla u\cdot \p_{n+1})^2}{2}$.
 This leads to
\begin{eqnarray}\label{eq:ineq_2}
&&\hspace{-.5in}\int \eps \phi^2 \left|\nabla |\nabla u -(\nabla u \cdot \p_{n+1})\p_{n+1}|  \right|^2 + \int \eps |\nabla \phi|^2 \left(|\nabla u|^2-(\nabla u \cdot \p_{n+1})^2  \right)\nonumber\\
&&\hspace{-.4in}-\int \eps \phi^2 \Delta \frac{|\nabla u|^2}{2}+\int \eps \phi^2\Delta \frac{(\nabla u\cdot \p_{n+1})^2}{2}+\int \eps \phi^2 \nabla \Delta u \cdot \nabla u\nonumber\\
&&\hspace{-.3in}-\int \eps \phi^2(\nabla u \cdot \p_{n+1}) \nabla \Delta u \cdot \p_{n+1}+\int\phi^2 \nabla g \left(\nabla u -(\nabla u \cdot \p_{n+1})\p_{n+1}\right) \geq 0.
\end{eqnarray}
Using the Bochner identity (for the third and fifth terms of (\ref{eq:ineq_2})) we get
\begin{eqnarray}\label{eq:ineq_3}
&&\hspace{-.5in}\int \eps \phi^2 \left|\nabla |\nabla u -(\nabla u \cdot \p_{n+1})\p_{n+1}|  \right|^2 + \int \eps |\nabla \phi|^2 \left(|\nabla u|^2-(\nabla u \cdot \p_{n+1})^2  \right)\nonumber\\
&&\hspace{-.4in}-\int \eps \phi^2 |\nabla^2 u|^2 -\int \eps \phi^2 \Rc{N}(\nabla u,\nabla u)+\int \eps \phi^2\Delta \frac{(\nabla u\cdot \p_{n+1})^2}{2}\nonumber\\
&&\hspace{-.3in}-\int \eps \phi^2(\nabla u \cdot \p_{n+1}) \nabla \Delta u \cdot \p_{n+1}+\int\phi^2 \nabla g \left(\nabla u -(\nabla u \cdot \p_{n+1})\p_{n+1}\right) \geq 0.
\end{eqnarray}
We will now focus on the sixth term in (\ref{eq:ineq_3}). Firstly, we compute the commutator $\nabla \Delta u \cdot \p_{n+1} - \Delta(\nabla u \cdot \p_{n+1})= \nabla_{n+1}( \Delta u ) - \Delta(\nabla_{n+1} u)$, where $\nabla_j$ stands for the covariant derivative along the (coordinate) vector field $\p_j$. Writing $\Delta = \mathscr{g}^{ij}\nabla_i \nabla_j$ (this is the expression of the rough Laplacian, which agrees with the Laplace-Beltrami when applied to functions), and writing $R$ for the curvature tensor, we compute
\begin{eqnarray*}
&&\Delta(\nabla_{n+1} u)- \nabla_{n+1}( \Delta u ) = \mathscr{g}^{ij}\nabla_i \nabla_j(\nabla_{n+1} u) - \nabla_{n+1} ( \mathscr{g}^{ij}\nabla_i \nabla_j u)\nonumber\\
&&\underbrace{=}_{\text{$\mathscr{g}$ is parallel}} \mathscr{g}^{ij}\nabla_i \nabla_j \nabla_{n+1} u - \mathscr{g}^{ij} \nabla_{n+1}  \nabla_i \nabla_j u\nonumber\\ 
&&\hspace{.2in}= \mathscr{g}^{ij}\nabla_i \nabla_j \nabla_{n+1} u - \mathscr{g}^{ij} \nabla_i  \nabla_{n+1} \nabla_j u + \mathscr{g}^{ij} R_{i (n+1) j}^a \nabla_a u\nonumber\\
&&\hspace{.2in}= \mathscr{g}^{ij}\nabla_i \nabla_j \nabla_{n+1} u - \mathscr{g}^{ij} \nabla_i  \nabla_{j} \nabla_{n+1} u + \mathscr{g}^{ij} R_{i (n+1) j}^a \nabla_a u= \mathscr{g}^{ij} R_{i (n+1) j}^a \nabla_a u.
\end{eqnarray*}
We can thus replace $\nabla \Delta u \cdot \p_{n+1}$ (in the sixth term of (\ref{eq:ineq_3})) with $\Delta(\nabla u \cdot \p_{n+1}) + \widetilde{E}$, where $|\widetilde{E}|\leq C_N |\nabla u|$. Working then on the fifth and sixth terms in (\ref{eq:ineq_3}), noting that $-\int \eps \phi^2 (\nabla u \cdot \p_{n+1}) \Delta (\nabla u \cdot \p_{n+1}) = -\int \eps \phi^2  \Delta \frac{ (\nabla u \cdot \p_{n+1})^2}{2} + \int \eps \phi^2 |\nabla (\nabla u \cdot \p_{n+1})|^2$, we obtain
\begin{eqnarray}\label{eq:ineq_4}
&&\hspace{-.4in}\int \eps \phi^2 \left|\nabla |\nabla u -(\nabla u \cdot \p_{n+1})\p_{n+1}|  \right|^2 + \int \eps |\nabla \phi|^2 \left(|\nabla u|^2-(\nabla u \cdot \p_{n+1})^2  \right)\nonumber\\
&&\hspace{-.2in}-\int \eps \phi^2 |\nabla^2 u|^2 -\int \eps \phi^2 \Rc{N}(\nabla u,\nabla u) + \int \eps \phi^2 |\nabla (\nabla u \cdot \p_{n+1})|^2\nonumber\\
&&-\int \eps \phi^2\widetilde{E} (\nabla u \cdot \p_{n+1}) +\int\phi^2 \nabla g \left(\nabla u -(\nabla u \cdot \p_{n+1})\p_{n+1}\right) \geq 0,
\end{eqnarray}
from which (\ref{eq:Tonegawa_type_ineq}) follows immediately.
\end{proof}

The next lemma provides the geometric significance of the integrand on the left-hand-side of (\ref{eq:Tonegawa_type_ineq}). Let $p\in B^n \times (-\sigma, \sigma)$ be such that $\nabla u(p)\neq 0$. Choose a normal system of coordinates $(y_1, \ldots, y_{n+1})$ centred at $p$, with $\p_{y_{n+1}} = \p_{x_{n+1}}$ at $p$ and $\nabla u \in \text{span}\{\p_{y_{n}}, \p_{y_{n+1}}\}$ at $p$. In analogy with the notation in \cite{Ton} we set 
$$|\widetilde{B}^{\eps}|^2(p)= \sum_{i=1}^{n-1}\sum_{j=1}^{n+1}|\p^2_{ij} u_{\eps}|^2(p),$$
where the partial derivatives are computed with respect to the coordinates $(y_1, \ldots, y_{n+1})$. Then $|\widetilde{B}^{\eps}|^2(p)$ is a well-defined function on $(B^n \times (-\sigma, \sigma)) \setminus \{\nabla u_{\eps}=0\}$ (it does not depend on the choice at $p$ of $\p_{y_j}$ for $j\in \{1, \ldots, n-1\}$). 

\begin{oss}
In the sequel, $|\widetilde{B}^{\eps}|^2$ will always appear multiplied by $|\nabla u_{\eps}|^2$ and integrated with respect to $d\mathcal{H}^{n+1} \res (B^n \times (-\sigma, \sigma))$. Therefore, we can view $|\widetilde{B}^{\eps}|^2$ as a summable function with respect to the measure $|\nabla u_{\eps}|^2\,d\mathcal{H}^{n+1} \res (B^n \times (-\sigma, \sigma))$ (regardless of the fact that it is undefined where $\nabla u_{\eps}=0$). In fact, later on, when letting $\eps\to 0^{+}$ in (\ref{eq:Tonegawa_type_ineq}), we will treat $|\widetilde{B}^{\eps}|^2$ and $\eps |\nabla u_{\eps}|^2$ as a measure-function pair (as in \cite{Ton}).
\end{oss}

\begin{lem}
 \label{lem:n+1_n-1_minor}
We have, on $(B^n \times (-\sigma, \sigma)) \setminus \{\nabla u_{\eps}=0\}$,
\begin{eqnarray}\label{eq:B_tilde_square}
&&\hspace{-.6in}|\nabla^2 u_{\eps}|^2 -|\nabla(\nabla u_{\eps} \cdot \p_{n+1})|^2  - \left|\nabla |\nabla u_{\eps} - (\nabla u_{\eps} \cdot \p_{n+1})\p_{n+1} | \right|^2\nonumber\\ 
&&\hspace{2in}=|\nabla u_{\eps}|^2|\widetilde{B}^{\eps}|^2+ \widetilde{E},
\end{eqnarray} 
where $|\widetilde{E}|\leq C_N |\nabla u_{\eps}|^2$.
\end{lem}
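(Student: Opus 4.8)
The assertion is pointwise, so fix $p\in B^n\times(-\sigma,\sigma)$ with $\nabla u_\eps(p)\neq 0$; write $u=u_\eps$ and $N=\p_{n+1}$ for the Fermi normal field, which is a \emph{unit geodesic} field ($|N|\equiv 1$, $\nabla_N N=0$). Put $w=\langle\nabla u,N\rangle$ and $P=\nabla u-wN$, so $P\perp N$ and $|\nabla u|^2=w^2+|P|^2$. The plan is to evaluate each of the three terms on the left of \eqref{eq:B_tilde_square} in the normal coordinate chart $(y_1,\dots,y_{n+1})$ at $p$ used to define $|\widetilde B^\eps|^2$ --- chosen so that $\p_{y_{n+1}}|_p=N$ and $\nabla u(p)\in\mathrm{span}\{\p_{y_n},\p_{y_{n+1}}\}|_p$, hence $\p_{y_k}u(p)=0$ for $k\leq n-1$ --- and to control the error created by $N$ being only a unit geodesic field rather than a parallel one. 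At $p$ the metric in these coordinates is the identity, the Christoffel symbols vanish, covariant derivatives agree with partial derivatives, and $(\nabla^2u)_{ij}(p)=\p^2_{y_iy_j}u(p)=:H_{ij}$. First I would record the ``frozen'' identity: if $N$ were parallel then $\nabla u\cdot\p_{n+1}=\p_{y_{n+1}}u$, so $|\nabla(\nabla u\cdot\p_{n+1})|^2(p)=\sum_j H_{j,n+1}^2$; similarly $|\nabla u-(\nabla u\cdot\p_{n+1})\p_{n+1}|^2=\sum_{k\leq n}(\p_{y_k}u)^2$, which on differentiating and using $\p_{y_k}u(p)=0$ for $k\leq n-1$ gives $\bigl|\nabla|\nabla u-(\nabla u\cdot\p_{n+1})\p_{n+1}|\bigr|^2(p)=\sum_j H_{jn}^2$; since $|\nabla^2u|^2(p)=\sum_{i,j}H_{ij}^2$, subtracting removes exactly the rows $i=n$ and $i=n+1$ and leaves $\sum_{i=1}^{n-1}\sum_{j=1}^{n+1}H_{ij}^2$, which is the right-hand side $|\nabla u_\eps|^2|\widetilde B^\eps|^2(p)$ of \eqref{eq:B_tilde_square}. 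This is the algebraic computation underlying \cite[Prop.~4, Prop.~5]{Ton}.

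The substance of the lemma is that dropping parallelism perturbs each term only by an error of size $\leq C_N|\nabla u|^2$. For the second term, $(e_j)(w)=(\nabla^2u)(e_j,N)+\langle\nabla u,\nabla_{e_j}N\rangle$; since $|N|\equiv 1$ one has $\nabla_{e_j}N\perp N$, so $\langle\nabla u,\nabla_{e_j}N\rangle=\langle P,\nabla_{e_j}N\rangle$ with $|\langle P,\nabla_{e_j}N\rangle|\leq C_N|P|\leq C_N|\nabla u|$. For the third term the apparent danger is the factor $1/|P|$ in $\p_{y_j}|P|$, but it cancels: from $|P|^2=|\nabla u|^2-w^2$ one gets $(e_j)|P|^2=2(\nabla^2u)(\nabla u,e_j)-2w(\nabla^2u)(N,e_j)-2w\langle P,\nabla_{e_j}N\rangle$, and since $\nabla u=|P|e_n+wN$ at $p$ the two Hessian contributions combine into $2|P|(\nabla^2u)(e_n,e_j)$, so $(e_j)|P|=(\nabla^2u)(e_n,e_j)-w\langle e_n,\nabla_{e_j}N\rangle$, with no singularity. (For full rigour one carries a parameter $\delta>0$, replacing $|P|$ by $\sqrt{\delta^2+|P|^2}$ throughout and letting $\delta\to 0^+$ at the end, exactly as in \cite[Prop.~4]{Ton}.) Squaring, summing over $j$ and subtracting, all the cross terms collect into $\widetilde E$: one contribution is $-(w^2+|P|^2)\sum_j\langle e_n,\nabla_{e_j}N\rangle^2=-|\nabla u|^2\sum_j\langle e_n,\nabla_{e_j}N\rangle^2$, which is manifestly $\leq C_N|\nabla u|^2$, while the remaining cross terms combine (using $\nabla_\bullet N\perp N$) into a term of the form $2|\nabla u|\,(\nabla^2u)(\nu^\perp,Z)$ with $|Z|\leq C_N$, where $\nu^\perp=(we_n-|P|N)/|\nabla u|$ is the unit vector in $\mathrm{span}\{\nabla u,N\}$ orthogonal to $\nabla u$.

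\textbf{The main obstacle} is thus the bound $\bigl|\,|\nabla u|\,(\nabla^2u)(\nu^\perp,\cdot)\bigr|\leq C_N|\nabla u|^2$ on the bounded directions that arise. For a general $C^2$ function this fails, so here one uses the Allen--Cahn structure of $u_\eps$: because $\nabla u_\eps=|\nabla u_\eps|\,\nu$ with $\nu$ the unit normal to $\{u_\eps=\mathrm{const}\}$, and $u_\eps$ solves its Euler--Lagrange equation, $\nabla^2u_\eps$ is, modulo a term of size $O_N(|\nabla u_\eps|)$, a multiple of $\nu\otimes\nu$; contracting with $\nu^\perp\perp\nabla u_\eps$ annihilates that leading part, so $|\nabla u_\eps|\,(\nabla^2u_\eps)(\nu^\perp,\cdot)=O_N(|\nabla u_\eps|^2)$, as required. (Equivalently one may retain in $\widetilde E$ an $O_N(|\nabla u_\eps|\,|\nabla^2u_\eps|)$ term and, on inserting the lemma into \eqref{eq:Tonegawa_type_ineq}, absorb the resulting $\eps|\nabla^2u_\eps|^2\phi^2$ into the left-hand side there; this is harmless for the way Lemma~\ref{prelim-ST} and the present lemma are combined.) Assembling the three estimates gives \eqref{eq:B_tilde_square} with $|\widetilde E|\leq C_N|\nabla u_\eps|^2$.
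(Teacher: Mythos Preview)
There is a genuine gap in your argument. Your computation correctly isolates the residual cross term as $2|\nabla u|\,(\nabla^2u)(\nu^\perp,Z)$ with $|Z|\leq C_N$, but the way you dispose of it is wrong. The assertion that ``$\nabla^2u_\eps$ is, modulo a term of size $O_N(|\nabla u_\eps|)$, a multiple of $\nu\otimes\nu$'' is false: the Euler--Lagrange equation $\eps\Delta u=\eps^{-1}W'(u)-g$ constrains only the \emph{trace} of the Hessian, not its rank or structure. In a frame adapted to the level sets, the tangent--tangent block of $\nabla^2u$ equals $|\nabla u|$ times the second fundamental form of the level set, which is precisely (a piece of) $|\nabla u|\,|\widetilde B^\eps|$ --- the very quantity the lemma is designed to control, and which carries no a~priori bound of the form $C_N$. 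So $(\nabla^2u)(\nu^\perp,Z)$ cannot in general be bounded by $C_N|\nabla u|$, and your contraction argument does not give $|\widetilde E|\leq C_N|\nabla u|^2$. Your parenthetical alternative --- keeping an $O_N(|\nabla u|\,|\nabla^2u|)$ error and absorbing it via Young's inequality when the lemma is fed back into \eqref{eq:Tonegawa_type_ineq} --- may be a workable route for the downstream \emph{application}, but it does not prove the lemma as stated.

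The paper proceeds differently and does not invoke the PDE at all. It first rewrites $\bigl|\nabla|P|\bigr|^2(p)$ as $|\nabla(\nabla u\cdot e_n)|^2(p)$ for a unit vector field $e_n$ taken as part of an orthonormal frame with $e_{n+1}=N=\partial_{n+1}$ and $e_n(p)$ along $P(p)$, so that both terms subtracted from $|\nabla^2u|^2$ have the common shape $|\nabla(\nabla_eu)|^2(p)$ for a \emph{unit} field $e$. Writing $e=\eta^i\partial_i$ in normal coordinates at $p$ chosen so that $\partial_n(p)=e(p)$, the paper then invokes the identity $(\partial_a\eta^n)(p)=0$ --- obtained by differentiating $|e|^2\equiv 1$ and using the vanishing of the metric derivatives at $p$ --- to argue that the Hessian--linear cross term drops out, leaving $\sum_a(\partial^2_{an}u)^2$ plus a remainder quadratic in $\partial_a\eta^i\,\partial_iu$, which is manifestly $\leq C_N|\nabla u|^2$. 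Thus the mechanism the paper relies on is purely geometric (the unit-length constraint on $e$ combined with normal coordinates), not the Allen--Cahn structure you appeal to.
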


\begin{proof} We write $\nu=\frac{\nabla u}{|\nabla u|}$ in a neighbourhood of $p$ and let $\{e_1, \ldots, e_n, e_{n+1}\}$ be a smooth orthonormal frame in a neighbourhood of $p$, chosen so that $e_{n+1}$ agrees with $\p_{n+1}$ in the neighbourhood and $e_n$ is determined at $p$ by the condition $\nu \in \text{span}\{e_n(p), e_{n+1}(p)\}$. We set $\nu_j = \nu \cdot e_j$, so that $\nu = \sum_{j=1}^{n+1} \nu_j e_j$. Then
\begin{eqnarray*}
&&\hspace{-.4in}\left|\nabla |\nabla u_{\eps} - (\nabla u_{\eps} \cdot \p_{n+1})\p_{n+1} | \right|^2 = \left|\nabla \left(\sqrt{\sum_{j=1}^n \nu_j^2 } \,|\nabla u_{\eps}|\right) \right|^2\nonumber\\
&&\hspace{1.6in}=\left|\frac{\sum_{j=1}^n \nu_j \nabla \nu_j}{\sqrt{\sum_{j=1}^n \nu_j^2 }} \,|\nabla u_{\eps}| +    \sqrt{\sum_{j=1}^n \nu_j^2 } \,\nabla |\nabla u_{\eps}|  \right|^2.
\end{eqnarray*}
Evaluating at $p$, at which $\nu_j=0$ for $j\in\{1, \ldots, n-1\}$, this expression becomes
$$\left|\frac{\nu_n \nabla \nu_n}{|\nu_n|} \,|\nabla u_{\eps}| +    |\nu_n| \,\nabla |\nabla u_{\eps}|  \right|^2= \left| (\nabla \nu_n)  \,|\nabla u_{\eps}| +    \nu_n \,\nabla |\nabla u_{\eps}|  \right|^2.$$
This says (by writing $\nabla (\nabla u_{\eps} \cdot e_n) = \nabla (|\nabla u_{\eps}| \nu_n))$) that 
$$\left|\nabla |\nabla u_{\eps} - (\nabla u_{\eps} \cdot \p_{n+1})\p_{n+1} | \right|^2 (p)=|\nabla (\nabla u_{\eps} \cdot e_n)|^2 (p),$$ with the given choice of $\{e_1, \ldots, e_n, e_{n+1}\}$. Note that $|\nabla (\nabla u_{\eps} \cdot e_n)|^2(p)$ only depends on the choice of the (unit) vector field $e_n$ (rather than the whole frame). We have thus obtained that, when evaluated at $p$, each of the two terms that are substracted from $|\nabla^2 u_{\eps}|^2$ in (\ref{eq:B_tilde_square}) is of the form $|\nabla (\nabla_e u_{\eps})|^2(p)$, for a certain unit vector field $e$.

We write such a unit vector field $e$ in coordinates (around $p$) as $e=\eta^i \p_i$. Then 
$\nabla (\nabla_e u_{\eps}) = \nabla (\eta^i \p_i u) = \mathscr{g}^{ab} \p_b(\eta^i \p_i u) \p_a$. If the coordinate frame is orthonormal at $p$, taking the norm we get (with all the terms evaluated at $p$)
\begin{eqnarray*}
&&|\nabla (\nabla_e u_{\eps})|^2=\sum_{a=1}^{n+1} (\p_a \eta^i \p_i u + \eta^i \p^2_{ai} u)^2\nonumber\\
&&\hspace{1in}=\sum_{a=1}^{n+1}[ (\p_a \eta^i)^2 ( \p_i u)^2 +\p_a ({\eta^i})^2 \p_i u \p^2_{ai} u+(\eta^i)^2 (\p^2_{ai} u)^2].
\end{eqnarray*}
If the coordinate system is moreover chosen so that $\p_n(p) = e(p)$ we obtain at $p$
$$|\nabla (\nabla_e u_{\eps})|^2 = \sum_{a=1}^{n+1} (\p_a \eta^i)^2 ( \p_i u)^2 + 2\,  \p_a \eta^n \, \p_n u \,\p^2_{nn} u+\sum_{a=1}^{n+1}  (\p^2_{an} u)^2.$$
Finally, we note that if the coordinate system is chosen to be normal at $p$ (with coordinate frame orthonormal at $p$ and with $\p_n=e$ at $p$) we have $(\p_a \eta^n) (p)=0$. This follows by expanding $0=\p_a|e|^2 = \p_a(\mathscr{g}_{ij}\eta^i \eta^j)$ and using the vanishing of the partial derivatives of $\mathscr{g}_{ij}$ at $p$, which gives $2\mathscr{g}_{ij}\, \p_a\eta^i \, \eta^j =0$. Evaluating at $p$, where $\eta^i=0$ for $i\neq n$ and $\eta^n=1$, we find $(\p_a \eta^n)(p)=0$. In conclusion, at $p$ we have the following equality:
$$|\nabla (\nabla_e u_{\eps})|^2 = \sum_{a=1}^{n+1} (\p_a \eta^i)^2 ( \p_i u)^2 +\sum_{a=1}^{n+1}  (\p^2_{an} u)^2$$
in normal coordinates at $p$ chosen so that $e(p)=\p_n(p)$. With the same argument we get
$$|\nabla (\nabla_{e_{n+1}} u_{\eps})|^2 = \sum_{a=1}^{n+1} (\p_a \widetilde{\eta}^i)^2 ( \p_i u)^2 +\sum_{a=1}^{n+1}  (\p^2_{a(n+1)} u)^2$$
in normal coordinates at $p$ chosen so that $e_{n+1}(p)=\p_{n+1}(p)$ (here $\widetilde{\eta}^i$ denote the coordinates of $e_{n+1}$).
Recalling (see Remark \ref{oss:hessian}) that, in normal coordinates at $p$, we have $|\nabla^2 u_{\eps}|^2 = \sum_{i,j=1}^{n+1}  (\p^2_{ij} u)^2$, the claim is proved.
\end{proof}

We will now analyse the limiting behavior of (\ref{eq:Tonegawa_type_ineq}) as $\e  = \e_{j} \to 0^{+}$, under the assumption that $g_{\e_{j}} \to g$ locally in $C^{1, 1}$. We begin with the second term on the right-hand-side of (\ref{eq:Tonegawa_type_ineq}). Recall that $u_{\eps_{j}}\to u_\infty$ in $L_{\rm loc}^1(N)$ (up to a subsequence that we keep implicit) and that $u_\infty \in BV_{\rm loc}(N)$ and takes values in $\{-1,+1\}$. By direct computation, the second term on the right-hand-side of (\ref{eq:Tonegawa_type_ineq}) is equal to 
\begin{eqnarray*}
&&\int \phi^2 \nabla g_{\e} \cdot \nabla u_{\eps} -  \int \phi^2 (\nabla g_{\e}\cdot \p_{n+1}) (\nabla u_{\eps} \cdot \p_{n+1})\nonumber\\
&&\hspace{.5in}= \int \phi^2 \nabla g_{\e} \cdot \nabla u_{\eps} -  \int \phi^2 (\nabla g_{\e}\cdot \p_{n+1}) \text{div} \, (u_{\eps} \cdot \p_{n+1})\nonumber\\
&&\hspace{2in}+   \int \phi^2 (\nabla g_{\e}\cdot \p_{n+1})\,u_{\eps} \, \text{div}\,\p_{n+1}\nonumber\\
&&\hspace{.5in}=-\int \text{div}\,(\phi^2 \nabla g_{\e}) u_{\eps} +  \int u_{\eps} \,\nabla (\phi^2 (\nabla g_{\e}\cdot \p_{n+1})) \cdot \p_{n+1}\nonumber\\ 
&&\hspace{2.5in}+   \int \phi^2 (\nabla g_{\e}\cdot \p_{n+1})\,u_{\eps} \, \text{div}\, \p_{n+1}.
\end{eqnarray*}
Setting $\e = \e_{j}$ in this, we note that since $g_{\e_{j}} \to g$ in $C^{1,1}_{\rm loc}(N)$, in all of the terms the function $u_{\eps_{j}}$ is multiplied by an $L^{\infty}$-bounded function. Therefore the limit of the right side as $\eps_{j} \to 0^{+}$ is
\begin{eqnarray*}
&&-\int \text{div}(\phi^2 \nabla g) u_{\infty} +  \int u_{\infty} \,\nabla (\phi^2 (\nabla g\cdot \p_{n+1})) \cdot \p_{n+1}\nonumber\\ 
&&\hspace{1.5in}+   \int \phi^2 (\nabla g\cdot \p_{n+1})\,u_{\infty} \, \text{div}\,\p_{n+1}\nonumber\\
&&\hspace{.5in}=\int_{\p^*\{u_\infty=+1\}} \phi^2 \,\nabla g \cdot \hat{n} +  \int u_{\infty} \text{div} \left( \phi^2 (\nabla g\cdot \p_{n+1})  \p_{n+1}\right)\nonumber\\
&&\hspace{.5in}=\int_{\p^*\{u_\infty=+1\}} \phi^2 \,\nabla g \cdot \hat{n} +  \int_{\p^*\{u_\infty=+1\}}  \phi^2 (\nabla g\cdot \p_{n+1})  \p_{n+1}\cdot \hat{n}
\end{eqnarray*}
where  $\p^*$ denotes the reduced boundary and by $\hat{n}$ the measure theoretic normal. We have thus shown that 
\be
\label{eq:limit_g_term}
\lim_{\eps_{j} \to 0^{+}} \left|\int \phi^2 \nabla g_{\e_{j}} \cdot \left(\nabla u_{\eps_{j}} - (\nabla u_{\eps_{j}}\cdot \p_{n+1}) \p_{n+1}\right)\right|\leq C_{g} \int \phi^2 d\|V\|
\ee
where the constant $C_{g}$ depends only on an upper bound on $\sup_{N} |\nabla \, g|.$ 
\begin{oss}
\label{oss:error_terms}
For the last two terms on the right-hand-side of (\ref{eq:Tonegawa_type_ineq}), on the other hand, it suffices to recall that $\eps_{j} |\nabla u_{\eps_{j}}|^2 d\mathcal{H}^{n+1} \to \|V\|$, so that, the limit of these two terms as $\eps_{j} \to 0^{+}$ will be bounded from above by by $C_N \int \phi^2 d\|V\|$. Similarly, recalling Lemma \ref{lem:n+1_n-1_minor}, leaving $\int \eps |\nabla u_{\eps}|^2 \phi^2 |\widetilde{B}^{\eps}|^2$ on the left-hand-side of (\ref{eq:Tonegawa_type_ineq}) and moving the term $\int \eps  \phi^2 \widetilde{E}$ to the right-hand-side, we find that this latter term is also controlled by $C_N \int \phi^2 d\|V\|$ in the limit.
\end{oss}

\medskip

In view of (\ref{eq:limit_g_term}) and of Remark \ref{oss:error_terms}, we can rewrite (\ref{eq:Tonegawa_type_ineq}) as 
\be
\label{eq:Tonegawa_type_ineq_2}
\int \e_{j} |\nabla u_{\e_{j}}|^2 \phi^2 |\widetilde{B}^{\e_{j}}|^2 \leq  \int \e_{j}  (|\nabla u_{\e_{j}}|^2-(\nabla u_{\e_{j}} \cdot \p_{n+1})^2) |\nabla \phi|^2+\text{other terms}
\ee
and $\lim_{\eps_{j}\to 0^{+}}|\text{other terms}|\leq C_{N,g}\int \phi^2 d\|V\|$.
The only terms that are left to deal with, in taking the limit $\e_{j}\to 0^{+}$, are therefore $\int \e_{j} |\nabla u_{\e_{j}}|^2 \phi^2 |\widetilde{B}^{\e_{j}}|^2$ and the first term on the right-hand-side. The argument for these follows \cite{Ton} closely.

Using Lemma \ref{lem:n+1_n-1_minor} we have, in the first instance (arguing similarly to \cite[Proposition 5]{Ton}) that the measure-function pairs $(|\widetilde{B}^{\e_{j}}|, V^{\e_{j}})$ satisfy a uniform $L^2$ bound and therefore there exists a well-defined (subsequential) limit $(|\widetilde{B}|, V)$, where $V$ is the varifold limit of $V^{\e_{j}}$. Then arguing (pointwise) as in \cite[Lemma 1]{Ton} and using Lemma \ref{lem:n+1_n-1_minor} again, it follows that there exists $c=c(n)>0$ such that $|{\mathbf B}_{V}|^2 \leq c (|\widetilde{B}|^2 + g^2)$, where 
${\mathbf B}_{V}$ is the generalized second fundamental form of $V$ as in (\ref{var-stability}). For the first term on the right-hand-side of (\ref{eq:Tonegawa_type_ineq_2}) we also argue as in the (final line of the) proof of \cite[Proposition 5]{Ton}. These arguments, together with (\ref{eq:limit_g_term}) and Remark \ref{oss:error_terms}, give the following:

\begin{lem}[Schoen--Tonegawa inequality]\label{Schoen-Tonegawa}
Let $g \in C^{1, 1}(N)$ with $\sup_{N} \, |\nabla g| \leq \Gamma$, and let $V$ be a stable limit $(g,0)$-varifold on $N$. Let $y \in N$ and let $B^{n} \times (-\sigma, \sigma) \subset T_{y}N$ be a Fermi coordinate neighborhood as described in the paragraph preceding Lemma~\ref{prelim-ST}. Let $\widetilde{V}$ be the pullback of $V$ under this coordinate map. Then for any $\phi\in C^\infty_c(B^n \times (-\sigma, \sigma)),$
\be
\label{eq:Schoen-Tonegawa}
\int \phi^2 |{\mathbf B}_{\widetilde{V}}|^2 d\|\widetilde{V}\| \leq c \int |\nabla \phi|^2 (1-(\widetilde{\nu} \cdot \p_{n+1})^2)d\|\widetilde{V}\| + C\int \phi^2 d\|\widetilde{V}\|
\ee
where ${\mathbf B}_{\widetilde{V}}$ is the generalized second fundamental form of $\widetilde{V}$, $\widetilde{\nu}(x)$ is a choice of unit normal to $T_{x} \, \widetilde{V}$ (which exists for $\|\widetilde{V}\|$-a.e.\ $x \in {\rm spt} \, \|\widetilde{V}\|$),  and the constants $c = c(N)$ and $C = C(N, \Gamma);$ ${\mathbf B}_{\widetilde{V}}$, $\widetilde{\nu}$ as well as the inner product $\cdot$ on the right had side are all with respect to the pullback metric from $N$ under the coordinate map. 
\end{lem}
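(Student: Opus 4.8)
The plan is to deduce the Schoen--Tonegawa inequality~(\ref{eq:Schoen-Tonegawa}) for the limit varifold $\widetilde V$ by passing to the limit $\e = \e_j \to 0^+$ in the $\e$-level inequality~(\ref{eq:Tonegawa_type_ineq}) established in Lemma~\ref{prelim-ST}, using the geometric identification of the left-hand integrand provided by Lemma~\ref{lem:n+1_n-1_minor} and the already-computed limit~(\ref{eq:limit_g_term}) of the $\nabla g$-term. Concretely, I would first fix the Fermi coordinate chart $B^n\times(-\sigma,\sigma)\subset T_yN$ from the paragraph before Lemma~\ref{prelim-ST}, so that $\p_{x_{n+1}}$ is a unit vector field and the metric chart has differential equal to the identity at the origin; this is what makes the error terms genuinely $O(\e|\nabla u_\e|^2)$ and lets us use the non-parametric integrand formalism of \cite[Section~3.1]{BW2}. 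Then, using Lemma~\ref{lem:n+1_n-1_minor}, the left-hand side of~(\ref{eq:Tonegawa_type_ineq}) becomes $\int \e|\nabla u_\e|^2\phi^2|\widetilde B^\e|^2 + \int \e\phi^2\widetilde E$ with $|\widetilde E|\le C_N|\nabla u_\e|^2$, and by Remark~\ref{oss:error_terms} the stray $\widetilde E$-term, the $\mathrm{Ric}_N$-term and the term $E$ all pass to quantities bounded by $C_N\int\phi^2\,d\|V\|$ in the limit, since $\e|\nabla u_\e|^2\,d{\mathcal H}^{n+1}\to\|V\|$. So the inequality collapses to the schematic form~(\ref{eq:Tonegawa_type_ineq_2}).

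The remaining analytic work, as indicated in the excerpt, is to make sense of the two surviving terms $\int \e|\nabla u_\e|^2\phi^2|\widetilde B^\e|^2$ and $\int \e(|\nabla u_\e|^2 - (\nabla u_\e\cdot\p_{n+1})^2)|\nabla\phi|^2$ in the limit. For the first term, I would follow \cite[Proposition~5]{Ton}: the $\e$-level inequality~(\ref{eq:Tonegawa_type_ineq}) together with Lemma~\ref{lem:n+1_n-1_minor} gives a uniform (in $\e$, for $\phi$ supported in a fixed chart) $L^2$ bound on the measure-function pairs $(|\widetilde B^\e|,V^\e)$ relative to the weight measures $\|V^\e\|$, hence (Hutchinson's measure-function-pair compactness) a subsequential limit pair $(|\widetilde B|,V)$; lower semicontinuity then yields $\int\phi^2|\widetilde B|^2\,d\|V\|\le\liminf\int\e|\nabla u_\e|^2\phi^2|\widetilde B^\e|^2$. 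The pointwise linear-algebra argument of \cite[Lemma~1]{Ton}, applied on the regular part and combined once more with Lemma~\ref{lem:n+1_n-1_minor}, compares the full second fundamental form to the $(n-1)$-truncated quantity, giving $|{\mathbf B}_V|^2\le c(n)(|\widetilde B|^2 + g^2)$; absorbing the $g^2$-contribution into the $C\int\phi^2\,d\|\widetilde V\|$ term (using $g\in C^{1,1}$, $\sup|g|$ bounded in terms of $\Gamma$ and $N$) produces the left-hand side of~(\ref{eq:Schoen-Tonegawa}). For the gradient term on the right, again as in the final line of the proof of \cite[Proposition~5]{Ton}, one uses that $\e|\nabla u_\e|^2\,d{\mathcal H}^{n+1}$ and $\e(\nabla u_\e\cdot\p_{n+1})^2\,d{\mathcal H}^{n+1}$ converge (as a measure-function pair, the factor $(\nabla u_\e\cdot\p_{n+1})^2/|\nabla u_\e|^2$ being a bounded function that tends to $(\widetilde\nu\cdot\p_{n+1})^2$) to $(1-(\widetilde\nu\cdot\p_{n+1})^2)\,d\|\widetilde V\|$, where $\widetilde\nu$ is the a.e.-defined unit normal of $\widetilde V$. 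Combining these and relabeling constants ($c=c(N)$ from the linear algebra and the metric error, $C=C(N,\Gamma)$ from the $g$-terms and the Ricci/$E$/$\widetilde E$ terms) gives exactly~(\ref{eq:Schoen-Tonegawa}).

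The main obstacle is the passage to the limit in the second fundamental form term: controlling $|\widetilde B^\e|^2$ near the set $\{\nabla u_\e = 0\}$ (where it is undefined), justifying the measure-function-pair convergence with the correct limit measure $\|\widetilde V\|$ rather than something larger, and establishing the pointwise inequality $|{\mathbf B}_V|^2\le c(|\widetilde B|^2+g^2)$ in a way that is compatible with the Riemannian Hessian (not the coordinate one) — this is precisely the subtlety flagged in the text, and it is why the Fermi chart with $\p_{x_{n+1}}$ of unit length and identity differential at the centre is essential: it keeps all curvature-induced discrepancies between $\sum_{i\le n-1,j}(\p^2_{ij}u)^2$ and the truncated $|{\mathbf B}_V|^2$ at the order $|\nabla u_\e|^2$, hence absorbable. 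The $\nabla g$-term is comparatively harmless since its limit is already computed in~(\ref{eq:limit_g_term}), reducing to a boundary integral over $\p^\star\{u_\infty=1\}\subset\mathrm{spt}\,\|\widetilde V\|$ that is dominated by $C_g\int\phi^2\,d\|\widetilde V\|$.
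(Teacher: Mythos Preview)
Your proposal is correct and follows essentially the same route as the paper: you invoke Lemma~\ref{prelim-ST} and Lemma~\ref{lem:n+1_n-1_minor} to rewrite the $\e$-level inequality in the form~(\ref{eq:Tonegawa_type_ineq_2}), dispose of the $\nabla g$-, Ricci-, $E$- and $\widetilde E$-terms via~(\ref{eq:limit_g_term}) and Remark~\ref{oss:error_terms}, and then pass to the limit in the two surviving terms exactly as in \cite[Proposition~5 and Lemma~1]{Ton}. The paper proceeds identically, including the same citations to \cite{Ton} for the measure-function-pair compactness, the pointwise bound $|{\mathbf B}_V|^2\le c(|\widetilde B|^2+g^2)$, and the convergence of the gradient term on the right-hand side.
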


%\begin{oss}
%All quantities in (\ref{eq:Schoen-Tonegawa}) are Riemannian. Recalling the choice of Fermi coordinates, we see that $\nu \cdot \p_{n+1}$ is the same as the Euclidean scalar product of the vector $\nu$ with $\p_{n+1}$ (because $\p_{n+1}$ is orthogonal to $\p_j$ for $j\neq n+1$ for the Riemannian scalar product). Therefore, upon, say,  doubling the constants on the right-hand-side of (\ref{eq:Schoen-Tonegawa}), we can write the same inequality with gradients, scalar products, norms and second fundamental forms replaced with the corresponding quantities computed with respect to the Euclidean metric on $B^n \times (-\sigma, \sigma)$.
%\end{oss} 

\subsection{Regularity of stable limit $(g,0)$-varifolds, Part I: $C^{1, \alpha} \implies C^{2}$}\label{higher-reg}

In this section we  show that if $g  \in C^{1, 1}(N)$ with $g >0$, and if a stable limit $(g, 0)$-varifold $V$ in some neighborhood of a point $y$ is given by a union of ordered $C^{1, \alpha}$ graphs for some $\alpha \in (0, 1)$, then each of the graphs is of class $C^{2}$ (and hence, by elliptic regularity, is of class $C^{3,\alpha}$ for any $\alpha \in (0, 1)$); we in fact show that $V$ has quasi-embedded PMC$(g, 0)$ structure near $y$ (see Definition~\ref{PMC-structure}). This is the content of Theorem~\ref{thm:inductive_higher_reg} below. 

The arguments we use to prove Theorem~\ref{thm:inductive_higher_reg} are adapted from those in \cite{BW1} and \cite{BW2}. The main difference between the present context and that of \cite{BW1}, \cite{BW2} is that here we must allow the full range of possibilities of quasi-embedded PMC\,$(g,0)$ structure, rather than restricting the scalar mean curvature to be prescribed by $g$ everywhere on the regular part as in \cite{BW2}. As a consequence, $3$-fold touching singularities are allowed here, as in the last picture in Figure \ref{fig:PMC_structures}, while only $2$-fold touching singularities can arise in \cite{BW1}, \cite{BW2}. On the other hand, we point out that by using the stability of the Allen--Cahn solutions $u_{{\eps}_j}$ corresponding to $V$, we can in fact simplify parts of the argument in \cite{BW1}, \cite{BW2}: specifically, we will be able to obtain the necessary $W^{2,2}$-estimates directly as a consequence of the local $L^{2}$ summability with respect to $\|V\|$ of the generalised second fundamental form of $V$ (implied by (\ref{var-stability})). 

\begin{oss}
In \cite[Section 7.4]{BW1} the corresponding $W^{2, 2}$ estimates require additional work (not aided by the stability assumption) since stability and the Schoen--Tonegawa inequalities in \cite{BW1}, \cite{BW2} are only assumed on $\Greg \, V$ (where there is $C^{2}$ regularity) and hence there is no \emph{a priori} information on the second fundamental form on approach to 
the part of the coincidence set away from $\Greg \, V$.
%indeed the stability and the Schoen--Tonegawa inequalities in \cite{BW1}, \cite{BW2} are assumed only for test functions supported away from ${\rm spt} \, \|V\| \setminus \Greg \, V$ (and with the second fundamental form equal to the classical one on the $C^{2}$ embedded part). 
In the absence of a hypothesis such as the Allen--Cahn approximation, validity of the stability and the Schoen--Tonegawa inequalities only on $\Greg  \, V$ is the natural stability assumption to make since, as in well known, it is the most direct consequence of non-negativity of second variation (of the $C^{2}$ immersed part of the varifold) with respect to the functional $A - {\rm Vol}_{g}$ where $A$ is the mass and ${\rm Vol}_{g}$ is the enclosed $g$-volume (see \cite{BW2}). 
\end{oss}

\medskip

We begin by recalling that a smooth prescribed-mean-curvature hypersurface of a Riemannian manifold is locally given by the image under the exponential map of a graph of a function that solves a PDE (in Euclidean space) associated with a quasilinear second order operator of mean curvature type. Indeed, if $u:B^n_{\rho}(x) \to \R$ is $C^2$ and if there exists a local system of normal coordinates centred at $p\in N$ defined on a neighborhood $U \subset N$ such that $B^n_{\rho}(x) \times I \subset \text{exp}_p^{-1}(U)$ for some open interval $I$ that contains $[\min u, \max u]$, and if the mean curvature $H$ of 
$\text{exp}_p \left(\text{graph}(u)\right)$ satisfies $|H| = g$ everywhere or $H = 0$ everywhere, then $u$ satisfies one of the following three PDEs: for any $\zeta \in C^\infty_c(B^n_{\rho}(x))$

\be
\label{eq:PDE_pmc}
-\int F_j(x,u,Du) D_j \zeta + \int F_{n+1}(x,u, Du) \zeta = \left\{\begin{array}{ccc}
                                                                                                 \int g(x,u)\zeta\\
                                                                                                 \int -g(x,u)\zeta\\
                                                                                                 0
                                                                                                \end{array}
\right.,
\ee
where $F_j(x,u,Du)=\left(\frac{\p}{\p p_j} F\right)((x,u),(Du,-1))$ and $F:\text{exp}_p^{-1}(U) \times \R^{n+1} \to \R$ is as in \cite[Section 3]{BW2} and \cite{SS}, and satisfies conditions \cite[(1.2)-(1.5)]{SS}. The notation $D_j$ stands for $\frac{\p}{\p x_j}$ for $j\in \{1, \ldots, n\}$. Conditions \cite[(1.2)-(1.5)]{SS} guarantee in particular that $D_i F_j$ forms a positive definite matrix, so that the above are (uniformly) elliptic quasilinear PDEs. The function $g$ appearing on the right-hand-side of the first two PDEs in (\ref{eq:PDE_pmc}) is the $C^{1,1}$ function in $\text{exp}_p^{-1}(U)$ obtained by composing the original $\left. g\right|_U$ with $\text{exp}_p$, and multiplying by the Riemannian volume element in normal coordinates (see \cite[Section 3.1]{BW2}); \emph{by abuse of notation, we denote the resulting function by $g$}. The PDEs in (\ref{eq:PDE_pmc}) are written in their weak formulation (with an implicit summation over repeated indices); however they are satisfied in the strong sense as well, since $u$ is $C^2$. The three PDEs in (\ref{eq:PDE_pmc}) correspond respectively to the cases where ${\text{exp}_p}(\text{graph}(u))$ is a hypersurface with mean curvature $g \nu$, $-g \nu$ and $0$, where $\nu$ is the unit normal that has positive scalar product with the (pushforward via $\text{exp}_p$ of the) upwards direction in the cylinder $B^n_\rho(x) \times I$. For this reason, when $u$ satisfies the first, middle or the last of the PDEs in (\ref{eq:PDE_pmc}), we will say that ${\rm graph}(u)$ is \emph{PMC with mean curvature $g$, or PMC with mean curvature $-g$ or minimal} respectively.

The main result of this section is the following: 
\medskip

\begin{thm}
\label{thm:inductive_higher_reg} Let $g \in C^{1, 1}(N)$ be positive. Let $y \in N$ be an arbitrary point, $\Oc = B_{1}^{n} \times {\mathbb R} \subset T_{y} \, N \approx {\mathbb R}^{n+1}$ and let $V$ be the restriction to $\Oc$ of a varifold obtained by applying an appropriate rescaling to the pull back of a (part of a) stable limit $(g, 0)$-varifold $V_{1}$ on $N$ by the exponentinal map $\exp_{y}$. Suppose that $V =\sum_{j=1}^q |{\rm graph}\, u_j|$, where $q\in \N$, $u_j:B^n_1 \to \R$ are of class $C^{1,\alpha}$ for some $\alpha \in (0, 1)$ and $u_1\leq u_2 \leq \ldots \leq u_q.$
Suppose further that the density $\Theta \, (\|V\|, y) = q.$
Then, for each $j\in \{1, \ldots, q\}$, $u_j$ is of class $C^{2, \alpha}$ and the scalar mean curvature of ${\rm graph} \, u_j$ with respect to the upward pointing unit normal is either zero everywhere on ${\rm graph} \, u_{j}$, or is equal to $g(x)$ for every $x \in {\rm graph} \, u_{j},$ or is equal to $-g(x)$ for every $x \in {\rm graph} \, u_{j}$. Moreover, $\spt{V}$ is the union of the graphs of at most three $u_j$'s.
More precisely: 
\begin{itemize}
\item[{\rm (i)}] if $q$ is even, then either all of the $u_j$'s coincide and ${\rm graph} \, u_{j}$ are minimal; or ${\rm graph} \, u_1$ is PMC with mean curvature $-g$, ${\rm graph} \, u_q$ is PMC with mean curvature $g$ and (if $q \geq 4$) $u_2= \ldots =u_{q-1}$ with ${\rm graph} \, u_{j}$ minimal for $2 \leq j \leq q-1$; 
\item[{\rm (ii)}] if $q$ is odd, then either ${\rm graph} \, u_1$ is PMC with mean curvature $-g$ and (if $q \geq 3$) $u_2= \ldots =u_{q}$ with ${\rm graph} \, u_{j}$ minimal for $2 \leq j \leq q;$ or 
${\rm graph} \, u_q$ is PMC with mean curvature $g$ and (if $q \geq 3$) $u_1= \ldots =u_{q-1}$ with ${\rm graph} \, u_{j}$ minimal for $1 \leq j \leq q-1$.
\end{itemize}
Thus $V_{1}$ has quasi-embedded $PMC \, (g, 0)$ structure near $y$. 
\end{thm}

The strategy of the proof of Theorem \ref{thm:inductive_higher_reg} is to use the stability hypothesis and Theorem~\ref{Roger-Tonegawa} to show that each $u_{j}$, which satisfies one of the above PDEs where it is $C^2$, must in fact be a weak solution (to one of the above PDEs) on the entire domain. Once this is done, standard elliptic theory implies the conclusion. As a preliminary step, we will need the following lemma. 

\begin{lem}[Absence of $\ell$-fold touching singularities for $\ell\geq 4$]
\label{lem:no_fourfold}
Let $y \in N$ be an arbitrary point, $\Oc = B_{1}^{n} \times {\mathbb R} \subset T_{y} \, N \approx {\mathbb R}^{n+1}$ and let $V$ be the varifold on $\Oc$ obtained by applying an appropriate rescaling to the pull back of a (part of a) limit $(g, 0)$-varifold on $N$ by the exponentinal map $\exp_{y}$. Suppose that $V\res \Oc=\sum_{j=1}^q |{\rm graph} \, u_j|$, where $q\in \N$, $u_j:B^n_1 \to \R$ are of class $C^{1,\alpha}$ for some $\alpha \in (0, 1)$ and $u_1\leq u_2 \leq \ldots \leq u_q.$ Suppose further that there exists a point $y \in \spt{V} \cap \Oc$ such that density $\Theta \, (\|V\|, y) = q$ and that if $\Theta(\|V\|, x)\leq q-1$ then there exists a neighbourhood of $x$ in $\Oc$ in which $V$ has quasi-embedded $PMC \, (g,0)$ structure.

Then $\spt{V} \cap \Oc=\cup_{j=1}^{\widetilde{q}} {\rm graph}\, \widetilde{u}_j$, where $\widetilde{q}\in \{1, 2, 3\}$, $\widetilde{q}\leq q$, and $\widetilde{u}_j\in C^{1,\alpha}(B^n_1)$. Moreover, if $\widetilde{q}\geq 2$ then $\widetilde{u}_j\leq \widetilde{u}_{j+1}$ on $B^n_1$ for $j\in \{1, \widetilde{q}-1\}$, and there exists $x\in B^n_1$ such that $\widetilde{u}_j(x)< \widetilde{u}_{j+1}(x)$ for $j\in \{1, \widetilde{q}-1\}$. (In other words, $\spt{V} \cap \Oc$ is the union of the graphs of at most three distinct ordered $C^{1,\alpha}$ functions and is embedded in some non-empty open cylinder $\Omega \times {\mathbb R} \subset \Oc$.)
\end{lem}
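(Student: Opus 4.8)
The plan is to argue by downward induction on the density value $q$, and to exploit the hypothesis that below density $q$ the varifold already has quasi-embedded PMC$(g,0)$ structure (hence, by Definition~\ref{PMC-structure}, consists locally of at most three ordered graphs). First I would observe that since $u_1\leq\cdots\leq u_q$ are $C^{1,\alpha}$ graphs, the support $\spt{V}\cap\Oc$ is exactly $\bigcup_{j=1}^q \mathrm{graph}(u_j)$, and the genuinely distinct functions among the $u_j$ are again $C^{1,\alpha}$ and ordered; call them $\widetilde u_1\leq\cdots\leq\widetilde u_{\widetilde q}$. So the only content of the lemma is the bound $\widetilde q\leq 3$. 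Suppose for contradiction $\widetilde q\geq 4$. I would then pass to a point $x_0\in B^n_1$ where at least four of the graphs are pairwise distinct in a neighbourhood of $x_0$ — equivalently, a point where $\widetilde u_1(x_0)<\widetilde u_2(x_0)<\widetilde u_3(x_0)<\widetilde u_4(x_0)$; such a point exists since the $\widetilde u_j$ are continuous and somewhere distinct. Near the corresponding point $Y=(x_0,\widetilde u_k(x_0))$ of $\spt{V}$ the density $\Theta(\|V\|,Y)$ is the number of graphs through $Y$, which is $\leq q-1$ unless all $q$ original graphs pass through it; the case-splitting here is the bookkeeping I'd need to do carefully.

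The core mechanism is this: wherever $\Theta(\|V\|,\cdot)\leq q-1$, the hypothesis gives quasi-embedded PMC$(g,0)$ structure, so locally $\spt{V}$ is the union of at most three $C^2$ embedded disks, each either minimal or PMC with mean curvature $\pm g$, arranged as in Definition~\ref{PMC-structure}(i)--(v); in particular \emph{no neighbourhood of such a point contains four pairwise-distinct sheets}. Thus each $\widetilde u_j$ that locally does not pass through a density-$q$ point is a classical solution of one of the PDEs in~(\ref{eq:PDE_pmc}) there. I would use this to propagate structure: on the open set $\mathcal U=\{x: \Theta(\|V\|,(x,\widetilde u_j(x)))\leq q-1 \text{ for all } j\}$, which is nonempty (it contains a neighbourhood of $x_0$), $V$ restricted to $\mathcal U\times\R$ has quasi-embedded PMC$(g,0)$ structure, forcing $\widetilde q\leq 3$ over $\mathcal U$ — contradicting $\widetilde q\geq 4$ and $\widetilde u_1(x_0)<\cdots<\widetilde u_4(x_0)$. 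The remaining subtlety is the "boundary" set where density equals $q$: there I would use the ordering together with the fact that graphs that touch can only touch tangentially (from the PMC$(g,0)$ structure wherever it applies, combined with the maximum principle for the quasilinear operators in~(\ref{eq:PDE_pmc}), using that the coincidence set of two $C^{1,\alpha}$ solutions of comparable PDEs with the ordering constraint is closed and the solutions agree to infinite order there only if identical). A clean way to close this is: if two of the distinct $\widetilde u_j$ coincide on a set with nonempty interior then by unique continuation for the (uniformly elliptic quasilinear) PDEs they coincide identically, contradicting distinctness; hence distinct graphs meet only on a closed nowhere-dense set, so $\mathcal U$ is dense, and density-$q$ points cannot be everywhere, giving the contradiction.

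The main obstacle I anticipate is making rigorous the reduction "at a point of density $\leq q-1$, only $\leq 3$ sheets" into a \emph{global} bound $\widetilde q\leq 3$ on $\spt V\cap\Oc$: locally one gets at most three disks, but a priori different triples of disks near different points might not be globally consistent (one must rule out, e.g., a "fourth sheet" that only coincides with others along the density-$q$ locus while being genuinely separate elsewhere). This is handled by the ordering of the $u_j$ — the labelling is globally consistent because the graphs never cross, only touch — together with the structural dichotomy of Definition~\ref{PMC-structure}: in cases (iii)--(v) the minimal sheet is pinched between the two PMC sheets, and a PMC sheet with mean curvature $+g$ (resp.\ $-g$) must be the topmost (resp.\ bottommost), which pins down the global roles. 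A secondary technical point is verifying that the point $x_0$ with four strictly separated values can indeed be chosen so that all four corresponding points of $\spt V$ have density $\leq q-1$ simultaneously; this follows because at such a point each of the four points carries at least one and at most $q-3$ of the remaining sheets, and $q-3< q$, but I would need to double-check the extreme case $\widetilde q = q$ (all original graphs distinct), where at a generic $x_0$ every point $(x_0,\widetilde u_j(x_0))$ has density exactly $1\leq q-1$ since $q\geq 4$, so the argument goes through immediately.
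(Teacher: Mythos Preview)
There is a genuine gap in your main mechanism. You write that since $V$ has quasi-embedded PMC$(g,0)$ structure near every point of density $\leq q-1$, ``locally $\spt{V}$ is the union of at most three $C^2$ embedded disks,'' and infer that this forces $\widetilde q\leq 3$ over $\mathcal U$. But the PMC$(g,0)$ structure of Definition~\ref{PMC-structure} bounds the number of disks \emph{near a single point $Y\in\spt V$}, not the number of distinct sheets sitting over a base point $x_0\in B^n_1$. If $\widetilde u_1(x_0)<\cdots<\widetilde u_4(x_0)$, the four points $(x_0,\widetilde u_j(x_0))$ are vertically separated, and near each one $V$ may simply be a single embedded disk (type (i) or (ii)); this is perfectly consistent with having four disjoint sheets over $x_0$. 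So no contradiction arises from the local structural hypothesis alone, and your paragraph ``forcing $\widetilde q\leq 3$ over $\mathcal U$ --- contradicting $\widetilde q\geq 4$'' does not go through.

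The paper's argument pivots on the density-$q$ set (which you treat as a peripheral ``subtlety'' rather than the essential ingredient). On a connected component $B_c$ of $B\setminus C$, where $C$ is the projection of the density-$q$ points, each distinct sheet $\underline u_j$ is $C^2$ by the inductive PMC$(g,0)$ hypothesis and solves one of the \emph{three} PDEs in~(\ref{eq:PDE_pmc}) (mean curvature $g$, $-g$, or $0$). If $\widetilde Q\geq 4$ then by pigeonhole two sheets $\underline u_{j_1}\neq\underline u_{j_2}$ solve the \emph{same} PDE; their difference $v$ satisfies a linear uniformly elliptic equation~(\ref{eq:PDE_difference_1_lemma}). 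One then chooses a ball $D\subset B_c$ whose boundary touches $C$ at some point $p$: at $p$ all $q$ original graphs coincide, so $v(p)=0$ and $Dv(p)=0$ (tangential touching, from the $C^{1,\alpha}$ regularity), while $v\not\equiv 0$ in $D$. The Hopf boundary point lemma then gives the contradiction. In short, the bound $\widetilde q\leq 3$ comes from \emph{pigeonhole among three PDE types plus Hopf at a density-$q$ touching point}, not from a local disk count; your proposal does not identify this mechanism.
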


\begin{proof} We note that the graph structure implies that $\Theta(\|V\|,\cdot)$ is integer-valued everywhere on $\spt{V}$ (not just almost everywhere); the proof of this is as in \cite[Lemma A.2]{BW1}. Let $\pi: \Oc\to B=B^n_1$ be the projection onto the first factor and let $C=\pi(\{x\in \spt{V} \cap \Oc:\Theta(x,\|V\|)=q\})$. The set $C$ is closed in $B$. If $B\setminus C= \emptyset$, then $u_j=u_1$ for all $j$ and $\spt{V}$ is a single graph, so the conclusion of the lemma holds with $\widetilde{q}=1$. From now we therefore assume that $B\setminus C \not= \emptyset$. We denote by $U$ the subset of $B\setminus C$ made up of points $p$ such that $\spt{V}$ is an embedded hypersurface at all the points in $\pi^{-1}(p)\cap \spt{V} = \cup_{j=1}^q \{(p,u_j(p))\}$ (there may be repeated points in the last expression). The set $U$ is open and, by the regularity assumption on points with density $\leq q-1$, $U$ is dense in $B\setminus C$ (the set of non-embedded points with multiplicity $\leq q-1$ projects locally to a finite union of submanifolds with dimension at most $n-1$). We define on $U$ the function $\widetilde{Q}$ that assigns to $p\in U$ the number of distinct points in the set $\pi^{-1}(p)\cap \spt{V}$. By the embeddedness requirement that characterizes $U$, the function $\widetilde{Q}$ is locally constant on $U$. Clearly $\widetilde{Q}\leq q$ on $U$. 

We claim, in a first instance, that $\widetilde{Q}$ extends to a locally constant function on $B\setminus C$. To see this, let $a\in B\setminus C$, $a \notin U$. Then there exists at least a point (and at most $q/2$ points) $x\in \pi^{-1}(a)\cap \spt{V}$ at which $\spt{V}$ is not embedded; at such a point $\Theta(x,\|V\|)\leq q-1$. By hypothesis, there exists a neighbourhood ${\Oc}_x$ of $x$ contained in $\Oc$ such that $V\res {\Oc}_x$ has one of the structures (iii), (iv) or (v) in Definition \ref{PMC-structure}. Denote by $\{x_1, \ldots x_K\}$ the distinct points in $\pi^{-1}(a)\cap \spt{V}$ at which $\spt{V}$ is not embedded and by $\{x_{K+1}, \ldots x_M\}$ the points in $\pi^{-1}(a)\cap \spt{V}$ at which $\spt{V}$ is embedded. Let $B_a$ be an open ball contained in $B$ such that $\spt{V} \res ( B_a \times I)$ is the union of exactly $M$ connected sets. Note that $M-K$ of these are embedded disks. From the characterization of the structures (iii), (iv) or (v) we can check the following fact: $\widetilde{Q}(p)$ is an integer independent of $p\in B_a \cap U$ (this follows by counting the distinct points in $\pi^{-1}(p)$ for each of the five possible structures). Therefore, recalling that $U$ is dense in $B\setminus C$, $\widetilde{Q}$ extends (from $U$) to a locally constant integer-valued function on $B\setminus C$, proving our first claim. 

We next define on each connected component $U_c$ of $U$ the functions $\underline{u}_1, \ldots \underline{u}_{\widetilde{Q}}$ such that $\underline{u}_j<\underline{u}_{j+1}$ for all $j\in \{1, \ldots \widetilde{Q}-1\}$ and such that $\spt{V} \cap (U_c \times I) = \cup_{j=1}^{\widetilde{Q}}\text{graph}(\underline{u}_j)$. (Note that $u_1$ necessarily agrees with $\underline{u}_1$.) Let $B_c$ be a connected component of $B\setminus C$. By our first claim, $\widetilde{Q}$ does not vary among the connected components of $U$ that lie in $B_c$. This implies that the functions $\underline{u}_1, \ldots \underline{u}_{\widetilde{Q}}$, initially defined only on $U\cap B_c$ for a common $\widetilde{Q}$, can be extended continuously from $U \cap B_c$ to $B_c$ giving rise to $\widetilde{Q}$ ordered functions that we still denote by $\underline{u}_1\leq \ldots \leq \underline{u}_{\widetilde{Q}}$. Moreover, $\underline{u}_1, \ldots, \underline{u}_{\widetilde{Q}}$ are $C^2$ on $B_c$ by the characterization of the structures (iii), (iv) or (v) of Definition \ref{PMC-structure}. A priori, $\widetilde{Q}$ may depend on $B_c$ and we will focus on a single connected component $B_c$. By the characterization of the structures (iii), (iv) or (v), each $\text{graph}(\underline{u}_{k})$ is either PMC with mean curvature $g$ on $B_c$, or PMC with mean curvature $-g$ on $B_c$, or is minimal on $B_c$. (These three options correspond respectively to the fulfillment, by $\underline{u}_k$, of the three PDEs in (\ref{eq:PDE_pmc}).)

Our next claim is that $\widetilde{Q}\leq 3$ on $B_c$. We let $b\in U\cap B_c$ and consider an open ball $D$ contained in $B_c$, centred at $b$ and such that $\p D \cap C$ is not empty. Let $p \in \p D \cap C$, then all the $\underline{u}_{j}$ extend continuously to $p$ with the same value ($=u_1(p)= \ldots =u_q(p)$). If $\widetilde{Q}\geq 4$ we find a contradiction to Hopf boundary point lemma, as follows. If $\widetilde{Q}\geq 4$ then 
there exist two indices $j_1\neq j_2$ for which $\underline{u}_{j_1}$ and $\underline{u}_{j_2}$ both solve the same of the three PDEs in (\ref{eq:PDE_pmc}). We let $v=\underline{u}_{j_1}-\underline{u}_{j_2}$ and compute, following a standard argument, the PDE satisfied by $v$ and obtain, for any $\zeta \in C^\infty_c(B_c)$

\be
\label{eq:PDE_difference_1_lemma}
-\int \left(\mathscr{a}_{ij} D_i v +  \mathscr{b}_i v\right) D_j \zeta + \int \left(\mathscr{d}_i D_i v + \mathscr{c}  v \right)\zeta  = \left\{\begin{array}{ccc}
                                                                                                 \int \mathscr{f}\,v \,\zeta\\
                                                                                                 
                                                                                                 0
                                                                                                \end{array}
\right.,
\ee
where 
$$\mathscr{a}_{ij}=\int_0^1 \frac{\p F_j}{\p p_i}(x, (1-s)\underline{u}_1+s \underline{u}_2, (1-s)D \underline{u}_1+s D \underline{u}_2) ds,$$
and $\mathscr{b}_i$, $\mathscr{c}$, $\mathscr{d}_i$ are similarly defined by integration of the composition of a smooth function (depending on $F$ or $DF$) with $(x,\underline{u}_{j_1}, D \underline{u}_{j_1},\underline{u}_{j_2}, D \underline{u}_{j_2})$. As such, $\mathscr{a}_{ij}$, $\mathscr{b}_i$, $\mathscr{c}$, $\mathscr{d}_i$ are in $C^{0,\alpha}(B_c)$. The left-hand-side of (\ref{eq:PDE_difference_1_lemma}) is the difference of the left-hand-sides of (\ref{eq:PDE_pmc}) for $\underline{u}_{j_1}$ and $\underline{u}_{j_2}$. The right-hand-side of (\ref{eq:PDE_difference_1_lemma}) is similarly obtained from the right-hand-sides of (\ref{eq:PDE_pmc}), an $\mathscr{f}$ is the integration of the composition of a $C^{0,1}$ function (depending on derivatives of $g$) with $(x,\underline{u}_{j_1}, \underline{u}_{j_2})$ and therefore $\mathscr{f}\in C^{0,1}(B_c)$. The function $\mathscr{a}_{ij}$ is moreover symmetric in $i,j$ (by the definition of $F_j$ in terms of $\frac{\p F}{\p p_j}$, see (\ref{eq:PDE_pmc}) above) and $\mathscr{a}_{ij}$ form a positive definite matrix by the conditions on $F$ \cite[(1.2)-(1.5)]{SS}, so the PDEs in (\ref{eq:PDE_difference_1_lemma}) are locally uniformly elliptic. The first case in (\ref{eq:PDE_difference_1_lemma}) arises when $\underline{u}_{j_1}$ and $\underline{u}_{j_2}$ either both solve the first or both solve the second of the PDEs in (\ref{eq:PDE_pmc}), while the second case in (\ref{eq:PDE_difference_1_lemma}) arises when $\underline{u}_{j_1}$ and $\underline{u}_{j_2}$ both solve the third PDE in (\ref{eq:PDE_pmc}). Since $v\in C^2(B_c)$, the PDE is satisfied in the strong sense.
Recall that all the $\text{graph}(u_j)$ intersect tangentially at the point $(p,u_1(p))$, therefore $v$ and $Dv$ extends continuously to $p$ with $v(p)=0$, $Dv(p)=0$. Hopf boundary point lemma (see Remark \ref{oss:Hopf} below), applied to either of the two PDEs in (\ref{eq:PDE_difference_1_lemma}), implies that $v\equiv 0$ on $D$, so $\underline{u}_{j_1}=\underline{u}_{j_2}$ on $D$, contradicting the initial definition of $\underline{u}_j$ on $U$.
We thus have $\widetilde{Q}\leq 3$ on any connected component $B_c$ of $B\setminus C$. 

We then set $\widetilde{q}=\max_{B\setminus C} \widetilde{Q}$. Since we are working under the assumption $B\setminus C\not= \emptyset$, 
we have $\widetilde{q}\geq 2$. The functions $\underline{u}_1$ and $\underline{u}_{\widetilde{Q}}$ (defined above on $B\setminus C$) agree respectively with (the restrictions to $B\setminus C$ of) $u_1$ and $u_q$. We set $\widetilde{u}_1=u_1$ and $\widetilde{u}_{\widetilde{q}} = u_q$ on $B$. 
If $\widetilde{q}= 2$, then $\spt{V}=\text{graph}(\widetilde{u}_{1}) \cup \text{graph}(\widetilde{u}_{\widetilde{q}})$ and all the conclusions of the lemma are satisfied. The last case to consider is $\widetilde{q}= 3$. In this case, we let $\widetilde{u}_2=\underline{u}_2$ on the (non-empty) open set $A$ given by the union of the connected components of $B\setminus C$ on which $\widetilde{Q}=3$. The function $\widetilde{u}_2$ is $C^2(A)$ and we extend it to $B$ by setting it equal to $\widetilde{u}_1$ on $B\setminus A$. The resulting function, still denoted $\widetilde{u}_2$, is in $C^{1,\alpha}(B)$: this only needs to be checked at points in $C$, since $\widetilde{u}_2$ is $C^2$ by construction on each connected components of $B\setminus C$. At point in $C$ the conclusion follows by recalling that all the $u_j$'s and their differentials agree on $C$, and that each $u_j$ is in $C^{1,\alpha}(B)$. By construction, there exists at least one connected component of $U$ on which $\underline{u}_1<\underline{u}_2<\underline{u}_3$, hence on the same non-empty open set $\widetilde{u}_1<\widetilde{u}_2<\widetilde{u}_3$.
\end{proof}

\begin{oss}\label{oss:Hopf}
We have used the Hopf boundary point lemma in its version that is valid regardless of the sign of the $0$-th order term, since we know that $v(p)=0$ (see the discussion that follows \cite[Theorem 3.5]{GT}). A more careful use of Hopf boundary point lemma gives additional information on $\widetilde{u}_{j_1}$ and $\widetilde{u}_{j_2}$, as we will point out now. While not necessary for the conclusion of Lemma \ref{lem:no_fourfold}, this will be used within the proof of Theorem \ref{thm:inductive_higher_reg} below. 
We assume that $\widetilde{u}_{j_1}\leq \widetilde{u}_{j_2}$ are $C^2$ on a ball $D$, $\widetilde{u}_{j_1}\not\equiv\widetilde{u}_{j_2}$ and that $\widetilde{u}_j$ and $D \widetilde{u}_j$, for $j\in \{j_1, j_2\}$, extend continuously to $p\in \p D$ with $\widetilde{u}_{j_1}(p)=\widetilde{u}_{j_2}(p)$ and $D\widetilde{u}_{j_1}(p)=D\widetilde{u}_{j_2}(p)$. We have ruled out, in Lemma \ref{lem:no_fourfold}, the possibility that these two functions solve the same PDE out of the three equations in (\ref{eq:PDE_pmc}). We now note, more precisely, 
that in fact the only possibilities for the right-hand-sides $\int h_j(x,u_j) \zeta$ in (\ref{eq:PDE_pmc}) are those for which $h_{j_1}(x,\widetilde{u}_{j_1}) < h_{j_2}(x,\widetilde{u}_{j_2})$. Here $h_j=g$, $h_j=-g$, or $h_j=0$ are the three options for $h_j$. This claim follows since if $h_{j_1}(x,\widetilde{u}_{j_1}) \geq h_{j_2}(x,\widetilde{u}_{j_2})$ for some $x$ then, since $g >0$, $h_{j_1}(x,\widetilde{u}_{j_1}) > h_{j_2}(x,\widetilde{u}_{j_2})$ for all $x$, so by taking the difference of the two PDEs gives for $v=\widetilde{u}_{j_1}-\widetilde{u}_{j_2}\leq 0$ 
\be
\label{eq:PDE_difference_1_oss}
-\int \left(\mathscr{a}_{ij} D_i v +  \mathscr{b}_i v\right) D_j \zeta + \int \left(\mathscr{d}_i D_i v + \mathscr{c}  v \right)\zeta  \geq 0,
\ee
whenever $\zeta\in C^\infty_c(D)$ and $\zeta\geq 0$. Here the coefficients $\mathscr{a}_{ij}$, $\mathscr{b}_{i}$, $\mathscr{d}_{i}$, $\mathscr{c}$ are as in the proof of Lemma \ref{lem:no_fourfold}. We have from (\ref{eq:PDE_difference_1_oss}) a contradiction to Hopf boundary point lemma, because $v(p)=0$, $Dv(p)=0$ and $v\not \equiv 0$ in $D$.
\end{oss}

\begin{oss}
We are not (yet) ruling out the possibility that $\widetilde{Q}$ in the proof of Lemma \ref{lem:no_fourfold} (i.e.~the number of distinct graphs needed to describe $\spt{V}$) may change from one connected component of $B\setminus C$ to another. This will be accomplished in the proof of Theorem~\ref{thm:inductive_higher_reg} based on the fact that $V$ is a limit $(g, 0)$-varifold (using Theorem~\ref{Roger-Tonegawa}).
\end{oss}

For the proof of Theorem \ref{thm:inductive_higher_reg} it is convenient to analyse the possible configuration that can arise depending on the maximum value of distinct graphs needed to describe $\spt{V}$, i.e.~the value of $\widetilde{q}$ in Lemma \ref{lem:no_fourfold}. 
In the proof of Theorem \ref{thm:inductive_higher_reg} we directly make use of the assumption that $V$ is a limit $(g, 0)$-varifold. This assumption was absent in Lemma \ref{lem:no_fourfold}, although it should be kept in mind that this lemma is used within an induction. To avoid confusion: the induction is carried out on $q$, however the cases below are differentiated depending on $\widetilde{q}$. For a fixed $q$, only some of the cases analysed below can actually happen. After the case by case analysis (on $\widetilde{q}$) is complete, we will show that in any of the possible configurations identified, one can extend the validity of the PDE of mean curvature type (i.e.\ one of the equations in (\ref{eq:PDE_pmc})) from the $C^2$ portion of each graph, to the full $C^{1,\alpha}$ graph in the weak form.

\medskip
\begin{proof}[Proof of Theorem~\ref{thm:inductive_higher_reg}]
Let $\widetilde{q}$ be as in Lemma~\ref{lem:no_fourfold}. Then $\widetilde{q} \leq 3$. Let $C =\pi(\{x\in \spt{V}:\Theta(x,\|V\|)=q\})$ where $\pi \, : \, B_{1}^{n} \times {\mathbb R} \to B_{1}^{n}$ 
is the orthogonal projection. We consider the cases $\widetilde{q}=1$, $\widetilde{q}=2$ and $\widetilde{q}=3$ separately.

\noindent \underline{$\widetilde{q}=1$}. In this case $\spt{V}$ is a single $C^{1,\alpha}$ graph, $\spt{V}=\text{graph}(\widetilde{u}_1)=\text{graph}(u_1)$. Then we will show by means of \cite{RogTon}---see Theorem \ref{Roger-Tonegawa} above---that it is either completely minimal, or completely PMC (the latter means that $\text{graph}(u_1)$ has either mean curvature everywhere equal to $g(x,u_1)$, or has mean curvature everywhere equal to $-g(x,u_1)$). Indeed, the phases of $u_\infty$ can only be: 

$+1$ on the supergraph of $u_1$ and $-1$ on the subgraph;

$-1$ on the supergraph of $u_1$ and $+1$ on the subgraph;

$+1$ on the supergraph of $u_1$ and $+1$ on the subgraph;

$-1$ on the supergraph of $u_1$ and $-1$ on the subgraph.

\noindent By Theorem \ref{Roger-Tonegawa}, the third option is not possible for $g>0$. In the first or second case, we have again by Theorem \ref{Roger-Tonegawa} the following two facts. The multiplicity of $V$ is a.e.~$1$ on the graph and the generalized mean curvature takes almost everywhere the value $g$, or almost everywhere the value $-g$, respectively in the first and second case. The first fact implies that the multiplicity is everywhere $1$, by the condition that the generalized mean curvature is in $L^p(\|V\|)$ (so this case can only happen for $q=1$). Using this fact and elliptic theory we get that the graph is $C^{2,\alpha}$, with mean curvature $g$ or $-g$ respectively. In the fourth case we get by Theorem \ref{Roger-Tonegawa} that the mean curvature is almost everywhere $0$ and the multiplicity is an even integer almost everywhere. Again, these fact imply that the graph is minimal (and smooth) and carries a constant even multiplicity (therefore this case can only arise for $q$ even). If $\widetilde{q}=1$, the proof of Theorem \ref{thm:inductive_higher_reg} is complete.

\medskip

\noindent \underline{$\widetilde{q}=2$}. This means that $\spt{V}=\cup_{j=1}^2 \text{graph}(\widetilde{u}_j)$, with $\widetilde{u}_1\leq \widetilde{u}_2$ and $\widetilde{u}_1< \widetilde{u}_2$ on some nonempty open set. The set $\widetilde{u}_1=\widetilde{u}_2$ is the set of points of multiplicity $q$ and away from it we have multiplicity $\leq q-1$ and (inductively) $C^2$ regularity. The set $\widetilde{u}_1=\widetilde{u}_2$ is the set $C$ from Lemma \ref{lem:no_fourfold} so we have that $\widetilde{u}_1$, $\widetilde{u}_2$ are $C^2$ on $B\setminus C$ and on each connected component of $B\setminus C$ the graph of $\widetilde{u}_j$ is either completely minimal or completely PMC. We distinguish two cases:

(I) $\text{graph}(\widetilde{u}_2) \setminus \text{graph}(\widetilde{u}_1)$ has some PMC $C^2$ embedded connected component; 

(II) $\text{graph}(\widetilde{u}_2) \setminus \text{graph}(\widetilde{u}_1)$ has no PMC $C^2$ embedded connected component.

\noindent We begin with (II). Then, by \cite{RogTon}, $\text{graph}(\widetilde{u}_2) \setminus \text{graph}(\widetilde{u}_1)$ is minimal and has even multiplicity, and moreover $u_\infty=-1$ on the supergraph of $\widetilde{u}_2$ and $u_\infty=-1$ on the set $\{(x,y)\in B\times I: \widetilde{u}_1(x) < y <\widetilde{u}_2(x)\}$. By considering a ball $D$ in an arbitrary connected component of $B\setminus C$, chosen so that a point on the boundary of $D$ belongs to $C$, we use Hopf boundary point lemma in $D$ (see Remark \ref{oss:Hopf}) to rule out the possibility that $\widetilde{u}_1$ is minimal on $D$, or PMC on $D$ with mean curvature $g$. The only possibility is therefore that $\widetilde{u}_1$ is PMC on $D$ with mean curvature $-g$, and therefore PMC on $B\setminus C$ with mean curvature $-g$. This also forces, again by \cite{RogTon}, that $u_\infty=+1$ on the subgraph of $\widetilde{u}_1$ and multiplicity to be $1$ at points in $\text{graph}(\widetilde{u}_1)\setminus \text{graph}(\widetilde{u}_2)$. (We are thus in a case that can only occur when $q$ is odd.) We next check that the set $\text{graph}(\widetilde{u}_1)\cap \text{graph}(\widetilde{u}_2)$ has vanishing $\mathcal{H}^n$-measure: if that were not the case, then we would contradict \cite{RogTon}, since almost every point on $\text{graph}(\widetilde{u}_1)\cap \text{graph}(\widetilde{u}_2)$ must lie in the interior of set $\{u_\infty=-1\}$, against our earlier conclusions that $u_\infty=+1$ on the subgraph of $\widetilde{u}_1$ and  $u_\infty=-1$ on the supergraph of $\widetilde{u}_2$. (Or, almost every point on $\text{graph}(\widetilde{u}_1)\cap \text{graph}(\widetilde{u}_2)$ must have even multiplicity, against the conclusion that $q$ is odd.)

\noindent We now consider case (I). Let $B_c$ be a connected component of $B\setminus C$ on which the graph of $\widetilde{u}_2$ is PMC. Then by considering a ball $D$ in $B_c$ chosen so that a point on the boundary of $D$ belongs to $C$, we use Hopf boundary point lemma (see Remark \ref{oss:Hopf}) to conclude that the mean curvature of $\text{graph}(\widetilde{u}_2)$ on $D$ is $g$ (and not $-g$). We therefore have that each connected components of $\text{graph}(\widetilde{u}_2)\setminus \text{graph}(\widetilde{u}_1)$ is either minimal or PMC with mean curvature $g$, and there is at least one PMC component. By \cite{RogTon} $u_\infty=+1$ on the supergraph of $\widetilde{u}_2$ and $u_\infty =-1$ on the set $\{(x,y)\in B\times I: \widetilde{u}_1(x) < y <\widetilde{u}_2(x)\}$. We check now that the set $\text{graph}(\widetilde{u}_1)\cap \text{graph}(\widetilde{u}_2)$ has vanishing $\mathcal{H}^n$-measure. If that were not the case, then by \cite{RogTon} we would have that almost every point on $\text{graph}(\widetilde{u}_1)\cap \text{graph}(\widetilde{u}_2)$ must lie in the interior of set $\{u_\infty=-1\}$, against our earlier conclusion. Moreover, by using \cite{RogTon} in the same way, almost all points in $\text{graph}(\widetilde{u}_1)\setminus \text{graph}(\widetilde{u}_2)$ must have multiplicity $1$ and mean curvature equal to $g$. We thus conclude that each connected components of $\text{graph}(\widetilde{u}_2)\setminus \text{graph}(\widetilde{u}_1)$ is PMC with mean curvature $g$. We now consider any connected component of $\text{graph}(\widetilde{u}_1)\setminus \text{graph}(\widetilde{u}_2)$. By using Hopf boundary point lemma as before, we conclude that each such connected component is either minimal or PMC with mean curvature $-g$. If one of them is minimal, then (arguing as above by means of \cite{RogTon}) $u_\infty = -1$ on the subgraph of $\widetilde{u}_1$ and this rules out the fact that any connected component of $\text{graph}(\widetilde{u}_1)\setminus \text{graph}(\widetilde{u}_2)$ is PMC. In other words, either all connected components of $\text{graph}(\widetilde{u}_1)\setminus \text{graph}(\widetilde{u}_2)$ are minimal, or they are all PMC with mean curvature $-g$. 

In conclusion, for the case $\widetilde{q}= 2$ we have two $C^{1,\alpha}$-functions $\widetilde{u}_1\leq \widetilde{u}_2$, that coincide on a set $C$ with $\mathcal{H}^n(C)=0$, are $C^2$ on $B\setminus C$ and

exactly one of these possible configurations occurs: 

$\text{graph}(\widetilde{u}_1)\setminus \text{graph}(\widetilde{u}_2)$ is minimal with even multiplicity, $\text{graph}(\widetilde{u}_2)\setminus \text{graph}(\widetilde{u}_1)$ has mean curvature $g$ and multiplicity $1$;

$\text{graph}(\widetilde{u}_1)\setminus \text{graph}(\widetilde{u}_2)$ has mean curvature $-g$ and multiplicity $1$, $\text{graph}(\widetilde{u}_2)\setminus \text{graph}(\widetilde{u}_1)$ has mean curvature $g$;

$\text{graph}(\widetilde{u}_1)\setminus \text{graph}(\widetilde{u}_2)$ has mean curvature $-g$ and multiplicity $1$, $\text{graph}(\widetilde{u}_2)\setminus \text{graph}(\widetilde{u}_1)$ is minimal with even multiplicity.

\medskip

\noindent \underline{$\widetilde{q}=3$}.  This means that $\spt{V}=\cup_{j=1}^3 \text{graph}(\widetilde{u}_j)$, with $\widetilde{u}_1\leq \widetilde{u}_2\leq \widetilde{u}_3$ and $\widetilde{u}_1< \widetilde{u}_2< \widetilde{u}_3$ on some nonempty open set.  The set $\widetilde{u}_1=\widetilde{u}_2=\widetilde{u}_3$ is the set of points of multiplicity $q$ and away from it we have multiplicity $\leq q-1$ and (inductively) $C^2$ regularity. The set $\widetilde{u}_1=\widetilde{u}_2=\widetilde{u}_3$ is the set $C$ from Lemma \ref{lem:no_fourfold} so we have that $\widetilde{u}_1$, $\widetilde{u}_2$, $\widetilde{u}_3$ are $C^2$ on $B\setminus C$ and on each connected component of $B\setminus C$ the graph of each $\widetilde{u}_j$ is either completely minimal or completely PMC.

\noindent We claim the following: $\widetilde{u}_1< \widetilde{u}_2< \widetilde{u}_3$ on an open subset of $B$ whose complement has vanishing $\mathcal{H}^n$ measure; the graphs of $\widetilde{u}_1$, $\widetilde{u}_2$, $\widetilde{u}_3$ have, on this open set, multiplicity respectively $1$, $q-2$ even, $1$; the graphs of $\widetilde{u}_1$, $\widetilde{u}_2$, $\widetilde{u}_3$ are, on $B\setminus C$, respectively PMC with mean curvature $-g$, minimal, PMC with mean curvature $g$. (In particular, this case can only occur for $q$ even and $\geq 4$.)

\noindent For the proof of our claim, we consider $B_c$, a connected component of $B\setminus C$ on which $\widetilde{u}_1< \widetilde{u}_2< \widetilde{u}_3$ and a ball $D$ in $B_c$ chosen so that a point on the boundary of $D$ belongs to $C$. Using Hopf boundary point lemma in $D$ as done in Remark \ref{oss:Hopf} we obtain that, on $B_c$, the top graph has necessarily mean curvature $g$, the bottom graph has mean curvature $-g$, the middle one is minimal. This forces (by \cite{RogTon}) the fact that $u_\infty=+1$ on the supergraph of $\widetilde{u}_3$ and on the subgraph of $\widetilde{u}_1$, while $u_\infty=-1$ on $\{(x,y)\in B\times I: \widetilde{u}_1(x) < y <\widetilde{u}_3(x)\}$. Always by \cite{RogTon}, the minimal portions have even multiplicity and the PMC portions have multiplicity $1$. Additionally, we also obtain that the set $Z$ of points where two of the three graphs agree is a set of vanishing $\mathcal{H}^n$-measure (for otherwise, almost everywhere on this set we would have multiplicity higher than $1$, hence even because $g>0$, and by \cite{RogTon} these points would have to belong to the interior of $\{u_\infty=-1\}$, against the previous conclusions). The $C^2$ regularity on $Z\setminus C$ completes the proof of the claim.

\medskip

In the remainder of this section we analyse the possible configurations of $V$ for $\widetilde{q}\in \{2, 3\}$. We will show that, for each configuration, each $\widetilde{u}_j$ is in $C^2(B)$ and ${\rm graph} \, u_{j}$ is either completely minimal, or completely PMC with mean curvature $g$, or  completely PMC with mean curvature $-g$. We consider  $\widetilde{u}_{j_1}$ and $\widetilde{u}_{j_2}$ for $j_1<j_2$ and let $v=\widetilde{u}_{j_2}-\widetilde{u}_{j_1}$. Then $v$ is a non-negative function and it is $C^2$ on $B\setminus C$. We compute the PDE satisfied by $v$ on $B\setminus C$, arguing as in Lemma \ref{lem:no_fourfold} and keeping in mind the possible configurations. We obtain, for all $\zeta \in C^\infty_c(B\setminus C)$,

\be
\label{eq:PDE_difference}
-\int \left(\mathscr{a}_{ij} D_i v +  \mathscr{b}_i v\right) D_j \zeta + \int\left(\mathscr{d}_i D_i v + \mathscr{c}  v \right)\zeta  = \left\{\begin{array}{ccc}
\int g(x,u_{j_2}) \zeta \\
\int g(x,u_{j_1}) \zeta\\
\int (g(x,u_{j_2})+g(x,u_{j_1})) \zeta
\end{array}
\right.,
\ee
where $\mathscr{a}_{ij}$, $\mathscr{b}_i$, $\mathscr{c}$, $\mathscr{d}_i \in C^{0,\alpha}(B) \cap C^1(B\setminus C)$ and $\mathscr{a}_{ij}$ is symmetric and positive definite. Which right-hand-side appears in (\ref{eq:PDE_difference}) depends on the right-hand-side of the PDEs for $\widetilde{u}_{j_1}$ and $\widetilde{u}_{j_2}$ in (\ref{eq:PDE_pmc}). The first corresponds to the case $\widetilde{u}_{j_1}$ minimal and $\widetilde{u}_{j_2}$ PMC (with mean curvature $g$). The second corresponds to the case $\widetilde{u}_{j_2}$ minimal and $\widetilde{u}_{j_1}$ PMC (with mean curvature $-g$). The third corresponds to the case in which $\widetilde{u}_{j_1}$ and $\widetilde{u}_{j_2}$ are PMC with mean curvature respectively $-g$ and $g$.

We will next prove that the PDE (\ref{eq:PDE_difference}) for $v$ extends (in its weak form) to the whole of $B$. Note that $v\in C^{1,\alpha}(B)$ by assumption and $v=0$ on $C$ ($v\geq 0$ on $B$) and that the right-hand-side (in all three cases) is a $C^{1,\alpha}$ function on $B$.

All possible cases are treated similarly and the third option in (\ref{eq:PDE_difference}) corresponds to the situation treated in \cite{BW2} (and in \cite{BW1} when $g\equiv \text{cnst}$). All cases follow a similar argument, that we carry out here only in the case corresponding to the first option in (\ref{eq:PDE_difference}). Recall, from our conclusions on the possible configurations when $\widetilde{q}\in \{2,3\}$, that $\mathcal{H}^n(C)=0$.

We note, first of all, that inequality~(\ref{var-stability}) provides an interior $L^2$-bound on $|D^2 \widetilde{u}_j|$ on $B\setminus C$, i.e.\ for each ball $B_{r} = B_{r}(0) \subset B$ with $r \in (0, 1)$, we have that 
$\int_{B_{r} \setminus C} |D\widetilde{u}_{j}|^{2} \leq \widetilde{c},$ where $\widetilde{c}$ depends on $\sup _{B_{r}} \, |Du_{j}|$, $\sup_{B_{r}} \, |Dg|$ and $r.$ This is because the generalized second fundamental form appearing in (\ref{var-stability}) agrees with the classical second fundamental form on $\Greg{V}$, and thus it bounds from above the second derivatives of $\widetilde{u}_j$ on $B_{r} \setminus C$ in terms of $\sup_{B_{r}} \, |Du_{j}|$. Thus $\widetilde{u}_j \in W^{2,2}(B_{r}\setminus C)$ and hence 
$v \in W^{2,2}(B_{r} \setminus C)$ for every $r \in (0, 1)$. Moreover, $Dv\in C^{0,\alpha}(B)$ and $Dv=0$ on $C$, with $Dv \in C^1(B\setminus C)$. Using now \cite[Lemma C.1]{BW1} we obtain that $Dv\in W^{1,2}_{\text{loc}}(B)$. We then adapt the argument in \cite[Section 7.5]{BW1}. We consider the function $f=\mathscr{a}_{ij} D_i v + \mathscr{b}_i v$ on $B$; $f\in C^{0,\alpha}(B)\cap C^1(B\setminus C)$, with $f=0$ on $C$ (since $v$ and $Dv$ vanish there). Recalling the structure of $\mathscr{a}_{ij}$ and $\mathscr{b}_i$, we find that $Df$ 
is in $L^1(B_{r}\setminus C)$ since $v \in W^{2,2}_{\rm loc}(B)$. Applying again \cite[Lemma C.1]{BW1} we conclude that $f\in W_{\rm loc}^{1,1}(B)$, with distributional derivative given by the $L^1$ function equal to $Df$ on $B\setminus C$ and $0$ on $C$. Using this fact, and recalling that the PDE for $v$ is satisfied strongly on $B\setminus C$, we compute, for $\zeta \in C^\infty_c(B)$:
\begin{eqnarray}\label{eq:extend_PDE_v}
&&\hspace{-.3in}-\int_B \left(\mathscr{a}_{ij} D_i v + \mathscr{b}_i v\right) D_j \zeta +  \left(\mathscr{d}_i D_i v + \mathscr{c}  v \right)\zeta=\int_B D_j\underbrace{\left(\mathscr{a}_{ij} D_i v + \mathscr{b}_i v\right)}_{f}  \zeta + \left(\mathscr{d}_i D_i v + \mathscr{c}  v \right)\zeta\nonumber\\
&&\hspace{1in}=\int_{B\setminus C} D_j \left(\mathscr{a}_{ij} D_i v + \mathscr{b}_i v\right)  \zeta + \left(\mathscr{d}_i D_i v + \mathscr{c}  v \right)\zeta\nonumber\\
&&\hspace{1.5in}= \int_{B\setminus C} g(x,\widetilde{u}_{j_2})\zeta=\int_{B} g(x,\widetilde{u}_{j_2})\zeta,
\end{eqnarray}
where we used (in the last equality) the fact that $\mathcal{H}^n(C)=0$. Equality (\ref{eq:extend_PDE_v}) says that the weak PDE for $v$ is valid on the whole of $B$.

\medskip

With the knowledge that the (weak) PDEs for $\widetilde{u}_{j_2}-\widetilde{u}_{j_1}$ extend from $B\setminus C$ to $B$, we are now ready to prove that $\widetilde{u}_j\in C^2(B)$ for $j\in\{1, 2, 3\}$. We will do so for the case $\widetilde{q}=3$ (thus with $q$ even); the case $\widetilde{q}=2$ can be treated analogously (and is more straightforward). We point out that the functions $F_j$ in (\ref{eq:PDE_pmc}) for $j\in \{1, \ldots n\}$ are odd in the third variable: this follows by recalling that $F(x,\nu)$ is even in $\nu$ (since $F$ is the integrand of the area functional). The first variation formula for $V$ gives (recalling that $\spt{V}=|\text{graph}(u_1)| + (q-2)|\text{graph}(u_2)|+|\text{graph}(u_3)|$ with $q$ even, in the case $\widetilde{q}=3$), for any $\zeta \in C^\infty_c(B)$

\begin{eqnarray}
\label{eq:PDE_firstvariation}
&&\sum_{j=1}^3 \widetilde{q}_j \left(-\int_B F_j(x,\widetilde{u}_j,D\widetilde{u}_j) D_j \zeta + \int F_{n+1} (x,\widetilde{u}_j,D\widetilde{u}_j) \zeta\right)\nonumber\\ 
&&\hspace{2in}= \int_B \left(g(x,\widetilde{u}_3) - g(x,\widetilde{u}_1)\right)\zeta,
\end{eqnarray}
with $\widetilde{q}_1=\widetilde{q}_3=1$, $\widetilde{q}_2 =q-2$ with $q$ even. We rewrite (\ref{eq:PDE_firstvariation}) as follows

\begin{eqnarray*}
&&\hspace{-.25in}q\left[\int_B -F_j(x,\widetilde{u}_1,D\widetilde{u}_1) D_j \zeta + F_{n+1} (x,\widetilde{u}_1,D\widetilde{u}_1) \zeta\right]\nonumber\\ 
&&\hspace{.25in}+(q-2)\left[\int_B -F_j(x,\widetilde{u}_2,D\widetilde{u}_2) D_j \zeta +F_{n+1} (x,\widetilde{u}_2,D\widetilde{u}_2) \zeta\right]\nonumber\\
 &&\hspace{.5in}-(q-2)\left[\int_B -F_j(x,\widetilde{u}_1,D\widetilde{u}_1) D_j \zeta +F_{n+1} (x,\widetilde{u}_1,D\widetilde{u}_1) \zeta\right]\nonumber\\ 
&&\hspace{.75in}+\left[\int_B  -F_j(x,\widetilde{u}_3,D\widetilde{u}_3) D_j \zeta +  F_{n+1} (x,\widetilde{u}_3,D\widetilde{u}_3) \zeta\right]\nonumber\\
&&\hspace{1in}-\left[\int_B -F_j(x,\widetilde{u}_1,D\widetilde{u}_1) D_j \zeta +F_{n+1} (x,\widetilde{u}_1,D\widetilde{u}_1) \zeta \right]\nonumber\\ 
&&\hspace{2.5in}= \int_B \left(g(x,\widetilde{u}_3) - g(x,\widetilde{u}_1)\right)\zeta.
\end{eqnarray*}
In the second and third square brackets we recognise (in their weak form) the left-hand-sides of the PDEs for $v_{21}=\widetilde{u}_2-\widetilde{u}_1$ and $v_{31}=\widetilde{u}_3-\widetilde{u}_1$; we obtained above (see (\ref{eq:extend_PDE_v})) the validity of these PDEs on $B$, so we obtain
\begin{eqnarray*}
&&q\left[\int_B -F_j(x,\widetilde{u}_1,D\widetilde{u}_1) D_j \zeta + F_{n+1} (x,\widetilde{u}_1,D\widetilde{u}_1) \zeta\right]\nonumber\\ 
&&\hspace{.5in}+(q-2)\int_B g(x,\widetilde{u}_1) \zeta +\int_B (g(x,\widetilde{u}_3)+g(x,\widetilde{u}_1))\zeta\nonumber\\ 
&&\hspace{2.5in}= \int_B \left(g(x,\widetilde{u}_3) - g(x,\widetilde{u}_1)\right)\zeta,
\end{eqnarray*}
which implies that (for every $\zeta\in C^\infty_c(B)$)
\be
\label{eq:PDE_u1}
\int_B -F_j(x,\widetilde{u}_1,D\widetilde{u}_1) D_j \zeta + F_{n+1} (x,\widetilde{u}_1,D\widetilde{u}_1) \zeta  =- \int_B g(x,\widetilde{u}_1) \zeta.
\ee
Similarly we can prove (using the PDEs for $v_{32}=\widetilde{u}_3-\widetilde{u}_2$ and $v_{31}=\widetilde{u}_3-\widetilde{u}_1$) that for every $\zeta\in C^\infty_c(B)$
\be
\label{eq:PDE_u3}
\int_B -F_j(x,\widetilde{u}_3,D\widetilde{u}_3) D_j \zeta + F_{n+1} (x,\widetilde{u}_3,D\widetilde{u}_3) \zeta  = \int_B g(x,\widetilde{u}_3) \zeta.
\ee
By standard elliptic theory the fulfilment (in weak sense) of the PDEs (\ref{eq:PDE_u1}) and (\ref{eq:PDE_u3}) implies that $\widetilde{u}_1$ and $\widetilde{u}_3$ are $C^2$ and solve these PDEs strongly. These facts, together with (\ref{eq:PDE_firstvariation}), also imply that $\widetilde{u}_2$ is $C^2$ and solves
$$D_j\left(F_j(x,\widetilde{u}_2,D\widetilde{u}_2)  \right) + F_{n+1} (x,\widetilde{u}_2,D\widetilde{u}_2)  =  0$$
on $B$. This completes the proof of Theorem \ref{thm:inductive_higher_reg}.
\end{proof}

\subsection{Regularity of stable limit $(g,0)$-varifolds, Part II: proofs of Theorem~\ref{limit-regularity}~and~Theorem~\ref{estimates}}\label{limit-reg-proof}

\begin{proof}[Proof of Theorem~\ref{limit-regularity}] 
Let the hypotheses be as in the theorem, so that $g \in C^{1, 1}(N),$ $g >0$ and $V$ is a limit $(g, 0)$-varifold on $N$ with associated sequences $(\e_{j}) \subset {\mathbb R}$ with $\e_{j} \to 0^{+}$, $(g_{j}) \subset C^{1, 1}(N)$ with $g_{j} \to g$ locally in $C^{1, 1}$ and $(u_{\e_{j}}) \subset W^{1, 2}_{\rm loc}(N)$ with $u_{\e_{j}}$ a critical point of 
${\mathcal F}_{\e_{j}, \sigma g_{j}}$ such that the Morse index of $u_{\e_{j}}$ with respect to ${\mathcal F}_{\e_{j}, \sigma g_{j}}$ is $\leq I$ for some fixed integer $I \in \N$ independent of $j$. 
We first state and prove the following:

\medskip
\noindent
\emph{Claim 1:} for each $y \in N$, there exists $\rho_{y}  \in (0, {\rm inj}_{y} \, N)$ and a (sub)sequence $u^{(\ell)} = u_{\e_{j_{\ell}}},$ $\ell=1, 2, 3, \ldots$ such that for each $\delta \in (0, \rho_{y})$ and all sufficiently large $\ell$ (depending on $\delta$) we have that $u^{(\ell)}$ is stable in ${\mathcal N}_{\rho_{y}}(y) \setminus \overline{{\mathcal N}_{\delta}(y)}.$ 

%To prove Claim 1 we may assume that $I \geq 1$ since Claim 1 is true by hypothesis if $I=0$. 
%Then for each $\rho >0$ such that $N \setminus \overline{{\mathcal N}_{\rho}(y)} \neq \emptyset$, we have that for each $j$, the Morse index of $u_{\e_{j}}$ is $\leq I-1$ at least in one of the two disjoint open sets ${\mathcal N}_{\rho}(y)$ or  $N \setminus \overline{{\mathcal N}_{\rho}(y)}$, and can find a subsequence 
%To see this, we may of course assume that 
%$I \geq 1.$ 
\medskip
To prove Claim 1, it is convenient to proceed by first establishing the following assertion:

\medskip
\noindent
\emph{Claim 2:} Let $y \in N$ and let $k$ be an integer $\geq 1.$ The following implication holds: 
\begin{eqnarray*}
&&\hspace{-.25in}\mbox{$\exists \, \rho>0$ and a (sub)sequence $u^{(\ell)} = u_{\e_{j_{\ell}}},$ $\ell=1, 2, \ldots,$ such that $\forall \, \delta \in (0, \rho)$ and $\forall \, \ell$}\nonumber\\ 
&&\hspace{-.25in}\mbox{sufficiently large (depending on $\delta$), the Morse index of $u^{(\ell)}$ in ${\mathcal N}_{\rho}(y) \setminus \overline{{\mathcal N}_{\delta}(y)}$ is}\nonumber\\ 
&&\hspace{-.25in}\mbox{$\leq k$}\nonumber\\ 
&&\implies\nonumber\\ 
&&\hspace{-.25in}\mbox{$\exists \, \rho^{\prime} \in (0, \rho)$ and a subsequence $(u^{(\ell^{\prime})})$ of $(u^{(\ell)})$ such that $\forall \, \delta \in (0, \rho^{\prime})$ and $\forall \, \ell^{\prime}$}\nonumber\\
&&\hspace{-.25in}\mbox{sufficiently large (depending on $\delta$), the Morse index of $u^{(\ell^{\prime})}$  in ${\mathcal N}_{\rho^{\prime}}(y) \setminus \overline{{\mathcal N}_{\delta}(y)}$ is}\nonumber\\ 
&&\hspace{-.2in}\mbox{$\leq k-1$.} 
\end{eqnarray*}

If this implication is false for some $k$, then there are a number $\rho >0$ and a subsequence $u^{(\ell)}  = u_{\e_{j_{\ell}}}$ of $(u_{\e_{j}})$ for which the hypothesis of the implication holds and yet the conclusion fails, allowing us to find sequences 
$\rho_{m} \to 0^{+}$ and $\delta_{m} \to 0^{+}$ with $0 < \rho_{m+1} < \delta_{m} < \rho_{m} < \rho$  for each $m,$ and a subsequence $(u^{(\ell^{\prime})})$ of $(u^{(\ell)})$ (arrived at by a diagonal sequence argument) such that for each $m$ and all sufficiently large $\ell^{\prime}$ (depending on $m$), the Morse index of $u^{(\ell^{\prime})}$ in ${\mathcal N}_{\rho_{m}}(y) \setminus \overline{{\mathcal N}_{\delta_{m}}(y)}$ is $\geq k$. Since by hypothesis of the implication we have that for each $m$ and sufficiently large $\ell^{\prime}$ the Morse index of $u^{(\ell^{\prime})}$ in ${\mathcal N}_{\rho}(y) \setminus \overline{{\mathcal N}_{\delta_{m}}(y)}$ is $\leq k,$ it follows that for each $m$ and sufficiently large $\ell^{\prime}$ the Morse index of $u^{(\ell^{\prime})}$ in ${\mathcal N}_{\rho}(y) \setminus  \overline{{\mathcal N}_{\rho_{m}}(y)}$ is zero. This says in particular that (a stronger form of) the conclusion of the implication holds. Claim 2 is thus established. 
%So if the implication fails for some $k$, then the claim holds. 

To deduce Claim 1, we may assume $I \geq 1$ (since if $I = 0$ then Claim 1 holds trivially) and apply Claim 2 iteratively, starting with any $\rho >0$, $k=I$ and with the full sequence $(u_{\e_{j}})$ in place of $(u^{(\ell)})$.  After $I$ iterations we arrive at Claim 1.

%To see this, we may assume that there is no subsequence $(u^{(\ell_{m})})$ with the property that for each $\delta \in (0, \rho)$ and sufficiently large $m$, 
%the Morse index of $u^{(\ell_{m}

Now fix an arbitrary point $y \in {\rm spt} \,\|V\|$ and let $\rho_{y} >0$ and $u^{(\ell)}$ be as given by Claim 1. Then for any $\delta \in (0, \rho_{y})$ and all sufficiently large $\ell$, 
$u^{(\ell)}$ is stable in ${\mathcal N}_{\rho_{y}}(y) \setminus \overline{{\mathcal N}_{\delta}(y)}$ and hence inequality (\ref{local-bounds}) holds 
with $u^{(\ell)}$ in place of $u_{\e_{j}}$ and $\varphi \in C^{1}_{c}({\mathcal N}_{\rho_{y}}(y) \setminus \overline{\mathcal N}_{\delta}(y))$. We may therefore argue as in the proof of (\cite[Proposition~3.2]{TonWic}) to show that, if $\delta \in (0, \rho_{y}/8)$,  there exists a set $\Sigma_{y, \delta}  \subset {\rm spt} \, \|V\| \cap \left({\mathcal N}_{\rho_{y}/2}(y) \setminus {\mathcal N}_{2\delta}(y)\right)$ with ${\rm dim}_{\mathcal H} \, (\Sigma_{y, \delta}) \leq n-2$ such that no tangent cone to $V$ at any point $y \in {\rm spt} \, \|V\| \cap \left({\mathcal N}_{\rho_{y}/2}(y) \setminus {\mathcal N}_{2\delta}(y)\right) \setminus \Sigma_{y, \delta}$ can be supported on a union of three or more half-hyperplanes meeting along a single $(n-1)$-dimensional subspace. Since we can choose $\delta$ arbitrarily small, and $y \in {\rm spt} \, \|V\|$ is arbitrary, we conclude that there is a set $\Sigma \subset {\rm spt} \, \|V\|$ with ${\rm dim}_{\mathcal H} \, (\Sigma) \leq n-2$ such that no tangent cone to $V$ at any point in ${\rm spt} \, \|V\| \setminus \Sigma$ can be supported on three or more half-hyperplanes meeting along a single $(n-1)$-dimensional  subspace. By the definition of classical singularity, this immediately implies that $V$ has no classical singularities anywhere. This is conclusion (i) of the theorem.

To see conclusion (ii), let $y \in {\rm spt} \, \|V\|$ be any point and let ${\mathbf C}$ be any tangent cone to $\sigma^{-1}V$ at $y.$ Then 
\begin{equation}\label{rescaling-seq}
\eta_{0, \sigma_{i} \, \#} \, (\exp_{Y}^{-1})_{\#} (\sigma^{-1}V) \to {\mathbf C}
\end{equation}
as varifolds on $T_{y} \, N \approx {\mathbb R}^{n+1}$ for some sequence of positive numbers $\sigma_{i} \to 0.$  Since $\sigma^{-1}V$ is an integral varifold with locally bounded generalised mean curvature, we have that ${\mathbf C}$ is a stationary integral hypercone. Passing to a subsequence of $\{\eps_{j}\}$ without relabelling, we may assume that $\widetilde{\eps}_{i} = \sigma_{i}^{-1}\eps_{i} \to 0$. Letting $\widetilde{u}_{\widetilde{\eps}_{i}}(X) = u_{\eps_{i}}(\exp_{y}(\sigma_{i}X))$, we see by the reasoning as in (\cite[p.\ 200]{TonWic}) with obvious modifications, that ${\mathbf C}$ is the limit varifold associated with the sequence $(\widetilde{u}_{\widetilde{\eps}_{i}})$ in the same way that the limit varifold $V$ is associated with the sequence $(u_{\eps_{i}}).$ Hence by the argument as in conclusion (i), we see that ${\mathbf C}$ has no classical singularities, proving the first assertion of conclusion (ii).
%Then ${\mathbf C}$ is a stationary integral hypercone in $T_{Y} \, N \approx {\mathbb R}^{n+1},$ and using the same notation as above, we have that ${\mathbf C}$ is the limit varifold associated with the sequence $(\widetilde{u}_{\widetilde{\e}_{j}})$. 
%Given a tangent cone ${\mathbf C}^{\prime}$ to ${\mathbf C}$ at a point $Y^{\prime},$ passing to a subsequence of $({\widetilde{u}}_{\widetilde{\e}_{j}})$ we can express ${\mathbf C}^{\prime}$ as the limit varifold associated with $\widetilde{u}_{\widetilde{\e}_{j}}(Y^{\prime} + \sigma_{j}^{\prime}(\cdot))$ for some sequence of positive numbers $(\sigma_{j}^{\prime})$ with $ (\sigma_{j}^{\prime})^{-1}\widetilde{\e}_{j} \to 0.$ 
%Hence by the reasoning as in the preceding two paragraphs, we see that ${\mathbf C}^{\prime}$ cannot be supported on three or more half-hyperplanes meeting along a common axis, proving the first assertion of part (ii). 
To see the second assertion of conclusion (ii), i.e.\ that ${\rm reg} \, {\mathbf C}$ 
is stable, let $\widetilde{\varphi} \in C^{1}_{c}({\mathbb R}^{n+1} \setminus \{0\})$ and choose $\delta >0$ such that $\widetilde{\varphi} \equiv 0$ on $B_{\delta}^{n+1}(0)$. Letting $\rho_{y}$, 
$u^{(\ell)} = u_{\e_{j_{\ell}}}$ be as given by Claim 1, and $(\sigma_{i})$ be the sequence for which (\ref{rescaling-seq}) holds, note that by Claim 1 for each fixed $i$ with $\sigma_{i}\delta < \rho_{y}$ and each sufficiently large $\ell$ (depending on $i$), $u^{(\ell)}$ is stable in ${\mathcal N}_{\rho_{y}}(y) \setminus \overline{{\mathcal N}_{\sigma_{i}\delta}(y)}$. Hence for such $\ell$, inequality (\ref{stability}) holds with $\widetilde{\varphi}(\sigma_{i}^{-1}\exp_{y}^{-1}(\cdot))$ in place $\varphi$ and $u^{(\ell)}$ in place of $u_{\e_{j}}$. Now choose a subsequence $(u^{(\ell_{i})})$ of $(u^{(\ell)})$ so that for each $i$, 
$u^{(\ell_{i})}$ is stable in  ${\mathcal N}_{\rho_{y}}(y) \setminus \overline{{\mathcal N}_{\sigma_{i}\delta}(y)}$ and $\overline{\e}_{i} = \sigma_{i}^{-1}\e_{j_{{\ell}_{i}}} \to 0$ as $i \to \infty$. 
Letting $\overline{u}_{\overline{\e}_{i}} (X) \equiv u_{\e_{j_{{\ell}_{i}}}}(\exp_{y}(\sigma_{i}X))$  and writing inequality (\ref{stability}) in terms of $\overline{u}_{\overline{\e}_{i}}$ and in local co-ordinates $X \in {\mathbb R}^{n}$ induced by the diffeomorphism $X \mapsto \exp_{Y}(\sigma_{i}X),$ and letting $i \to \infty$, 
we deduce (in the same way that (\ref{var-stability}) is derived from (\ref{stability})) that ${\mathbf C}$ has a generalised second fundamental form ${\mathbf B}_{\mathbf C}$ satisfying 
$$\int_{{\mathbb R}^{n+1}} |{\mathbf B}_{{\mathbf C}}|^{2} \widetilde{\varphi}^{2} d\|{\mathbf C}\| \leq \int_{{\mathbb R}^{n+1}} |\nabla^{{\mathbb R}^{n+1}} \widetilde{\varphi}|^{2} d\|{\mathbf C}\|$$
where $\nabla^{{\mathbb R}^{n+1}}$ is the gradient on ${\mathbb R}^{n+1}$. Now given any $\varphi \in C^{1}_{c}({\rm reg} \, {\mathbf C})$ (where ${\rm reg} \, {\mathbf C}$ is $C^{2}$ is the embedded part of 
${\mathbf C}$), we may take $\widetilde{\varphi}$ in the above inequality to be 
a compactly supported extension of $\varphi$ to ${\mathbb R}^{n+1} \setminus \{0\}$ which in a neighborhood of ${\rm spt} \, \varphi$ is constant in the normal direction to ${\rm reg} \, {\mathbf C}.$ From this we deduce, using also the fact that the multiplicity of ${\mathbf C}$ is constant on every connected component of ${\rm reg} \, {\mathbf C}$, the usual stability inequality 
$$\int_{{\rm reg}\, {\mathbf C}}  |{\mathbf B}_{\mathbf C}|^{2} \varphi^{2} d{\mathcal H}^{n} \leq \int_{{\rm reg} \, {\mathbf C}} |\nabla^{{\mathbf C}} \varphi|^{2} \, d{\mathcal H}^{n}$$
for all $\varphi \in C^{1}_{c}({\rm reg} \, {\mathbf C}),$ where ${\mathbf B}_{\mathbf C}$ is the (classical) second fundamental form of ${\rm reg} \, {\mathbf C}$ and $\nabla^{\mathbf C}$ denotes the gradient on ${\rm reg} \, {\mathbf C}.$  This is the second assertion of part (ii).

We next prove conclusion (iii). 
%Fix $I \in \N$ and let ${\mathcal V}$ be the class of all varifolds on $N$ of the form $\sigma^{-1}V$ for some limit $(g, 0)$-varifolds $V$ on $N$ with associated sequences $(\e_{j})$ with $\e_{j} \to 0^{+}$, 
%$(g_{j}) \subset C^{1, 1}(N)$ with $g_{j} \to g$ locally in $C^{1, 1}$ and $(u_{\e_{j}}) \subset W^{1, 2}_{\rm loc}(N)$ with $u_{\e_{j}}$ a critical point of 
%${\mathcal F}_{\e_{j}, \sigma g_{j}}$ such that the Morse index of $u_{\e_{j}}$ with respect to ${\mathcal F}_{\e_{j}, \sigma g_{j}}$ is bounded from above by $I$.  
If a tangent cone ${\mathbf C}$ to $\sigma^{-1}V$ at a point $y \in {\rm spt} \, \|V\|$ is supported on a hyperplane $P$, then there is a positive integer $q$ such that 
${\mathbf C} = q|P|$. 
For each positive integer $q$, let $R_{q}(V)$ be the set of points $y \in {\rm spt} \, \|V\|$ such that one tangent cone to $\sigma^{-1}V$ at $y$ is $q|P|$ for some hyperplane $P.$
We wish to show by arguing by induction on $q$ that $R_{q}(V)  \subset \Greg \, V$ for all $q \geq 1$, which is the assertion in conclusion (iii). If $q=1$, this is true by Allard's regularity theorem (\cite[Theorem~8.19]{A}) and standard elliptic regularity theory. Fix an integer $q \geq 2$ and suppose that the following induction hypothesis holds: 
\begin{itemize}
\item[($\ast$)] $R_{q^{\prime}}(V) \subset \Greg \, V$ for any $q^{\prime} \in \{1, 2, \ldots, q-1\}$. 
\end{itemize}
%Let ${\mathcal V}$ be the set of all varifolds on $N$ of the form $\sigma^{-1}V$ for stable limit $(g, 0)$-varifolds $V$  on $N$ for some fixed positive $g \in C^{1, 1}(N).$  
Let $y \in R_{q}(V)$ and let $\rho_{y}$ be as given by Claim 1. We first wish to apply Theorem~\ref{BWregularity} with ${\mathcal V} = \{\sigma^{-1}V \res {\mathcal N}_{\rho_{y}/4}(y)\}$ and 
$U_{\sigma^{-1}V \res {\mathcal N}_{\rho_{y}/4}(y)} = {\mathcal N}_{\rho_{y}/4}(y).$  By Theorem~\ref{Roger-Tonegawa} and conclusion (i), ${\mathcal V}$ satisfies hypotheses (a), (b) of Theorem~\ref{BWregularity}, so we only need to verify condition (c) of Theorem~\ref{BWregularity}, i.e.\ that $V^{\prime} = \sigma^{-1}V \res {\mathcal N}_{\rho_{y}/4}(y)$ 
(taken with $U_{V^{\prime}} = {\mathcal N}_{\rho_{y}/4}(y)$) satisfies the $(q, \beta)$-separation property (as in Definition~\ref{qbseparation}) for some $\beta \in (0, 1)$.  If fact, we shall take 
$$\beta = (K + 1)^{-1}\epsilon$$ 
where $\epsilon,$ $K$ be the constants as 
in Theorem~\ref{nonvar-SS} taken with any $p > n$ and with $\rho_{0} = \rho_{y}/2$, $\Lambda = \frac{1}{2}\rho_{y}\sup_{N} \, |g|$, $\lambda = c$ and $\overline{\lambda} = C$ where $c$, $C$ are as in Lemma~\ref{Schoen-Tonegawa} (taken with $\Gamma = \sup_{N}|g| + \sup_{N} \, |\nabla g|$). %To do this, we shall employ Theorem~\ref{nonvar-SS}. 
%Specifically, we need to show that there is a constant $\beta >0$ such that the following holds:\\
 %\noindent
%\emph{Claim: If $V \in {\mathcal V}$, $X \in N$, $\widetilde{V} = \left(\Gamma \circ {\rm exp}_{X}^{-1}\right)_{\#} \, V \res {\mathcal N}_{{\rm inj}_{X} N}(X)$ for some orthogonal rotation $\Gamma \, : \, T_{X} \, N \approx  {\mathbb R}^{n+1} \to T_{X} \, N$, $\rho \in (0, \min\{1, {\rm inj}_{X} N\}]$, $$(\omega_{n}2^{n})^{-1}\|\eta_{0, \rho \, \#} \, \widetilde{V}\|(B_{1}^{n+1}(0)) \leq q+ 1/2,$$ 
%$$q-1/2 \leq \omega_{n}^{-1}\|\eta_{0, \rho \, \#} \widetilde{V}\|((B_{1/2}^{n}(0) \times {\mathbb R}) \cap 
%B_{1}^{n+1}(0)) \leq q + 1/2,$$ 
%$\Theta \, (\|\eta_{0, \rho \, \#} \widetilde{V}\|, Y) < q$ for each $Y \in B_{1}^{n+1}(0)$ and if
%\begin{eqnarray*}
%\int_{(B_{1/2}^{n}(0) \times {\mathbb R}) \cap B_{1}^{n+1}(0)} |x^{n+1}|^{2} \, d\|\eta_{0, \rho \#} \widetilde{V}\| && \\
%&&\hspace{-2.5in} + \rho\left(\int_{(B_{1/2}^{n}(0) \times {\mathbb R}) \cap B_{1}^{n+1}(0)} |H_{V}(\exp_{X}(\G^{-1}(\rho Y))|^{p} \, d\|\eta_{0, \rho \, \#} \widetilde{V}\|(Y)\right)^{1/p} + \rho < \beta,
%\end{eqnarray*} then 
%\begin{equation}\label{hyp(c)}
%\eta_{0, \rho\, \#} \widetilde{V} \res ((B_{1/2}^{n}(0) \times {\mathbb R}) \cap B_{1}^{n+1}(0))  = \sum_{j=1}^{q} |{\rm graph} \, u_{j}| 
%\end{equation}
%for some $u_{j} \in C^{2}(B_{1/2}^{n}(0))$, $j=1, 2, \ldots, q,$ with 
%$u_{1} \leq u_{2} \leq \ldots \leq u_{q}.$}\\ 
With this choice of $\beta$, let $X \in {\mathcal N}_{\rho_{y}/4}(y)$, $\rho \in (0, {\rm min} \, \{1, {\rm inj}_{X}(N), {\rm dist} \, (X, \partial {\mathcal N}_{\rho_{y}/4}(y))\})$ and $Q \, : \, {\mathbb R}^{n+1} \to {\mathbb R}^{n+1}$ be an orthogonal rotation such that, writing $$\widetilde{V} = \left(Q \circ {\rm exp}_{X}^{-1}\right)_{\#} \, V^{\prime} \res \left({\mathcal N}_{{\rm inj}_{X} N}(X) \cap {\mathcal N}_{\rho_{y}/4}(y)\right),$$ 
the  conditions (i)-(v) of Definition~\ref{qbseparation} are satisfied; in particular, it is assumed that for some $Y \in B_{\rho/2}^{n}(0) \times \{0\} \subset {\mathbb R}^{n+1} \approx T_{X} \, N$ and $\tau \in (0, \rho/2]$, 
\begin{equation}\label{density-hyp}
\Theta \, (\|V^{\prime}\|, \xi) < q \;\; \mbox{for each} \;\; \xi \in {\rm exp}_{X}(B_{\t}^{n}(Y) \times {\mathbb R}).
\end{equation}
\noindent
We need to show that the conclusion of the implication in Definition~\ref{qbseparation} holds, namely, that 
\begin{equation}\label{hyp(c)}
\eta_{Y, \t \, \#} \widetilde{V} \res ((B_{1/4}^{n}(0) \times {\mathbb R}) \cap B_{1}^{n+1}(0))  = \sum_{j=1}^{q} |{\rm graph} \, u_{j}| 
\end{equation}
for some $u_{j} \in C^{2}(B_{1/4}^{n}(0))$, $j=1, 2, \ldots, q,$ with  $u_{1} \leq u_{2} \leq \ldots \leq u_{q}.$ 

It suffices of course to establish (\ref{hyp(c)}) with $(1-\delta)\t$ in place of $\t$ where $\delta \in (0, 1/2)$ is arbitrary. We shall do this by employing Theorem~\ref{nonvar-SS}, taken with arbitrary $p > n$, $\rho_{0} = \rho_{y}/2$, $X_{0} = y$, $\Lambda = \frac{1}{2}\rho_{y} \sup_{N} \, |g|$, $\rho = \t$, and with ${\rm exp}_{X}(Y)$ in place of $X$  and 
$\sigma^{-1}V \res {\mathcal N}_{\rho_{y}/2}(y)$ in place of $V$. It follows from Theorem~\ref{Roger-Tonegawa} that hypothesis (a) of Theorem~\ref{nonvar-SS} holds with these choices. To check that hypothesis (b) of Theorem~\ref{nonvar-SS} holds with these choices,  
%We first argue that  
%\begin{equation}\label{sing-bound}
%{\rm dim}_{\mathcal H} \left({\rm sing} \, V \res {\mathcal N}_{\rho}(X)\right) \leq n-7.
%\end{equation}
note that in view of (\ref{density-hyp}) and the induction hypothesis ($\ast$), if a tangent cone to $\sigma^{-1}V$ at a point $Z \in {\rm spt} \, \|V\| \cap {\mathcal N}_{(1-\delta)\t}(\exp_{X}(Y))$  is supported on a hyperplane then $Z \in \Greg \, \sigma^{-1}V.$ Hence if $2 \leq n \leq 6,$ it follows from conclusion (ii) (of the present theorem) and Theorem~\ref{classification} that 
${\rm sing} \, V \res {\mathcal N}_{(1-\delta)\t}({\rm exp}_{X}(Y)) = \emptyset$, and if $n\geq 7,$  it follows from conclusion (ii), Theorem~\ref{classification} and Lemma~\ref{stratification} that 
${\rm dim}_{\mathcal H} \, ({\rm sing} \, V \res {\mathcal N}_{(1-\delta)\t}({\rm exp}_{X}(Y)) \leq n-7.$  Thus hypothesis (b) of Theorem~\ref{nonvar-SS} holds.  Finally,  to check that hypothesis (c) of Theorem~\ref{nonvar-SS} is satisfied, note that 
$y \not\in {\mathcal N}_{\t}({\rm exp}_{X}(Y))$ since $\Theta \,(\|V\|, y) = q$ and hence we can invoke Claim 1 with $\delta\t$ in place of $\delta$ to infer that 
$V \res {\mathcal N}_{(1- \delta)\t}({\rm exp}_{X}(Y))$ is a stable limit $(g, 0)$-varifold on ${\mathcal N}_{(1-\delta)\t}({\rm exp}_{X}(Y)).$ Consequently, Lemma~\ref{Schoen-Tonegawa}  
yields hypothesis (c) of Theorem~\ref{nonvar-SS} with $\lambda = c$ and $\overline{\lambda} = C$, where $c$ and $C$ are the constants as in Lemma~\ref{Schoen-Tonegawa}. Hence it follows  from Theorem~\ref{nonvar-SS} that (\ref{hyp(c)}) holds (first with $(1-\delta)\t)$ in place of $\t$ and hence also in the limit $\delta \to 0^{+}$),  i.e.\ that $V^{\prime}$ satisfies the $(q, \beta)$-separation property for the above choice of $\beta$. This verifies hypothesis (c) of Theorem~\ref{BWregularity}.

We may therefore choose a hyperplane $P$ such that $q|P|$ is a tangent cone to $V$ at $y$ and apply Theorem~\ref{BWregularity} to see that in an appropriately small neighborhood around the origin, the varifold $\left(\exp_{y}^{-1}\right)_{\#} V$ is given as the sum of the multiplicity 1 varifolds associated with graphs of $C^{1, \alpha}$ functions $u_{1} \leq u_{2} \leq \ldots \leq u_{q}$ defined over a ball $B \subset P.$  Since for any $\delta \in (0, \rho_{y})$ Claim 1 implies that $V \res \left({\mathcal N}_{\rho_{y}}(y) \setminus \overline{\mathcal N}_{\delta}(y)\right)$ is a stable limit $(g, 0)$-varifold, it follows from Theorem~\ref{thm:inductive_higher_reg} that for each $j$, the function $u_{j}$ is of class $C^{2}$ in $B \setminus \{0\}$ and solves one of the equations in 
(\ref{eq:PDE_pmc}). It is then straightforward to see that $u_{j}$ is a weak solution to the same equation on $B$, and hence by elliptic regularity that $u_{j} \in C^{2}$ for each $j$.  Thus 
$y \in \Greg \, V$, and this completes the proof of conclusion (iii). 

Conclusion (iv) in case $2 \leq n \leq 6$ follows readily from conclusion (iii), conclusion (ii) and Lemma~\ref{classification}. When $n \geq 7$ we see from conclusion (iii), conclusion (ii), Lemma~\ref{classification} and Lemma~\ref{stratification} that ${\rm dim}_{\mathcal H} \, ({\rm sing} \, V) \leq n-7$, giving in particular conclusion (iv) in dimensions $n \geq 8$. 
If $n=7$, conclusion (iv) makes the stronger assertion that ${\rm sing} \, V$ must be discrete. This follows by a standard argument which goes as follows: if this is false then there are points $z, z_{k} \in {\rm sing} \, V$ for $k=1, 2, 3, \ldots$ with $z_{k} \neq z$ and $z_{k} \to z$ as $k \to \infty$.  Rescaling $(\exp_{z}^{-1})_{\#} \, V$ about the origin (in $T_{z} \, N \approx {\mathbb R}^{8}$) by the sequence $\rho_{k} 
= |\exp_{z}^{-1}(z_{k})|$ produces, after passing to a subsequence, a tangent cone ${\mathbf C} = \lim_{k \to \infty} \, \eta_{0, \rho_{k} \, \#} \, (\exp_{z}^{-1})_{\#} \, V.$ By conclusion (ii) and 
Lemma~\ref{classification}, we have that ${\rm sing} \, {\mathbf C} \subset \{0\}$ (in fact ${\rm sing} \, {\mathbf C}  = \{0\}$ in view of conclusion (iii)).  By Claim 1, for each sufficiently large $k$ 
the varifold $V_{k} \equiv V \res ({\mathcal N}_{2\rho_{k}}(z) \setminus \overline{{\mathcal N}_{\rho_{k}/4}(z)})$ is a stable limit $(g, 0)$-varifold in 
${\mathcal N}_{2\rho_{k}}(z) \setminus \overline{{\mathcal N}_{\rho_{k}/4}(z)}.$ Since ${\rm dim}_{\mathcal H} \, ({\rm sing} \, V_{k}) = 0,$ $\eta_{0, \rho_{k} \, \#} \, (\exp_{z}^{-1})_{\#} \, V_{k} \to {\mathbf C} \res (B_{2}^{n+1}(0) \setminus \overline{B_{1/4}^{n+1}(0)})$ as varifolds and ${\mathbf C} \res (B_{2}^{n+1}(0) \setminus \overline{B_{1/4}^{n+1}(0)})$ is regular, we can apply Theorem~\ref{nonvar-SS} to conclude that ${\rm spt} \, \|V\| \cap \left({\mathcal N}_{3\rho_{k}/2}(z) \setminus \overline{{\mathcal N}_{\rho_{k}/2}(z)}\right) \subset \Greg V$, contrary to the the fact that $z_{k} \in {\rm sing} \, V.$ Hence ${\rm sing} \, V$ must be discrete if $n=7$, and the proofs of conclusion (iv) and the theorem are complete. 
\end{proof}

\begin{proof}[Proof of Theorem~\ref{estimates}]
 Let $\overline{{\mathcal V}}$ be the collection of all stable limit $(g, 0)$-varifolds on $N$ over all $g \in C^{1, 1}(N)$ such that $\sup_{N} \, |g|+ \sup_{N} \, |\nabla g| \leq \Gamma$. By 
 Theorem~\ref{limit-regularity},~part(i), no tangent cone to a varifolds in $\overline{{\mathcal V}}$ is supported on three or more half-hyperplanes meeting along a common $(n-1)$-dimensional subspace. 
 
First consider part (ii) of Theorem~\ref{estimates}. To establish this, we appeal to Theorem~\ref{BWregularity}, proceeding as in the argument of Theorem~\ref{limit-regularity}, part (iii) but now also keeping track of the estimate provided by Theorem~\ref{BWregularity}. (Thus we do not need \emph{a priori} the regularity provided by Theorem~\ref{limit-regularity}, part (iv)). To begin with, note  that  if $y \in N$, ${\rm inj}_{y} \, N \geq \overline{\rho}$, $\rho \in (0,{\rm inj}_{y} \, N),$ and if $\widetilde{V}$ is any varifold on $B_{\rho}^{n+1}(0) \subset T_{y} \, N \approx {\mathbb R}^{n+1}$ such that $\widetilde{V} = \left(Q \circ {\rm exp}_{y}^{-1}\right)_{\#} \, V \res {\mathcal N}_{\rho}(y)$ for some $V \in \overline{\mathcal V}$ and orthogonal rotation $Q \, : \, {\mathbb R}^{n+1} \to {\mathbb R}^{n+1}$, then 
$\hat H_{\widetilde{V}} \leq {\rm sup}_{N} \, |g|$,  where $\hat H_{\widetilde{V}} = |H_{V} \circ Q \circ \exp_{y}|.$ Hence, for $E_{\sigma}$ as in Theorem~\ref{estimates}, we have that the excess ${\hat E}_{\sigma}$ as in Theorem~\ref{BWregularity} satisfies 
$${\hat E}_{\sigma} \leq \int_{(B_{1/2}^{n}(0) \times {\mathbb R}) \cap B_{1}^{n+1}(0)} |x^{n+1}|^{2} \, d\|\eta_{0, \sigma \#} \widetilde{V}\| + (1 + \sup_{N} \, |g|) \sigma \leq E_{\sigma}$$ 
provided that (in the definition of $E_{\sigma}$) we choose $\mu \geq 1+ \Gamma$ $(\geq 1 + \sup_{N} \, |g|)$. 

First consider the case $q=1$. In this case, subject also to the rest of the hypotheses in part (ii), we have the validity of the assertion in part (ii) by the Allard regularity theorem and the $C^{2, \alpha}$ Schauder theory for uniformly elliptic equations.

%Fix $y \in N$ and $\rho \in (0, {\rm inj}_{y} \, N)$ such that $\mu\rho < 1$, and let $\widetilde{\mathcal V}$ be the set of all 
% varifolds $\widetilde{V}$ on $B_{\rho}^{n+1}(0)$ such that $\widetilde{V} = \left(\eta_{\rho^{-1}z, \rho^{-1}\sigma}\right)_{\#} \, \left({\rm exp}_{y}^{-1}\right)_{\#} \, V \res {\mathcal N}_{\rho}(y)$ for some  $V \in {\mathcal V}$, some $z \in B_{\rho}^{n+1}(0)$ and some $\sigma \in (0, 1]$. Then  no tangent cone to a varifolds in $\widetilde{\mathcal V}$ is supported on three or more half-hyperplanes meeting along a common $(n-1)$-dimensional subspace, since this is so for varifolds in ${\mathcal V}$.  
 Now let $q \geq 2$, and assume by induction that part (ii) is valid with $q^{\prime}$ in place of $q$ for any $q^{\prime} \in \{1, 2, \ldots, q-1\}.$ By 
 arguing  exactly as in the proof of Theorem~\ref{limit-regularity}, part (iii), taking: (a) ${\mathcal V} = \overline{\mathcal V}$ with $U_{V} = N$ for each $V \in \overline{\mathcal V}$; (b) this induction hypothesis in place of the induction hypothesis ($\ast$) therein; (c) 
 $\overline{\rho}$ in place of $\rho_{y}$ and (d) $\delta = 0,$ we see (by employing Theorem~\ref{BWregularity} and Theorem~\ref{nonvar-SS} as in that argument) that part (ii) of the current theorem holds, in the first instance, with $C^{1, \alpha}$ functions $u_{j}$, $j=1, 2, \ldots q.$ By Theorem~\ref{thm:inductive_higher_reg} we see that these $u_{j}$ are in fact of class  $C^{2, \alpha}.$ Finally, the desired $C^{2, \alpha}$ estimate in part (ii) follows from standard Schauder estimates. 
 
 For part (i), note that by Theorem~\ref{limit-regularity} we have that any varifold in $\widetilde{\mathcal V}$ has quasi-embedded $PMC (g, 0)$ structure locally away from a closed set of Hausdorff dimension at most $n-7$. 
 The assertion of part (i) now follows from the slicing argument of \cite[pp.~785--787]{SS} (the first part of the proof of \cite[Theorem 2]{SS}), taking the just established conclusion (ii) in place of \cite[Theorem 1]{SS}. 
 \end{proof}

\section{A min-max construction of Allen--Cahn solutions}
\label{minmax_setup}

For notational convenience, we will work, in Sections~\ref{minmax_setup} and \ref{proof}, with the functional $\Fce=\ca{F}{\eps, g}$ given by 
$${\mathcal F}_{\e, \, g}(u) = \int_N \eps \frac{|\nabla u|^2}{2} + \int_N \frac{W(u)}{\eps} -\int_N g\, u$$
rather than with ${\mathcal F}_{\e, \, \sigma g}(u)$. This means that, given $g>0$ in $C^{1,1}(N)$, we will establish the existence of a quasi-embedded immersed hypersurface with mean curvature $\frac{g}{\sigma} \nu$. This is of course equivalent to proving Theorem~\ref{thm:existence} for 
$g >0$, $g \in C^{1, 1}(N)$.  Recall that  $W:\R\to [0,\infty)$ is a fixed double-well potential, i.e.\ a non-negative function of class $C^2$ having precisely two non-degenerate minima at $\pm 1$ with values $W(\pm 1) = 0$ and  we require that $c \leq W^{\prime\prime}(t) \leq C$ for some constant $C, c >0$ and all $t \in {\mathbb R} \setminus [-2, 2].$  
%Given such $W$, $\sigma$ is a fixed positive normalising constant depending only on $W$.

In this section we will choose two functions $a_{\eps}$ and $b_{\eps}$ as valley points for the functional $\ca{F}{\eps}$ and prove that the class of continuous paths in $W^{1,2}(N)$ that joins $a_{\eps}$ to $b_{\eps}$ satifies a suitable ``wall condition'' (mountain pass condition) to produce minmax critical points. 
The functions $a_{\eps}$ and $b_{\eps}$ converge uniformly on $N$ as $\eps\to 0$ respectively to the constants $-1$ and $+1$. The value of the functional at $-1$ and $+1$ is respectively $\Fce(-1) = \int_N g$ and $\Fce(+1) = -\int_N g$. We will consider in the next lemma affine subspaces of the form 
$$\Pi_\delta=\left\{u\in W^{1,2}(N): \int_N g\,u = -\int_N g + \delta\right\}.$$
In Proposition \ref{Prop:mountainpass} we will see that an affine subspace of this type provides a suitable ``wall'' for a mountain pass construction. We first introduce some one-dimensional profiles that play a role in the forthcoming arguments.

\medskip

\textit{One-dimensional profiles - single transition}. We denote by $\Het:\R\to \R$ the monotonically increasing solution to the Allen--Cahn ODE $u''-W'(u)=0$ such that $\lim_{r\to \pm\infty} \Het(r) = \pm 1$, with $\Het(0)=0$. (If we choose $W$ so that it agrees with the standard potential $\widetilde{W}(t) = \frac{(1-t^2)^2}{4}$ on $[-2, 2],$ then we have $\Het(r)=\tanh\left(\frac{r}{\sqrt{2}}\right)$; note that since $\tanh (\cdot) \in [-1, 1]$, modifying $\widetilde{W}$ outside the interval $[-2, 2]$ so as to arrange quadratic growth for $W$---as is needed here---does not affect this solution.) For any given $\eps>0$, the rescaled function $\He(r)=\Het\left(\frac{r}{\eps}\right)$ solves the ODE ${\eps} u''-\frac{W'(u)}{\eps}=0$. We will need a truncated version $\OHet$ of $\Het$ in our construction: this approximate solution is set to be constant ($\pm 1$) away from $[-6  |\log\eps|, 6  |\log\eps|]$. This is convenient for the construction of an ``Allen--Cahn approximation'' of a hypersurface, i.e.~a function on $N$ that takes on large sets the values $\pm 1$ and presents a single transition between these two values along the hypersurface in question, and such that its Allen--Cahn energy $\Ece$ approximates the area of the hypersurface. Similar truncations have often been used in the literature and we refer to \cite{B1} for further details. For $\Lambda=3|\log\eps|$ define

$$\OHet(r) = \chi(\Lambda^{-1} r -1)\Het(r) \pm (1-\chi(\Lambda^{-1}|r|-1)),$$
where $+1$ or $-1$ is chosen respectively on $r>0$, $r<0$ and $\chi$ is a smooth bump function that is $+1$ on $(-1,1)$ and has support equal to $[-2,2]$. With this definition, $\OHet=\Het$ on $(-\Lambda, \Lambda)$, $\OHet=-1$ on $(-\infty, -2\Lambda]$, $\OHet=+1$ on $[2\Lambda,\infty)$. Moreover the function $\OHet$ satisfies  $\|\OHet''-W'(\OHet)\|_{C^2(\R)} \leq C \eps^3$, for $C>0$ independent of $\eps$. (Note also that $\OHet''-W'(\OHet)=0$ away from $(-2\Lambda, -\Lambda) \cup (\Lambda,2\Lambda)$.)

For $\eps<1$, we rescale these truncated solutions and let $\OHet^{\eps}(\cdot)=\OHet\left(\frac{\cdot}{\eps}\right)$. Note that $\OHet^{\eps}$ solves $\|\eps\left(\OHet^{\eps}\right)''-\frac{W'(\OHet^{\eps})}{\eps}\|_{C^2(\R)} \leq C \eps^2$ and $\eps\left(\OHet^{\eps}\right)''-\frac{W'(\OHet^{\eps})}{\eps}=0$ on $(-\eps\Lambda, \eps\Lambda)$, $\OHet^{\eps}=+1$ on $(2\eps\Lambda, \infty)$, $\OHet^{\eps}=-1$ on $(-\infty,-2\eps\Lambda)$.

Using these facts and recalling that $\Ece(\Het^{\eps})=2\s$ we get $\Ece(\OHet^{\eps})=2\s+O({\eps}^2)$. (The function $O({\eps}^2)$ is bounded by $C{\eps}^2$ for all $\eps$ sufficiently small, with $C$ independent of $\eps$.)

\medskip

\begin{lem}
 \label{lem:wall_g_pos}
There exists $\delta\in(0,2\int_N g)$ such that the following is true. For any $\eps_j \to 0$ there exists $\delta_j \to \delta$ such that
 $$\liminf_{j\to \infty}\left(\inf_{u\in \Pi_{\delta_j}} \ca{F}{\eps_j}(u)\right)>\int_N g\,\,\left( = \ca{F}{\eps_j}(-1) \geq \ca{F}{\eps_j}(+1)\right).$$
\end{lem}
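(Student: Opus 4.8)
The statement is a ``mountain pass wall'' lemma: we need an affine hyperplane $\Pi_{\delta_j}$ (a level set of the linear functional $u \mapsto \int_N g\,u$) on which $\ca{F}{\eps_j}$ is, in the limit, strictly above the value $\int_N g = \ca{F}{\eps_j}(-1)$ at the lower valley. The plan is to argue by contradiction and compactness. First I would fix, for a trial value of $\delta$ to be chosen, the constraint set $\Pi_\delta$; since $\Pi_\delta$ is a closed affine subspace of $W^{1,2}(N)$ and $\ca{F}{\eps}$ is coercive on $W^{1,2}(N)$ (the gradient term plus the quadratic growth of $W$ outside $[-2,2]$ dominate the linear term $-\int_N \sigma g\,u$, here written $-\int_N g\, u$ after the rescaling in Section~\ref{minmax_setup}), the infimum over $\Pi_\delta$ is attained at some $u_{\eps} = u_{\eps,\delta}$. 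Then I would suppose, for contradiction, that for every $\delta$ in a suitable range and for some sequence $\eps_j \to 0$ (and any sequence $\delta_j \to \delta$) one has $\liminf_j \inf_{\Pi_{\delta_j}} \ca{F}{\eps_j} \le \int_N g$. Applying this with $\delta_j \equiv \delta$ produces minimizers $u_{\eps_j}$ on $\Pi_\delta$ with $\ca{F}{\eps_j}(u_{\eps_j}) \le \int_N g + o(1)$; in particular $\Ece(u_{\eps_j})$ is uniformly bounded (the linear term is bounded on $W^{1,2}$ norm-bounded sets, and coercivity then bounds the norm).

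With a uniform energy bound in hand, I would invoke the Modica--Mortola / Hutchinson--Tonegawa machinery recalled in the excerpt: pass to a subsequence so that $w_{\eps_j} = \Phi(u_{\eps_j})$ converges in $BV$, $u_{\eps_j} \to u_\infty \in BV(N;\{-1,+1\})$ in $L^1$, and $(2\sigma)^{-1}$ times the energy density converges weakly to a Radon measure $\mu$ which is the weight of an integral varifold. The point is a lower-semicontinuity / Modica--Mortola inequality:
\[
\liminf_{j\to\infty} \Ece(u_{\eps_j}) \ \ge\ 2\sigma\, \mathrm{Per}_N(\{u_\infty = +1\}),
\]
so that
\[
\liminf_{j\to\infty} \ca{F}{\eps_j}(u_{\eps_j}) \ \ge\ 2\sigma\,\mathrm{Per}_N(E_\infty) - \int_N g\, u_\infty, \qquad E_\infty := \{u_\infty = +1\}.
\]
Meanwhile the constraint passes to the limit: $\int_N g\, u_\infty = -\int_N g + \delta$, equivalently $2\int_{E_\infty} g = \delta$ (using $u_\infty = 2\chi_{E_\infty} - 1$ and $\int_N g\, u_\infty = 2\int_{E_\infty} g - \int_N g$). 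So the contradiction hypothesis forces
\[
2\sigma\,\mathrm{Per}_N(E_\infty) - \int_N g + 2\int_{E_\infty} g \ \le\ \int_N g,
\]
i.e.\ $2\sigma\,\mathrm{Per}_N(E_\infty) \le 2\int_N g - 2\int_{E_\infty} g = 2\int_{N\setminus E_\infty} g \le 2\int_N g$, with $E_\infty$ a Caccioppoli set of prescribed weighted volume $2\int_{E_\infty} g = \delta$. I would also record that $E_\infty \ne \emptyset$ and $E_\infty \ne N$ once $\delta \in (0, 2\int_N g)$, since $2\int_{E_\infty} g = \delta$ pins the weighted volume strictly between $0$ and $2\int_N g$.

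The heart of the argument is then a \emph{geometric lower bound on perimeter subject to a weighted-volume constraint}: there is a constant $c_0 = c_0(N,g) > 0$ such that any Caccioppoli set $E \subset N$ with $0 < \int_E g < \int_N g$ satisfies $\mathrm{Per}_N(E) \ge c_0 \min\{\int_E g,\ \int_{N\setminus E} g\}$, or more simply $\mathrm{Per}_N(E) \ge c_0$ whenever $\delta/2 \le \int_E g \le 2\int_N g - \delta/2$ for any fixed $\delta>0$. This is a consequence of the relative isoperimetric inequality on the compact manifold $N$ together with $g$ being bounded above and below by positive constants on $\overline{E} \cap \{\text{stuff}\}$— more precisely, $g \le \sup_N g < \infty$ gives $\int_E g \le (\sup_N g)\,|E|$ and $\int_{N\setminus E} g \le (\sup_N g)\,|N\setminus E|$, so $\min(|E|,|N\setminus E|)$ is bounded below in terms of $\delta$, and then the relative isoperimetric inequality $\mathrm{Per}_N(E) \ge c_{\mathrm{iso}} \min(|E|,|N\setminus E|)^{n/(n+1)}$ yields $\mathrm{Per}_N(E) \ge $ a positive constant depending only on $N, g, \delta$. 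Choosing $\delta$ small enough that this constant, call it $P(\delta)$, satisfies $2\sigma\, P(\delta) > 2\int_N g - \delta$ — which is possible because as $\delta \to 0^+$ the right side $\to 2\int_N g$ while I need $P(\delta)$ not to collapse too fast; here the key is that the lower bound on $\min(|E|,|N\setminus E|)$ degrades only like $\delta$, so $P(\delta) \gtrsim \delta^{n/(n+1)}$, hence $2\sigma P(\delta) \gtrsim \delta^{n/(n+1)} \gg \delta$ for small $\delta$, whereas $2\int_N g - \delta < 2\int_N g$ is bounded — wait, that comparison fails since $2\int_N g$ is a fixed positive number. Let me instead keep $\delta$ \emph{small but fixed}: for $\delta$ small, $2\int_N g - \delta \approx 2\int_N g$ is essentially constant, so I cannot beat it with $P(\delta) \to 0$. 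The correct choice is the opposite: I should not send $\delta$ to a degenerate endpoint. Rather, the contradiction inequality $2\sigma\,\mathrm{Per}_N(E_\infty) \le 2\int_{N\setminus E_\infty} g$ is what must fail; I pick $\delta$ so that the \emph{set of admissible $E$} (those with $\int_E g = \delta/2$) all have $\mathrm{Per}_N(E)$ large enough. Since $\int_{N\setminus E_\infty} g = \int_N g - \delta/2 < \int_N g$, it suffices to choose $\delta$ so that $2\sigma\,\inf\{\mathrm{Per}_N(E): \int_E g = \delta/2\} > 2\int_N g - \delta$; as $\delta$ ranges over $(0, 2\int_N g)$ the left side is a positive function of $\delta$ (lower semicontinuous in $\delta$, by a standard compactness argument in $BV$), and one checks it is not dominated by $2\int_N g - \delta$ on all of this interval — for instance at $\delta = \int_N g$ (the ``half volume'' level) the isoperimetric profile attains its maximum, so there $2\sigma\,\mathrm{Per} $ is bounded below by a constant which, after possibly rescaling $W$ (hence $\sigma$) — but $\sigma$ is fixed — one uses that $\mathrm{Per}_N$ of a half-volume set on a \emph{fixed} $N$ is a fixed positive number, and whether $2\sigma$ times it exceeds $\int_N g$ depends on $g$; if $g$ is too large this fails.

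\textbf{The main obstacle.} So the genuinely delicate point — and the step I expect to be hardest — is \emph{precisely the choice of $\delta$} and verifying the strict inequality, because for large $g$ the naive half-volume comparison fails and one must instead choose $\delta$ near an endpoint and exploit the \emph{shape} of the isoperimetric-type profile $\delta \mapsto \inf\{2\sigma\,\mathrm{Per}_N(E) + 2\int_{E} g : \int_E g = \delta/2\}$ relative to the constant $\int_N g$. The resolution should be: consider the function $\Psi(E) := 2\sigma\,\mathrm{Per}_N(E) - \int_N g\, u_\infty^{(E)} = 2\sigma\,\mathrm{Per}_N(E) + \int_N g - 2\int_E g$ over Caccioppoli $E$; at $E = \emptyset$ this equals $\int_N g$, and I claim $\Psi$ cannot stay $\le \int_N g$ on the constraint sets for \emph{all} small $\delta$ unless the perimeter term can be made $o(\delta)$ subject to $\int_E g = \delta/2$, which the relative isoperimetric inequality forbids: $\mathrm{Per}_N(E) \ge c_{\mathrm{iso}} |E|^{n/(n+1)} \ge c_{\mathrm{iso}} (\delta/(2\sup g))^{n/(n+1)}$, and since $n/(n+1) < 1$ this is $\gg \delta$ for $\delta$ small, giving $\Psi(E) \ge \int_N g - \delta + 2\sigma c_{\mathrm{iso}}(\delta/(2\sup g))^{n/(n+1)} > \int_N g$ for all sufficiently small $\delta > 0$. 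Thus the correct choice is $\delta$ \emph{small}, and the strict inequality $\inf_{\Pi_{\delta_j}}\ca{F}{\eps_j} > \int_N g$ in the limit is exactly this computation; finally a routine semicontinuity-in-$\delta$ argument upgrades the contradiction to allow $\delta_j \to \delta$ rather than $\delta_j \equiv \delta$, and lets one quantify the $\liminf$ strictly above $\int_N g$. I would present the relative isoperimetric inequality and the Modica--Mortola lower bound as the two inputs, do the contradiction, and devote the most care to the elementary but sign-sensitive inequality $2\sigma c_{\mathrm{iso}} (\delta/(2\sup_N g))^{n/(n+1)} > \delta$ for $\delta$ small, which is what selects $\delta$.
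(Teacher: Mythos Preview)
Your final argument is correct and is essentially the paper's proof: choose $\delta$ small, argue by contradiction, pass via Modica--Mortola/BV compactness to a limit set $E_\infty$ with $2\int_{E_\infty} g = \delta$, and obtain a contradiction from the isoperimetric-type bound $2\sigma\,\mathrm{Per}_N(E_\infty) \gtrsim \delta^{n/(n+1)} \gg \delta$ (the paper runs Sobolev--Poincar\'e directly on $w_j=\Phi(u_j)$ and Fatou, rather than invoking the Modica--Mortola lower bound followed by the relative isoperimetric inequality on the limit set, and it constructs $\delta_j$ via explicit recovery functions $v_{\eps_j}$ rather than taking $\delta_j\equiv\delta$, but these are minor variations). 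Note that your intermediate displayed inequality has a sign slip---the correct consequence of $\liminf \ca{F}{\eps_j}(u_{\eps_j})\le \int_N g$ is $2\sigma\,\mathrm{Per}_N(E_\infty) \le 2\int_{E_\infty} g = \delta$, not $\le 2\int_{N\setminus E_\infty} g$---though your final paragraph has the right computation.
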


\begin{proof}
Within this proof we will write $\mathbb{I}_A$ for the characteristic function of $A$, and $|A|$ for $\mathcal{H}^{n+1}(A)$. Pick a point $z$ where $g$ achieves its maximum $g_M$ and consider a geodesic ball $B$ centred at $z$ and such that $g>g_M/2$ on $B$. Then $\mathbb{I}_B -\mathbb{I}_{N\setminus B}$ can be approximated by a function $v_\varepsilon$, for every $\eps$ small enough, as follows. For each sufficiently small $\varepsilon$ (we prescribe $120\eps|\log\eps|$ to be smaller than the radius of $B$) the function $v_{\varepsilon}$ is defined to be $\overline{\Het}^{\eps}\circ \text{dist}_{\p B}$, where $\text{dist}_{\p B}$ is the signed distance function to $\p B$, taken to be positive inside $B$. The distance function is Lipschitz on $N$ and satisfies $|\nabla \text{dist}_{\p B}|=1$; this permits to compute $\Ece(v_{\varepsilon})$ using the coarea formula (with slicing function given by $\text{dist}_{\p B}$) and the estimate on $\Ece(\overline{\Het}^{\eps})$ given before the statement of the lemma, to obtain that $\Ece(v_{\varepsilon})=2\sigma \mathcal{H}^n(\p B)+O(\eps|\log\eps|)$.\footnote{We will carry out more subtle computations in the same spirit in Section \ref{proof}.} We thus have $\frac{1}{2\s}\Ece(v_\varepsilon) \to {\Hc}^n(\p B)$ and $v_\varepsilon \stackrel{L^1}{\to} \mathbb{I}_B -\mathbb{I}_{N\setminus B}$. 
Then $\int_N g v_\varepsilon$ converges, as $\eps \to 0$, to $\int_N g(\mathbb{I}_B -\mathbb{I}_{N\setminus B})=-\int_N g + 2\int_B g$. We set $\delta_B=2\int_B g$, therefore $\int_N g v_\varepsilon \to -\int_N g+\delta_B$ and $|B| g_M\leq \delta_B \leq 2|B| g_M$. We will prove that for a sufficiently small choice of $B$ (if $g>g_M/2$ is true on a ball centred at $z$, it is true on any ball centred at $z$ and contained in the first), the Lemma holds with $\delta=\delta_B$ and with $\delta_j = \int_N g v_{\varepsilon_j} + \int_N g$ (for an arbitrary given sequence $\eps_j \to 0$). The choice of $\delta_j$ is made to ensure $v_{{\eps}_j}\in \Pi_{\delta_j}$, and $\delta_j \to \delta$.

Note that, for each $\eps_j$,
$$\inf_{u \in \Pi_{\delta_j}} \ca{F}{\eps_j}(u) = \left( \inf_{u \in \Pi_{\delta_j}} \ca{E}{\eps_j}(u)  \right) - \delta_j+\int_N g$$
and the minimizing sequences are the same for $\inf_{u \in \Pi_{\delta_j}} \ca{F}{\eps_j}(u)$ and for $\inf_{u \in \Pi_{\delta_j}} \ca{E}{\eps_j}(u)$ (because the two functionals differ by the constant $-\delta_j+\int_N g$ on $\Pi_{\delta_j}$). Since $\Ece(v_\varepsilon) \to 2\s{\Hc}^n(\p B)$ as $\eps \to 0$ and $v_{\eps_j}\in \Pi_{\delta_j}$ by construction, we can see that there exists an upper bound for $\inf_{u \in \Pi_{\delta_j}} \ca{F}{\eps_j}(u)$ (and for $\inf_{u \in \Pi_{\delta_j}} \ca{E}{\eps_j}(u)$) that is independent of $j$. Since $\Ece\geq 0$, we also have a lower bound for $\inf_{u \in \Pi_{\delta_j}} \ca{F}{\eps_j}(u)$ independently of $j$.

Pick $u_j$ such that $\ca{F}{\eps_j}(u_j) - \inf_{u \in \Pi_{\delta_j}} \ca{F}{\eps_j}(u)$ converges to $0$ (as $j \to \infty$). Then $\ca{F}{\eps_j}(u_j)$ is uniformly bounded above and therefore so is $\ca{E}{\eps_j}(u_j)$. The latter condition guarantees, thanks to a standard argument (\cite{HT}, \cite{MM}) that we now recall, that $u_j$ converges in $L^1$ to a $BV$ function $u_\infty$ that takes only the values $\pm 1$.

\medskip

The uniform bound on $\ca{E}{\eps_j}$ and the fact that $W(x) \geq \kappa (x-1)^2$, for some $\kappa>0$ and for $x \geq 2$, imply that $\|u_j\|_{L^2}$ is uniformly bounded. Set $\Phi(s) =\int_0^s \sqrt{\frac{W(\widetilde{s})}{2}}d\widetilde{s}$ and $\sigma=\int_{-1}^1 \sqrt{\frac{W(s)}{2}}ds$ (so that $\Phi(\pm 1)=\pm \s/2$) and let $w_j = \Phi(u_j)$. Then $\|w_j\|_{BV(N)}$ is uniformly bounded (with $\int_N|\nabla w_j| \leq \frac{\Ece(u_j)}{2}$) and therefore by the BV compactness theorem, upon passing to a subsequence that we do not relabel, there exists $w_\infty \in BV(N)$ such that $w_{j} \to w_{\infty}$ in $L^{1}(N)$ and $\int_{N} |Dw_{\infty}| \leq \liminf_{j \to \infty} \, \int_{N} |Dw_{j}|$. In particular $w_j \to w_\infty$ a.e., so that $u_j  = \Phi^{-1}(w_{j}) \to u_\infty:=\Phi^{-1}(w_\infty)$ a.e. The uniform bound on $\ca{E}{\eps_j}$ and Fatou's lemma imply that $\int_N W(u_\infty) \leq \liminf_{j\to \infty}\int_N W(u_j) =0$, from which it follows that $u_\infty=\pm1$ a.e. Then $\frac{2}{\s}w_\infty= u_\infty\in BV(N)$.

The convergence $w_j \stackrel{L^1}{\to} w_\infty$ implies in particular that $w_j \to w_\infty$ in measure. Since $\Phi^{-1}$ is uniformly continuous, for any $s>0$ we can choose $\widetilde{s}>0$ such that $|w_j-w_\infty|\leq \widetilde{s}$ implies $|u_j - u_\infty|\leq s$. Therefore for every $s>0$ we have $|\{|u_j-u_\infty|\geq s\}| \to 0$, i.e.~$u_j \to u_\infty$ in measure. We can then prove $u_j \stackrel{L^1}{\to} u_\infty$ as follows.

$$\int_N |u_j-u_\infty| = \int_{\{|u_j-u_\infty|\geq s\}}|u_j-u_\infty| + \int_{\{|u_j-u_\infty|<s\}}|u_j-u_\infty| \leq $$ $$\leq \int_{\{|u_j-u_\infty|\geq s\}}|u_j| + \int_{\{|u_j-u_\infty|\geq s\}}|u_\infty| + \int_N s \leq$$
$$ \leq |\{|u_j-u_\infty|\geq s\}|^{1/2}\left(\int_N |u_j|^2\right)^{1/2} + |\{|u_j-u_\infty|\geq s\}| + s|N|$$
and all three terms go to $0$ as $s \to 0$.

\medskip

Since $u_j \in \Pi_{\delta_j}$ we conclude that $\int_N g\, u_\infty = -\int_N g + \delta$. As $u_\infty=\pm 1$ a.e.~and is $BV$, we must have that there exists a set $D\subset N$ with finite perimeter such that $\mathbb{I}_D -\mathbb{I}_{N\setminus D}=u_\infty$ and $\int_D g = \int_B g$. This implies that $|D| \geq \frac{1}{2} |B|$ (by the choice of $B$). Denoting by $\overline{w_j}$ the average of $w_j$ and recalling the Sobolev--Poincar\'{e} inequality we have:

$$\frac{\Ece(u_j)}{2} \geq \int_N |\nabla w_j| = \int_N |\nabla (w_j - \overline{w_j})| \geq C_{SP}\left(\int_N  |w_j - \overline{w_j})|^{\frac{n+1}{n}}\right)^{\frac{n}{n+1}}.$$
By Fatou's lemma (and the $L^1$ and a.e.~convergence $w_j \to w_\infty=\mathbb{I}_D -\mathbb{I}_{N\setminus D}$, which also gives $\overline{w_j} \to \frac{1}{|N|}(2|D| - |N|)$), we get
$$\left(\int_N  \left|2\mathbb{I}_D-\frac{2|D|}{|N|}\right|^{\frac{n+1}{n}}\right)^{\frac{n}{n+1}} \leq \liminf_{j\to \infty}\left(\int_N  |w_j - \overline{w_j})|^{\frac{n+1}{n}}\right)^{\frac{n}{n+1}} .$$
Computing the left-hand-side (giving up the integral over $N \setminus D$) we get 
\be
 \label{eq:bound_D}
 \left(2-\frac{2|D|}{|N|}\right)|D|^{\frac{n}{n+1}}  \leq \frac{1}{2C_{SP}}  \liminf_{j\to \infty} \Ece(u_j).
\ee

\medskip

Assume that the conclusion of the lemma fails for a certain $\delta$ (i.e.~for a certain choice of $B$). In other words, we assume that for some ${\eps}_j\to 0$ we have (for the choice of $\delta_j$ specified above) $\liminf_{j\to \infty}\left(\inf_{u\in \Pi_{\delta_j}} \ca{F}{\eps_j}(u)\right)\leq \int_N g$. This means that there exists a subsequence (not relabeled) ${\eps}_j\to 0$ such that (for $u_j$ chosen above) $\lim_{j\to \infty} \ca{F}{\eps_j}(u_j)\leq \int_N g$. Then 
$$\int_N g \geq \liminf_{j\to \infty}\ca{F}{\eps_j}(u_j) = \liminf_{j\to \infty}\ca{E}{\eps_j}(u_j) +\int_N g -\delta, \,\,\,\,\text{i.e.}$$
\be
 \label{eq:sublineardelta}
 \liminf_{j\to \infty}\ca{E}{\eps_j}(u_j) \leq \delta.
\ee
Recalling $|D|\geq \frac{1}{2}|B|$ and $g_M |B| \leq \delta \leq 2g_M |B|$, (\ref{eq:bound_D}) and (\ref{eq:sublineardelta}) give that $\delta^{\frac{n}{n+1}}\lesssim \delta$, a contradiction if $B$ is chosen small enough (to make $\delta$ suitably small).
\end{proof}

\noindent
\textit{Choice of valley points $a_{\eps}$ and $b_{\eps}$}. There exist two functions $a_{\eps}$ and $b_{\eps}$ on $N$ that solve $\ca{F'}{\eps}=0$ with $-1<a_{\eps}<-1+\eps c$ and $b_{\eps}>+1$ and $a_{\eps} \to -1$ $b_{\eps}\to +1$ uniformly on $N$ as $\eps\to 0$, where $c>0$ depends on $W$ and on the maximum of $g$. To see this, consider the constant $-1$ and evaluate $-\ca{F'}{\eps}(-1)=\eps \Delta(-1)-\frac{W'(-1)}{\eps}+g=g>0$. For the constant $(-1+c\eps)$ on the other hand we have $-\ca{F'}{\eps}(-1+c\eps)=-\frac{W'(-1+c\eps)}{\eps}+g$. Recall that $W'(-1+c\eps)\approx c C_W \eps$ ($W$ is quadratic around $-1$); choosing $c$ sufficiently large (depending only on $W$ and $g$) we can ensure that $g<c C_W$, and therefore that $-\ca{F'}{\eps}(-1+c\eps)<0$. Therefore, by considering the negative gradient flow of $\Fce$ with initial condition given by the constant $-1$, we obtain a (stable) solution $a_{\eps}$ to $\ca{F'}{\eps}=0$ that lies between $-1$ and $-1+c\eps$ (the latter acts as an upper barrier by the maximum principle). Similarly, computing $-\ca{F'}{\eps}(+1)=g>0$ and $-\ca{F'}{\eps}(1+c\eps)=-\frac{W'(1+c\eps)}{\eps}+g<0$, we obtain that there is a (stable) solution $b_{\eps}$ to $\ca{F'}{\eps}=0$ that lies between $1$ and $1+c\eps$ and we can obtain $b_{\eps}$ via negative gradient flow of $\Fce$ with initial condition given by the constant $+1$. We will use the functions $a_{\eps}$ and $b_{\eps}$ as valley points for the class of admissible paths.

\begin{Prop}[Existence of a mountain pass solution]
\label{Prop:mountainpass}
For $\eps>0$ let $\Gamma$ denote the collection of all continuous paths $\gamma:[-1,1]\to W^{1,2}(N)$ such that $\gamma(-1)=a_{\eps}$ and $\gamma(1)=b_{\eps}$. Then there exists $\eps_0>0$ such that for each $\eps<\eps_0$
$$\inf_{\gamma \in \Gamma} \sup_{u\in \gamma([-1,1])} \ca{F}{\eps}(u) = \beta_\varepsilon$$ is a critical value, i.e.~there exists $u_\varepsilon \in W^{1,2}(N)$ that is a critical point of 
$\ca{F}{\eps}$ with ${\mathcal F}_{\eps}(u_{\eps}) = \beta_{\eps};$ moreover, $u_{\eps}$ has Morse index $\leq 1$.
\end{Prop}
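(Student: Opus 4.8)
The plan is to establish Proposition~\ref{Prop:mountainpass} as a direct application of a standard mountain pass lemma (in the form valid under the Palais--Smale condition and yielding a critical point of Morse index $\leq 1$, e.g.\ the version of Ambrosetti--Rabinowitz refined by the second-deformation/minimax-index results of Ghoussoub or Hofer). The three ingredients I would verify in order are: (1) $\ca{F}{\eps}$ satisfies the Palais--Smale condition on $W^{1,2}(N)$; (2) the two valley points $a_{\eps}$, $b_{\eps}$ are strict local minima separated by a ``wall'', so that the minimax value $\beta_\eps$ strictly exceeds $\max\{\ca{F}{\eps}(a_{\eps}),\ca{F}{\eps}(b_{\eps})\}$; and (3) the abstract theorem then delivers a critical point $u_\eps$ at level $\beta_\eps$ with Morse index at most $1$.

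For step (1), I would check that $\ca{F}{\eps}$ is $C^2$ on $W^{1,2}(N)$ (immediate from the $C^2$ hypothesis on $W$, its quadratic growth outside $[-2,2]$, and the Sobolev embedding $W^{1,2}(N)\hookrightarrow L^p$ for the relevant $p$ in dimension $n+1$; note $N$ compact) and that the gradient of $\ca{F}{\eps}$ has the form identity-plus-compact after inverting $-\eps\Delta+1$, so a Palais--Smale sequence is bounded (using $\Ece\geq 0$ and the coercive quadratic growth of $W$ to bound $\|u\|_{L^2}$, hence $\|u\|_{W^{1,2}}$, from the energy bound along a PS sequence — the linear term $-\int \sigma g u$ is controlled by $\|u\|_{L^2}$) and admits a strongly convergent subsequence by the standard compactness argument. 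For step (2), I would use Lemma~\ref{lem:wall_g_pos}: fix $\delta$ as in that lemma; the affine hyperplane $\Pi_{\delta_j}$ separates $a_{\eps_j}$ (which lies near $-1$, where $\int_N g\,u\approx -\int_N g$, i.e.\ on the side $\int_N g\,u<-\int_N g+\delta_j$ for small $\eps_j$ since $a_{\eps_j}\to -1$ uniformly) from $b_{\eps_j}$ (near $+1$, on the side $\int_N g\,u>-\int_N g+\delta_j$). Every continuous path in $\Gamma$ from $a_{\eps_j}$ to $b_{\eps_j}$ must cross $\Pi_{\delta_j}$, so $\sup_{\gamma}\ca{F}{\eps_j}\geq \inf_{\Pi_{\delta_j}}\ca{F}{\eps_j}$, and Lemma~\ref{lem:wall_g_pos} gives $\liminf_j \inf_{\Pi_{\delta_j}}\ca{F}{\eps_j}>\int_N g=\ca{F}{\eps_j}(-1)\geq \ca{F}{\eps_j}(+1)$. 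Combined with $\ca{F}{\eps_j}(a_{\eps_j})\to \int_N g$ and $\ca{F}{\eps_j}(b_{\eps_j})\to -\int_N g$ (from uniform convergence $a_{\eps_j}\to -1$, $b_{\eps_j}\to +1$), this gives, for all $j$ large (equivalently all $\eps<\eps_0$), the strict inequality $\beta_{\eps}>\max\{\ca{F}{\eps}(a_{\eps}),\ca{F}{\eps}(b_{\eps})\}$, which is the mountain pass geometry. One should also record that $a_{\eps}$, $b_{\eps}$ are themselves strict local minima (they arise as stable — i.e.\ $\ca{F''}{\eps}\geq 0$ — gradient-flow limits trapped between two ordered constant barriers; nondegeneracy of the minima $\pm1$ of $W$ makes them strict), so the hypotheses of the index-$1$ version of the mountain pass theorem are met.

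For step (3), with Palais--Smale and the mountain pass geometry in hand, the minimax value $\beta_\eps$ is a critical value; the Morse index bound $\leq 1$ is the content of the refined mountain pass theorem (the critical point produced by a one-parameter minimax over paths has a Morse index not exceeding the dimension of the parameter family, here $1$) — one applies it in the $C^2$ setting where the second-derivative form $\ca{F''}{\eps}(u_\eps)$ is a well-defined quadratic form, and any almost-critical, almost-index-$\leq 1$ sequence produced by the deformation argument converges (again by Palais--Smale, now for the pair $(\ca{F}{\eps},\ca{F''}{\eps})$) to such a $u_\eps$; alternatively one invokes the standard statement directly. I expect step (1), the Palais--Smale verification, and the bookkeeping in step (3) to be entirely routine; the one genuinely substantive input is the wall estimate, which has already been isolated and proved as Lemma~\ref{lem:wall_g_pos}, so the main ``obstacle'' here is really just assembling these pieces correctly — in particular being careful that $a_{\eps}$ lies on the correct side of $\Pi_{\delta}$ for all small $\eps$ (which follows from $a_{\eps}\to -1$ uniformly and $\delta\in(0,2\int_N g)$), so that the topological linking argument applies.
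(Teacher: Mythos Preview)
Your proposal is correct and follows essentially the same approach as the paper: verify Palais--Smale via the quadratic growth of $W$, establish the mountain-pass geometry by showing every admissible path crosses the wall $\Pi_{\delta_j}$ from Lemma~\ref{lem:wall_g_pos}, and then invoke standard minmax theory for the existence and index bound. The only cosmetic difference is that the paper obtains $\ca{F}{\eps}(a_\eps)<\int_N g$ directly from the fact that $a_\eps$ is a gradient-flow descendant of $-1$ (rather than via the limit $\ca{F}{\eps}(a_\eps)\to\int_N g$), and it phrases step (3) as invariance of the path class under the negative gradient flow rather than appealing to strict local minimality of the endpoints.
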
 

\begin{proof}
(i) We show that $\beta_\varepsilon > \ca{F}{\eps}(a_{\eps})$ and $\beta_\varepsilon > \ca{F}{\eps}(b_{\eps})$. By the choice of $a_{\eps}$ and $b_{\eps}$, we have $ \ca{F}{\eps}(a_{\eps})< \ca{F}{\eps}(-1)=\int_N g$ and $\ca{F}{\eps}(b_{\eps})< \ca{F}{\eps}(+1)=-\int_N g$, so it will suffice to prove that for all sufficiently small $\eps$ we have $\beta_\varepsilon > \int_N g$. Observe that $\int_N a_{\eps} g<-\int_N g +c \eps|N|\|g\|_{L^\infty}$ and $\int_N b_{\eps}>\int_N g$. The choice of $\delta \in(0,\int_N g)$ in Lemma \ref{lem:wall_g_pos} can be made independently of $\eps$, in particular $\delta> c \eps|N|\|g\|_{L^\infty}$ for $\eps$ sufficiently small. Then the continuity of $u\to \int_N u$ in $W^{1,2}(N)$ guarantees that any continuous path joining $a_{{\eps}_j}$ to $b_{{\eps}_j}$ must cross $\Pi_{\delta_j}$ (for $j$ large enough), thus ensuring the mountain pass condition.

\noindent (ii) We show that the Palais--Smale condition is satisfied (at fixed $\eps$) on $\ca{F}{\eps}$-bounded sequences, i.e.~that for any sequence $\{u_m\}_{m=1}^\infty$ such that $\ca{F}{\eps}(u_m)$ is uniformly bounded in $m$ and such that $\ca{F'}{\eps}(u_m) \to 0$ (as elements of the dual of $W^{1,2}(N)$) there exists a subsequence of $u_m$ converging strongly in $W^{1,2}(N)$. Note that for $|u|\geq 2$ we have $W(u) - gu \geq \kappa u^2 - \|g\|_\infty |u|$ for some $\kappa>0$ and thus for $|u|\geq C_{g,W}$ we have $W(u) - gu \geq \frac{\kappa}{2}u^2$. Therefore the assumption $\ca{F}{\eps}(u_m) \leq K$ implies that $\|\nabla u_m\|_{L^2(N)} \leq \frac{K}{\eps}$ and that $\|u_m\|_{L^2(N)} \leq C_{W,g,K}$. Rellich-Kondrachov theorem provides a subsequence (not relabeled) that converges weakly in $W^{1,2}(N)$ to a function $u$. Recall that $\ca{F'}{\eps}(u_m)(\psi) = \int_N \eps \nabla u_m \nabla \psi + \frac{W'(u_m)}{\eps} \psi - g \psi$. By the $L^2$-convergence of $u_m$ to $u$ and the fact that $W'$ is linear at $\pm \infty$ we obtain that $W'(u_m)\to W'(u)$ in $L^2$.
Using the weak convergence $u_m \stackrel{W^{1,2}}{\rightharpoonup} u$ we get $\lim_{m\to \infty} \ca{F'}{\eps}(u_m)(\psi)=\ca{F'}{\eps}(u)(\psi)$. On the other hand, the assumption on $\ca{F'}{\eps}(u_m)$ gives $\ca{F'}{\eps}(u)(\psi)=0$, i.e.~$u$ is a critical point of $\ca{F}{\eps}$. The boundedness of $u_m-u$ in $W^{1,2}$ gives that $\ca{F'}{\eps}(u_m) (u_m-u) \to 0$ and therefore $\ca{F'}{\eps}(u_m) (u_m-u)  - \ca{F'}{\eps}(u)(u_m-u) = \int_N \eps|\nabla(u_m - u)|^2 + \int_N\frac{1}{\eps}(W'(u_m)-W'(u))(u_m-u)-\int_N g(u_m-u) \to 0$. The second and third integrals go to $0$ by the strong $L^2$-convergence $u_m \to u$, therefore $\int_N |\nabla(u_m - u)|^2 \to 0$, concluding that $u_m \to u$ in $W^{1,2}$ (strongly).

\noindent (iii) The proposition now follows from standard minmax theory since the class of continuous paths $\gamma:[-1,1]\to W^{1,2}(N)$ such that $\gamma(-1)=a_{\eps}$ and $\gamma(1)=b_{\eps}$ is invariant under the flow induced by the negative gradient of $\ca{F}{\eps}$ (see e.g.~\cite{Struwe} or \cite{Gho}).
\end{proof}

What is left to do is to make sure that the energy $\ca{E}{\eps}(u_{\eps})$ associated to the mountain pass solution provided by Proposition \ref{Prop:mountainpass} stays uniformly (in $\eps$) bounded above and away from zero. This will guarantee that, as $\eps \to 0$ (subsequentially), the energy distribution of $u_{\eps}$ gives rise to a non-trivial varifold (with finite mass). In Lemma \ref{lem:upperbound} we discuss the upper bound. The lower bound (Lemma \ref{lem:lowerbound}) will be immediate from Lemma \ref{lem:wall_g_pos}.

\begin{lem}
 \label{lem:upperbound}
There exist $\eps_0>0$ and $K>0$ such that $\sup_{N} \, |u_{\eps}| + \ca{E}{\eps}(u_{\eps})\leq K$ for every $\eps<\eps_0$, where $u_{\eps}$ is as in Proposition \ref{Prop:mountainpass}.
\end{lem}

\begin{proof}
\noindent {\it Step 1}. We produce, for every $\eps$ sufficiently small, a continuous path joining $v_1$ to $v_2$ and such that the maximum of $\ca{F}{\eps}$ on the path is attained with a value that is bounded above independently of $\eps$. There is a fairly standard way to do this by using a sweepout via level sets of a Morse function, see e.g.~\cite{Gua}. For fixed $\varepsilon$, to each level set $\Sigma$ one associates a $W^{1,2}$ function $f_{\Sigma}$ on $N$ such that the Allen--Cahn energy $\Ece(f_{\Sigma})$ is approximately $2\sigma \mathcal{H}^n(\Sigma)$ (the difference is an infinitesimal of $\varepsilon$).\footnote{The function $f_{\Sigma}$ is constructed such that $\Sigma$ is its nodal set and such that the profile of the function in the normal direction to $\Sigma$ is given by $\Het^{\eps}$. More precisely, $f_{\Sigma}$ is the composition of $\Het^{\eps}$ with the signed distance function to $\Sigma$. The possible singularities in $\Sigma$ are isolated and have an explicit structure (thanks to the Morse condition), which permits to handle them in elementary fashion (\cite[Section 9.6]{Gua}). We do not give further details here, since in Section \ref{proof} we will carry out (explicitly) subtler constructions of this kind: we will need to handle a singular set of unknown structure and implement other operations that are not present in the construction just sketched.} Using this procedure, the sweepout (that is, the one-parameter family of level sets) induces a continuous path in $W^{1,2}(N)$, that joins $-1$ to $+1$ (see \cite[Section 7.4]{Gua}). One then modifies this path so that it joins $a_{\eps}$ to $b_{\eps}$ (remaining continuous): to this end, it suffices to compose with two short paths at the endpoints, one joining $a_{\eps}$ to $-1$ (through constant functions) and one joining $+1$ to $b_{\eps}$ (through constant functions). We denote the resulting path by $t \to u_t\in W^{1,2}(N)$, for $t\in[-1,1]$. The upper bound on the measure of the level sets of the Morse function then implies an upper bound on $\Ece(u_t)$, independently of $\varepsilon$. In order to infer from this an upper bound on $\Fce(u_t)$ (independent of $\varepsilon$) we note that the term $\int_N g u_t=\Ece(u_t) - \Fce(u_t)$ is bounded above and below independently of $\varepsilon$, because $u_t$ is bounded between $-1$ and $1$ by construction. The bound on the path just discussed then implies a uniform upper bound for $\ca{F}{\eps}(u_{\eps})$, independently of $\eps$, by the minmax characterization of $u_{\eps}$. 

\noindent {\it Step 2}. A critical point $u_{\eps} \in W^{1,2}(N)$ of $\ca{F}{\eps}$ solves the weak formulation of the semilinear elliptic PDE $\eps \Delta u - \frac{W'(u)}{\eps} = -g$. By elliptic theory $u_{\eps}$ is $C^{2,\alpha}$. Note (arguing as we did when we chose $a_{\eps}$ and $b_{\eps}$) that any constant smaller than or equal to $-1$ is a lower barrier and any constant larger than $1+c\eps$ is an upper barrier. The maximum principle therefore implies that any solution of the PDE (in particular $u_{\eps}$) is bounded between $-1$ and $1+c\eps$, which gives an $L^\infty$ bound $\|u_{\eps}\|_{L^\infty(N)}\leq 2$ uniformly in $\eps$.

\noindent {\it Step 3}. Since $\ca{E}{\eps}(u_{\eps}) \leq \ca{F}{\eps}(u_{\eps}) + |N| \|g\|_{\infty} \|u_{\eps}\|_{\infty}$, we conclude from steps 1 and 2 a uniform upper bound for $\ca{E}{\eps}(u_{\eps})$.
\end{proof}

\begin{lem}
 \label{lem:lowerbound}
There exist $\eps_0>0$ and $L>0$ such that $\ca{E}{\eps}(u_{\eps})\geq L$ for every $\eps<\eps_0$, where $u_{\eps}$ is as in Proposition \ref{Prop:mountainpass}.
\end{lem}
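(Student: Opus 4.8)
The plan is to derive the lower bound $\Ece(u_\eps) \geq L$ from the mountain pass characterization established in Proposition~\ref{Prop:mountainpass} together with the wall estimate of Lemma~\ref{lem:wall_g_pos}. The key observation is that $\ca{F}{\eps}(u_\eps) = \beta_\eps$, and any admissible path $\gamma \in \Gamma$ joining $a_\eps$ to $b_\eps$ must cross the affine hyperplane $\Pi_{\delta_j}$ (for $\eps = \eps_j$ small), so that $\beta_{\eps_j} \geq \inf_{u \in \Pi_{\delta_j}} \ca{F}{\eps_j}(u)$. By Lemma~\ref{lem:wall_g_pos}, the right-hand side satisfies $\liminf_{j\to\infty}\left(\inf_{u\in\Pi_{\delta_j}}\ca{F}{\eps_j}(u)\right) > \int_N g = \ca{F}{\eps_j}(-1)$. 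Hence for all sufficiently large $j$ we have $\beta_{\eps_j} \geq \int_N g + 2\theta$ for some fixed $\theta > 0$ (depending only on the geometric data, via $\delta$).

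First I would record the elementary identity $\Ece(u) = \ca{F}{\eps}(u) + \int_N \sigma g\, u$ — wait, with the normalization of this section we have $\Fce = \ca{F}{\eps, g}$, so $\ca{F}{\eps}(u) = \Ece(u) - \int_N g\, u$, i.e.\ $\Ece(u_\eps) = \ca{F}{\eps}(u_\eps) + \int_N g\, u_\eps$. Next, from Step~2 of the proof of Lemma~\ref{lem:upperbound} we have the uniform $L^\infty$ bound $\|u_\eps\|_{L^\infty(N)} \leq 2$, obtained via the maximum principle (any constant $<-1$ is a subsolution and any constant $>1+c\eps$ is a supersolution). Therefore $\int_N g\, u_\eps \leq 2|N|\,\|g\|_{L^\infty}$, and hence, rearranging,
\[
\Ece(u_\eps) \;\geq\; \ca{F}{\eps}(u_\eps) - 2|N|\,\|g\|_{L^\infty} \;=\; \beta_\eps - 2|N|\,\|g\|_{L^\infty}.
\]
Actually the cleaner route avoids even this: I would instead argue that if $\Ece(u_\eps)$ were small, then since $W \geq 0$ we would have $\int_N \eps\frac{|\nabla u_\eps|^2}{2}$ and $\int_N \eps^{-1}W(u_\eps)$ both small, forcing $u_\eps$ to be $L^1$-close to a constant $\pm 1$ (because $\Phi(u_\eps)$ has small $BV$ norm, $\Phi(u_\eps)\to$ const in $L^1$ along a subsequence, so $u_\eps \to \pm 1$ in $L^1$); then $\ca{F}{\eps}(u_\eps) = \Ece(u_\eps) - \int_N g\, u_\eps \to \mp\int_N g \leq \int_N g$, contradicting $\beta_{\eps_j} \geq \int_N g + 2\theta$.

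More precisely: suppose for contradiction that along a subsequence $\eps_j \to 0$ we had $\Ece(u_{\eps_j}) \to 0$. Set $w_j = \Phi(u_{\eps_j})$; then $\int_N|\nabla w_j| \leq \tfrac12\Ece(u_{\eps_j}) \to 0$ and, using the uniform $L^\infty$ bound on $u_{\eps_j}$ (hence on $w_j$), a subsequence converges in $BV$ and a.e.\ to some $w_\infty$ with $\nabla w_\infty = 0$, so $w_\infty$ is constant, and since $\int_N W(u_{\eps_j}) \to 0$ Fatou gives $u_\infty := \Phi^{-1}(w_\infty) \in \{-1,+1\}$ a.e., hence $u_\infty \equiv -1$ or $u_\infty \equiv +1$ on $N$. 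Arguing as in the proof of Lemma~\ref{lem:wall_g_pos} (convergence in measure plus the uniform $L^2$ bound), $u_{\eps_j} \to u_\infty$ in $L^1(N)$, so $\int_N g\, u_{\eps_j} \to \pm\int_N g$. Consequently $\ca{F}{\eps_j}(u_{\eps_j}) = \Ece(u_{\eps_j}) - \int_N g\, u_{\eps_j} \to \mp \int_N g \leq \int_N g$. But $\ca{F}{\eps_j}(u_{\eps_j}) = \beta_{\eps_j} \geq \int_N g + 2\theta$ for $j$ large, a contradiction. Therefore there exist $\eps_0 > 0$ and $L > 0$ with $\Ece(u_\eps) \geq L$ for all $\eps < \eps_0$.

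I do not anticipate a serious obstacle here; the only point requiring a little care is the passage $u_{\eps_j} \to u_\infty$ in $L^1$, which is exactly the standard Modica--Mortola / Hutchinson--Tonegawa compactness argument already carried out in detail in the proof of Lemma~\ref{lem:wall_g_pos} and which I would simply cite. The wall estimate Lemma~\ref{lem:wall_g_pos} and the mountain pass value lower bound $\beta_{\eps_j} > \int_N g$ (step (i) of the proof of Proposition~\ref{Prop:mountainpass}) do all the real work.
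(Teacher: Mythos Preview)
Your contradiction argument is correct, but it is considerably more elaborate than needed. The paper's proof is a two-line direct computation: since the maximum principle gives $u_\eps > -1$ pointwise (as noted in the construction of the valley points and in Step~2 of Lemma~\ref{lem:upperbound}), and $g>0$, one has $\int_N g\,u_\eps \geq -\int_N g$, whence $\Ece(u_\eps) = \beta_\eps + \int_N g\,u_\eps \geq \beta_\eps - \int_N g$; combined with $\liminf_{\eps\to 0}\beta_\eps > \int_N g$ (from Lemma~\ref{lem:wall_g_pos} and the mountain pass setup), this immediately yields $\liminf_{\eps\to 0}\Ece(u_\eps)>0$. You had this direct route within reach---you invoked the $L^\infty$ bound $|u_\eps|\leq 2$ from the same maximum principle argument---but used the cruder two-sided bound instead of the sharp one-sided bound $u_\eps>-1$, which is precisely what makes the constants match up. Your alternative via Modica--Mortola compactness buys nothing extra here and imports machinery that the direct inequality renders unnecessary.
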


\begin{proof}
We have $\liminf_{\eps \to 0} \beta_{\eps} > \int_N g$ by definition of $\beta_{\eps}=\ca{F}{\eps}(u_{\eps})$ and by Lemma \ref{lem:wall_g_pos}. Then $\ca{E}{\eps}(u_{\eps})=\beta_{\eps}+\int_N g u_{\eps}$ and we have (see the proof of step 2 in Lemma \ref{lem:upperbound}) that $u_{\eps}>-1$, so $\int_N g u_{\eps}\geq -\int_N g$.
Then we conclude that $\liminf_{\eps\to 0}\ca{E}{\eps}(u_{\eps})>0$.
\end{proof}

\section{Proof of the existence theorem for positive $g \in C^{1, 1}$}
\label{proof}

\begin{oss}
Recall that, given $g>0$, $g\in C^{1,1}(N)$, we will prove the existence conclusion of Theorem~\ref{thm:existence} with $\frac{g}{\s}$ in place of $g$, because (for notational convenience) we work with the functional ${\mathcal F}_{\e, g}$ (rather than with ${\mathcal F}_{\e, \sigma g}$, which would lead to the existence conclusion with mean curvature $g$). To make notation lighter, in this section we will denote by $\Fce$ the functional ${\mathcal F}_{\e, g}$.
\end{oss}

\begin{proof}[First part of the proof of Theorem \ref{thm:existence}]
If there exists a sequence ${\eps}_{j}\to 0^{+}$ such that the min-max critical points $u_{{\eps}_j}$ given by Proposition \ref{Prop:mountainpass} taken with $\eps = \eps_{j}$ have the property that $u_{{\eps}_j}\to u_{\infty}$ in $L^1(N)$ and $u_\infty$ is not identically $-1$, then Theorem~\ref{thm:regularity} implies that $\p\{u_\infty=+1\}$ is a closed hypersurface that is quasi-embedded away from a possible singular set $\Sigma$ of Hausdorff dimension $\leq n-7$ and with mean curvature given on $\p\{u_\infty=+1\}\setminus \Sigma$ by $\frac{g}{\s}\nu$, where $\nu$ is the inward pointing normal to $\{u_\infty=+1\}$. In particular, $\p\{u_\infty=+1\}$ is, away from $\Sigma$, the image of a two-sided $C^2$ immersion, with unit normal $\nu$ and mean curvature $\frac{g}{\s}\nu$. Theorem \ref{thm:existence} is thus proved in the case in which $u_\infty\not\equiv -1$ for some sequence $(u_{{\eps}_j})$.
\end{proof}

In order to establish Theorem~\ref{thm:existence} for $g \in C^{1, 1}(N)$ with $g >0$, we only need to address the case in which $u_\infty\equiv -1$ for every convergent subsequence of every sequence of min-max critical points $u_{{\eps}_j}$ generated by Proposition~\ref{Prop:mountainpass}. In fact, to complete the proof, it will suffice to consider a single such convergent sequence $(u_{{\eps}_j})$ for which $u_\infty\equiv -1$. By possibly passing to a subsequence, we may also assume that $V^{{\eps}_j}\to V$ as varifolds, with $V$ a stationary integral $n$-varifold on $N$. By 
Theorem~\ref{thm:regularity} $V$ is, away from a singular set ${\rm sing} \, V$ of Hausdorff dimension $\leq n-7$ (with ${\rm sing} \, V$ empty if $2 \leq n \leq 6$ and finite if $n=7$), an embedded, smooth minimal hypersurface $M$ with locally constant even multiplicity. We will write $\overline{M}$ for the closure of $M$; note that $\overline{M}$ coincides with $\spt{V}$, $M={\rm reg} \, V$ and ${\rm sing} \, V = \overline{M}\setminus M$. Recall that we have, in this situation, that 
$$\lim_{j \to \infty} \frac{1}{2\s}\ca{F}{{\eps}_j}(u_{{\eps}_j}) =\|V\|(N)+\frac{1}{2\s}\int_N g \geq 2\mathcal{H}^n(M)+\frac{1}{2\s}\int_N g.$$ 

In the forthcoming sections, our goal will be to prove the following:

\begin{Prop}
\label{Prop:main}
Let $u_{{\eps}_j}$ be as in the preceding paragraph.
Then there exist $v_{{\eps}_j}:N\to \R$ that solve $\ca{F'}{{\eps}_j}(v_{{\eps}_j})=0$ and $\ca{F''}{{\eps}_j}(v_{{\eps}_j})\geq 0$ (i.e.~stable critical points of $\ca{F}{\varepsilon_j}$) with $\liminf_{{\eps}_j\to 0} \ca{E}{{\eps}_j}(v_{{\eps}_j})>0$ and $\limsup_{{\eps}_j\to 0} \ca{E}{{\eps}_j}(v_{{\eps}_j})<\infty$; moreover, there exists a (fixed) non-empty open set that is contained in $\{v_{{\eps}_j}>\frac{3}{4}\}$ for all ${\eps}_j$.
\end{Prop}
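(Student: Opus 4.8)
The plan is to realize Proposition~\ref{Prop:main} by constructing, for each sufficiently small $\eps_i$, a carefully designed initial datum $h = h^{\eps_i}$ built from the minimal hypersurface $M_0 = M$ (together with its regularity from Theorem~\ref{thm:regularity}(b)), and then running the negative $\mathcal F_{\eps_i}$-gradient flow from $h$. The desired $v_{\eps_i}$ will be the limit of this flow. Concretely, I would first take the oriented double cover of $M$ with its minimal immersion $\iota$ into $N$ (which exists since $|M|$ is two-sided, being the reduced-boundary type limit, and carries even multiplicity). Following the schematic in Figure~\ref{fig:avoid_peak}, I remove two small disks $D_1, D_2$ from $M$, paste one back with multiplicity $2$, then deform the double disk $2|D_1|$ \emph{as an immersion} to strictly decrease $\mathrm{Area} - \mathrm{Vol}_g$ (possible precisely because $M$ is minimal, hence \emph{not} stationary for $\mathrm{Area}-\mathrm{Vol}_g$ when $g>0$); repeat for $D_2$. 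This yields a quantitative gap: along the whole geometric deformation $\mathrm{Area}-\mathrm{Vol}_g$ stays $\le 2\mathcal H^n(M) - \varsigma$ for some $\varsigma = \varsigma(M\subset N) > 0$. For $n \le 6$ the relevant hypersurfaces are smooth, so $h$ can be taken smooth; for $n \ge 7$ a finer analysis of the distance function to $M$ near its $(n-7)$-dimensional singular set is needed, producing first a Lipschitz $h$ that is then mollified.

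Next I would translate this geometry into the Allen--Cahn setting. The function $h$ is taken to be close to $+1$ on the two (thin) regions bounded by the deformed double disks and close to $-1$ elsewhere, with transitions occurring in $O(\eps_i)$-neighbourhoods of the deformed hypersurfaces, modelled on the one-dimensional heteroclinic profile of $W$. The key quantitative point is that $(2\sigma)^{-1}\mathcal F_{\eps_i, \sigma g}$ evaluated on such functions approximates $\mathrm{Area} - \mathrm{Vol}_g + \tfrac12\int_N g$ on the corresponding hypersurface up to an error $O(\eps_i|\log\eps_i|)$. Two conditions must be arranged: (I) a mean-convexity condition $-\mathcal F'_{\eps_i, \sigma g}(h) = \eps_i\Delta h - \eps_i^{-1}W'(h) + g > 0$ (distributionally; a Radon measure for $n\ge7$, a smooth function for $n\le6$), obtained by choosing the sign/orientation of the transition layers so the ``barrier pushes inward'', which requires the deformation to be performed so that each PMC sheet has mean curvature vector pointing consistently; and (II) the existence of a continuous path in $W^{1,2}(N)$ from $a_{\eps_i}$ (the valley point near $-1$ from Section~\ref{minmax_setup}) to $h$ along which $(2\sigma)^{-1}\mathcal F_{\eps_i,\sigma g} \le 2\mathcal H^n(M) - \varsigma/2$. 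For (II) one uses the crucial feature that the geometric deformations from (i) to (ii) and (iv) to (v) — which are discontinuous, with a portion appearing suddenly — can be replicated \emph{continuously} at the Allen--Cahn level; likewise the path ``from $a_{\eps_i}$ to (i)'' replicates the sudden appearance of $M\setminus(D_1\cup D_2)$ with multiplicity $2$, and is arranged so $\mathcal F_{\eps_i,\sigma g}$ is almost-increasing along it (an increasing function plus an $O(\eps_i|\log\eps_i|)$ error).

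With $h$ in hand, I would invoke standard semilinear parabolic theory: the negative $\mathcal F_{\eps_i, \sigma g}$-gradient flow (an inhomogeneous Allen--Cahn flow) starting at $h$ exists for all time, decreases $\mathcal F_{\eps_i,\sigma g}$, and subconverges as $t\to\infty$ to a critical point $v_{\eps_i}$, which is automatically stable ($\mathcal F''_{\eps_i,\sigma g}(v_{\eps_i})\ge 0$) because it is a limit of a gradient flow (it is a local minimizer along the flow, or one appeals to the standard fact that $\omega$-limits of gradient flows are stable). The mean-convexity (I) acts as a subsolution/barrier: the flow from $h$ is monotone in the sense that $v_{\eps_i} \ge h$ pointwise (parabolic comparison, since $h$ is a subsolution of the elliptic equation). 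In particular $v_{\eps_i} \ge 3/4$ on the fixed non-empty open set where $h \ge 3/4$, giving the last claim of the Proposition. It remains to get the energy bounds: the upper bound $\limsup \mathcal E_{\eps_i}(v_{\eps_i}) < \infty$ follows from $\mathcal E_{\eps_i}(v_{\eps_i}) \le \mathcal F_{\eps_i,\sigma g}(v_{\eps_i}) + |N|\|g\|_\infty \|v_{\eps_i}\|_\infty \le \mathcal F_{\eps_i,\sigma g}(h) + C$ (energy decrease plus an $L^\infty$ bound from the maximum principle, $-1 \le v_{\eps_i} \le 1 + c\eps_i$), and $\mathcal F_{\eps_i,\sigma g}(h)$ is uniformly bounded by construction. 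The positive lower bound $\liminf \mathcal E_{\eps_i}(v_{\eps_i}) > 0$ comes from ruling out $v_{\eps_i} = b_{\eps_i}$: if $v_{\eps_i} = b_{\eps_i}$ then concatenating the path from (II) (from $a_{\eps_i}$ to $h$) with the gradient-flow trajectory from $h$ to $b_{\eps_i}$ (along which $\mathcal F_{\eps_i,\sigma g}$ only decreases) produces an admissible path in $\Gamma_{\eps_i}$ along which $(2\sigma)^{-1}\mathcal F_{\eps_i,\sigma g}$ stays $\le 2\mathcal H^n(M) - \varsigma/2 < (2\sigma)^{-1}\beta_{\eps_i}$ for large $i$ (using $(2\sigma)^{-1}\beta_{\eps_i} \to \|V\|(N) + \tfrac{1}{2\sigma}\int_N g \ge 2\mathcal H^n(M) + \tfrac{1}{2\sigma}\int_N g$), contradicting the min-max characterization; hence $v_{\eps_i} \ne b_{\eps_i}$, and a standard argument (as in Lemma~\ref{lem:lowerbound}, using that $v_{\eps_i}$ is a nonconstant stable solution, or that $v_{\eps_i}$ is trapped strictly between the valley points while crossing both phases) yields a uniform positive lower bound on $\mathcal E_{\eps_i}(v_{\eps_i})$.

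The main obstacle, and where the bulk of the work lies, is the \emph{simultaneous} fulfilment of (I) and (II): the mean-convexity condition constrains the geometry of the transition layers (their normal direction and the sign of the induced mean curvature), while the energy condition (II) requires that the layered functions closely track the geometric quantity $\mathrm{Area}-\mathrm{Vol}_g$ along a \emph{continuous} $W^{1,2}$ path, including the delicate continuous replication of the topologically discontinuous ``appearance of a sheet'' moves. Making these compatible, and controlling the $O(\eps_i|\log\eps_i|)$ errors uniformly, is subtle; in general dimensions the additional difficulty is handling the small singular set $\overline M\setminus M$ of $M_0$, which forces a quantitative study of the distance function $\dm$ near $\mathrm{sing}\,V$ and a Lipschitz-then-smooth construction of $h$ so that (I) survives mollification (as a Radon measure of the correct sign) and (II) still holds. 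I expect these technical points — the geometric deformation with its quantitative gap $\varsigma$, and its faithful, continuous Allen--Cahn realization respecting mean-convexity — to occupy the remaining sections.
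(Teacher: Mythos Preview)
Your strategy is the paper's own: the geometric deformation of the double cover of $M$ producing the gap $\varsigma$, its continuous Allen--Cahn realization, the mean-convexity condition (I), the path condition (II), and the gradient flow to a stable limit. Two of your justifications, however, are not correct as stated and require the arguments the paper actually gives.

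First, stability of $v_{\eps_i}$ does \emph{not} follow from being an $\omega$-limit of a gradient flow; a gradient trajectory can perfectly well converge to a saddle along its stable manifold, so ``$\omega$-limits of gradient flows are stable'' is false in general. The paper uses the mean-convexity (I) in an essential way: it makes the flow $t\mapsto h_t$ pointwise increasing, and then one rules out $\lambda_1<0$ by a barrier argument (Lemma~\ref{lem:stability}): if $\rho_1>0$ is the first eigenfunction, then for small $s>0$ one has $-\mathcal F'_{\eps}(h_\infty - s\rho_1)<0$ while $h_\delta < h_\infty - s\rho_1$, and the increasing flow must eventually touch $h_\infty - s\rho_1$ from below, contradicting the sign of $-\mathcal F'_{\eps}$ at the first touching point. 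Second, the positive lower bound on $\mathcal E_{\eps_i}(v_{\eps_i})$ is not obtained ``as in Lemma~\ref{lem:lowerbound}'': that lemma is specific to the min-max value via the wall $\Pi_\delta$ and does not apply to an arbitrary stable critical point. The paper's argument (Section~\ref{conclusive}) is different: assuming $\mathcal E_{\eps_i}(v_{\eps_i})\to 0$ along a subsequence, a rescaling at any putative zero of $v_{\eps_i}$ yields an entire finite-energy solution of the Allen--Cahn equation with zero energy, hence a constant $\pm 1$, contradicting the vanishing at the origin; thus $\{v_{\eps_i}=0\}=\emptyset$, and since $v_{\eps_i}>3/4$ on the fixed open set one gets $v_{\eps_i}>0$ everywhere. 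Evaluating the PDE at a global minimum then forces $\inf_N v_{\eps_i}>1$, and comparison with the flow starting at the constant $+1$ (which converges to $b_{\eps_i}$) gives $v_{\eps_i}=b_{\eps_i}$, already excluded. A minor point: your energy bound along the path should read $(2\sigma)^{-1}\mathcal F_{\eps_i}\le 2\mathcal H^n(M)+\int_N \tfrac{g}{2\sigma}-\varsigma/2$; the additive constant $\int_N \tfrac{g}{2\sigma}$ appears on both sides of the comparison with $(2\sigma)^{-1}\beta_{\eps_i}$ and cancels, so the contradiction survives, but the formula as you wrote it is off.
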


\begin{proof}[Second (final) part of the proof of Theorem \ref{thm:existence}, assuming Proposition \ref{Prop:main}]
Let $u_{{\eps}_j}$ be as in the paragraph preceding Proposition~\ref{Prop:main} and let $v_{{\eps}_j}$ be the functions given by Proposition \ref{Prop:main}. Owing to the condition that 
$\{v_{{\eps}_j}>\frac{3}{4}\}$ contains a fixed non-empty open set, we obtain that any (subsequential) $L^1$-limit $v_\infty$ of $v_{{\eps}_j}$ equals $+1$ on this open set. In view of the upper and lower bounds on $\ca{E}{{\eps}_j}(v_{{\eps}_j})$ and stability of $v_{\eps_{j}}$, we can apply Theorem \ref{thm:regularity} to any subsequential limit of the sequence of associated varifolds $V^{v_{{\eps}_j}}$. We obtain that the multiplicity-$1$ varifold associated with the reduced boundary of $\{v_\infty = +1\}$ is non-trivial and provides the prescribed-mean-curvature hypersurface needed to complete the proof of Theorem~\ref{thm:existence}.
\end{proof}

The proof of Proposition \ref{Prop:main} will be achieved, in the forthcoming sections, by exploiting the minmax characterisation of $u_{\eps}$ and the geometric fact that a minimal hypersurface with multiplicity $2$ is not a stationary point for the geometric functional $A-\text{Vol}_{\frac{g}{\s}}$, when viewed as an immersion from its double cover: the term $A$ measures the hypersurface area and the term $\text{Vol}_{\frac{g}{\s}}$ is the enclosed $\frac{g}{\s}$-volume, which is $\int_E \frac{g}{\s}$ when the hypersurface in question is the reduced boundary of a Caccioppoli set $E\subset N$. (See \cite{BW2} for the more general definition that applies also to the case in which the hypersurface is not a boundary. This is the natural functional whose critical points are hypersurfaces with scalar mean curvature prescribed by $\frac{g}{\s}$.) More specifically, we will proceed as follows. Recall that, under the hypotheses of Proposition~\ref{Prop:main}, we have a sequence $u_{{\eps}_j}$ of min-max critical points whose associated varifolds converge to a minimal hypersurface $M$ endowed with locally constant even multiplicity, and $\limsup_{{\eps}_j \to 0}\frac{1}{2\s}\ca{F}{{\eps}_j}(u_{{\eps}_j}) \geq  2\mathcal{H}^n(M)+\frac{1}{2\s}\int_N g$. We will exhibit, for all sufficiently small $\eps={\eps}_j$, a continuous path $\gamma:[-1,1]\to W^{1,2}(N)$ with $\gamma(-1)=a_{\eps}$, with the second endpoint $\gamma(+1)$ a stable solution to $\ca{F'}{\eps}=0$, and with an energy bound $\Fce(\gamma(t))\leq 2(2\s)\mathcal{H}^n(M) - c_M+\int_N g$ for all $t\in[-1,1]$ and for some $c_M>0$ independent of $\eps$ ($c_M$ will depend only on $M\subset N$).  Then the minmax characterisation of $u_{\eps}$ will imply that the second endpoint $\gamma(+1)$ is a function $v_{\eps}$ that cannot be $b_{\eps}$. Owing to the way in which we will construct the path, $v_{\eps}$ will satisfy the remaining conditions in Proposition \ref{Prop:main}.

\subsection{Preliminaries}
\label{prelim}

\textit{One-dimensional profiles - double transition}. 
The profiles $\Het^{\eps}$ introduced in Section \ref{minmax_setup} will be needed to write ``Allen--Cahn approximations'' of multiplicity-$1$ hypersurfaces. (Recall that $\Lambda$ is a shorthand notation for $3|\log\eps|$.) In order to deal with multiplicity-$2$ portions, instead, we define, for $\eps>0$, the function $\Psi:\R\to \R$
\be
\label{eq:Psi}
\Psi(r)=\left\{\begin{array}{ccc}
\OHet^{\eps}(r+2\eps \Lambda) & r\leq 0\\
\OHet^{\eps}(-r+2\eps \Lambda) & r>0
        \end{array}\right. .
\ee
(This function is smooth, since all derivatives of $\OHet^{\eps}$ vanish at $\pm 2\eps \Lambda$.) We have $\Ece(\Psi)=2(2\s) + O(\eps^2)$. Additionally, we will need a continuous family of one-dimensional profiles that will be employed to replicate, for functions, the geometric operation of continuously changing the weight of a hypersurface; in a similar spirit, this family will also be used to produce Allen--Cahn approximations of hypersurfaces-with-boundary endowed with multiplicity $2$. In view of this we define, for $t\in [0,\infty)$:
\be
\label{eq:family2}
\Psi_t(r):= \left\{ \begin{array}{ccc}
                      \OHet^{\eps}(r+2\eps \Lambda-t) & r\leq 0 \\
                      \OHet^{\eps}(-r+2\eps \Lambda-t) & r> 0 
                     \end{array}
\right. .
\ee
Note that $\Psi_0=\Psi$ and $\Psi_{t}\equiv -1$ for $t\geq 4\eps\Lambda$. For $t\in(0,4\eps\Lambda)$ the function $\Psi_t$ is equal to $-1$ on the set $\{r\in\R:|r|\geq 4\eps\Lambda-t\}$.
For each $t$ the function $\Psi_t$ is even and Lipschitz (and smooth away from $0$). The energy $\Ece(\Psi_t)$ is decreasing in $t$: indeed we have $\Ece(\Psi_t)=\Ece(\Psi)-\int_{-t}^t \eps |\Psi'|^2 + \frac{W(\Psi)}{\eps}$.

\medskip
 
\textit{Distance to $\overline{M}$}. Let $M$ be as in the beginning of Section \ref{proof} (just before the statement of Proposition \ref{Prop:main}). We denote by $\dm:N\to \R$ the Lipschitz function $\dm(x)=\text{dist}(x,\overline{M})$, where $\text{dist}$ is the Riemannian distance. By Hopf--Rinow theorem, $\dm(x)$ is always realized by at least one geodesic from $x$ to a point in $\overline{M}$; in our case, the endpoint of such a geodesic will always belong to $M$, see \cite[Lemma 3.1]{B1}. We let $\om\in(0,\text{inj}(N))$ and consider the open set $T_\omega=\{x:\dm(x)<\om\}$. We will restrict our analysis to this neighbourhood of $\overline{M}$. By the analysis in \cite[Section 3]{B1} (see also \cite{ManteMennu}), the function $\nabla \dm$ is in $SBV(T_{\om}\setminus \overline{M})$, i.e.~it is a $BV$-function whose distributional derivatives are Radon measures with no Cantor part. More precisely, we have that 
(see \cite[Lemma 3.2]{B1}) the distributional Laplacian $\Delta \dm$ restricted to $T_\omega\setminus \overline{M}$ is a Radon measure whose singular part is a negative. 
Moreover (see \cite{ManteMennu} and \cite[Proposition~3.1]{B1}), always restricting to $T_\omega \setminus \overline{M}$, the support of the singular part of $\Delta \dm$ is countably $n$-rectifiable and agrees with the so-called cut-locus of $M$, denoted by $\text{Cut}(M)$; away from the cut-locus of $M$, the Laplacian $\Delta \dm$ is smooth. We recall that, at $x\in \left(N \setminus \text{Cut}(M)\right)\setminus (\overline{M}\setminus M)$, $-\Delta d_{\overline{M}}(x)$ agrees with the (classical) scalar mean curvature of the level set of $d_{\overline{M}}$ that contains $x$, computed with respect to the normal that points away from $M$. For $x\in T_\om\setminus \text{Cut}(M) \setminus \overline{M}$ we have that there exists a unique geodesic from $x$ to $\overline{M}$ whose length realizes $\dm(x)$. This geodesic is completely contained (except for its endpoint, that lies in $M$) in the open set $T_\om\setminus \text{Cut}(M) \setminus \overline{M}$. This yields a retraction of $T_\om\setminus \overline{\text{Cut}(M)}$ onto $M$, see \cite[Remark 3.2]{B1}, with points moving towards $M$ at unit speed along the unique geodesic connecting them to $M$. Arguing as in \cite[Lemma 3.3]{B1} by means of Riccati's equation \cite[Corollary 3.6]{Gray}, and replacing the condition $\Rc{N}>0$ (valid in \cite[Lemma 3.3]{B1}) with $\Rc{N}>-C$ for some $C>0$ (valid on our compact manifold $N$), we obtain that $\Delta \dm \leq C \dm(x)$ on $T_\om\setminus \text{Cut}(M)\setminus \overline{M}$ (recall that $\dm$ is smooth on this open set). In fact, since $\dm$ is smooth on $M$ and $M$ is minimal, we have $\Delta \dm =0$ on $M$, so $\Delta \dm(x) \leq C \dm(x)$ on $T_\om\setminus \text{Cut}(M)\setminus (\overline{M}\setminus M)$. 

\begin{Prop}
\label{Prop:sign_lapl} 
Let $N$ be a compact $(n+1)$-dimensional Riemannian manifold and $M$ a smooth minimal hypersurface as in the beginning of Section \ref{proof}. Denote by $\dm$ the distance function to $\overline{M}$ and by $T_\om=\{x\in N:  \dm(x)<\om\}$, where $\om\in(0,\text{inj}(N))$. Then the distributional Laplacian $\Delta \dm$ on $T_\om$ is a Radon measure that satisfies $\Delta \dm \leq C\dm $, for\footnote{This should be interpreted as an inequality between measures: the function $C\dm$ is identified with the measure $C\dm \mathcal{H}^{n+1}\res T_{\om}$ and we know already that the distributional Laplacian $\Delta \dm$ is a Radon measure, therefore the inequality means that $C\dm \mathcal{H}^{n+1} -\Delta \dm$ is a positive (Radon) measure on $T_{\omega}$. Also recall that a distribution is said to be $\leq 0$ if for every non-negative test function the result is $\leq 0$ (similarly for $\geq 0$). A distribution that is $\geq 0$ or $\leq 0$ is necessarily a Radon measure, see e.g.~\cite[Theorem 1.39]{EvGa}.} $C=-\min_N \Rc{N}$.
\end{Prop}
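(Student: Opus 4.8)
The plan is to prove Proposition~\ref{Prop:sign_lapl} by combining the local (pointwise, smooth) estimate already established in the text with a careful analysis of the singular part of $\Delta \dm$ across the cut locus, so that the final inequality $\Delta \dm \leq C\dm$ holds as an inequality of Radon measures on all of $T_\om$.

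First I would recall the structure of $\Delta \dm$ on $T_\om \setminus \overline{M}$ as laid out just before the statement: on the open set $T_\om \setminus \text{Cut}(M) \setminus \overline{M}$ the function $\dm$ is smooth, and the Riccati comparison argument (via \cite[Corollary~3.6]{Gray}, using $\Rc{N} \geq -C$ on the compact manifold $N$, with $C = -\min_N \Rc{N}$) gives the classical pointwise bound $\Delta \dm(x) \leq C\dm(x)$ there; moreover since $\dm$ is smooth on $M$ and $M$ is minimal, $\Delta \dm = 0$ on $M$, so in fact $\Delta \dm \leq C\dm$ pointwise on $T_\om \setminus \text{Cut}(M) \setminus (\overline{M}\setminus M)$. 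The second ingredient is that $\nabla \dm \in SBV(T_\om\setminus \overline{M})$ with $\Delta \dm \res (T_\om\setminus \overline{M})$ a Radon measure whose singular part is \emph{negative}, supported on the $n$-rectifiable set $\text{Cut}(M)$ (this is \cite[Lemma~3.2, Proposition~3.1]{B1} and \cite{ManteMennu}). Splitting $\Delta \dm = (\Delta \dm)_{\mathrm{ac}} + (\Delta\dm)_{\mathrm{sing}}$ on $T_\om\setminus\overline{M}$, the absolutely continuous part is dominated by $C\dm\,{\mathcal H}^{n+1}$ by the pointwise estimate, while the singular part is $\leq 0$; hence $C\dm\,{\mathcal H}^{n+1} - \Delta \dm \geq 0$ as measures on $T_\om \setminus \overline{M}$.

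The remaining and genuinely delicate point is to upgrade this to an inequality of distributions (equivalently, Radon measures) on all of $T_\om$, i.e.\ to control the behaviour of $\Delta \dm$ across $\overline{M}$ itself. Here I would argue that $\dm$, being Lipschitz with $|\nabla \dm| = 1$ a.e., has $\Delta \dm$ a priori only a distribution of order $\leq 1$; to see it is a Radon measure near $\overline M$ and that no unfavourable (positive, i.e.\ wrong-sign) concentration occurs on $\overline M$, I would test against $0 \leq \zeta \in C^\infty_c(T_\om)$ and approximate $\dm$ from the region $\{\dm > \tau\}$: writing $\int \dm \,\Delta\zeta = \lim_{\tau\to 0^+}\int_{\{\dm>\tau\}} \dm\,\Delta\zeta$, integrating by parts twice on $\{\dm>\tau\}$ (whose boundary $\{\dm=\tau\}$ is a level set of a Lipschitz function, smooth for a.e.\ $\tau$ by Sard, and where $\partial_\nu \dm = 1$), and using that on $\{\dm>\tau\}$ one has the measure bound just derived. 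The boundary terms involve $\int_{\{\dm=\tau\}} \zeta\, \partial_\nu \dm$ and $\int_{\{\dm=\tau\}} \dm\, \partial_\nu\zeta = \tau\int_{\{\dm=\tau\}}\partial_\nu\zeta$; since ${\mathcal H}^n(\{\dm=\tau\})$ stays bounded as $\tau \to 0^+$ (a coarea/monotonicity consideration, using that $\overline M$ has locally finite ${\mathcal H}^n$-measure, which holds since $\overline M = \spt\|V\|$ for an integral varifold), the second term vanishes in the limit and the first produces exactly the (signed, bounded) contribution $-\int_{\overline M}\zeta\,d\mu_{\overline M}$ for some \emph{positive} measure $\mu_{\overline M}$ (twice the ${\mathcal H}^n$-density jump of $\nabla\dm$ across $\overline M$, which is $2{\mathcal H}^n\res M$ plus a possible contribution from $\overline M\setminus M$ — in all cases of the correct sign for the inequality). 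This shows $\Delta\dm$ is a Radon measure on $T_\om$ with $\Delta\dm\res\overline M \leq 0$, and hence $\Delta \dm \leq C\dm$ on all of $T_\om$.

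I expect the main obstacle to be precisely this last step: justifying rigorously that the boundary terms in the integration by parts on $\{\dm>\tau\}$ have limits as $\tau\to 0^+$ and that the limiting contribution from $\overline M$ has the right sign. The cleanest route is probably to avoid delicate Sard-type arguments by instead invoking the known $SBV$-regularity of $\nabla\dm$ across $\overline M$ — near $M$ (away from the small singular set $\overline M\setminus M$, which has dimension $\leq n-7 < n-1$ and is therefore $\Delta\dm$-negligible as it carries no $(n-1)$-dimensional, let alone $n$-dimensional, mass once we know $\Delta\dm$ is a measure) $\dm$ is smooth on each side and $\nabla\dm$ jumps from $-\nu$ to $+\nu$, so $\Delta\dm\res M = -2{\mathcal H}^n\res M$, which is $\leq 0$; and the small singular set is removable by a standard capacity argument since $\mathcal{H}^{n-1}(\overline M \setminus M) = 0$. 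Assembling the pointwise bound on the smooth part, the negativity of the singular part on $\text{Cut}(M)$, and the negativity of the jump part on $M$, together with removability of $\overline M\setminus M$, yields the claimed inequality of Radon measures on $T_\om$.
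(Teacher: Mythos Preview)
Your overall route matches the paper's: use the Riccati pointwise bound $\Delta\dm \leq C\dm$ on the smooth region, the negativity of the singular part of $\Delta\dm$ on $\text{Cut}(M)$, and a capacity/removability argument for the small set $\overline{M}\setminus M$. The paper carries out this last step by choosing a cutoff $\chi$ supported near $\overline{M}\setminus M$ with $\int|\nabla\chi|<\delta$ and exploiting that the distribution $\Delta\dm - C\dm$ has order at most~$1$; it does not attempt a level-set integration by parts on $\{\dm>\tau\}$.

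Where your argument fails is the sign of the jump of $\nabla\dm$ across $M$. With $\nu$ a unit normal to $M$, one has $\nabla\dm = +\nu$ on the side into which $\nu$ points and $\nabla\dm = -\nu$ on the other, so the distributional divergence picks up the jump contribution $[\nabla\dm]\cdot\nu\,\mathcal{H}^n\res M = +2\,\mathcal{H}^n\res M$; equivalently, in one variable $(|t|)'' = +2\delta_0$. Thus $\Delta\dm\res M = +2\mathcal{H}^n\res M$, not $-2\mathcal{H}^n\res M$, and since $C\dm = 0$ on $M$ this positive singular part has the \emph{wrong} sign for the inequality $\Delta\dm\leq C\dm$. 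The paper's proof does not confront this jump either (it asserts the pointwise bound across $M$ and removes only $\overline{M}\setminus M$), but in the sole application \eqref{eq:first_var_G0_part1}--\eqref{eq:first_var_G0} the Lipschitz factor $(\OHet^{\eps})'(-\dm+2\eps\Lambda)$ multiplying $\Delta\dm$ vanishes on $M$ (because $(\OHet^{\eps})'(2\eps\Lambda)=0$), so the positive jump is annihilated and the downstream conclusion survives. The clean repair is to assert and prove the inequality of measures on $T_\om\setminus M$; your argument, once you drop the incorrect jump computation on $M$ and keep the capacity removal of $\overline{M}\setminus M$, does exactly that.
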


\begin{proof}
We consider the distribution $\Delta \dm - C \dm$, defined by its action on $v\in C^\infty_c(T_\omega)$ by
$$(\Delta \dm - C \dm)(v)=-\int\nabla \dm \cdot \nabla v - C \int_N \dm v.$$
Note that this is a distribution on $T_{\om}$ of order at most $1$, because  $\nabla \dm\in L^{\infty}(T_\om)$ with $|\nabla \dm|\leq 1$ and hence we get $|(\Delta \dm - C \dm)(v)|\leq \mathcal{H}^{n+1}(N) (C\om +1)\|v\|_{C^1(T_\omega)}$.

Putting together $\Delta \dm(x) \leq C \dm(x)$ on $T_\om\setminus \text{Cut}(M)\setminus (\overline{M}\setminus M)$ with the sign condition on the singular part of $\Delta \dm$ obtained in \cite[Lemma 3.2]{B1} (and discussed above), we conclude that the restriction of the distribution $\Delta \dm - C \dm$ to $T_\om\setminus  (\overline{M}\setminus M)$ is $\leq 0$. This restriction is therefore a (negative) Radon measure on $T_\om\setminus  (\overline{M}\setminus M)$.

We argue via a capacity argument. For any $\delta>0$ we choose (see \cite[4.7]{EvGa}) $\chi \in C^\infty_c(T_\om)$ to be a function that takes values in $[0,1]$, is identically $1$ in an open neighbourhood of $\overline{M}\setminus M$, identically $0$ away from a (larger) neighbourhood of $\overline{M}\setminus M$ and such that $\int_{T_{\om}} |\nabla \chi|<\delta$. For $v\in C^\infty_c(T_\om)$, $v\geq 0$, we get

$$\int_{T_{\om}} (\Delta \dm - C\dm) v = \int_{T_{\om}} (\Delta \dm-C\dm) (1-\chi)v + \int_{T_{\om}} \Delta \dm \, \chi v - \int_{T_\omega} C\dm \chi v=$$
\be
\label{eq:capacity_delta}
= \int_{T_{\om}} (\Delta \dm-C\dm) (1-\chi)v - \int_{T_{\om}} \nabla \dm \, \nabla \chi\, v - \int_{T_{\om}} \nabla \dm \nabla v \, \chi - \int_{T_\omega} C\dm \chi v.
\ee
The second, third and fourth terms in the last identity tend to $0$ as $\delta \to 0$, because $\|\chi\|_{W^{1,1}(T_\omega)}\to 0$ as $\delta\to 0$ and $|\nabla \dm|\leq 1$. The first term is $\leq 0$ for any $\delta$, because $(1-\chi)v\geq 0$ and we saw that the distribution $\Delta \dm-C\dm$ is a negative Radon measure on the support of $(1-\chi)v$ (for any $\delta$). Taking the limit in (\ref{eq:capacity_delta}) as $\delta \to 0$ we therefore obtain that $\Delta \dm - C \dm$ is a negative distribution on $T_{\om}$, and therefore it is a (negative) Radon measure on $T_{\om}$.
\end{proof}

We will denote by $\widetilde{M}$ the oriented double cover of $M$, that is $\widetilde{M}=\{(y,v):y\in M, v \text{ is one of the two possible choices of unit normal to $M$ at $y$}\}$ ($\widetilde{M}$ naturally embeds in the unit sphere bundle on $N$). The standard projection $\widetilde{M}\to M$ is given by $(y,v)\to y$ and we denote by $\iota:\widetilde{M}\to N$ the composition of the standard projection with the embedding of $M$ into $N$. For $q=(y,v)\in \widetilde{M}$, for $y\in M$ and $v$ a choice of unit normal to $M$ at $y$, the geodesic $s\in (0,\text{inj}(N)) \to \text{exp}_{y}(s v)$ leaves $M$ orthogonally and is minimizing, between $y$ and the point $\text{exp}_{y}(t v)$, as long as $t$ is sufficiently small. We denote by $\sigma_{(y,v)}$ the (positive) number such that this geodesic is minimizing between $y$ and the point $\text{exp}_{y}(t v)$ for all $t\leq \sigma_{(y,v)}$ and is no longer minimizing if $t>\sigma_{(y,v)}$. As we are restricting to $T_{\om}$, we truncate $\sigma_{(y,v)}$ using the convention that $\sigma_{y,v}=\om$ if the geodesic is minimising for some $t>\om$. The function $\sigma_{(y,v)}$ on $\widetilde{M}$ is continuous (see \cite[Section 3]{B1} and \cite{ManteMennu} for details); moreover, we have a smooth diffeomorphism $F$
\be
\label{eq:diffeo_F}
F:\{((y,v),s):(y,v)\in \widetilde{M}, s\in(0,\sigma_{(y,v)})\} \to T_\om\setminus \text{Cut}(M) \setminus \overline{M} 
\ee
defined by $F((y,v),s)=\text{exp}_y(sv)$. This diffeomorphism extends by continuity to a map  
$$V_{\widetilde{M}}=\{((y,v),s):(y,v)\in \widetilde{M}, s\in[0,\sigma_{(y,v)})\} \to T_\om\setminus \left(\text{Cut}(M) \cup (\overline{M} \setminus M)\right).$$ This extended map will be still denoted by $F$ and is $2-1$ on $\widetilde{M}\times \{0\}$. (On $\widetilde{M}\times \{0\}$ this map can be identified with $\iota$.) Since $\sigma_{(y,v)}>0$, on any compact subset of $M$ there is a positive lower bound for $\sigma_{(y,v)}$ and therefore the map $F$ provides, around any compact set of $M$, a system of Fermi coordinates (tubular neighbourhood system).

\medskip

We next collect certain properties of the level sets $\Gamma_t=\{x\in N:\dm(x)=t\}$ for $t\in [0,\om)$. For $t\in(0,\om)$ we have that $\Gamma_t \setminus \text{Cut}(M)$ is smooth, with scalar mean curvature at $x$ given by $-\Delta\dm(x)$. For $\mathcal{H}^1$-a.e.~$t\in(0,\om)$ we have $\mathcal{H}^n\left(\Gamma_t \cap \text{Cut}(M)\right)=0$, since $\text{Cut}(M)$ has dimension $n$. Therefore we have that $\mathcal{H}^1$-a.e.~level set is $\mathcal{H}^n$-a.e.~smooth. For these level sets we can therefore compute the $\mathcal{H}^n$-measure by computing the measure of their smooth part. For this we argue as in \cite[Lemma 4.1]{B1} (to which we refer for further details). We use \cite[Theorem 3.11]{Gray} to compute the distortion of the area element as we move along a geodesic $s\in(0,\sigma_{(y,v)}) \to \text{exp}_y(sv)$, for $(y,v)\in \widetilde{M}$, the oriented double cover of $M$ (in other words, $y\in M$ and $v$ is one of the two choices of unit normal to $M$ at $y$). The distortion of the area element $\theta_s$ is ruled by the ODE $\frac{\p}{\p s} \log\theta_s  = -\vec{H}(y,s)\cdot \frac{\p}{\p s}$, where $\vec{H}_{(y,s)}$ is the mean curvature of the level set at distance $s$ evaluated at the point $(y,s)=\text{exp}_y(sv)$. Using Riccati's equation \cite[Corollary 3.6]{Gray}, and the bound $\Rc{N}\geq -C$ on $N$, we obtain that $H_{(y,s)}\geq - C s$, where $H(y,s)=\vec{H}_{(y,s)}\cdot \frac{\p}{\p s}$ is the scalar mean curvature of the level set $\Gamma_s$ at the point $(y,s)$, with respect to the unit normal that points away from $M$. We thus have $\frac{\p}{\p s} \log\theta_s  \leq Cs$. Integrating this inequality we obtain that, with coordinates chosen so that $\theta_0(x)=1$, the area element evolves with the bound $\theta_s(x)\leq e^{Cs^2/2}$. Therefore 
\begin{equation}
\label{eq:area_level_set}
 \mathcal{H}^n(\Gamma_t)\leq 2\mathcal{H}^n(M) e^{Ct^2/2} \text{ for almost every } t\in(0,\om)
\end{equation}
(level sets of $\dm$ are double covers of $M$, since $\dm$ is unsigned, hence the appearance of the factor $2$). Recall also that the scalar mean curvature of $\Gamma_t$ at the point $x\in \Gamma_t\setminus \text{Cut}(M)$ agrees with $-\Delta \dm(x)$. The following estimate is implicit in the previous discussion:
\begin{equation}
\label{eq:mean_curv_level_set}
x\in \Gamma_t\setminus\text{Cut}(M) \Rightarrow \Delta \dm(x)\leq Ct.
\end{equation}

\subsection{Allen--Cahn approximation of $2|M|$}
\label{2M}

In this section we will produce for all sufficiently small $\eps$, a function $G^{\eps}_0:N \to \R$ whose Allen--Cahn energy is\footnote{We will only be interested in obtaining a control from above of the energy by the area, up to a small error term. It is however true, as can be seen by computations similar to those that we give in this section, that a control from below is also valid.} approximately $2\mathcal{H}^n(M)$. While this function is not part of the path $\gamma$ that we aim to construct (see the discussion that precedes Section \ref{prelim}), by suitably deforming it we will construct two functions through which the path $\gamma$ will pass. 

Let $\eps>0$ be sufficiently small to ensure $12\eps|\log\eps|<\min\{\om,1\}$. The function $G^{\eps}_0$ has level sets coinciding with the level sets of $\dm$ and the one-dimensional profiles normal to these level sets are dictated by $\OHet^{\eps}$ (see (\ref{eq:Psi}) for the definition of $\Psi$):

\be
\label{eq:G0}
G^{\eps}_0(x) = \left\{\begin{array}{ccc}
                          -1 & \text{ for } x\in N\setminus T_\om \\
\Psi(\dm(x))& \text{ for } x\in T_\om
                         \end{array}\right. .
\ee
Equivalently, for $x\in T_\om$ we have $G^{\eps}_0(x) = \OHet^{\eps}(-\dm(x)+2\eps\Lambda)$. We will now compute $\Ece(G^{\eps}_0)$ and the first variation $\ca{E'}{\eps}(G^{\eps}_0)$.

We use the shorthand notation $\Lambda=3|\log\eps|$. By definition we have, on $T_{\om}$, that $\nabla G^{\eps}_0 = \Psi'(\dm)\nabla \dm$, while the energy of $G^{\eps}_0$ is $0$ in $N\setminus\{x\in N:\dm(x)\leq 4\eps\Lambda\}$; we use the coarea formula for the Lipschitz function $\dm$ (for which $|\nabla \dm|=1$) to get

$$\Ece(G_0^{\eps})=\int_{T_{\om}} \eps \frac{|\nabla G_0^{\eps}|^2}{2} + \frac{W(G_0^{\eps})}{\eps} = \int_{0}^{\om} \left(\int_{\Gamma_s} \frac{|\nabla G_0^{\eps}|^2}{2} + \frac{W(G_0^{\eps})}{\eps}\right) ds =$$ 
$$ =\int_{-2\eps\Lambda}^{2\eps\Lambda} \left(\int_{\Gamma_{2\eps\Lambda-s}} \eps \frac{({\OHet^{\eps}}'(s))^2}{2} + \frac{W({\OHet^{\eps}}(s))}{\eps}\right) ds \underbrace{\leq}_{(\ref{eq:area_level_set})} $$ 
$$\leq   2 e^{\frac{C}{2} (12\eps|\log\eps|)^2} \mathcal{H}^n(M) \left(\int_{\R}  \eps \frac{({\OHet^{\eps}}')^2}{2} + \frac{W({\OHet^{\eps}})}{\eps}\right)\leq 2(2\s)\mathcal{H}^n(M) + O(\eps|\log\eps|),$$
for $\eps$ in the chosen range. The Allen--Cahn first variation of $G^{\eps}_0$ (which is clearly $0$ outside $\{x\in N:\dm(x)\leq 4\eps\Lambda\}$) can be computed in $\{x\in N:\dm(x)<5\eps\Lambda\}$ as follows. The Radon measure $\Delta \dm$ satisfies Proposition \ref{Prop:sign_lapl}, recall moreover the error by which $\OHet^{\eps}$ fails to solve the Allen--Cahn ODE (Section \ref{prelim}). Then, in the distributional sense, we have
\be
\label{eq:first_var_G0_part1}
-\ca{E'}{\eps}(G^{\eps}_0)=\eps\Delta G^{\eps}_0 -\frac{W'(G^{\eps}_0)}{\eps} = 
\ee
$$=\eps {\OHet^{\eps}}''(-\dm+2\eps\Lambda) |\nabla \dm|^2 - \eps{\OHet^{\eps}}'(-\dm+2\eps\Lambda) \Delta \dm -\frac{W'(\OHet^{\eps}(-\dm+2\eps\Lambda))}{\eps}=$$
$$=\underbrace{\eps {\OHet^{\eps}}''(-\dm+2\eps\Lambda)-\frac{W'(\OHet^{\eps}(-\dm+2\eps\Lambda))}{\eps}}_{O({\eps}^2)}- \underbrace{\eps{\OHet^{\eps}}'(-\dm+2\eps\Lambda)}_{0\leq\, \cdot\,\leq 3} \underbrace{\Delta \dm}_{\leq C\dm},$$
Here $-\ca{E'}{\eps}(G^{\eps}_0)$ and $\Delta \dm$ are a Radon measures. The term $O({\eps}^2)$ in the last line is a Lipschitz function that we interpret as a density with respect to $\mathcal{H}^{n+1}$ and the last term is the measure $\Delta \dm$ multiplied by a bounded Lipschitz function (we have used $0\leq (\OHet^{\eps})' \leq 3/\eps$). Note that $\dm<5\eps\Lambda$ on the relavant domain. Therefore there exist $\eps_{N}\in(0,1)$ and $C_0$ (only depending on $N$) such that for all $\eps<\eps_{N}$ 
\be
\label{eq:first_var_G0}
-\ca{E'}{\eps}(G^{\eps}_0)\geq -C_0 \eps|\log\eps|,
\ee
interpreting the inequality as one between Radon measures.

\subsection{Immersions and signed distance} 
\label{signed_distance}

We will need a certain notion of signed distance. Fix a compact set $K\subset \widetilde{M}$ with non-empty interior; this\footnote{When $\overline{M}=M$, e.g.~for $n\leq 6$, one can choose $K=\widetilde{M}$ and the whole construction presented in the remaining sections can be shortened considerably.} set will be kept fixed throughout the construction in the coming sections. It is convenient to choose $K$ to be even, i.e.~$K=\iota^{-1}(\iota(K))$. Let $\widetilde{\phi}\in C^\infty_c(\widetilde{M})$, $\widetilde{\phi}\geq 0$, such that $\text{supp}\widetilde{\phi} \subset \subset \text{Int}(K)$. The continuous function $\sigma_{(y,v)}$ (see (\ref{eq:diffeo_F}) and the discussion preceding it) has a strictly positive minimum on $K$ and we choose $\sigma_K>0$ strictly smaller than this minimum. Consider, for $c\in(0,\sigma_K/3)$ and $t\in[0,\frac{\sigma_K}{3\max\widetilde{\phi}}]$, the following immersions:

$$p=(y,v)\in \text{Int}(K)\to \text{exp}_{\iota(p)}((c+t\widetilde{\phi}(y)) v).$$

The image of this immersion is a smooth embedded hypersurface. Note that the immersion extends smoothly up to the boundary, because in a neighbourhood of $\p K$ we have $\widetilde{\phi}=0$. We will denote by $K_{c,t,\widetilde{\phi}}$ the image of $K$ via the immersion. Note that the image of $K\setminus \text{supp}\widetilde\phi$ in contained in (the smooth part of) the level set $\Gamma_c$.

For a point $(q,s) \in \text{Int}(K)\times (0,\sigma_K)$ we define its signed distance to $\text{graph}(c+t\widetilde{\phi})$ to be negative for $s< (c+t\widetilde{\phi})(q)$, positive for $s>(c+t\widetilde{\phi})(q)$ and vanishing on $\text{graph}(c+t\widetilde{\phi})$, with absolute value equal to the Riemannian distance of $(q,s)$ to $\text{graph}(c+t\widetilde{\phi})$, where the Riemannian distance is the one induced by the pull-back via $F$ of the metric on $N$. This signed distance descends to a well-defined (smooth) signed distance $\text{dist}_{K_{c,t,\widetilde{\phi}}}$ on $F\left(\text{Int}(K)\times [0,\sigma_K)\right)\setminus M$.

Since $c+t\widetilde{\phi}$ is smooth up to $\p K$ (and extends smoothly to an open neighbourhood of $K$ with value $c$), there exists a tubular neighbourhood of $K_{c,t,\widetilde{\phi}}$ in which the nearest point projection onto $K_{c,t,\widetilde{\phi}}$ is well-defined. We denote this projection by $\Pi_{c,t}$. Upon choosing the tubular neighbourhood sufficiently small, we also ensure that in the tubular neighbourhood of $F((K\setminus \text{supp}\widetilde{\phi}) \times \{c\})$ the projection $\Pi_{c,t}$ agrees with the nearest point projection onto $\Gamma_c$, denoted by $\Pi_c$.  

We choose $c_1>0$, $t_1>0$ such that for all $c\in(0,c_1]$ and all $t\in [0,t_1]$ there exists a tubular neighbourhood of $K_{c,t,\widetilde{\phi}}$ of semi-width $c$ in which the nearest point projection $\Pi_{c,t}$ is well-defined, it coincides with $\Pi_c$ in the tubular neighbourhood of $F((K\setminus \text{supp}\widetilde{\phi}) \times \{c\})$, and moreover the following bounds hold. There exists $\kappa_K>0$ such that for all $x$ in the tubular neighbourhood of $K_{c,t,\widetilde{\phi}}$ of semi-width $c$
\be
\label{eq:JacPi_ct}
\left|\,\,|J\Pi_{c,t}|(x)-1\,\,\right|\leq \kappa_K d \,\,\text{ and }\,\,\left|\,\,\frac{1}{|J\Pi_{c,t}|(x)}-1\,\,\right|\leq \kappa_K d,
\ee
where $|J\Pi_{c,t}|=\sqrt{(D \Pi_{c,t})(D \Pi_{c,t})^T}$ and $d$ is the Riemannian distance of $x$ to $K_{c,t,\widetilde{\phi}}$.  

The way in which we will use these properties (in the forthcoming sections) is that for each fixed $\eps$ (sufficiently small) we will work with $c=4\eps\Lambda$ and with variable $t$, and with tubular neighbourhoods of semi-width $4\eps\Lambda$. The choice is made so that we can fit one-dimensional Allen-Cahn profiles in the normal bundle to $K_{c,t,\widetilde{\phi}}$. The estimates in (\ref{eq:JacPi_ct}) guarantee that the Allen--Cahn energy of the resulting function (defined in the tubular neighbourhood of $K_{c,t,\widetilde{\phi}}$) is very close to the area of $K_{c,t,\widetilde{\phi}}$ (up to the usual multiplicative constant $2\s$). The fact that $K_{c,t,\widetilde{\phi}}$ agrees with $\Gamma_c$ on its boundary will give that the function just constructed can be extended to a Lipschitz function on $N$, thanks to the properties of $\dm$.

We will additionally make use of the following fact. There exists a constant $\kappa_K>0$ (that we can assume is the same as the one appearing in (\ref{eq:JacPi_ct})) depending only on $K\subset N$ such that the nearest point projection $\Pi_K: K\times [0,\sigma_K) \to K$ satisfies
\be
\label{eq:JacPi_K}
\left|\,\,|J\Pi_{K}|(x)-1\,\,\right|\leq \kappa_K s \,\,\text{ and }\,\,\left|\,\,\frac{1}{|J\Pi_{K}|(x)}-1\,\,\right|\leq \kappa_K s,
\ee
where $|J\Pi_{K}|=\sqrt{(D \Pi_{K})(D \Pi_{K})^T}$ and $x=(q,s)$. The same notation $\Pi_K$ will be used to denote the nearest point projection from $F\left(K\times [0,\sigma_K)\right)$ onto $F(K)\subset M$ (recall that $s$ is the distance of $F(x)$ to $F(K)$). 

\subsection{Choice of $\eps$ and geometric quantities involved}
\label{epsilon}

Recall that our final aim, in order to achieve the proof of Proposition \ref{Prop:main}, is to produce, for each sufficiently small $\eps$, a continuous path $\gamma$ (with values in $W^{1,2}$) that joins $a_{\eps}$ to another stable solution (see the discussion after the statement of Proposition \ref{Prop:main}). The path itself will be exhibited for each $\eps<\eps_1$, for a certain ${\eps}_1>0$. Estimates on the energy $\Fce$ along the path will be obtained in terms of certain (fixed) geometric quantities (e.g.~$\mathcal{H}^n(M)$), that are independent of $\eps$, and error terms. For all $\eps\leq {\eps}_2$, for a certain ${\eps}_2\in(0,{\eps}_1]$, these error terms will be of the type $O(\eps|\log\eps|)$, i.e.~they will be bounded, in absolute value, by $C\eps|\log\eps|$ with $C>0$ independent of $\eps\in(0,{\eps}_2)$. Finally (this will only happen in Section \ref{conclusive}), for a certain ${\eps}_3\in (0,{\eps}_2]$, the terms $O(\eps|\log\eps|)$ will be absorbed in the geometric quantities, leading to an effective estimate on the energy $\Fce$ along the path, i.e.~an estimate that only depends on geometric quantities.

Rather than picking $\eps_1$ here, we will require a smallness condition on it repeatedly (finitely many times) throughout the forthcoming sections. One smallness requirement was made in Section \ref{2M}, $12 {\eps}_1 |\log {\eps}_1| < \min \{\om,1\}$. At every new requirement, we will implicitly assume that all those previously imposed remain valid. Similarly, upon estimating $\Fce$ for the functions that are constructed for $\eps<{\eps}_1$, we will (finitely many times) write the error terms in the form $O(\eps|\log\eps|)$ for $\eps\in(0,{\eps}_2)$; each time we specify a new ${\eps}_2$ we will implicity assume that we pick the smallest ${\eps}_2$ among all those identified until that moment. An initial requirement is $\eps_2\leq {\eps}_N$, for the ${\eps}_N$ chosen in Section \ref{2M}. At the end (Section \ref{conclusive}) we will choose ${\eps}_3\in (0,{\eps}_2]$ and restrict to $\eps\in (0,{\eps}_3]$: for this range, the energy estimates become effective and allow us to conclude the proof.

From now on we shall let the sequence $u_{\e_{j}}$ be as in the beginning of Section~\ref{proof}. Note however that until Section~\ref{conclusive}, all we need to know about $u_{\e_{j}}$ is that the corresponding sequence of varifolds $V^{u_{\e_{j}}}$ converges to $q|M|$ where $q$ is a locally constant function on $M$ taking even integer values and $|M|$ is  stationary (i.e.\ zero mean curvature); the fact that ${\mathcal F}_{\e_{j}}(u_{{\eps}_i})$ are min-max values does not play a role until Section \ref{conclusive}. The minimal hypersurface $M$  however plays a key role throughout. We fix the compact set $K$ (chosen in Section \ref{signed_distance}) and two geodesic balls $D_1, D_2 \subset M$ whose double covers have compact closures contained in $\text{Int}(K)$ and such that the respective concentric balls of half the radius (that will be denoted by $B_j \subset D_j$ for $j\in \{1,2\}$) have equal areas. Geometric quantities depending on $D_1, D_2, M, N$ will appear in the energy estimates. To avoid burdening the notation, we will often drop the explicit dependence on $\eps$ for the functions, and paths of functions, that we construct. With reference to the upcoming sections, $\varpi_{D_j, t}$, $\gamma_t$, $f_r$, $h_t$ all implicitly depend on $\eps$.

\subsection{One-parameter deformation, bumping outwards}
\label{bump_out}

Recall that $\iota:\widetilde{M}\to N$ is the (minimal) immersion of the oriented double cover $\widetilde{M}$ of $M$ induced by the standard projection $(y,v)\in \widetilde{M}\to y\in M$. (The image of $\iota$ is $M$ and is covered twice.)
Let $D\subset M$ be a geodesic ball, with radius denoted by $2R>0$, with closure contained in $\text{Int}(K)$ (where $K$ is the compact set fixed at the beginning of Section \ref{signed_distance}); here $D$ is either of the geodesic balls $D_1$, $D_2$ chosen in Section \ref{epsilon}, and with which we will work in Section \ref{two_parameter}. Let $B\subset D$ the concentric geodesic ball of radius $R$. Let $\chi_D\in C^\infty_c(D)$ with $\chi_D\geq 0$, $\chi_D=1$ on $B$ and $|\nabla \chi_D|\leq 2/R$ (in this section the latter condition is not needed and we only use $\chi_D>0$ on $B$, rather than $=1$). We will write $\widetilde{D}=\iota^{-1}(D)\subset \widetilde{M}$ and $\widetilde{B}=\iota^{-1}(B)\subset \widetilde{M}$. Define $\chi_{\widetilde{D}}=\chi_D\circ \iota \,\in C^\infty_c(\widetilde{D})$. We will consider the one-parameter (one-sided) smooth family of immersions $\iota_{D,t}:\widetilde{D}\to N$ defined as follows for $t\in [0,\frac{\sigma_K}{3 \max_D \chi}]$ (recall the choice of $\sigma_K$ at the beginning of Section \ref{signed_distance}):
\be
\label{eq:immersion_0t}
p=(y,v)\in \widetilde{D}\to \iota_{D,t}(p)=\text{exp}_{y}(t\,\chi_{\widetilde{D}}(p) \,v).
\ee
We have $\iota_{D,t}=\left.\iota\right|_{\widetilde{D}}$ on $\widetilde{D}\setminus \text{supp}\chi_{\widetilde{D}}$ for every $t$, and $\iota_{0,D}=\left.\iota\right|_{\widetilde{D}}$. The key property of interest to us for this (one-sided) deformation of $\left.\iota\right|_{\widetilde{D}}$ is going to be the value of the functional $J_{\frac{g}{\sigma}}(t)=J_{\frac{g}{\sigma}}(\iota_{t,D})$, where $J_{\frac{g}{\sigma}}=A-\text{Vol}_{\frac{g}{\sigma}}$. The evaluation of $\text{Vol}_{\frac{g}{\sigma}}$ requires the preliminary choice of an oriented open set $\Oc\subset N$ (that must contain the images of the immersions); in our case we choose $\Oc$ to be the cylinder $F(\widetilde{D}\times [0,\sigma_K))$, with $F$ defined in (\ref{eq:diffeo_F}). The value of $\text{Vol}_{\frac{g}{\sigma}}(t)$ is then given by $\int_{F(S_t)} {\frac{g}{\sigma}} d\mathcal{H}^{n+1}$, where $S_t=\{(y,s)\in \widetilde{D}\times [0,\sigma_K): s<t \chi_{\widetilde{D}}(y)\}$. The term $A(t)$ is the $n$-dimensonal area of the immersion $\iota_{t,D}$. We refer to \cite{BW2} for the general definition of $J_g$. 

It is readily checked that there exists $t_0 \in (0,\frac{\sigma_K}{3 \max_D \chi}]$ such that $J_{\frac{g}{\sigma}}(t)$ is decreasing on $t\in [0,t_0]$. This can be seen by considering the first variation $J_{\frac{g}{\sigma}}^\prime(0)$. The first variation of area is $0$ because $\iota$ is minimal, while the first variation of $\text{Vol}_{\frac{g}{\sigma}}$ is obtained by integrating on $\widetilde{D}$ the product of ${\frac{g}{\sigma}}$ with the normal speed $\left(\left.\frac{d}{dt}\right|_{t=0^+}\iota_{t,D}\right) \cdot v= \chi_{\widetilde{D}}$. We thus have $J_{\frac{g}{\sigma}}'(0)=-\int_{\widetilde{D}}{\frac{g}{\sigma}}\chi_{\widetilde{D}} <0$. Therefore, since $J_{\frac{g}{\sigma}}$ is $C^1$ on $[0,\frac{\sigma_K}{3 \max_D \chi}]$, there exists $t_0>0$ such that $J_{\frac{g}{\sigma}}(t)$ is decreasing on $t\in [0,t_0]$. 

\begin{oss}[smallness of $t_0$]
\label{oss:choice_t0}
We will need, for several reasons, to possibly make $t_0$ smaller. We impose $t_0<t_1$, where $t_1$ (together with $c_1$, which will be needed below) is as in Section \ref{signed_distance} with $\widetilde{\phi}$ replaced by the function $\widetilde{\chi}_D$, extended to be in $C^\infty_c(K)$, by setting it equal to $0$ in the complement of $\widetilde{D}$. This will be needed in order to use the bounds obtained in Section \ref{signed_distance}. Moreover, in view of Section \ref{flow}, we require that the mean curvature of the immersion (\ref{eq:immersion_0t}) is, in absolute value, smaller than $\frac{\min_N g}{20}$ for all $t\in [0,t_0]$. This is possible, since $\min_N g>0$, the values of the mean curvature change continuously in $t$ and at $t=0$ the immersion is minimal. In the next remark we impose one further condition, which may require to make $t_0$ smaller one last time.
\end{oss}

\begin{oss}[choice of $\tau_B$]
\label{oss:choiceoftauB}
By making $t_0$ smaller if necessary, we ensure that the (positive) quantity $\frac{J_{\frac{g}{\sigma}}(0)-J_{\frac{g}{\sigma}}(t_0)}{2}$ is smaller than $2\mathcal{H}^n(B)$. From now on we will work with this (positive) $t_0$ and denote by $\tau_B=\frac{J_{\frac{g}{\sigma}}(0)-J_{\frac{g}{\sigma}}(t_0)}{2}$. We then have $\tau_B<2\mathcal{H}^n(B)$; this will be used in Section \ref{two_parameter}.
\end{oss}

\medskip

We point out, for future reference, that $\iota_{t,D}$ and $\iota$ coincide in a neighbourhood of $\p \widetilde{D}$ in $\widetilde{D}$. We now consider, for $c\in [0,\frac{\sigma_K}{3}]$, $t\in [0,\frac{\sigma_K}{3 \max_D \chi}]$, the immersion from $\widetilde{D}$ into $N$ given by

\be
\label{eq:immersion_ct}
p=(y,v)\in \widetilde{D} \to \text{exp}_y((c+t\chi_{\widetilde{D}}(p))\,v).
\ee
(For $c>0$ the image of such an immersion is embedded.) We denote by $J_{\frac{g}{\sigma}}(c,t)$ the evaluation of $J_{\frac{g}{\sigma}}$ on the immersion (\ref{eq:immersion_ct}), with reference to the oriented open set $F(\widetilde{D}\times [0,\sigma_K))$. We recall that $J_{\frac{g}{\sigma}}(c,t)=A(c,t)-\text{Vol}_{\frac{g}{\sigma}}(c,t)$, where $A(c,t)$ is the area of the immersion (\ref{eq:immersion_ct}) and $\text{Vol}_{\frac{g}{\sigma}}(c,t)=\int_{F(S_{c,t})} {\frac{g}{\sigma}} d\mathcal{H}^{n+1}$, with $S_{c,t}=\{(y,s)\in \widetilde{D}\times [0,\sigma_K): s<c+t \chi_{\widetilde{D}}(y)\}$. (Note that $J_{\frac{g}{\sigma}}(0,t)$ agrees with $J_{\frac{g}{\sigma}}(t)$ used above.) By continuity in $c$ and $t$, and by the decreasing property of $J_{\frac{g}{\sigma}}(0,t)$ on $[0,t_0]$ obtained above, there exist $c_0\in (0,\frac{\sigma_K}{3}]$ such that for all $c\in [0,c_0]$ and for all $t\in[0,t_0]$
\be
\label{eq:J_bounds_ct}
J_{\frac{g}{\sigma}}(c,t)\leq J_{\frac{g}{\sigma}}(0,0) + \frac{\tau_B}{2}, \,\,\,\,J_{\frac{g}{\sigma}}(c,t_0)\leq J_{\frac{g}{\sigma}}(0,0) - \tau_B,
\ee
where $\tau_B>0$ was fixed in Remark \ref{oss:choiceoftauB}. Note that $J_{\frac{g}{\sigma}}(0,0)=2\mathcal{H}^{n+1}(D)$, since the term $\text{Vol}_{\frac{g}{\sigma}}$ vanishes for $\iota_{0,D}=\left.\iota\right|_{\widetilde{D}}$.

\begin{oss}[smallness of $c_0$]
By making $c_0$ smaller if necessary, we also assume that $c_0<c_1$, where $c_1$ is chosen as in Section \ref{signed_distance} with $\widetilde{\phi}$ replaced by the function $\widetilde{\chi}_D$, extended to be in $C^\infty_c(K)$, by setting it equal to $0$ in the complement of $\widetilde{D}$. This will permit the use of the bounds obtained in Section \ref{signed_distance}.
\end{oss}

The geometric deformation of $\left.\iota\right|_{\widetilde{D}}$ just described will now be replicated with a family of functions in $W^{1,2}(F(\widetilde{D}\times [0,\sigma_K)))$, with bounds on $\Fce$ that will replace (and be deduced from) the estimates in (\ref{eq:J_bounds_ct}). 
We define, initially, a one-parameter family of functions on $\widetilde{D}\times [0,\sigma_K) \subset V_{\widetilde{M}}$ (where $V_{\widetilde{M}}$ is the extended domain of the map $F$, see (\ref{eq:diffeo_F})), using coordinates $(q,s)$, and then we pass the definition to $F\left(\widetilde{D}\times [0,\sigma_K)\right)$. In the following we assume $\eps<{\eps}_1$, with $6{\eps}_1|\log{\eps}_1|<c_0$. We use the shorthand notation $2\eps\Lambda=6{\eps}|\log{\eps}|$. We set, for $t\in[0,t_0]$ and $(q,s)\in \widetilde{D}\times (0,\sigma_K)$ (and for each $\eps$ in the specified range)
\be
\label{eq:pre_bump_D}
\varpi_{D,t}(q,s) = \OHet^{\eps}\left(-\text{dist}_{\text{graph}(2\eps\Lambda+t\chi_{\widetilde{D}})}(q,s)\right).
\ee
Here we are using the signed distance to $\text{graph}(2\eps\Lambda+t\chi_{\widetilde{D}})$ that was discussed in Section \ref{signed_distance}\footnote{To be coherent with Section \ref{signed_distance}, $\chi_{\widetilde{D}}$ must be extended to a function in $C^\infty_c(K)$, by setting it equal to $0$ in the complement of $\widetilde{D}$ and the graph in question must be considered as a graph on $K$. Then the resulting distance, defined in Section \ref{signed_distance} on $K\times (0,\sigma_K)$, has to be restricted to $\widetilde{D}\times (0,\sigma_K)$.}.
Note that although the distance is not defined for $s=0$, the definition in (\ref{eq:pre_bump_D}) extends continuously from $s>0$ to $s\geq 0$ with value $1$ at $s=0$; this follows upon observing that $\liminf_{s\to 0} \text{dist}_{\text{graph}(2\eps\Lambda+t\chi_{\widetilde{D}})}(q,s) \leq -2\eps\Lambda$ and that $\OHet^{\eps}$ has value $1$ on $[2\eps \Lambda, \infty)$. More precisely, $\OHet^{\eps}$ has vanishing derivative at $2\eps \Lambda$. This guarantees that the function defined in (\ref{eq:pre_bump_D}), extended to $s=0$ with value $1$, passes to the quotient as a $C^1$ function on the open set $F\left( D\times [0,\sigma_K) \right)$ (this is a tubular neighbourhood of $D$). With slight abuses of notation, we use the notation $\varpi_{D,t}$ also to denote this quotient and we write $\varpi_{D,t}:F\left( D\times [0,\sigma_K) \right) \to \R$,
\be
\label{eq:bump_D}
\varpi_{D,t}(x) = \OHet^{\eps}\left(-\text{dist}_{K_{2\eps\Lambda,t,\chi_{\widetilde{D}}}}(x)\right).
\ee
Here $K_{2\eps\Lambda,t,\chi_{\widetilde{D}}}$ denotes, as in Section \ref{signed_distance}, the set $F\left(\text{graph}(2\eps\Lambda+t\chi_{\widetilde{D}})\right)$.\footnote{As in the previous footnote, the graph is taken over $K$ and $\chi_{\widetilde{D}}$ is extended to a function in $C^\infty_c(K)$.}
As before, the function $\varpi_{D,t}$ is extended in a $C^1$ fashion across $D$, with value $1$ on $D$.

We remark that for $t=0$ we have $\varpi_{D,0}=\left.G^{\eps}_0\right|_{F\left( D\times [0,\sigma_K) \right)}$, where $G^{\eps}_0$ was defined in (\ref{eq:G0}). 
Moreover, we point out that (for each $\eps$ considered) the assignment $t \in [0,t_0] \to \varpi_{D,t}\in W^{1,2}(F(\widetilde{D}\times [0,\sigma_K))$ is continuous.

\medskip

The Allen--Cahn energy of $\varpi_{D,t}$ can be computed using the coarea formula (either with respect to the distance, for which the Jacobian factor is $1$, or with respect to the nearest point projection, using (\ref{eq:JacPi_ct})) in a tubular neighbourhood of $K_{2\eps\Lambda,t,\chi_{\widetilde{D}}} \cap F\left( D\times [0,\sigma_K) \right)$ of semi-width $2\eps\Lambda$; away from this tubular neighbourhood, $\varpi_{D,t}$ is constantly $\pm 1$, hence there is no energy contribution. The coarea formula gives that there exists $\eps_2>0$ such that for all $\eps\leq \eps_2$ the following bound holds:

\be
\label{eq:E_energy_bump_D}
\Ece(\varpi_{D,t}) \leq (2\s) \mathcal{H}^n\left(K_{2\eps\Lambda,t,\chi_{\widetilde{D}}} \cap F\left( D\times [0,\sigma_K) \right)\right) +O(\eps|\log\eps|).
\ee
The set $K_{2\eps\Lambda,t,\chi_{\widetilde{D}}} \cap F\left( D\times [0,\sigma_K)\right)$ is the image of the immersion in (\ref{eq:immersion_ct}) with $c=2\eps\Lambda$ (for which we obtained the estimates in (\ref{eq:J_bounds_ct}) --- we will use these to obtain (\ref{eq:F_energy_bump_D}) below).

In order to get an estimate for $\Fce(\varpi_{D,t})$ we now consider $\int_{\widetilde{D}\times [0,\sigma_K)} g \varpi_{D,t} d\mathcal{H}^{n+1}$ and relate this quantity to $\text{Vol}_g(2\eps\Lambda,t)$. As in (\ref{eq:J_bounds_ct}), $\text{Vol}_g(c,t)$ is the evaluation of $\text{Vol}_g$ on the immersion (\ref{eq:immersion_ct}) and we are choosing $c=2\eps\Lambda$. Let $\varrho$ denote the function that is $+1$ on $\{(q,s)\in \widetilde{D}\times [0,\sigma_K) : \text{dist}_{\text{graph}(2\eps\Lambda+t\chi_{\widetilde{D}})}((q,s)) \leq 0\}$ and $-1$ on $\{(q,s)\in \widetilde{D}\times [0,\sigma_K) : \text{dist}_{\text{graph}(2\eps\Lambda+t\chi_{\widetilde{D}})}((q,s)) > 0\}$. Then $\int_{\widetilde{D}\times [0,\sigma_K)} g\, \varrho\, d\mathcal{H}^{n+1} = 2\int_{\{\varrho=+1\}} g \, d\mathcal{H}^{n+1} - \int_{\widetilde{D}\times [0,\sigma_K)} g\, d\mathcal{H}^{n+1} =2\text{Vol}_g(2\eps\Lambda,t) - \int_{\widetilde{D}\times [0,\sigma_K)} g\, d\mathcal{H}^{n+1}$. We let  
$$U_{1,0}=\{(q,s)\in \widetilde{D}\times [0,\sigma_K) : -2\eps\Lambda \leq \text{dist}_{\text{graph}(2\eps\Lambda+t\chi_{\widetilde{D}})}((q,s)) \leq 0\},$$
$$U_{0,-1}=\{(q,s)\in \widetilde{D}\times [0,\sigma_K) : 0\leq \text{dist}_{\text{graph}(2\eps\Lambda+t\chi_{\widetilde{D}})}((q,s)) \leq 2\eps\Lambda \};$$
the function $\varpi_{D,t}$ decreases, on these two sets, respectively from $1$ to $0$ and from $0$ to $-1$ (as the distance increases respectively from $-2\eps\Lambda$ to $0$ and from $0$ to $2\eps\Lambda$), so that $|\varpi_{D,t}-\varrho|\leq 1$ on $U_{0,-1} \cup U_{1, 0}$. On $\left(\widetilde{D}\times [0,\sigma_K)\right) \setminus (U_{0,-1} \cup U_{1, 0})$, on the other hand, we have $\varpi_{D,t}=\varrho$. We then have
$$\int_{\widetilde{D}\times [0,\sigma_K)} g \,\varpi_{D,t}\, d\mathcal{H}^{n+1}\geq $$ $$2\text{Vol}_g(2\eps\Lambda,t) - \int_{\widetilde{D}\times [0,\sigma_K)} g\, d\mathcal{H}^{n+1} - \mathcal{H}^{n+1}(U_{1,0}\cup U_{0,-1})|\sup_N g|.$$
Using the coarea formula (with respect to $\text{dist}_{\text{graph}(2\eps\Lambda+t\chi_{\widetilde{D}})}$ or with respect to $\Pi_{c,t}$, recalling (\ref{eq:JacPi_ct})), we obtain that $\mathcal{H}^{n+1}(U_{1,0})$ and $\mathcal{H}^{n+1}(U_{0,-1})$ are bounded above by $2\eps\Lambda \mathcal{H}^n\left(\left.\text{graph}(2\eps\Lambda+t\chi_{\widetilde{D}})\right|_{\widetilde{D}}\right) + O(\eps|\log\eps|)$. 
Moreover, from the area formula we obtain $\mathcal{H}^n\left(\left.\text{graph}(2\eps\Lambda+t\chi_{\widetilde{D}})\right|_{\widetilde{D}}\right)\leq \mathcal{H}^n(\widetilde{D}) (1+ C_{\chi_{\widetilde{D}},c_0,t_0,N})$. In conclusion, there exists $\eps_2>0$ such that for all $\eps\leq \eps_2$ we have

$$\int_{\widetilde{D}\times [0,\sigma_K)} g \,\varpi_{D,t}\, d\mathcal{H}^{n+1} \geq 2\text{Vol}_g(2\eps\Lambda,t) - \int_{\widetilde{D}\times [0,\sigma_K)} g\, d\mathcal{H}^{n+1} -  |O(\eps|\log\eps|)|.$$

Putting this together with (\ref{eq:E_energy_bump_D}) and (\ref{eq:J_bounds_ct}) we obtain that there exists $\eps_2>0$ such that for all $\eps\leq \eps_2$ (here the energy is computed on the domain of $\varpi_{D,t}$, that is on $F(D\times [0,\sigma_K))$)

$$\frac{1}{2\s}\Fce(\varpi_{D,t}) \leq \underbrace{J_{\frac{g}{\s}}(0,0)}_{=2\mathcal{H}^{n}(D)} + \int_{\widetilde{D}\times [0,\sigma_K)} \frac{g}{2\s}\, d\mathcal{H}^{n+1}+ \frac{\tau_B}{2} +O(\eps|\log\eps|) \,\,\,\text{ for all } t\in[0,t_0],$$
\be
\label{eq:F_energy_bump_D}
\frac{1}{2\s}\Fce(\varpi_{D,t_0}) \leq \underbrace{J_{\frac{g}{\s}}(0,0)}_{=2\mathcal{H}^{n}(D)}+ \int_{\widetilde{D}\times [0,\sigma_K)}\frac{g}{2\s}\, d\mathcal{H}^{n+1}- \tau_B  +O(\eps|\log\eps|).
\ee

\subsection{One-parameter deformation, bumping downwards}
\label{bump_down}

In this section $\widetilde{D}$ will be as in Section \ref{bump_out} and we will construct a deformation that continues the one provided in Section \ref{bump_out} to $t\leq 0$. More precisely, we will construct, for each $\eps\in (0,{\eps}_1]$ (for ${\eps}_1$ as in Section \ref{bump_out}) a one-parameter deformation 
$$t\in [-4\eps\Lambda, 0]\to \varpi_{D,t}\in W^{1,2}\left(\widetilde{D}\times [0,\sigma_K)\right).$$
At $t=0$ the resulting function will agree with $G^{\eps}_0$ and therefore with the function defined by (\ref{eq:bump_D}) for $t=0$ (justifying the notation). We will then check that the functions $\varpi_{D,t}$ (for $t\in [-4\eps\Lambda, 0]$) pass to the quotient as $W^{1,2}$ (actually $W^{1,\infty}$) functions on $F\left(\widetilde{D}\times [0,\sigma_K)\right)$ and that the resulting deformation $[-4\eps\Lambda, 0]\to W^{1,2}\left(\widetilde{D}\times [0,\sigma_K)\right)$ is continuous in $t$.

Let $\chi_{\widetilde{D}}:\widetilde{D}\to [0,1]$ be as in Section \ref{bump_out} and $\Psi_t:\R\to \R$ as in (\ref{eq:family2}). We define, for $t\in [-4\eps\Lambda, 0]$ and $(q,s)\in \widetilde{D}\times [0,\sigma_K)$,

\be
\label{eq:bump_down}
 \varpi_{D,t}(q,s)=\Psi_{-t\,\chi_{\widetilde{D}}(q)}(s).
\ee
Since $\chi_{\widetilde{D}}$ is even on $\widetilde{D}$ by construction, the function $\varpi_{D,t}(q,s)$ in (\ref{eq:bump_down}) passes to the quotient in $F\left(\widetilde{D}\times [0,\sigma_K)\right)$. We will now check that it is in fact Lipschitz on $F\left(\widetilde{D}\times [0,\sigma_K)\right)$. Note that at $t=0$ the function agrees with $\left.G^{\eps}_0\right|_{F\left(\widetilde{D}\times [0,\sigma_K)\right)}$ (see (\ref{eq:G0})).

We only need to check the Lipschitz property locally around a point $x\in D$, since the function is smooth in $F\left(\widetilde{D}\times (0,\sigma_K)\right)$. Let $(y,a)\in B_\rho(x)\times (-\sigma_K, \sigma_K)$ denote Fermi coordinates centred at geodesic ball around $x$ in $D$. Then the expression of the function obtained by passing $\varpi_{D,t}$ to the quotient is $\Psi_{-t\,\chi_{D}(y)}(a)$, because $\Psi_s$ is even for all $s\geq 0$. Moreover,  $\Psi_s$ is Lipschitz for all $s\geq 0$; this implies that the function $\Psi_{-t\,\chi_{D}(y)}(a)$ is Lipschitz with respect to the product metric on $B_\rho(x)\times (-\sigma_K, \sigma_K)$. The distortion factor between the Riemannian metric on $N$ and this product metric is bounded by a geometric constant (that is fixed by the choices of $K$ and $\sigma_K$ and only depends only on the geometry of $F(K\times [0,\sigma_K))$), hence the function $\varpi_{D,t}$ passes to the quotient as a Lipschitz function. We also point out that there exists a neighbourhood of $F\left(\p \widetilde{D} \times [0,\sigma_K)\right)$ in which the function agrees with $G^{\eps}_0$ for every $t\in [-4\eps\Lambda, 0]$.

Next we estimate the Allen--Cahn energy of $\varpi_{D,t}$. Denote by $\nabla_q$ the metric gradient in $\widetilde{D}\times [0,\sigma_K)$ projected onto the level set $\{s=\text{cnst}\}$. Then

$$\nabla_q  \varpi_{D,t} (q,s)=-t \left.\frac{d}{d a} \Psi_{a}(s)\right|_{a=-t\chi_{\widetilde{D}}(q)}\nabla_q \chi_{\widetilde{D}}(q,s),$$
where we think temporarily of $\chi_{\widetilde{D}}(q,s)=\chi_{\widetilde{D}}(q)$ as a function on $\widetilde{D}\times [0,\sigma_K)$ that only depends on the variable $q$. Recalling (\ref{eq:family2}), we have $\left|\frac{d}{d a} \Psi_{a}(s)\right|=|\Psi'(a+|s|)|\leq \frac{3}{\eps}$. Moreover we ensured $|\nabla \chi_{\widetilde{D}}|\leq \frac{2}{R}$, as a function on $\widetilde{D}$, and therefore $|\nabla_q \chi_{\widetilde{D}}(q,s)|\leq \frac{C_K}{R}$, for a constant $C_K$ that depends only on the Riemannian metric on $K\times [0,\sigma_K)$. Therefore

\be
\label{eq:tangential_gradient}
\eps |\nabla_q  \varpi_{D,t}|^2 \leq \frac{9 C_K^2 t^2}{\eps R^2}\leq  \frac{9\cdot 16\, C_K^2 t^2}{R^2} \eps \Lambda^2=C \eps |\log\eps|^2,
\ee
for a constant $C>0$ that depends only on the choices of $K$ and $D$ (in particular, it does not depend on $\eps$). Recalling that (for the Riemannian metric) the unit vectors $\frac{\p}{\p s}$ are orthogonal to the level sets $\{s=\text{cnst}\}$ we can write $|\nabla \varpi_{D,t}|^2=|\nabla_q \varpi_{D,t}|^2 + \left|\frac{\p}{\p s}\varpi_{D,t}\right|^2$. We then compute the Allen--Cahn energy of $\varpi_{D,t}$ by using the coarea formula with respect to $\Pi_{K}$, recalling that $\nabla_q \varpi_{D,t}=0$ on $\widetilde{B}\times [0,\sigma_K)$ (since $\chi_{\widetilde{D}}=1$ on $\widetilde{B}$):  

\be
\label{eq:energy_varpi_t}
\int_{\widetilde{D}\times (0,\sigma_K)} \eps\frac{|\nabla \varpi_{D,t}|^2}{2} + \frac{W(\varpi_{D,t})}{\eps} 
\ee
$$=\int_{\widetilde{B}} \left(\int_0^{\sigma_K} \frac{1}{|J\Pi_K|(q,s)}\left(\eps \left|\frac{d}{d s} \Psi_{|t|}(s)\right|^2 +  \frac{W( \Psi_{|t|}(s))}{\eps}\right) ds\right)dq\,\,\,+$$
$$+\int_{\widetilde{D}\setminus \widetilde{B}} \left( \int_{(0, {\sigma_K})}\frac{1}{|J\Pi_K|(q,s)}\left( \eps \left|\frac{\p}{\p s} \Psi_{-t\chi_{\widetilde{D}}(q)}(s)\right|^2 + \frac{W(\Psi_{-t\chi_{\widetilde{D}}(q)}(s))}{\eps} \right)  ds\right) dq +$$ $$+ \int_{(\widetilde{D}\setminus \widetilde{B})\times (0,{\sigma_K})}  \eps |\nabla_q\varpi_{D,t}|^2 .$$
We consider the right-hand-side. The first term bounded by $(1+k_K\eps\Lambda)\mathcal{H}^n(\widetilde{B})(2\s)$, by the observation following (\ref{eq:family2}) and by the bounds on $\Pi_K$ in (\ref{eq:JacPi_K}). For the same reason, the second term is bounded by $(1+k_K\eps\Lambda)\mathcal{H}^n(\widetilde{D}\setminus \widetilde{B})(2\s)$. In view of (\ref{eq:tangential_gradient}) the third term is $O(\eps|\log\eps|)$ (in fact, it is $O(\eps^2 |\log\eps|^3)$, by noticing that the integrand vanishes on $(\widetilde{D}\setminus \widetilde{B})\times (4\eps\Lambda, \sigma_K)$). We thus have that there exists ${\eps}_2\in(0,{\eps}_1]$ such that, for all ${\eps}\in(0,{\eps}_2]$, the following bound holds independently of $t$: 

\be
\label{eq:E_energy_bump_down_D_t}
\Ece(\varpi_{D,t})\leq (2\s) 2\mathcal{H}^n(D) +O(\eps|\log\eps|).
\ee
One can easily obtain a finer bound by estimating the first term more precisely. We only need to do so for $t=-4\eps\Lambda$; in this case the first term vanishes, therefore

\be
\label{eq:E_energy_bump_down_D}
\Ece(\varpi_{D,-4\eps\Lambda})\leq (2\s) 2\mathcal{H}^n(D\setminus B) +O(\eps|\log\eps|).
\ee

To conclude this section, we estimate the energy $\Fce$ of the functions $\varpi_{D,t}$. A very rough estimate will suffice for our purposes: since $\varpi_{D,t} \geq -1$ we have 

$$\int_{\widetilde{D}\times [0,\sigma_K)} \varpi_{D,t}\, g\, d\mathcal{H}^{n+1}\geq -\int_{\widetilde{D}\times [0,\sigma_K)}g \,d\mathcal{H}^{n+1};$$
together with (\ref{eq:E_energy_bump_down_D_t}) and (\ref{eq:E_energy_bump_down_D})
this gives, for all ${\eps}\leq{\eps}_2$,
\be
\label{eq:F_energy_bump_down_D}
\frac{1}{2\s}\Fce(\varpi_{D,-4\eps\Lambda})\leq 2\mathcal{H}^n(D)-2\mathcal{H}^n(B)+\int_{\widetilde{D}\times [0,\sigma_K)}\frac{g}{2\s} \,d\mathcal{H}^{n+1} +O(\eps|\log\eps|),
\ee

$$\frac{1}{2\s}\Fce(\varpi_{D,t})\leq \underbrace{2\mathcal{H}^n(D)}_{=J_{\frac{g}{\s}}(0,0)}+\int_{\widetilde{D}\times [0,\sigma_K)}\frac{g}{2\s} \,d\mathcal{H}^{n+1} +O(\eps|\log\eps|) \text{ for all } t\in[-4\eps\Lambda,0].$$

\subsection{Avoiding the value $2\mathcal{H}^n(M)+\frac{1}{2\sigma}\int_N g$}
\label{two_parameter}

The value $2\mathcal{H}^n(M)+\frac{1}{2\sigma}\int_N g$ is the peak that we wish to avoid with our path, see the discussion before and after the statement of Proposition \ref{Prop:main}.

Given a geodesic ball $D\subset M$ (and denoting by $\widetilde{D}\subset K$ its double cover), in Sections \ref{bump_out} and \ref{bump_down} we produced, for $t_0>0$ (depending on $D$) and for every sufficiently small $\eps\leq \eps_2$, a continuous family $t\in [-4\eps\Lambda, t_0]\to  W^{1,2}\left(\widetilde{D}\times [0,\sigma_K)\right)$. This family descends to a continuous one $[-4\eps\Lambda, t_0]\to  \varpi_{D,t}\in W^{1,2}\left(F(\widetilde{D}\times [0,\sigma_K))\right)$; more precisely, the functions in the image of this curve are in $W^{1,\infty}\left(F(\widetilde{D}\times [0,\sigma_K))\right)$. At $t=0$ we have that $\varpi_{D,0}$ agrees with the restriction of $G^{\eps}_0$ (defined in (\ref{eq:G0})). In other words, we have a continuous two-sided deformation of $G^{\eps}_0$ in $F(\widetilde{D}\times [0,\sigma_K))$. By (\ref{eq:F_energy_bump_D}) and (\ref{eq:F_energy_bump_down_D}) the energy $\frac{1}{2\s}\Fce$ stays below $$J_{\frac{g}{\s}}(0,0)+\int_{\widetilde{D}\times [0,\sigma_K)}\frac{g}{2\s} \,d\mathcal{H}^{n+1} +\frac{\tau_B}{2}+O(\eps|\log\eps|)$$ 
for all $t\in [-4\eps\Lambda, t_0]$ and, moreover, at the endpoints $t=-4\eps\Lambda$ and $t=t_0$ we have that the energy $\frac{1}{2\s}\Fce$ is at most (recalling from Remark \ref{oss:choiceoftauB} that $\tau_B<2\mathcal{H}^n(B)$)
$$J_{\frac{g}{\s}}(0,0)-\tau_B +\int_{\widetilde{D}\times [0,\sigma_K)}\frac{g}{2\s}  \,d\mathcal{H}^{n+1} +O(\eps|\log\eps|).$$

In this section we will consider two distinct geodesic balls and, around each of them, we will produce a two-sided deformation of $G^{\eps}_0$ as we did above. By suitably combining them, and extending to $N$, we will produce a continuous curve into $W^{1,2}(N)$ (the functions are moreover Lipschitz on $N$) with the key property that the energy $\frac{1}{2\s}\Fce$ stays always below the value $2\mathcal{H}^n(M)-\varsigma+\int_{N} \frac{g}{2\sigma} \,d\mathcal{H}^{n+1} +O(\eps|\log\eps|)$, for some $\varsigma>0$ that only depends on quantities determined by $\iota$, by the choice of $K$ and of the two geodesic balls; in particular, the energy bound holds independently of $\eps$, for all sufficiently small $\eps$.

\medskip

Let $D_1\subset \subset M$ and $D_2\subset \subset M$ be the geodesic balls chosen in Section \ref{epsilon}. We denote respetively by $B_1$ and $B_2$ the concentric geodesic balls with half the radius. The balls are chosen so that $\mathcal{H}^n(B_1)=\mathcal{H}^n(B_2)$. We denote the respective double covers by $\widetilde{D}_1$, $\widetilde{D}_2$, $\widetilde{B}_1$, $\widetilde{B}_2$ and we assumed that $\widetilde{D}_j\subset \subset K$ for $j\in\{1,2\}$. For each $D_j$ we can repeat the construction in Sections \ref{bump_out}, \ref{bump_down}. We let ${\eps}_1$ denote the smallest of the two ${\eps}_1$ identified for $j\in\{1,2\}$, and $t_0^{(j)}$ the final time of the deformation identified for $D_j$ (which was denoted by $t_0$ for $D$). We thus obtain, for each $\eps\leq {\eps}_1$ two continuous curves into $W^{1,2}\left(F(\widetilde{D}_j\times [0,\sigma_K))\right)$, respectively for $j=1,2$:
$$t\in [-4\eps\Lambda, t_0^{(1)}] \to \varpi_{D_1,t}\,\,\,\,\,\,,\,\,\,\, t\in [-4\eps\Lambda, t_0^{(2)}]\to \varpi_{D_2,t}.$$ 
We define, for $t\in [-4\eps\Lambda - t_0^{(1)}, 4\eps\Lambda + t_0^{(2)}]$, the following continuous\footnote{Continuity is immediate from the continuity of each $\varpi_{D_j,t}$ in their respective (disjoint) domains.} one-parameter family of $W^{1,2}$ functions on $F\left(\widetilde{D}_1 \times [0,\sigma_K) \right)\cup F\left(\widetilde{D}_2 \times [0,\sigma_K) \right)$:

\be
\label{eq:two-param}
\underline{\gamma}_t = \left\{ \begin{array}{ccc}
\varpi_{D_1,t+t_0^{(1)}}  &\text{ on } F\left(\widetilde{D}_1 \times [0,\sigma_K)\right) &\text{ for }  t\in [-4\eps\Lambda - t_0^{(1)},-t_0^{(1)}]\\
\varpi_{D_2,-4\eps\Lambda}  &\text{ on }  F\left(\widetilde{D}_2 \times [0,\sigma_K)\right)  &\text{ for }  t\in [-4\eps\Lambda - t_0^{(1)},-t_0^{(1)}]\\
\varpi_{D_1,t+t_0^{(1)}}  &\text{ on } F\left(\widetilde{D}_1 \times [0,\sigma_K)\right) &\text{ for }  t\in [- t_0^{(1)},0]\\
\varpi_{D_2,-4\eps\Lambda}  &\text{ on }  F\left(\widetilde{D}_2 \times [0,\sigma_K)\right)  &\text{ for }  t\in [-t_0^{(1)},0]\\
\varpi_{D_1,t_0^{(1)}}  &\text{ on } F\left(\widetilde{D}_1 \times [0,\sigma_K)\right) &\text{ for }  t\in [0,4\eps\Lambda]\\
\varpi_{D_2,t-4\eps\Lambda}  &\text{ on }  F\left(\widetilde{D}_2 \times [0,\sigma_K)\right)  &\text{ for }  t\in [0,4\eps\Lambda]\\
\varpi_{D_1,t_0^{(1)}}  &\text{ on } F\left(\widetilde{D}_1 \times [0,\sigma_K)\right) &\text{ for }  t\in [4\eps\Lambda,4\eps\Lambda+t_0^{(2)} ]\\
\varpi_{D_2,t-4\eps\Lambda}  &\text{ on }  F\left(\widetilde{D}_2 \times [0,\sigma_K)\right)  &\text{ for }  t\in [4\eps\Lambda,4\eps\Lambda+t_0^{(2)} ]
                        \end{array}
\right. .
\ee

\begin{figure}[h]
\centering
\includegraphics[scale=0.4]{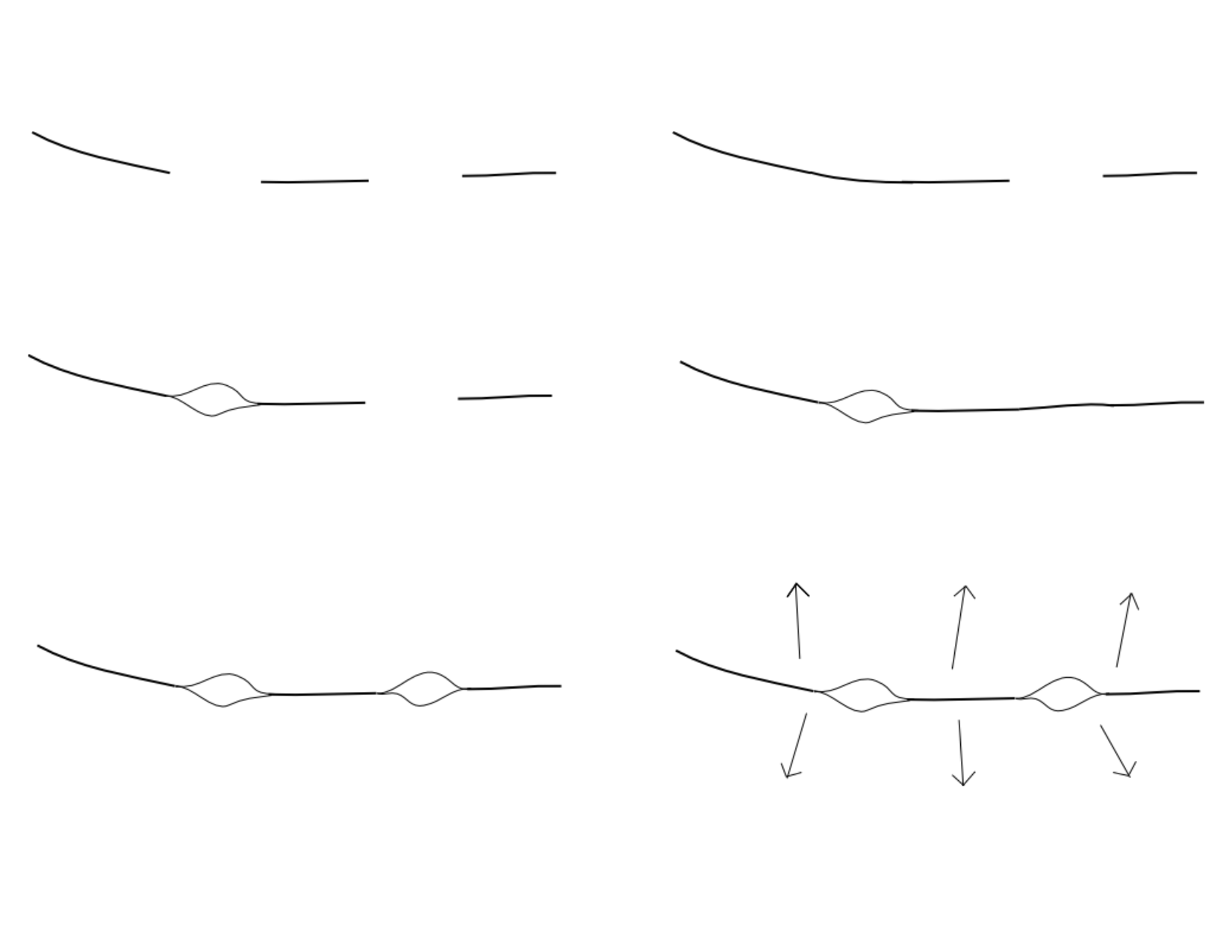}
\put (-240, 180){\tiny{(i)}}
\put(-75,180){\tiny{(ii)}}
\put(-240,120){\tiny{(iii)}}
\put(-75,120){\tiny{(iv)}}
\put(-240,20){\tiny{(v)}}
\put(-75,20){\tiny{(vi)}}
\caption{Schematic picture representing the $\e \to 0^{+}$ limit of the path $\underline{\gamma}_{t}$ defined above. The order (i) $\to$ (ii) $\to$ (iii) $\to$ (iv) $\to$ (v) $\to$ (vi) represents increasing $t$. 
The deformation (i) $\to$ (ii), for instance, represents the $\e \to 0^{+}$ limit of the top two lines of the piecewise definition of $\underline{\gamma}_{t}$; likewise, (ii) $\to$ (iii) corresponds to the third and fourth lines of the definition, etc.  The deformations (i) $\to$ (ii), and (iii) $\to$ (iv) are instantaneous ``jumps'' in the $\e \to 0^{+}$ limit (and therefore discontinuous), but the corresponding deformation $t \mapsto \underline{\gamma}_{t}$ are continuous in $t$ for fixed $\e = \e_{j}>0.$ The evolution $t \mapsto \underline{\gamma}_{t}$ (for fixed $\e = \e_{j}>0$) corresponding to ``(vi) onwards'' in the picture is carried out by the negative gradient flow of ${\mathcal F}_{\e_{j}}$, and converges to the stable solution $v_{\e_{j}}$, as described in Section~\ref{flow} below.     
}
\label{fig:avoid_peak}
\end{figure}

The idea is that for every $t$, in one of the two subdomains $F(\widetilde{D}_j \times [0,\sigma_K))$, the function agrees with one of the endpoints of the curve $\varpi_{D_j,t}$. Recall that $\tau_{B_j}<2\mathcal{H}^n(B_j)$ by the choice made in Section \ref{bump_out}, moreover we ensured $\mathcal{H}^n(B_1)=\mathcal{H}^n(B_2)$. Thanks to (\ref{eq:F_energy_bump_D}) and (\ref{eq:F_energy_bump_down_D}) we can estimate, for $\eps\leq {\eps}_2$, the energy $\frac{1}{2\s}\Fce(\underline{\gamma}_t)$ on the set $F\left(\widetilde{D}_1 \times [0,\sigma_K) \right)\cup F\left(\widetilde{D}_1 \times [0,\sigma_K) \right)$ from above with the quantity 
\be
\label{eq:E_bound_two_param}
2\mathcal{H}^n(D_1) + 2\mathcal{H}^n(D_2)+\int_{\widetilde{D_1}\times [0,\sigma_K)}\frac{g}{2\s} \,d\mathcal{H}^{n+1}+\int_{\widetilde{D_2}\times [0,\sigma_K)}\frac{g}{2\s} \,d\mathcal{H}^{n+1}$$ $$-\min\left\{\frac{\tau_{B_1}}{2}, \frac{\tau_{B_2}}{2} \right\} +O(\eps|\log\eps|).
\ee

Each $\underline{\gamma}_t$ is in fact, on $F\left(\widetilde{D}_1 \times [0,\sigma_K) \right)\cup F\left(\widetilde{D}_1 \times [0,\sigma_K) \right)$, a Lipschitz function. Our next aim is to extend $\underline{\gamma}_t$, for each $t$, to a Lipschitz function on $N$, obtaining a continuous curve from $[-4\eps\Lambda - t_0^{(1)}, 4\eps\Lambda + t_0^{(2)}]$ into $W^{1,2}(N)$.
In order to do that, we recall that there exists a neighbourhood of $F\left(\p \widetilde{D}_1 \times [0,\sigma_K) \right)\cup F\left(\p \widetilde{D}_1 \times [0,\sigma_K) \right)$ in which $\underline{\gamma}_t$ agrees with $G^{\eps}_0$ for every $t$.
This implies that we can define a Lipschitz function on $N$ that extends $\underline{\gamma}_t$ by setting

\be
\label{eq:gamma_t}
\gamma_t (x) = \left\{\begin{array}{ccc}
                      \underline{\gamma}_t(x) & \text{ for } x\in F\left(\widetilde{D}_1 \times [0,\sigma_K) \right)\cup F\left(\widetilde{D}_2 \times [0,\sigma_K) \right)\\
                      G^{\eps}_0(x) & \text{ for } x\in N\setminus \left(F\left(\widetilde{D}_1 \times [0,\sigma_K) \right)\cup F\left(\widetilde{D}_2 \times [0,\sigma_K) \right)\right)
                      \end{array}
 \right. .
\ee
The continuity of $t\in[-4\eps\Lambda - t_0^{(1)}, 4\eps\Lambda + t_0^{(2)}]\to \gamma_t \in W^{1,2}(N)$ is also immediate from the continuity of $\underline{\gamma}_t$.

\medskip

We next estimate the Allen--Cahn energy of $\gamma_t$. For that purpose, we first give a lower bound on the Allen--Cahn energy of $G^{\eps}_0$ on the sets $F\left(\widetilde{D}_1 \times [0,\sigma_K) \right)$ and $F\left(\widetilde{D}_2 \times [0,\sigma_K) \right)$. We follow the argument in Section \ref{prelim} that led to (\ref{eq:area_level_set}), this time using the bound $\Rc{N}\leq C$; this is true for some $C>0$ because $N$ is compact. Integrating Riccati's equation we get $H_t\leq \sqrt{C}\tan(\sqrt{C}t)$, where $H_t$ denotes the scalar mean curvature of the level set of $\dm$ at distance $t$ (computed with respect to the unit normal that points away from $M$, equivalently with respect to $\frac{\p}{\p s}$ in $K\times [0,\sigma_K)$). The ODE for the area element $\theta_s$ then leads to the following bound for the evolution of $\theta_s$ along a geodesic orthogonal to $\widetilde{M}$: $\theta_s\geq \theta_0\left(1-\frac{C s^2}{2}\right)$. Therefore 
$$\mathcal{H}^n\left(\widetilde{D}_j \times \{s\}\right)\geq \left(1-\frac{C s^2}{2}\right)\mathcal{H}^n\left(\widetilde{D}_j \times \{0\}\right).$$
Then the coarea formula, used for the function $\dm$ gives for $j\in \{1,2\}$ (similarly to Section \ref{2M}):
\be
\label{eq:E_G0_D}
\int_{F\left(\widetilde{D}_j \times [0,\sigma_K) \right)} \eps \frac{|\nabla G_0^{\eps}|^2}{2} + \frac{W(G_0^{\eps})}{\eps} = \int_{0}^{\sigma_K} \left(\int_{\widetilde{D}_j \times \{s\}} \frac{|\nabla G_0^{\eps}|^2}{2} + \frac{W(G_0^{\eps})}{\eps}\right) ds =
\ee 
$$ =\int_{-2\eps\Lambda}^{2\eps\Lambda} \left(\int_{\widetilde{D}_j \times \{2\eps\Lambda-s\}} \eps \frac{({\OHet^{\eps}}'(s))^2}{2} + \frac{W({\OHet^{\eps}}(s))}{\eps}\right) ds \geq $$ 
$$\geq   2  \left(1-\frac{C (4\eps\Lambda)^2}{2}\right) \mathcal{H}^n(D_j) \left(\int_{-2\eps\Lambda}^{2\eps\Lambda}  \eps \frac{({\OHet^{\eps}}')^2}{2} + \frac{W({\OHet^{\eps}})}{\eps}\right)\geq 2 (2\sigma)\mathcal{H}^n(D_j)-|O(\eps|\log\eps|)|.$$
The last inequality holds for $\eps\leq {\eps}_2$ for a suitable choice of ${\eps}_2$. The bound for $\Ece(G^{\eps}_0)$ obtained in Section \ref{2M}, together with (\ref{eq:E_G0_D}), gives an upper bound for $G^{\eps}_0$ on the set $N\setminus \left(F\left(\widetilde{D}_1 \times [0,\sigma_K) \right)\cup F\left(\widetilde{D}_1 \times [0,\sigma_K) \right)\right)$: the Allen--Cahn energy of $G^{\eps}_0$ on this set is at most 
$$  2 (2\sigma)\mathcal{H}^n(M)- 2 (2\sigma)\mathcal{H}^n(D_1)- 2 (2\sigma)\mathcal{H}^n(D_2) + O(\eps|\log\eps|).$$
In order to estimate $\Fce$ on this same set we note that $G^{\eps}_0 \geq -1$ and therefore
\begin{eqnarray*}
&&\int_{N\setminus \left(F\left(\widetilde{D}_1 \times [0,\sigma_K) \right)\cup F\left(\widetilde{D}_1 \times [0,\sigma_K) \right)\right)} G^{\eps}_0\, g\, d\mathcal{H}^{n+1}\nonumber\\
&&\hspace{1.75in}\geq -\int_{N\setminus \left(F\left(\widetilde{D}_1 \times [0,\sigma_K) \right)\cup F\left(\widetilde{D}_1 \times [0,\sigma_K) \right)\right)}g \,d\mathcal{H}^{n+1}.
\end{eqnarray*}
Recalling (\ref{eq:gamma_t}) and the bound (\ref{eq:E_bound_two_param}), the last two estimates imply that there exists $\eps_2>0$ such that for all $\eps\leq \eps_2$ the family $t\in [-4\eps\Lambda - t_0^{(1)}, 4\eps\Lambda + t_0^{(2)}] \to \gamma_t \in W^{1,2}(N)$ (recall that $\gamma_t=\gamma_t^{\eps}$ is constructed for each fixed $\eps$ in the chosen range) satisfies

\be
\label{eq:F_bound_gammat}
\frac{1}{2\s}\Fce(\gamma_t)\leq 2 \mathcal{H}^n(M) +\int_N \frac{g}{2\s}\, d\mathcal{H}^{n+1}-\underbrace{\min\left\{\frac{\tau_{B_1}}{2}, \frac{\tau_{B_2}}{2} \right\}}_{:=\,\varsigma\,>0} +O(\eps|\log\eps|)
\ee
for all $t\in [-4\eps\Lambda - t_0^{(1)}, 4\eps\Lambda + t_0^{(2)}]$, as claimed in the beginning of this section.

\subsection{Path to $a_{\eps}$}
\label{path_to_a}

In this section we exhibit, for each sufficiently small $\eps$, a continuous path in $W^{1,2}(N)$ that connects the function $a_{\eps}$ (the first endpoint of the class of admissible paths, see Section \ref{minmax_setup}) to the function $\gamma_{-4\eps\Lambda - t_0^{(1)}}$ obtained in Section \ref{two_parameter}, ensuring that $\Fce$ along this path remains bounded by the right-hand-side of (\ref{eq:F_bound_gammat}). To ease notation, in this section we will denote simply by $f_0:N\to \R$ the function $\gamma_{-4\eps\Lambda - t_0^{(1)}}$. 
We rewrite the definition of $f_0$ as follows, recalling (\ref{eq:bump_down}), (\ref{eq:two-param}) and (\ref{eq:gamma_t}):

\be
\label{eq:f}
f_0(x)=\left\{
\begin{array}{ccc}
\Psi_{4\eps\Lambda \,\chi_{\widetilde{D_j}}(q)}(s) & \text{ for } x=F(q,s), (q,s) \in \widetilde{D_j}\times [0,\sigma_K),\,\,\,j\in\{1,2\}\\
\Psi_{0}(\dm(x)) & \text{ for } x \in N\setminus\left( F(\widetilde{D_1}\times [0,\sigma_K)) \cup F(\widetilde{D_2}\times [0,\sigma_K))\right)
\end{array}\right. .
\ee
We define, for $r\in [0, 4\eps\Lambda]$

\be
\label{eq:fr}
f_r(x)=\left\{
\begin{array}{ccc}
\Psi_{4\eps\Lambda \,\chi_{\widetilde{D_j}}(q)+r}(s) & \text{ for } x=F(q,s), (q,s) \in \widetilde{D_j}\times [0,\sigma_K),\,\,\,j\in\{1,2\}\\
\Psi_{r}(\dm(x)) & \text{ for } x \in N\setminus\left( F(\widetilde{D_1}\times [0,\sigma_K)) \cup F(\widetilde{D_2}\times [0,\sigma_K))\right)
\end{array}\right. .
\ee
The first line is well-defined thanks to the fact that $\chi_{\widetilde{D}_j}$ is even on $\widetilde{M}$; note that for $r=0$ the definition in (\ref{eq:fr}) agrees with the one in (\ref{eq:f}), justifying the notation. Since $\chi_{\widetilde{D_j}} \in C^\infty_c(\widetilde{D}_j)$ for $j\in\{1, 2\}$, the definition in the first line agrees with $\Psi_{r}(\dm(x))$ when $(q,s)$ is in a neighbourhood of $\p\widetilde{D_j}\times [0,\sigma_K)$. Thanks to the fact that $\Psi_r$ is Lipschitz, one can check that each $f_r$ is a Lipschitz function on $N$. Moreover, the mapping $r\in [0,4\eps\Lambda]\to f_r\in W^{1,2}(N)$ is continuous. Note that $f_{4\eps\Lambda}\equiv -1$. 

To compute $\Ece(f_r)$ we employ the coarea formula, with respect to the function $\Pi_K$ on $\cup_{j=1}^2 F\left(\widetilde{D_j}\times [0,4\eps\Lambda)\right)$ and with respect to $\dm$ on the complementary domain in $\{\dm<4\eps\Lambda\}$ (there is no energy contribution in $\{\dm\geq 4\eps\Lambda\}$, since $f_r=-1$ there). The key observation is that the one-dimensional profile that appears for $f_r$ (with $r>0$) in the normal bundle to $M$ carries less energy than the one that appears for $f_0$. Indeed, (\ref{eq:f}) and (\ref{eq:fr}) show that the profile $\Psi_0$ for $f_0$ is replaced by $\Psi_r$ for $f_r$, and the profile $\Psi_{4\eps\Lambda \,\chi_{\widetilde{D_j}}(q)}$ for $f_0$ is replaced by $\Psi_{4\eps\Lambda \,\chi_{\widetilde{D_j}}(q)+r}$ for $f_r$. Then a computation analogous to (\ref{eq:energy_varpi_t}) in combination with the observation that follows (\ref{eq:family2}) gives that $\Ece(f_r)\leq \Ece(f_0)$ in the domain $F\left((\widetilde{D}_1 \times [0,\sigma_K)) \cup (\widetilde{D}_2 \times [0,\sigma_K))\right)$; similarly, using the coarea formula for $d_{\overline{M}}$, we get $\Ece(f_r)\leq \Ece(f_0)$ in the complementary domain. Moreover, we note that $f_r\geq -1$ and therefore $\int_N f_r\, g\, d\mathcal{H}^{n+1} \geq -\int_N g \,d\mathcal{H}^{n+1}$. Therefore we obtain that there exists $\eps_2>0$ such that for all $\eps\leq \eps_2$ the following bound holds

\be
\label{eq:F_energy_fr}
\frac{1}{2\s}\Fce(f_r)\leq 2\mathcal{H}^n(M)-2\mathcal{H}^n(B_1)-2\mathcal{H}^n(B_2)+\int_{N}\frac{g}{2\s} \,d\mathcal{H}^{n+1} +O(\eps|\log\eps|)
\ee
for all $r\in [0,4\eps\Lambda]$. (It should be kept in mind that $f_r=f_r^{\eps}$ is built for each fixed $\eps$ in the chosen range.)

We next connect $f_{4\eps\Lambda}\equiv -1$ to $a_{\eps}$, continuously in $W^{1,2}$. For this it suffices to recall that $a_{\eps}$ was defined as limit of the negative $\Fce$-gradient flow with initial condition $-1$; we denote by $[0,T_a]$ the time interval\footnote{Although irrelevant for our construction, it is easy to check that $T_a$ tends to $0$ as $\eps\to 0$.} on which this flow is defined. The same flow (translated by $4\eps\Lambda$) provides the continuation of $f_r$ to an interval $[0,4\eps\Lambda + T_a]$ with the property that 
the family $r\in  [0, 4\eps\Lambda + T_a] \to f_r \in W^{1,2}(N)$ is a continuous path that satisfies the bound in (\ref{eq:F_energy_fr}) for all $r$ and such that $f_0=\gamma_{-4\eps\Lambda - t_0^{(1)}}$ and $f_{4\eps\Lambda + T_a}= a_{\eps}$.

\subsection{Flow to a stable solution}
\label{flow}

In this section we produce a continuous path in $W^{1,2}(N)$ that starts at the function $\gamma_{4\eps\Lambda + t_0^{(2)}}$ obtained in Section \ref{two_parameter} and ends at a stable solution $v_{\eps}$ of $\ca{F'}{\eps}=0$.

To ease notation, we denote by $h$ the function $\gamma_{4\eps\Lambda + t_0^{(2)}}$. Recalling (\ref{eq:bump_D}) and (\ref{eq:gamma_t}), $h$ is given by

\be
\label{eq:h}
h(x)=\left\{
\begin{array}{ccc}
\OHet^{\eps}(-\text{dist}_{K_{2\eps\Lambda, t_0^{(j)}, \chi_{\widetilde{D}_j}}}(x)) & \text{for } x=F(q,s), (q,s) \in \widetilde{D_j}\times [0,\sigma_K), j\in\{1, 2\},\\
\Psi_{0}(\dm(x)) & \text{for } x \in N\setminus ( F(\widetilde{D_1}\times [0,\sigma_K)) \cup F(\widetilde{D_2}\times [0,\sigma_K))).
\end{array}\right. 
\ee
The first line is well-defined thanks to the fact that $\chi_{\widetilde{D}_j}$ is even on $\widetilde{M}$. Since $\chi_{\widetilde{D_j}} \in C^\infty_c(\widetilde{D}_j)$ for $j\in\{1, 2\}$, the definition in the first line agrees with $\Psi_{0}(\dm(x))$ (and thus with $G^{\eps}_0$) when $(q,s)$ is in a neighbourhood of $\p\widetilde{D_j}\times [0,\sigma_K)$. In view of this, we will compute separately the first variation of $h$ with respect to $\Ece$ in the domains that appear in the first and second line of (\ref{eq:h}). For the second line, it suffices to recall (\ref{eq:first_var_G0}). For the first, we need to estimate, for any sufficiently small $\eps$, the mean curvature of the embedded hypersurfaces given by level sets of the distance to $F\left(\text{graph}(2\eps\Lambda + t_0^{(1)} \chi_{\widetilde{D}_1} + t_0^{(2)} \chi_{\widetilde{D}_2})\right),$ where the graph is intended over $\text{Int}(K)$. Recall Remark \ref{oss:choice_t0} and the fact that $M$ is minimal. Then, choosing $\eps_2$ sufficiently small, we can ensure that for all $\eps\leq\eps_2$
the mean curvature of the embedded hypersurfaces $\{\text{dist}_{K_{2\eps\Lambda, t_0^{(j)}, \chi_{\widetilde{D}_j}}}=d\}$ for $d\in [-2\eps\Lambda, 2\eps\Lambda]$ are, in absolute value, smaller than $\frac{\min_N g}{9}$. 
With a computation similar to (\ref{eq:first_var_G0_part1}), with the distance $\text{dist}_{K_{2\eps\Lambda, t_0^{(j)}, \chi_{\widetilde{D}_j}}}$ in place of $\dm$, we obtain that, on the set $F(\widetilde{D_1}\times [0,\sigma_K)) \cup F(\widetilde{D_2}\times [0,\sigma_K))$, and for all $\eps\leq \eps_2$,

$$\ca{E'}{\eps}(h)\geq -\frac{\min_N g}{3}-|O(\eps|\log\eps|)|.$$
Together with (\ref{eq:first_var_G0}), this implies that $\ca{F'}{\eps}(h)\geq \frac{2 \min_N g}{3}-|O(\eps|\log\eps|)|$ on $N$ for all $\eps<\eps_1$ and therefore there exists $\eps_2>0$ such that for all $\eps\leq \eps_2$

\be
\label{eq:F_firstvar_h}
\ca{F'}{\eps}(h)\geq \frac{\min_N g}{2}>0 \,\, \text{ on } N.
\ee
(As in (\ref{eq:first_var_G0}), this is understood to be an inequality between Radon measures.) 

We will now produce the path mentioned at the beginning of this section, by means of a negative gradient flow (with respect to the functional $\Fce$). If $n\leq 6$, then the function $h$ is smooth (because so it $\dm$ in a tubular neighbourhood of $M=\overline{M}$) and we can use it as initial condition for the flow. For the general case, we first need a smoothing of $h$ with the key property that it preserves the positivity of the first variation (\ref{eq:F_firstvar_h}). We refer to \cite[Appendix A]{B1} for the definition of the mollifiers $\rho_\delta:N\times N\to \R$ used for the smoothing operation. For each sufficiently small $\eps$ there exists $\delta>0$ such that the convolution $h_{\delta}=h\star \rho_\delta$ is a smooth function on $N$ and satisfies 
\be
\label{eq:F_firstvar_h_delta}
-\ca{F'}{\eps}(h_\delta)=\eps \Delta h_\delta - \frac{W'(h_\delta)}{\eps} + g >0
\ee
(see \cite[Lemma A.2]{B1}, this uses that $h$ is Lipschitz). Moreover, $r\in (0,\delta]\to h_r = h\star \rho_r$ is continuous in $W^{1,2}(N)$ and extends by continuity at $r=0$ with $h_0=h$ (see \cite[Lemma A.1]{B1}). Still by continuity in $r$ of the convolution (\cite[Lemma A.1]{B1}), upon choosing $\delta$ sufficiently small we can also ensure that a bound of the form (\ref{eq:F_bound_gammat}) continues to hold for $h_r$, i.e.

\be
\label{eq:bound_hr}
\frac{1}{2\s}\Fce(h_r)\leq 2 \mathcal{H}^n(M)+\int_N \frac{g}{2\s}\, d\mathcal{H}^{n+1}-\frac{2}{3}\varsigma +O(\eps|\log\eps|)
\ee
for all $r\in [0,\delta]$. 

We need to ensure two more conditions on $h_{\delta}$, for which we may have to make $\delta$ smaller (the choice of $\delta$ is allowed to depend on $\eps$). Recall that by construction $h=1$ on the fixed non-empty open set $$F\left(\{(q,s):q\in \widetilde{D}_1\cup \widetilde{D}_2, 0\leq s<t_0^{(1)} \chi_{\widetilde{D}_1} + t_0^{(2)} \chi_{\widetilde{D}_2}\}\right).$$ By continuity, if $\delta$ is chosen sufficiently small, then $h\star \rho_\delta >3/4$ on the same open set. By construction, $h\leq 1$. Recall that the stable solution $b_{\eps}$ to $\ca{F'}{\eps}=0$ lies (strictly) above $1$. Therefore, with a suitable small choice of $\delta$, $h\star \rho_\delta  <b_{\eps}$.

\medskip

We will use the function $h_\delta$ just identified as initial condition for the negative gradient flow (with respect to $\Fce$), defined for $t\geq \delta$ by

\be
\label{eq:F_flow}
\left\{\begin{array}{ccc}
\eps \p_t h_t = \eps\Delta h_t -\frac{W'(h_t)}{\eps} + g\\
\left. h_t \right|_{t=\delta}= h\star \rho_\delta = h_\delta
       \end{array}\right. .
\ee
This flow is well-defined and smooth for all times by standard semi-linear parabolic theory. Moreover, it is mean-convex with respect to $\Fce$, i.e.
$$\ca{F'}{\eps}(h_t)>0 \text { for all }t\in [\delta,\infty).$$
To see this, notice that $F_t=\eps\Delta h_t -\frac{W'(h_t)}{\eps} + g$ is smooth on $N$ for all $t\geq \delta$ and $F_{\delta}>0$ by (\ref{eq:F_firstvar_h_delta}). Since $h_t$ solves (\ref{eq:F_flow}), then the PDE $\p_t U_t= \Delta U_t -\frac{W''(h_t)}{\eps^2}  U_t$ is solved by $U_t=F_t$. Another solution is given by $U_t\equiv 0$. By the parabolic maximum principle, the condition $F_t>0$ holds for all $t\geq \delta$, since it holds for $t=\delta$.

The mean-convexity guarantees, in a first instance, that the functions $h_t:N\to \R$ are increasing in $t$ and the limit $h_\infty$ for $t\to \infty$ is well-defined. The function $h_\infty$ is a solution of the elliptic PDE $\ca{F'}{\eps}(h_\infty)=0$; the mean-convexity further gives the following.

\begin{lem}
 \label{lem:stability}
The function $h_\infty = \lim_{t\to \infty} h_t$ is a stable solution of $\ca{F'}{\eps}=0$. Morover, there exists a fixed non-empty open set (independent of $\eps$) that is contained in $\{h_\infty>\frac{3}{4}\}$.
\end{lem}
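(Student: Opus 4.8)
The plan is to exploit the mean-convexity of the flow (\ref{eq:F_flow}) established in the paragraph preceding the statement, together with the two barrier conditions on $h_\delta$ just recorded (namely $h_\delta > 3/4$ on the fixed open set, and $h_\delta < b_{\eps}$). First I would argue monotonicity: since $F_t = \ca{F'}{\eps}(h_t)<0$ fails, i.e.\ since $-\ca{F'}{\eps}(h_t) = \eps\partial_t h_t >0$ for all $t\geq \delta$, the map $t\mapsto h_t(x)$ is strictly increasing for every $x\in N$. (Here one uses that $F_\delta>0$ strictly by (\ref{eq:F_firstvar_h_delta}) and the strong parabolic maximum principle applied to the linear equation $\partial_t U_t = \Delta U_t - \eps^{-2}W''(h_t)U_t$, satisfied by $U_t=F_t$, to upgrade $F_t\geq 0$ to $F_t>0$ for $t>\delta$.) Next, I would establish a uniform (in $t$) upper bound: by the parabolic maximum principle, $h_\delta < b_{\eps}$ together with the fact that $b_{\eps}$ is a stationary (hence time-independent) solution of the same equation implies $h_t < b_{\eps}$ for all $t\geq \delta$. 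Thus $h_t$ is increasing and bounded above pointwise, so $h_\infty(x) := \lim_{t\to\infty} h_t(x)$ exists with $3/4 < h_\delta \leq h_\infty \leq b_{\eps}$ on $N$; on the fixed open set $F(\{(q,s): q\in\widetilde D_1\cup\widetilde D_2,\ 0\leq s < t_0^{(1)}\chi_{\widetilde D_1}+t_0^{(2)}\chi_{\widetilde D_2}\})$ we retain $h_\infty \geq h_\delta > 3/4$, which gives the second assertion of the lemma.

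The next step is to pass to the limit in the equation and identify $h_\infty$ as a solution of $\ca{F'}{\eps}=0$. Using the energy-decreasing property of the $\ca{F}{\eps}$-gradient flow (and the fact that $\ca{F}{\eps}$ is bounded below on the invariant region, e.g.\ by the $L^\infty$ and energy bounds as in Lemma~\ref{lem:upperbound}), one has $\int_\delta^\infty \|\partial_t h_t\|_{L^2(N)}^2\, dt < \infty$, so there is a sequence $t_k\to\infty$ with $\partial_t h_{t_k}\to 0$ in $L^2(N)$. Standard parabolic Schauder estimates (using the uniform $L^\infty$ bound) give uniform $C^{2,\alpha}(N)$ bounds on $\{h_t : t\geq \delta+1\}$, so along a subsequence $h_{t_k}\to h_\infty$ in $C^2(N)$; passing to the limit in (\ref{eq:F_flow}) shows $\eps\Delta h_\infty - \eps^{-1}W'(h_\infty) + g = 0$ classically, i.e.\ $\ca{F'}{\eps}(h_\infty)=0$. (Since the flow is monotone, the full limit $h_\infty$ agrees with this subsequential limit, so no ambiguity arises.)

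It remains to prove stability, $\ca{F''}{\eps}(h_\infty)\geq 0$, and this is the step I expect to be the main obstacle. The idea is the classical one for monotone limits of (parabolic) flows: differentiating the equation and using that $F_t = \eps\partial_t h_t > 0$ produces, in the limit, a \emph{positive} solution of the linearized (Jacobi) operator at $h_\infty$. Concretely, set $\varphi_t := \eps^{-1}F_t = \partial_t h_t > 0$; then $\varphi_t$ solves $\eps\partial_t \varphi_t = \eps\Delta\varphi_t - \eps^{-1}W''(h_t)\varphi_t$. One expects $\varphi_t \to 0$, so a direct limit is degenerate; instead I would normalize. The cleanest route is: suppose for contradiction $\ca{F''}{\eps}(h_\infty)$ has a negative eigenvalue, with first eigenfunction $\psi>0$ on $N$ (by connectedness of $N$ and the strong maximum principle for the linearized operator $L := \eps\Delta - \eps^{-1}W''(h_\infty)$, the bottom eigenfunction may be taken strictly positive) and eigenvalue $-\lambda<0$, so $L\psi = -\lambda\psi$ with $\lambda>0$. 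Test the identity obtained from $h_t$ solving the flow against $\psi$: integrating $\eps\partial_t h_t = \eps\Delta h_t - \eps^{-1}W'(h_t)+g$ and Taylor-expanding $W'(h_t) = W'(h_\infty) - W''(h_\infty)(h_\infty - h_t) + O((h_\infty-h_t)^2)$, while $\eps\Delta h_\infty - \eps^{-1}W'(h_\infty)+g=0$, gives
\be
\label{eq:stability_test}
\eps \frac{d}{dt}\int_N (h_\infty - h_t)\psi \leq -\lambda \int_N (h_\infty - h_t)\psi + C\int_N (h_\infty - h_t)^2\psi,
\ee
where $C$ bounds $\eps^{-1}\|W'''\|_{L^\infty}$ on the (compact) range of the $h_t$'s, and we used $h_t \nearrow h_\infty$ so that $h_\infty - h_t \geq 0$. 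Since $h_\infty - h_t \to 0$ uniformly (monotone convergence of continuous functions to a continuous limit on a compact space, Dini), the quadratic term is eventually dominated by $\tfrac{\lambda}{2}\int_N(h_\infty-h_t)\psi$, so for $t$ large $\frac{d}{dt}\int_N(h_\infty-h_t)\psi \leq -\frac{\lambda}{2\eps}\int_N(h_\infty-h_t)\psi$, forcing $\int_N(h_\infty - h_t)\psi$ to decay exponentially; but it is also, by the same inequality read the other way (or by a lower Taylor bound), comparable to a quantity that cannot decay faster than the first term dictates unless $h_t\equiv h_\infty$ for large $t$ — in which case $F_t\equiv 0$, contradicting $F_t>0$. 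Turning this heuristic into a rigorous contradiction is the delicate part; an alternative, possibly cleaner, implementation is to note that $\varphi_t = \partial_t h_t$ is a positive supersolution/solution of the parabolic linearization, and to invoke a parabolic Harnack inequality to control $\varphi_t$ from below by $\|\varphi_t\|_{L^2}$ times a fixed positive function, then feed this into the energy identity $\frac{d}{dt}\|\varphi_t\|_{L^2}^2 = -2\,\ca{F''}{\eps}(h_t)(\varphi_t,\varphi_t)$ to conclude that $\ca{F''}{\eps}(h_\infty)(\varphi,\varphi)\geq 0$ for the (normalized) limit direction $\varphi$, and finally bootstrap positivity of $\varphi$ through the spectral decomposition to get $\ca{F''}{\eps}(h_\infty)\geq 0$ on all of $C^1_c(N)$. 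Either way, the essential input is the strict mean-convexity $F_t>0$ propagated by the maximum principle, and the essential difficulty is the degeneration $\varphi_t\to 0$, which must be handled by normalization plus a Harnack-type lower bound rather than a naive limit.
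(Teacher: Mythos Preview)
Your argument for the second assertion and for the convergence $h_t\to h_\infty$ to a classical solution is fine and essentially what the paper does (the paper in fact records the convergence before the lemma).

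For stability, however, your approach (a) contains a sign error that is the source of the ``delicate part'' you worry about. With your convention $L=\eps\Delta-\eps^{-1}W''(h_\infty)$, the Jacobi operator for $\ca{F''}{\eps}$ is $-L$; a negative eigenvalue $-\lambda<0$ of $\ca{F''}{\eps}$ means $(-L)\psi=-\lambda\psi$, i.e.\ $L\psi=\lambda\psi$ with $\lambda>0$, not $L\psi=-\lambda\psi$. Carrying this through, writing $w_t=h_\infty-h_t\geq 0$ and $Q(t)=\int_N w_t\psi$, the subtraction of the equations gives $\eps\partial_t w_t = Lw_t + O(w_t^2)$, and testing against $\psi$ yields
\[
\eps\,Q'(t)=\lambda\,Q(t)+O\bigl(\|w_t\|_\infty\,Q(t)\bigr),
\]
so for $t$ large (Dini) one obtains $\eps Q'(t)\geq \tfrac{\lambda}{2}Q(t)$, forcing exponential \emph{growth} of $Q$. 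Since $Q(t)\to 0$ and $Q(t_0)>0$ (strict monotonicity $h_{t_0}<h_\infty$), this is an immediate contradiction. There is nothing delicate, and no need for Harnack or normalization as in your alternative (b).

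The paper's proof is different and shorter still: it is a pure barrier argument. If $\lambda_1<0$ with first eigenfunction $\rho_1>0$, then for small $s>0$ the function $h_\infty-s\rho_1$ is a strict supersolution of the stationary equation, i.e.\ $\eps\Delta(h_\infty-s\rho_1)-\eps^{-1}W'(h_\infty-s\rho_1)+g<0$, and moreover $h_\delta<h_\infty-s\rho_1$. Since the flow $h_t$ is strictly increasing, it reaches $h_\infty-s\rho_1$ at a first time $T$ and a point $x$; at $(T,x)$ one has $\Delta h_T\leq \Delta(h_\infty-s\rho_1)$ and $W'(h_T)=W'(h_\infty-s\rho_1)$, whence $\eps\partial_t h_T(x)\leq \eps\Delta(h_\infty-s\rho_1)-\eps^{-1}W'(h_\infty-s\rho_1)+g<0$ at $x$, contradicting $\partial_t h_t>0$. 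Compared to your (corrected) approach, this avoids Dini, Taylor remainders, and any integral estimate; your route, on the other hand, is the standard linearization argument and generalizes more readily to situations without a convenient pointwise ordering.
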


\begin{proof}
To prove stability, we recall that the second variation of $\Fce$ at $h_\infty$ is given by the quadratic form $Q(\phi,\phi)=\int_N \eps|\nabla \phi|^2 - \frac{W''(h_\infty)}{\eps}\phi^2$ and that the associated Jacobi operator is $-\eps\Delta \phi + \frac{W''(h_\infty)}{\eps}\phi$. Letting $\rho_1$ denote the first eigenfunction and $\lambda_1$ the associated eigenvalue, if $\lambda_1<0$ we can find $s>0$ sufficiently small so that $\ca{F'}{\eps}(h_\infty-s\rho_1)<0$ on $N$. Since $h_{\delta}<h_\infty$, we can also ensure that $h_{\delta}<h_\infty-s\rho_1$. We then consider the solution to (\ref{eq:F_flow}) and let $T$ be the first time such that $h_T=h_\infty-s\rho_1$ at a point $x\in N$. At this point we must then have $\Delta h_T\leq \Delta(h_\infty-s\rho_1)$ and $W'(h_T)=W'(h_\infty-s\rho_1)$, therefore $\ca{F'}{\eps}(h_\infty-s\rho_1)\geq \ca{F'}{\eps}(h_{T})>0$ at $x$. This contradicts $\lambda_1< 0$. 

To prove the second statement, we recall that $h_{\delta} >3/4$ on the (fixed) non-empty open set $F\left(\{(q,s):q\in \widetilde{D}_1\cup \widetilde{D}_2, 0\leq s<t_0^{(1)} \chi_{\widetilde{D}_1} + t_0^{(2)} \chi_{\widetilde{D}_2}\}\right)$. The condition that $h_t$ increases along the flow guarantees the conclusion.
\end{proof}

\begin{oss}
 \label{oss:bound_hr}
As $\Fce$ decreases along the flow, (\ref{eq:bound_hr}) holds for all $r\geq 0$.
\end{oss}

\begin{oss}
By construction $h_\delta  <b_{\eps}$, and since $b_{\eps}$ is a stationary solution to the PDE in the first line of (\ref{eq:F_flow}), $b_{\eps}$ acts as a barrier for the flow $h_t$. In particular, $h_\infty \leq b_{\eps}$.
\end{oss}

\subsection{Concluding argument for the proof of Proposition \ref{Prop:main}}
\label{conclusive}

By reversing the path $f_r$ in Section \ref{path_to_a} and composing it with the one produced in Section \ref{two_parameter} and the one in Section \ref{flow}, we obtain a continuous path in $W^{1,2}(N)$ that starts at $a_{\eps}$ and ends at a stable critical point $h_\infty$ of $\Fce$. This path can be produced for all $\eps$ sufficiently small. Moreover, from (\ref{eq:F_bound_gammat}), (\ref{eq:F_energy_fr}), (\ref{eq:bound_hr}) and Remark \ref{oss:bound_hr}, there exists $\eps_2>0$ such that, for all $\eps<\eps_2$ we have, all along this path, the upper bound $\frac{1}{2\s} \Fce \leq 2\mathcal{H}^n(M)+\int_N \frac{g}{2\s} d\mathcal{H}^{n+1} -\frac{2}{3}\varsigma + O(\eps|\log\eps|)$,
with $\varsigma>0$ independent of $\eps$. By choosing a suitable $\eps_3\leq \eps_2$, we then have, for all $\eps\leq \eps_3$, and all along the path,

\be
\label{eq:F_bound_whole_path}
\frac{1}{2\s} \Fce \leq 2\mathcal{H}^n(M)+\int_N \frac{g}{2\s}\, d\mathcal{H}^{n+1} -\frac{\varsigma}{2}.
\ee
We now proceed to conclude the proof of Proposition \ref{Prop:main}. Recalling the assumption set up at the beginning of Section \ref{proof}, the min-max solutions $u_{\eps_{j}}$ (obtained in Proposition \ref{Prop:mountainpass}) have the property that $\lim_{j\to 0}u_{{\eps}_j}=u_\infty\equiv -1$ and
$$\liminf_{\varepsilon_j \to 0} \frac{1}{2\s}\ca{F}{\eps}(u_{{\eps}_j}) \geq 2 \mathcal{H}^n(M) + \int_N \frac{g}{2\s}\, d\mathcal{H}^{n+1}.$$
This implies that, for $\eps={\eps}_j$, the function $h_\infty$ cannot be $b_{\eps}$; otherwise, the continuous path in $W^{1,2}(N)$ that starts at $a_{\eps}$ and ends at $b_{\eps}$ and satisfies (\ref{eq:F_bound_whole_path}) would contradict the minmax characterization of $u_{\eps}$. Therefore $h_\infty$ is a stable solution to $\ca{F'}{\eps}=0$ that does not coincide with $b_{\eps}$. We now make the dependence on $\eps$ explicit, and denote $h_\infty$ by $v_{\eps}$: we check next that the stable solutions $v_{\eps_{j}}$ indeed complete the proof.

The solutions $v_{\eps_{j}}$ have a uniform upper bound on ${\mathcal E}_{\eps_{j}}(v_{\eps_{j}})$, because 
\begin{equation}\label{stable-mass-bound-1}
\frac{1}{2\s} \Fce(v_{\eps_{j}}) \leq 2\mathcal{H}^n(M)+\int_N  \frac{g}{2\s}\, d\mathcal{H}^{n+1} -\frac{\varsigma}{2}
\end{equation}
and 
\begin{equation}\label{stable-mass-bound-2}
-1\leq v_{\eps_{j}}\leq b_{\eps_{j}} \leq 1+c_{W} \varepsilon_{j}.
\end{equation} 
The condition in Lemma \ref{lem:stability} gives a fixed non-empty open set on which $v_{{\eps}_j}>\frac{3}{4}$ for all $j$. 

It only remains to prove that ${\mathcal E}_{\e_{j}}(v_{\eps_{j}})$ is bounded away from $0$.  To see this, we will prove first of all that if ${\mathcal E}_{\e_{j}}(w_{j})\to 0$ with $\e_{j} \to 0$ for a sequence $w_{j} \in W^{1, 2}(N)$ satisfying $\ca{F'}{\eps_{j}, g}(w_{j})=0$ and $\limsup_{j \to \infty} \,  \sup_{N} |w_{j}| < \infty$, then we must have that $\{w_{j} = 0\}=\emptyset$ for all sufficiently large $j$. If this is false, then passing to a subsequence without relabelling, we find points 
$y_{j}$ such that $w_{j}(y_{j}) = 0.$ Let $r_0\in(0,\text{inj}(N))$. Let $\widetilde{w}_j: B^{n+1}_{\frac{r_0}{{\eps}_i}}(0)\to \R$ be defined by $\widetilde{w}_j(x)=w_{j}(\text{exp}_{y_j}({\eps}_j x))$. Then $\widetilde{w}_j$ solves the PDE $\Delta \widetilde{w}_j - W'(\widetilde{w}_j) = -\eps_j g$ on $B^{n+1}_{\frac{r_0}{{\eps}_j}}(0)$, where $\Delta$ is the Laplace-Beltrami operator with respect to the metric obtained by pulling-back the Riemannian metric to $B_{\frac{r_0}{\e_{i}}}^{n+1}(0)$ via the map $x\to \text{exp}_{y_j}({\eps}_j x)$. Note that $\widetilde{w}_j(0)= 0$ for all $j$ and that  
for any compact set $K\subset \R^{n+1}$ we have that $\int_K \frac{1}{2}|\nabla \widetilde{w}_j|^2 + W(\widetilde{w}_j) \to 0$ as $j\to \infty$. Since $w_{j}$ is bounded, it follows from the 
De~Giorgi--Nash--Moser estimates that $\widetilde{w}_{j}$ is locally uniformly H\"older continuous on ${\mathbb R}^{n+1}$. By Schauder theory, we then have in fact that $\widetilde{w}_{j}$ 
is locally uniformly bounded in $C^{2, \alpha}$ for any $\alpha \in (0, 1)$. 
Using a diagonal argument, we then obtain an entire $C^{2, \alpha}$ solution $\widetilde{w}$ to $\Delta \widetilde{w} - W'(\widetilde{w}) =0$ on $\R^{n+1}$ such that $\int_{{\mathbb R}^{n}} \frac{1}{2}|\nabla \widetilde{w}|^2 + W(\widetilde{w}) = 0$. This forces $\widetilde{w}\equiv 1$ or $\widetilde{w}\equiv -1$.  But this is impossible since we must also have $\widetilde{w}(0) = 0.$ Thus we have $\{w_{j} = 0\} = \emptyset$ as asserted. 
Returning to the sequence $(v_{\e_{j}})$, if $\liminf_{j \to \infty} \,  {\mathcal E}_{\e_{j}}(v_{\e_{j}}) = 0,$ then passing to subsequence and taking $w_{j} = v_{\e_{j}}$ in the preceding discussion, we see that since $v_{{\eps}_j}$ is somewhere positive by Lemma \ref{lem:stability}, we must have that $\inf_{N} \, v_{{\eps}_j} > 0$ for all sufficiently large $j$.  Since $v_{\e_{j}}$ satisfies $\e_{j} \Delta v_{\e_{j}} - \e_{j}^{-1}W^{\prime}(v_{\e_{j}})  +g = 0$ on $N,$ evaluating at any $z \in N$ with $v_{\e_{j}}(z) = \inf_{N} \, v_{\e_{j}}$ we see that $W^{\prime}\left(\inf_{N} \, v_{\e_{j}}\right) = \e_{j}^{2}\Delta v_{\e_{j}}(z) + \e_{j}g(z) > 0$. Since $W^{\prime}(t) \leq 0$ for $t \in [0, 1]$ and $\inf_{N} \, v_{\e_{j}} > 0$ for all sufficiently large $j$, this implies that $\inf_{N} \, v_{\e_{j}} >1$ for all sufficiently large $j$.    
To complete the argument, note that the negative gradient flow of $\ca{F}{{\eps}_j}$ with initial condition $1$ tends to $b_{{\eps}_j}$ (by the definition of the latter). 
We consider the negative gradient flow for $\ca{F}{{\eps}_j}$ with initial condition $v_{{\eps}_j}$: this flow is time-independent. Recalling that $1\leq v_{{\eps}_j}\leq b_{{\eps}_j}$, the parabolic maximum principle then implies that $v_{{\eps}_j}=b_{{\eps}_j}$.

We have thus shown that if $\liminf_{i \to \infty} \,  {\mathcal E}_{\e_{j}}(v_{{\eps}_j}) = 0$ then along a subsequence we have $v_{{\eps}_j}=b_{{\eps}_j}$, a possibility we have excluded earlier in this section. Therefore we must have that $\liminf_{j \to \infty} \, {\mathcal E}_{\e_{j}}(v_{{\eps}_j}) >0$. This concludes the proof of Proposition \ref{Prop:main}.

\begin{oss}[\textit{The case $\text{Ric}_N>0$, $g\equiv \text{cnst}$}]
 \label{oss:Ric_pos}
In the case in which $N$ has positive Ricci curvature and $g\equiv \lambda\in (0,\infty)$, it follows from the expression of the second variation of $\ca{F}{\eps,\lambda}$ that stable solutions to $\ca{F'}{\eps,\lambda}=0$ must be constant functions on $N$ (see e.g.~\cite[Proposition 7.1]{B1}). It is then easy to check that there are only two stable solutions to $\ca{F'}{\eps,\lambda}=0$, one of which is a constant close to $-1$ and the other is a constant close to $+1$. Then the first solution has to be the function that we denoted by $a_{\eps}$, while the second is the function that we denoted by $b_{\epsilon}$. In particular, in the argument given at the beginning of this section, the function $h_\infty=v_{\epsilon}$ had to agree with $b_{\eps}$, giving a contradiction. We therefore conclude that when $N$ has positive Ricci curvature and $g\equiv \lambda\in (0,\infty)$ the minmax solutions $u_{\eps}$ produced by Proposition \ref{Prop:mountainpass} cannot yield (through their associated varifolds) a completely minimal hypersurface, in other words the open set $\{u_\infty=+1\}$ is non-empty.
\end{oss}

\section{Extension to the case of non-negative Lipschitz~$g$}\label{extension}

We shall continue to assume that $N$ is a compact Riemannian manifold. Having completed (in Section \ref{proof}) the proof of Theorem~\ref{thm:existence} for $g\in C^{1,1}(N)$ with $g>0$, we can now use a fairly straightforward approximation argument, based on the estimates of Theorem~\ref{estimates}, to generalise Theorem~\ref{thm:existence} to the case of Lipschitz $g$ with $g\geq 0$. This will establish Theorem \ref{thm:existence} in the stated generality. 

Let $g\, : \, N \to [0, \infty)$ be Lipschitz. Choose $g_{j}\in C^\infty(N)$ such that $g_{j} >0$ and $g_j \to g$ in $C^{0}(N)$ as $j \to \infty$, with $\sup_j \|g_j\|_{C^{1}(N)}\leq \Gamma$, where $\Gamma = \sup_{N}\, |g| + {\rm Lip} \, (g) + 1$.  (To this end, it suffices to consider $g+\delta_j$ for a sequence of positive numbers $(\delta_j)$ with $\delta_j \to 0^+$ as $j\to \infty$ and then define $g_j$ to be the convolution of $g+\delta_j$ with an appropriate mollifier.) By applying the main results of Sections~\ref{stable-regularity}, \ref{minmax_setup} and \ref{proof} taking $g_{j}$ in place of $g$ we get that for every $j,$ there exist a sequence $(\e_{k}^{j})$ with ${\eps}_k^j\to 0^+$ as $k\to \infty$ and a sequence of critical points $u_{{\eps}_k^j}:N\to \R$ of $\ca{F}{{\eps}_k^j, \sigma g_{j}}$ with Morse index at most $1$ with the following property: the associated varifolds $V_k^j=V^{u_{{\eps}_k^j}}$ satisfy $V_k^j \to \sigma V^j$ as $k\to \infty$, where $V^{j}$ is a non-zero integral $n$-varifold that can be written in the form $V^j= V^j_0 + V^j_{g_j}$ such that the conclusions of Theorem~\ref{thm:regularity} hold with $V^j_0$, $V^j_{g_j}$, $\sigma V^j$ in place of $V_0, V_g, V$ respectively,  and with $V^j_{g_j}\neq 0$. Indeed for each $j,$ we have one of the following two possibilities for the sequence $(u_{\eps_{k}^{j}})_{k=1}^{\infty}$: 
\begin{itemize}
\item[(i)] for each $k$, $u_{{\eps}_k^j}$ is a min-max critical point of ${\mathcal F}_{\e_{k}^{j}, \sigma g_{j}}$ given by Proposition \ref{Prop:mountainpass} taken with $\e = \e_{k}^{j}$, or,  
\item[(ii)] for each $k$, $u_{\e_{k}^{j}} = v_{\e_{k}^{j}}$ where $v_{\e_{k}^{j}}$ is the stable critical point of ${\mathcal F}_{\e_{k}^{j}, \sigma g_{j}}$ given by Proposition \ref{Prop:main} taken with $\e_{k}^{j}$ in place of $\e_{j}$.
\end{itemize}

To produce the desired hypersurface with mean curvature prescribed by $g$, we wish to take the varifold limit of (a subsequence of) the sequence $(V^j_{g_j})$. However, since we have not established a uniform upper bound on the Morse index of $\Greg{V^j_{g_j}}$ (with respect to the functional $A-\text{Vol}_{g_{j}}$), there is not enough information in the sequence $(V^{j}_{g_{j}})$ to immediately yield regularity of $\lim_{j\to \infty} V^j_{g_j}$. To circumvent this issue we proceed slightly differently as in the following outline: we take the varifold limit of (a subsequence of) the sequence $(V^j)$ (with $V^{j}$ including \emph{both} $V^{j}_{g_{j}}$ and the possible minimal portions $V_{0}^{j}$), and exploit the index bound on $u_{{\eps}_k^j}$ to deduce regularity of $V=\lim_{j\to \infty} V^j$ (namely, that away from a genuine singular set ${\rm sing} \, V$ of Hausdorff dimension $\leq n-7$, $\spt{V}$ is locally given by the union of a finite number of---in fact at most three---$C^2$ embedded ordered graphs) and that the convergence $V^{j} \to V$ is locally graphical and in $C^{2}$ away from ${\rm sing} \, V$ and away from possibly one additional point. 
%Broadly described, the argument for this optimal regularity of $V$ relies on a tangent cone analysis on $V$ and an application of Theorem \ref{estimates}, part (ii)  to the sequence $V^j$ in balls centred at points of ${\rm spt} \, \|V\|$ at which at least one tangent cone to $V$ is supported on a hyperplane. This argument simultaneously yields the assertion on the local $C^{2}$ 
%convergence $V^{j} \to V$ away from ${\rm sing} \, V$ and away from one possible additional point.  
It then readily follows that $V_g^j \to V_g$ locally in $C^{2}$ away from a closed set of 
singularities ${\rm sing} \, V_{g}$ of Hausdorff dimension $\leq n-7$ and away from one possible additional point, and that 
$\spt{V_g} \setminus {\rm sing} \, V_{g}$ is the desired immersed, quasi-embedded hypersurface. The fact that $V_g\neq 0$ is an easy consequence of the monotonicity formula. 
We now provide a more detailed exposition of these steps.

\medskip

By construction, the varifolds $V^j$ are integral, with generalised mean curvature uniformly bounded by $\sup_{N} |g| + 1.$ Moreover $V^{j}$ have mass $\|V^{j}\|(N)$ uniformly bounded from above; this follows from Lemma~\ref{lem:upperbound} in case the sequence $u_{\eps_{k}^{j}}$ corresponding to $V^{j}$ consists of min-max critical points as in (i) above, or from 
inequalities (\ref{stable-mass-bound-1}) and (\ref{stable-mass-bound-2}) in case the sequence $u_{\eps_{k}^{j}}$ corresponding to $V^{j}$ consists of stable critical points as in (ii) above. Hence by Allard's compactness theorem, upon extracting a subsequence that we do not relabel, we have $V^j \to V$ with $V$ a non-zero integral varifold and having first variation in 
$L^{\infty}(\|V\|).$ 
%for any $p \in [1, \infty).$ 

\begin{lem}\label{tangent-cones-to-V}
Let $V$ be as above, and let ${\mathbf C}$ be a tangent cone to $V$ at a point $p \in {\rm spt} \, \|V\|$. Then ${\mathbf C}$ is a stationary integral $n$-varifold on $T_{p} \, N \approx {\mathbb R}^{n+1}$ with  stable regular part and no classical singularities. 
%tangent cone to ${\mathbf C}$ is supported on three or more half-hyperplanes of ${\mathbb R}^{n+1}$ meeting along a common $(n-1)$-dimensional axis. 
\end{lem} 

\begin{proof} Let $(V^{j})$ be the sequence of limit$(g_{j},0)$-varifolds as above so that $V = \lim_{j \to \infty} \, V^{j}$ on $N$. It follows from this and the definition of tangent cone that %and the convergence $V^{j} \to V$ is in $C^{2}$ (as ordered graphs) on any compact subset of $N \setminus \{p_{\infty}\}$. 
%by the definition of tangent cone and Theorem~\ref{estimates}, part (ii),
there is a sequence of numbers $\rho_{\ell} \to 0^{+}$ and a subsequence $(V^{j_{\ell}})$ such that  
\begin{equation}\label{tangent-limit}
(\eta_{0, \rho_{\ell}} \circ \exp_{p}^{-1})_{\#} \, V^{j_{\ell}} \to {\mathbf C} 
\end{equation} 
as varifolds on ${\mathbb R}^{n+1}.$  For each $\ell,$ there is a sequence $\left(\eps^{j_{\ell}}_{k}\right)_{k=1}^{\infty}$ with $\eps^{j_{\ell}}_{k} \to 0^{+}$ as $k \to \infty$ and critical points 
$u_{k}^{(\ell)} = u_{\e_{k}^{j_{\ell}}}$ of ${\mathcal F}_{\e_{k}^{j_{\ell}}, \sigma g_{j_{\ell}}},$ as in (i) or (ii) above (with $j_{\ell}$ in place of $j$),  such that $V^{j_{\ell}}$ is the limit varifold corresponding to the sequence $(u_{k}^{(\ell)})_{k=1}^{\infty}$. Since the Morse index of $u_{k}^{(\ell)}$ is $\leq 1$, we have that either (a) a subsequence of $(u_{k}^{(\ell)})_{k=1}^{\infty}$ is stable in 
${\mathcal N}_{\rho_{\ell}/2}(p)$, or (b) a subsequence of $(u_{k}^{(\ell)})_{k=1}^{\infty}$ is stable in $N \setminus \overline{{\mathcal N}_{\rho_{\ell}/2}(p)}.$ Since (a) or (b) must hold for infinitely many $\ell$, 
we may pick an appropriate diagonal subsequence $(u_{k_{i}}^{(\ell_{i})})_{i=1}^{\infty}$ and argue exactly as in the proof of Theorem~\ref{limit-regularity}, part (ii), to first conclude that either ${\mathbf C} \res B_{1/2}^{n+1}(0)$ has stable regular part and no classical singularities, or that ${\mathbf C} \res \left({\mathbb R}^{n+1} \setminus \overline{B_{1/2}^{n+1}(0)}\right)$ has stable regular part and no classical singularities. Since ${\mathbf C}$ is a cone, either of these possibilities implies the full conclusion of the lemma.\end{proof}

In order to establish regularity of $V$, it is convenient to first prove the following lemma which concerns the ``stable case;'' this lemma can then be used in a fairly standard way to handle the Morse index $\leq 1$ situation at hand.  

%In this lemma and subsequently, we define:
%\begin{itemize}
%\item ${\rm reg} \, V$ to be the set of points $y \in {\rm spt} \, \|V\|$ such that 
%the pullback of $\spt{V}$ via the exponential map $\exp_{y}$ is, near the origin $0 = \exp_{y}^{-1}(y) \in T_{y} \, N,$  the union of finitely many $C^2$ embedded ordered graphs over a neighbourhood of a hyperplane of $T_{y} \, N$.
%\item ${\rm sing} \, V$ to be ${\rm spt} \, \|V\| \setminus {\rm reg} \, V.$ 
%\end{itemize}
%it assumes a stability property for the $V_j$'s in a fixed open set. 

\begin{lem}
\label{lem:extension_stablecase}
% Let $U\subset N$, let $g_j \in C^{1,1}(U)$, $g_j>0$ for all $j\in \N$, and $g_j \to g$ in $C^{0,\alpha}(U)$, with $g\geq 0$ and $g\in C^{0,\alpha}(N)$. For $j\in \N$, let $u_{\e_{k}^{j}}:U\to \R$ be a sequence of critical points of $\ca{F}{{\eps}_k^j, \sigma g_{j}}$, with ${\eps}_k^j\to 0^+$ as $k\to \infty$. Assume that these critical points are stable, $\ca{F''}{{\eps}_k^j, \sigma g_{j}}(u_{\e_{k}^{j}})\geq 0$ for each $j, k$. Assume that for each $j$ the sequence of varifolds $V^{u_{\e_{k}^{j}}}$ converges to the integral varifold $V_j$ (the stable limit-$(g,0)$ integral varifold that arises from the sequence $u_{\e_{k}^{j}}$ as $k\to \infty$ as in Section \ref{stable-regularity}).
Let $g$, $g_{j}$ be as above so that $g_j \in C^{1,1}(N)$ with $g_j>0$ for $j \in \N$, $\sup_{j} \, \|g_{j}\|_{C^{1}(N)} \leq \Gamma$ for some $\Gamma > 0$ and $g_{j} \to g$ in $C^{0}(N).$ Let $V^{j}$, $V$ be the varifolds as above, and let $U \subset N$ be open. For each $j$, assume that $V^j \res U$ is a stable limit $(g_j,0)$-varifold in $U$ (in the sense that there is a sequence of critical points $u_{\eps_{k}^{j}}$ of ${\mathcal F}_{\e_{k}^{j}, \sigma g_{j}}$ corresponding to $V^{j}$ such that  $u_{\eps_{k}^{j}}$ are stable in $U$ for $k=1, 2, 3, \ldots$). 
%(This means that for each $j,$ there exist a sequence $(\eps_{k}^{j})$ with $\eps_{k}^{j} \to 0^{+}$ as $k \to \infty$ and a sequence $u_{\e_{k}^{j}}$ of critical points of $\ca{F}{{\eps}_k^j, \sigma g_{j}}$ that are stable in $U$ such that 
%the associated sequence of varifolds $V^{u_{\eps_{k}}^{j}}$ converges to $V_{j}$ as $k \to \infty$). 
%Suppose that $\sup_j \|g_j\|_{C^{1}(U)}\leq \Gamma$ for some $\Gamma \in \R$, that $g_j \to g$ in $C^{0}(U)$ as $j \to \infty$, 
%$g_j\to g$ in $C^{0,\alpha}(K)$ for every compact subset $K \subset U$, 
%Suppose that $V^j \to V$ as integral 
%varifolds on $U$. 
Then ${\rm sing} \, V \cap U \left(\equiv ({\rm spt} \, \|V\| \setminus \Greg \, V) \cap U\right)$  has Hausdorff dimension $\leq n-7$ (and is empty if $2 \leq n \leq 6$). Moreover, the convergence $V^{j} \to V$ is graphical and in 
$C^{2}$ in any compact subset of $U \setminus {\rm sing} \, V.$ 
\end{lem}

\begin{proof}
First consider a point $p\in \spt{V} \cap U$ such that at least one tangent cone to $V$ at $p$ is supported on a plane $P.$ Let $q = \Theta(\|V\|, p)$ (a positive integer).
%and $\Gamma  = \sup_{N} \, |g| +1$. 
Let $\e_{0} = \e_{0}(n, q, N, \G)$ and $\mu = \mu(n, N, \Gamma)$ be the constants as in Theorem~\ref{estimates}, taken with $\overline{\rho} = {\rm inj} \, N$. Identify $T_{p} \, N$ with ${\mathbb R}^{n+1}$ such that $P$ is identified with the hyperplane ${\mathbb R}^{n} \times \{0\}$. By the definition of tangent cone, we can choose sufficiently small $r=r(p)>0$ with $r<\frac{1}{2}\min\{\text{inj}(N),\text{dist}(p,\p U)\}$ so that if $\widetilde{V}$ is the pull back of $V \res B_{2r}(p)$ by the exponential map at $p$, then $\widetilde{V}$ satisfies 
$\frac{\|\widetilde{V}\|(B_{2r}^{n+1}(0))}{\omega_{n}(2r)^{n}} \leq q+ 1/4,$  
$q-1/4 \leq \frac{\|\widetilde{V}\|\left((B_{r}^{n}(0) \times {\mathbb R}) \cap B_{2r}^{n+1}(0)\right)}{\omega_{n}r^{n}} \leq q+1/4$ and  
$$2\mu r + (2r)^{-n-2} \int_{(B_{r}^{n}(0) \times {\mathbb R}) \cap B_{2r}^{n+1}(0)} |x^{n+1}|^{2} \, d\|\widetilde{V}\| < \e_{0}/2.$$
Then by varifold convergence, it follows that for all sufficiently large $j$, 
$$\frac{\|\widetilde{V}^{j}\|(B_{2r}^{n+1}(0))}{\omega_{n}(2r)^{n}} \leq q+ 1/2,$$  
$q-1/2 \leq \frac{\|\widetilde{V}^{j}\|\left((B_{r}^{n}(0) \times {\mathbb R}) \cap B_{2r}^{n+1}(0)\right)}{\omega_{n}r^{n}} \leq q+1/2$ and  
$$2\mu r + (2r)^{-n-2} \int_{(B_{r}^{n}(0) \times {\mathbb R}) \cap B_{2r}^{n+1}(0)} |x^{n+1}|^{2} \, d\|\widetilde{V}^{j}\| < \e_{0}$$
where $\widetilde{V}^{j}$ is the pull back of $V^{j} \res B_{2r}(p)$ by the exponential map at $p$. 
%$V\res B_{2r}(p)$ satisfies (working in an exponential chart around $p$): 
%\noindent (A) the mass of $V\res B_{2r}(p)$ is between the values $\omega_n (2r)^{n}\Theta(\|V\|,p) -\frac{1}{4}\omega_n(2r)^{n}$ and $\omega_n(2r)^{n} \Theta(\|V\|,p)+\frac{1}{4} \omega_n (2r)^n$; 
%\noindent (B) the mass of $V\res B_{2r}(p)$ in the cylinder $(B^{n}_{r}(0)\times \R) \cap B^{n+1}_{2r}(0)$ is between the values $\omega_n   \Theta(\|V\|,p)r^n-\frac{1}{4}\omega_n r^n$ and $\omega_n \Theta(\|V\|,p)r^n+\frac{1}{4} \omega_n r^n$; 
%\noindent (C) the coarse excess $E_{2r}$ of $V\res B_{2r}(p)$ in the cylinder $(B^{n}_{r}(0)\times \R) \cap B^{n+1}_{2r}(0)$ is smaller than $\varepsilon_0/2$, where $\varepsilon_0$ is as in Theorem \ref{estimates}, part (ii). 
%\noindent We recall that (as in Theorem \ref{estimates}) we are using normal coordinates at $p,$ and 
%the first factor of $B^{n}_{r}(0)\times \R$ is contained in the chosen tangent plane $D_p$ to $V\res B_{2r}(p)$ at $p$, and the coarse excess $E_{2r}$ is as defined in Theorem \ref{BWregularity}. By abuse of notation we are not specifying the pushforward of $V$ via the inverse of the exponential map. By varifold convergence and (A), (B), (C) above, for each ball $B_{2r}(p)$ the mass and excess hypotheses of Theorem \ref{estimates}, part (ii) are satisfied by $V^j\res B_{2r}(p)$ for all sufficiently large $j.$
We can therefore apply Theorem \ref{estimates}, part (ii)  to conclude that for any given $\theta \in (0, 1)$ and all $j$ sufficiently large, $\widetilde{V}^j \res (B_{r/2}^{n}(0) \times {\mathbb R}) \cap B_{2r}^{n+1}(0)$ is made up of graphs of $q$ ordered functions over $B_{r/2}^{n}(0)$, of class $C^{2,\theta}$ with $C^{2,\theta}$ norm bounded from above independently of $j$. (To this end, note that the constant $C$ on the right-hand-side of the estimate in Theorem~\ref{estimates}, part (ii) depends on $g_{j}$ only in terms of an upper bound on $\|g_j\|_{C^{1}(N)}$, and we have by assumption that $\|g_j\|_{C^{1}(N)} \leq \G$.) 
%Moreover, the pushforward by ${\rm exp}_{p}$ of each of these graphs has scalar mean curvature with respect to the ``upward'' pointing unit normal equal to either zero or $g_{j}$ or $-g_{j}.$ 
By the Arzel\`a--Ascoli theorem, it follows that $\widetilde{V} \res (B_{r/2}^{n}(0) \times {\mathbb R}) \cap B_{2r}^{n+1}(0)$ is the sum of multiplicity 1 varifolds associated with graphs of $q$ ordered functions over $B_{r/2}(0)$ of class $C^{2,\theta}.$ Thus $p \in \Greg \, V$. 
%and that the pushforward by ${\rm exp}_{p}$ of each of these graphs has scalar mean curvature with respect to the ``upward'' pointing unit normal equal to either zero or $g$ or 
%$-g.$ Since $\Theta \, (\|\widetilde{V}\|, 0) = q$, arguing as in the proof of Lemma~\ref{lem:no_fourfold} we see that $V$ has quasi-embedded $PMC\, (g, 0)$ structure near $p$. 

Let $\Sigma \subset {\rm spt} \, \|V\| \cap U$ be the set of points where no tangent cone to $V$ is supported on a hyperplane. Then by the above $\Sigma = {\rm sing} \, V \cap U$, and by Lemma~\ref{tangent-cones-to-V}, Theorem~\ref{classification} and Lemma~\ref{stratification} we have that $\Sigma = \emptyset$ if $2\leq n \leq 6$ and ${\rm dim}_{\mathcal H} \, (\Sigma) \leq n-7$ if $n \geq 7$. The final conclusion of the lemma asserting the $C^{2}$ convergence $V^{j} \to V$ locally away from ${\rm sing} \, V$ follows from the discussion in the preceding paragraph.
\end{proof}

Returning to the analysis of $V^j\to V$ on $N$, we next fix a small arbitrary $\tau \in(0,\text{inj}(N))$ and a (finite) cover of ${\rm spt} \, \|V\|$ by open balls ${\mathcal N}_{\tau/2}(p_{k}^{\tau}),$ $k=1, 2, \ldots, N(\tau),$ with radius $\tau/2$ and centre $p_{k}^{\tau} \in {\rm spt} \, \|V\|$. 
% 
% Recall that we know the regularity of $V^j$ already, and $|H_{V^j}|\leq \max_N g+1$ for all $j$. 
We then consider the following two possibilities, one of which must hold:
\begin{itemize}
\item[{\rm (a)}] there exists a subsequence $(j_\ell)_{\ell=1}^{\infty}$ of the sequence $(j)_{j=1}^{\infty}$ such that for every $k \in \{1, 2, \ldots, N(\tau)\}$ and every $\ell \geq 1$, the restriction $V^{j_\ell} \res {\mathcal N}_{\tau}(p_{k}^{\tau})$ is a stable limit $(g_{j_\ell},0)$-varifold in ${\mathcal N}_{\tau}(p_{k}^{\tau})$;
\item[{\rm (b)}] there is $j_0 = j_{0}(\tau) \in \N$ such that for every $j\geq j_0$ there is $k(j) \in \{1, 2, \ldots, N(\tau)\}$ such that $V^{j} \res {\mathcal N}_{\tau}(p_{k(j)}^{\tau})$ is not a stable limit $(g_j,0)$-varifold in ${\mathcal N}_{\tau}(p_{k(j)}^{\tau})$.
\end{itemize} 

\begin{oss}
\label{oss:compact_ST}
If for some fixed $j$ and some point $p \in N$ the varifold $V^{j} \res {\mathcal N}_{\tau}(p)$ is not a stable limit $(g_{j}, 0)$-varifold in ${\mathcal N}_{\tau}(p)$, then no subsequence of the sequence $(u_{{\eps}_k^j})_{k=1}^{\infty}$ can be stable 
(with respect to ${\mathcal F}_{\eps_{k}^{j}, \sigma g_{j}}$) in ${\mathcal N}_{\tau}(p)$. In this case, since $u_{{\eps}_k^j}$ has Morse index $\leq 1$ in $N,$ it follows that $u_{{\eps}_k^j}$ is stable in $N\setminus \overline{{\mathcal N}_{\tau}(p)}$ for all sufficiently large $k$, and hence $V^{j} \res (N \setminus \overline{{\mathcal N}_{\tau}(p)})$ is a stable limit $(g_j,0)$-varifold in $N\setminus \overline{{\mathcal N}_{\tau}(p)}$.
\end{oss}

If case (a) occurs for some $\tau \in (0, {\rm inj} \, (N))$, then  for every $k \in \{1, 2, \ldots, N(\tau)\}$ we can apply Lemma~\ref{lem:extension_stablecase} with $U = {\mathcal N}_{\tau}(p_{k}^{\tau})$ to conclude the following: 

\begin{itemize}
\item [{\rm (a$^{\prime}$)}] ${\rm sing} \, V$ has Hausdorff dimension $\leq n-7$ if $n\geq 7$ and ${\rm sing} \, V =\emptyset$ if $2 \leq n \leq 6$; moreover, there is a subsequence $(V^{j_{\ell}})$ of $(V^{j})$ such that the convergence $V^{j_{\ell}} \to V$ is in $C^2$ (as ordered graphs) in each compact subset $K \subset N\setminus {\rm sing} \, V$. 
\end{itemize} 
\begin{oss}
In the last part of this section we will point out that ${\rm sing} \, V$ is in fact finite if $n=7.$ 
\end{oss}

If case (a) fails for every $\tau \in (0, {\rm inj} \, (N))$, then for an arbitrary sequence of positive numbers $(\tau_{\ell})_{\ell=1}^{\infty}$ with $\tau_{\ell} \to 0,$ case (b) holds with 
$\tau = \tau_{\ell}$ for every $\ell \geq 1$.  In this case, by Remark~\ref{oss:compact_ST}, there is a subsequence $(V^{j_{\ell}})$ of the sequence $(V^{j})$ such that for each $\ell \geq 1$ and some $p_{\ell} \in {\rm spt} \, \|V\|$, 
$V^{j_{\ell}} \res (N \setminus \overline{{\mathcal N}_{\tau_{\ell}}(p_{\ell})})$ is a stable limit $(g_{j_{\ell}}, 0)$-varifold in $N \setminus \overline{{\mathcal N}_{\tau_{\ell}}(p_{\ell})}$. Then, since ${\rm spt} \, \|V\|$ is compact, there is a point  $p_\infty \in {\rm spt} \, \|V\|$ such that passing to a subsequence which we index by $\ell$ again, we have that $p_{\ell} \to p_{\infty}$ whence, for any $\delta >0$, 
 the ball ${\mathcal N}_{\delta}(p_{\infty})$ contains ${\mathcal N}_{\tau_{\ell}}(p_{\ell})$ for all sufficiently large $\ell.$ Consequently, $V^{j_{\ell}} \res (N\setminus \overline{{\mathcal N}_{\delta}(p_\infty)})$ is a stable limit $(g_{j_{\ell}},0)$-varifold for each $\delta >0$ and sufficiently large $\ell$ (depending on $\delta$).  Since $\delta >0$ is arbitrary, we may again apply Lemma~\ref{lem:extension_stablecase} to conclude the following:

 \begin{itemize}
 \item[{\rm (b$^{\prime}$)}] there is a point $p_{\infty} \in {\rm spt} \, \|V\|$ and a subsequence $(V^{j_{\ell}})$ of the sequence $(V^{j})$ such that for each $\delta > 0$ and sufficiently large $\ell$ (depending on $\delta$), the varifold $V^{j_{\ell}}$ is a stable limit $(g_{j_{\ell}}, 0)$-varifold in $N \setminus {\mathcal N}_{\delta}(p_{\infty});$ consequently, the convergence  $V^{j_{\ell}} \to V$ is in $C^{2}$ (as ordered graphs) in each compact set $K \subset N \setminus ({\rm sing} \, V \cup \{p_{\infty}\});$ moreover, if $n \geq 7$, then ${\rm sing} \, V$ (which may or may not include $p_{\infty}$) has Hausdorff dimension  $\leq n-7;$ if $2 \leq n \leq 6$, then ${\rm sing} \, V \subset \{p_{\infty}\}.$ 
 \end{itemize}
\begin{oss} 
In the last part of this section we will improve this to say that ${\rm sing} \, V$ is finite if $n=7$ and  empty if $2 \leq n \leq 6.$
\end{oss}

Note that since (a) or (b) holds, either (a$^{\prime}$) or (b$^{\prime}$) must hold. For notational convenience, let us relabel the subsequence $(V^{j_{\ell}})_{\ell =1}^{\infty}$ in either case as $(V^{j})_{j=1}^{\infty}$, and the subsequence $(g_{j_{\ell}})_{\ell=1}^{\infty}$ as $(g_{j})_{j=1}^{\infty}$. 

Now let $V_{g}$ be the varifold limit in $N$ of the sequence $(V_{g_{j}}^{j})$, and note that $V_g\neq 0$ because $V_{g_j}^j\neq 0$ for all $j$ and, in view of the uniform $L^{\infty}$ bound on $(g_{j})$, there is a uniform positive lower bound for $\|V^j_{g_j}\|(N)$ by the monotonicity formula applied in a ball of radius $\text{inj} \, (N)$ around an arbitrary point of 
$\spt{V^j_{g_j}}$. Moreover, the local $C^{2}$ convergence $V^j \to V$ away from ${\rm sing} \, V$ (in case (a$^{\prime}$)) or away from 
${\rm sing} \, V \cup \{p_{\infty}\}$ (in case (b$^{\prime}$)) implies that the convergence $V^j_{g_j} \to V_g$ is also locally in $C^{2}$ away from ${\rm sing} \, V$ (in case (a$^{\prime}$)) or away from ${\rm sing} \, V\cup \{p_{\infty}\}$ (in case (b$^{\prime}$)). 
%a possible singular set $\Sigma=\text{sing}\,V \cap \spt{V_g}$ with $\text{dim}_{\mathcal{H}}(\Sigma)\leq n-7$ and possibly away from an additional point $p_\infty$. (Clearly $\spt{V_g}\subset \spt{V}$.)  
 In view of this local $C^{2}$ convergence, and the fact that the $C^{2}$ graphs locally describing 
 $V^j_{g_j}$ on $N \setminus {\rm sing} \, V$ (in case (a$^{\prime}$)) or on 
$N \setminus \left({\rm sing} \, V \cup \{p_{\infty}\}\right)$ (in case (b$^{\prime}$)) have scalar mean curvature $g_j$, we conclude that $V_g$ is, away from ${\rm sing} \, V$, locally given by a union of $C^2$ graphs each having mean curvature $g\, \hat{n}$ for one of the two choices of unit normal $\hat{n}$ on each graph. Since $g$ is Lipschitz, it follows from this that $V_g$ is of class $C^{2, \alpha}$ for any $\alpha \in (0, 1)$ away from ${\rm sing} \, V.$ It also follows from this and the Hopf boundary point lemma that $\Greg \, V_{g} = {\rm spt} \, \|V_{g}\| \setminus {\rm sing} \, V$ is quasi-embedded (in the sense defined in Remark~\ref{quasi-embedded}). 
%(in case (a$^{\prime}$)) or away from ${\rm sing} \, V \cup \{p_{\infty}\}$ (in case (b$^{\prime}$)).

It now follows from Theorem~\ref{Roger-Tonegawa} that $V_{g}$ has multiplicity 1 on ${\rm reg}\,V_g \cap \{g>0\}$ (where  ${\rm reg} \, V_{g}$ is the embedded part of ${\rm spt} \, \|V_{g}\|$). For if not, then there is a point $y \in {\rm reg} \, V_{g} \cap \{g >0\}$ and a ball $B = {\mathcal N}_{\rho}(y)$ with ${\rm spt} \, \|V_{g}\| \cap {\mathcal N}_{2\rho}(y) \subset {\rm reg} \, V_{g}$, with $p_{\infty} \in N \setminus {\mathcal N}_{2\rho}(y)$ in case (b$^{\prime})$, and with $g >0$ on $B$ such that $V_{g} \res B$ 
has integer multiplicity $k \geq 2$ and ${\rm spt} \, \|V_{g}\| \cap B$ has mean curvature $g \hat{n}$ for some choice of continuous unit normal $\hat{n}$. By the $C^{2}$ convergence 
$V^{j}_{g_{j}} \to V_{g}$ in $B$, 
it follows that there is an open set $\Omega \subset N$ with ${\rm spt} \, \|V_{g}\| \cap {\mathcal N}_{\rho/2}(y) \subset \Omega$ and $V^{j}_{g_{j}} \res \Omega$ consisting of $k$ normal graphs $G_{j}^{\ell}$, $\ell =1, 2, \ldots, k$, of class $C^{2},$ over ${\rm spt} \, \|V_{g}\| \cap B_{\rho/2}(y)$, with each  $G_{j}^{\ell}$ converging in $C^{2}$ to ${\rm spt} \, \|V_{g}\| \cap B_{\rho/2}(y)$ as $j \to \infty.$ Furthermore, since 
$g_{j} >0$ in $B$, it follows by Theorem~\ref{Roger-Tonegawa} (iii) that for each $j$, the graphs $G_{j}^{\ell},$ $\ell = 1, 2, \ldots, k,$ are distinct, and  have mean curvature given by 
$g_{j} \nu_{j}^{\ell}$ with the unit normal vectors $\nu_{j}^{\ell}$ to $G_{j}^{\ell}$ close to $\hat{n}$ (by the $C^{2}$ convergence). Since $\cup_{\ell=1}^{k}G_{j}^{\ell}$ is contained in the phase boundary, this contradicts Theorem~\ref{Roger-Tonegawa} (iii) which says that the mean curvature vector must point into the $+1$ phase everywhere on the phase boundary.  Thus $V_{g}$ has multiplicity 1 on ${\rm reg}\,V_g \cap \{g>0\}$ as claimed.

The local $C^{2}$ convergence $V^j_{g_{j}} \to V_g$ on $N \setminus {\rm sing} \, V$ (in case (a$^{\prime}$)) or on $N\setminus ({\rm sing} \, V \cup\{p_\infty\})$ (in case (b$^{\prime}$)) 
further gives that condition (\textbf{T}) of \cite[Section~1.3]{BW2} is satisfied by $V_g.$ 
We can then apply \cite[Theorem 4.1]{BW2} and \cite[Remark 4.6]{BW2} (the validity of which extends, with the same reasoning as in \cite{BW2}, to the case of Lipschitz  $g$) to conclude that the varifold $V_{g} \res (N \setminus {\rm sing} \, V)$ in case (a$^{\prime}$) or 
$V_g\res (N\setminus ({\rm sing} \, V \cup \{p_\infty\}))$ in case (b$^{\prime}$) can be realized as the pushforward of an oriented $n$-manifold via a two-sided immersion with mean curvature given by $g \nu$, where $\nu$ is a choice of unit normal to the immersion. Since both ${\rm sing} \, V$ in case (a$^{\prime}$) and ${\rm sing} \, V \cup \{p_\infty\}$ in case (b$^{\prime}$) are lower dimensional, 
$V_g$ is realized by the same pushforward. 

If case (a$^{\prime}$) arises, then the proof of Theorem~\ref{thm:existence} is now complete with $M  = {\rm spt} \, \|V_{g}\| \setminus {\rm sing} \, V$ except when $n=7$ (in which case it remains to show that ${\rm sing} \, V$ is finite).  
%\Greg \, V_{g}$ (see Remark \ref{gen-reg-def} and \cite[Definition 3.17]{BW2} for the definition of $\Greg \, V$).

If case (b$^{\prime}$) arises and $n \geq 8$, then the proof is complete with $M = {\rm spt} \, \|V_{g}\| \setminus {\rm sing} \, V.$
 
If $n=7$, ${\rm sing} \, V$ must be finite (in either of the cases (a$^{\prime}$) and (b$^{\prime}$)) by essentially the same argument as for the corresponding claim in Theorem~\ref{limit-regularity}, part (iv). This goes as follows: if ${\rm sing} \, V$ is an infinite set, then since ${\rm spt} \, \|V\|$ is compact, there are points $y, y_{k} \in {\rm sing} \, V$ for $k=1, 2, 3, \ldots$ 
with $y_{k} \neq y$ and $y_{k} \to y$ as $k \to \infty$.  Rescaling $(\exp_{y}^{-1})_{\#} \, V$ about the origin (in $T_{y} \, N \approx {\mathbb R}^{8}$) by the sequence $\rho_{k} 
= |\exp_{y}^{-1}(y_{k})|$ produces, after passing to a subsequence, a tangent cone ${\mathbf C} = \lim_{k \to \infty} \, \eta_{0, \rho_{k} \, \#} \, (\exp_{y}^{-1})_{\#} \, V.$ 
%There is a point $z \in S^{7}$ such that,  passing to a subsequence without relabelling,
%$\overline{y_{k}} \equiv \rho_{k}^{-1}\exp_{y}^{-1}(y_{k}) \to z.$ 
Note that we have of course that $p_{\infty} \not\in {\mathcal N}_{2\rho_{k}}(y) \setminus \overline{{\mathcal N}_{\rho_{k}/4}(y)}$ for all  sufficiently large $k$, and hence, for each $k$ and all sufficiently large $j$, the varifolds $V^{j}$ are stable limit $(g_{j}, 0)$-varifolds in 
${\mathcal N}_{2\rho_{k}}(y) \setminus \overline{{\mathcal N}_{\rho_{k}/4}(y)}.$ Thus if ${\mathbf C} \res (B_{3/2}^{n+1}(0) \setminus \overline{B_{1/2}^{n+1}(0)})$ were regular,  it would follow from Theorem~\ref{estimates}, part (ii) that $V \res ({\mathcal N}_{3\rho_{k}/2}(y) \setminus \overline{{\mathcal N}_{\rho_{k}/2}(y)})$ would be regular, contrary to the the fact that 
$y_{k} \in {\rm sing} \, V.$ Hence there is a point $z \in {\rm sing} \, {\mathbf C} \setminus \{0\}$ whence the ray $\{\lambda z \, : \, \lambda \geq 0\} \subset {\rm sing} \, {\mathbf C}$. This is impossible by 
Lemma~\ref{tangent-cones-to-V} and Theorem~\ref{classification} according to which ${\rm sing} \, {\mathbf C}$ must be $0$-dimensional. 

%in case $n \geq 7$ since then, if $p_{\infty} \not\in \Greg{V_{g}}$, we can enlarge the set $\Sigma$ to contain the point $p_{\infty}$ without altering the condition $\text{dim}_{\mathcal{H}}(\Sigma)\leq n-7.$ 

The only remaining case to analyse is when (b$^{\prime}$) arises and $n \in \{2, 3, \ldots, 6\}.$  In this case, the proof of Theorem~\ref{thm:existence} would be complete with $M = {\rm spt} \, \|V_{g}\|$ provided we can check that $p_{\infty} \in \Greg\, V.$ 
%if $p_{\infty} \in {\rm spt} \, \|V_{g}\|,$ then $p_{\infty} \in {\rm reg} \, V_{g}$ ${\rm spt} \, \|V_{g}\|$ near $p_{\infty}$ is of class $C^{2, \alpha}$ for any $\alpha \in (0, 1)$, i.e. that the pullback under $\exp_{p_{\infty}}$ of a neighborhood of $p_{\infty}$ in ${\rm spt} \, \|V_{g}\|$ is a finite union of $C^{2, \alpha}$ ordered graphs over a hyperplane. This of course follows if we can establish that the pullback under $\exp_{p_{\infty}}$ of a neighborhood of $p_{\infty}$ of ${\rm spt} \, \|V\|$  is a finite union of $C^{2, \alpha}$ ordered graphs over a hyperplane.  
To see this,  consider any tangent cone ${\mathbf C}$ to $V$ at $p_{\infty}$.  
By Lemma~\ref{tangent-cones-to-V} and Theorem~\ref{classification}, 
\begin{equation*}\label{tangent-is-planar}
{\mathbf C} = q|P|
\end{equation*}
for some hyperplane $P$ of $T_{p_{\infty}} N$ and a positive integer $q$. %In particular, $\Theta \, (\|V\|, p_{\infty}) = q.$ 
With $q$, $P$ as above, we can now argue essentially as in the proof of Theorem~\ref{limit-regularity}, part (iii). First apply Theorem~\ref{BWregularity} with ${\mathcal V} = \{V\},$ $U_{V} = N$, $X_{0} = p_{\infty}$ and $\beta = (1 + \mu)^{-1}\epsilon_{0}$ where 
 $\mu = \mu(n, N, \Gamma)$, $\epsilon = \epsilon_{0}(n, q, N, \Gamma)$ are as in Theorem~\ref{estimates} taken with $\overline{\rho} = {\rm inj} \, N$, to conclude that $(\exp_{p_{\infty}}^{-1})_{\#} \, V$ near the origin 
 $0 \in {\mathbb R}^{n+1} \approx T_{p_{\infty}} \, N$ is the sum of $q$ multiplicity 1 varifolds associated with ordered $C^{1, \alpha}$ functions $u_{1} \leq u_{2} \leq \ldots \leq u_{q}$ over a ball $B_{\rho_{1}}^{n}(0) \subset P \approx {\mathbb R}^{n} \times \{0\}$. Note that in this application of Theorem~\ref{BWregularity}, hypothesis (b) holds because all tangent cones to $V$ (including at $p_{\infty}$) are supported on hyperplanes, and hypothesis (c) (for the above choice of $\beta$) is verified by applying Theorem~\ref{estimates}, part (ii) to the approximating varifolds $V^{j}$. Indeed, since $\Theta \, (\|V\|, p_{\infty}) = q$, the density assumption in hypothesis (c), namely, that  $\Theta (\|\eta_{0, \rho \, \#} \widetilde{V}\|, Y) < q$ for all $Y \in B_{1}^{n+1}(0)$ where 
 $\widetilde{V} = \left(\Gamma \circ \exp_{X}^{-1}\right)_{\#} V \res {\mathcal N}_{\rho}(X)$ (notation as in hypothesis (c)), guarantees that $p_{\infty} \not\in {\mathcal N}_{\rho}(X),$ ensuring that for each $\delta \in (0, 1/2)$ and sufficiently large $j$ (depending on $\delta$ and $\rho$), $V^{j}$ are stable limit $(g_{j}, 0)$-varifolds in ${\mathcal N}_{(1-\delta)\rho}(X)$. Hence Theorem~\ref{estimates} is applicable to $V^{j}$ in  the ball ${\mathcal N}_{(1-\delta)\rho}(X)$ for any $\delta \in (0, 1/2)$, and letting first $j \to \infty$ and then $\delta \to 0$ (for fixed $\rho$) we see that $V$ satisfies hypothesis (c) of Theorem~\ref{BWregularity}. 
 Finally, since $V$ is of class $C^{2}$ away from $p_{\infty}$, we see that the functions $u_{j}$ are of class $C^{2}$ away from $p_{\infty}$. It then follows by a standard cut-off function argument that each $u_{j}$ separately is a weak solution to one of the three equations in (\ref{eq:PDE_pmc}) on $B_{\rho_{1}}^{n}(0)$, and hence, since $g$ is Lipschitz, that $u_{j} \in C^{2, \alpha}$ for each $j$ and each $\alpha \in (0, 1)$.  This concludes the proof of everywhere $C^{2, \alpha}$ regularity of $V$, and hence of $V_{g}$, in case (b$^{\prime}$) and in dimensions $n \in \{2, 3, \ldots, 6\}$. The proof of Theorem~\ref{thm:existence} is now complete.

\section{Appendix: index of notation and definitions}
 The following notation and definitions are used throughout the article. Here $N$ is an $(n+1)$-dimensional Riemannian manifold.
\begin{itemize}
\item ${\rm inj}_{X} \, N$: injectivity radius of $N$ at $X \in N$.
\item ${\rm inj} \, N$: the injectivity radius of $N$. 
\item $\overline{M}$: closure in $N$ of $M \subset N$. 
\item $\omega_{n}$: Lebesgue measure of the unit ball in ${\mathbb R}^{n}$. 
\item ${\mathcal H}^{k}$: $k$-dimensional Hausdorff measure on $N$ with respect to the Riemannian metric on $N$.
\item ${\mathcal H}^{n} \res M$: the restriction of ${\mathcal H}^{n}$ to $M \subset N$, defined by ${\mathcal H}^{n} \res M (A) = {\mathcal H}^{n} \, (A \cap M)$ for $A \subset N$. 
\item ${\rm dim}_{\mathcal H} \, (A)$: Hausdorff dimension of $A \subset N$.  
\item $|M|$: multiplicity 1 $n$-varifold on $N$ associated with the $n$-rectifiable subset $M \subset N$. 
\item $\|V\|$: weight-measure on $N$ associated with the varifold $V$ on $N$.
\item $V \res \Oc$: restriction of varifold $V$ on $N$ to open subset $\Oc \subset N$.
\item $f_{\#} \, V$: push-forward of varifold $V$ on $N$ by diffieomorphism $f \, : N \to N^{\prime}$ between manifolds.  
\item $\partial^{\star} E$: reduced boundary of the Caccioppoli set $E \subset N$. 
\item ${\mathcal N}_{\rho}(X)$: normal coordinate ball in $N$ of radius $\rho >0$ and centre $X$. 
\item limit $(g, 0)$-varifold: Definition~\ref{limit-varifold} (Section~\ref{ac-solutions}).
\item stable limit $(g, 0)$-varifold: Definition~\ref{limit-varifold} (Section~\ref{ac-solutions}).
\item classical singularity: Definition~\ref{classical-sing} (Section~\ref{nonvar-regularity}). 
\item $(q, \beta)$-separation property: Defintion~\ref{qbseparation} (Section~\ref{nonvar-regularity}). 
\item quasi-embedded point: Definition~\ref{quasi-embedded} (Section~\ref{nonvar-regularity}). 
\item $\Greg \, V$: Definition~\ref{greg} (Section~\ref{nonvar-regularity}).
\item ${\rm sing} \, V$: Definition~\ref{sing} (Section~\ref{nonvar-regularity}).
\item ${\rm reg} \, V$: Definition~\ref{reg} (Section~\ref{nonvar-regularity}).
%\item quasi-embedded: Remark~\ref{quasi-embedded}.
\item quasi-embedded $PMC \, (g, 0)$ structure: Definition~\ref{PMC-structure} (Section~\ref{smooth-excision}).
\item $\Lambda$ in Sections \ref{minmax_setup} and \ref{proof}: shorthand notation for $3|\log\eps|$.
\item $\Het$, $\Het^{\eps}$: functions defined in Section \ref{minmax_setup}.
\item $\Psi$, $\Psi_t$: functions defined in (\ref{eq:Psi})-(\ref{eq:family2}) (Section \ref{prelim}).
\item $d_{\overline{M}}$: (unsigned) Riemannian distance to $\overline{M}$ (Section \ref{prelim}).
\item $\widetilde{M}$: oriented double cover of $M$ (Section \ref{proof}).
\item $\iota: \widetilde{M}\to N$: (minimal) immersion of $\widetilde{M}$ into $N$.
\item diffeomorphism $F$ and its domain $V_{\widetilde{M}}$: Section \ref{prelim}.
\item $K$: compact subset of $M$ chosen in Section \ref{signed_distance}.
\item $B_j\subset D_j$ ($j\in\{1,2\}$): geodesic balls in $K$ chosen in Section \ref{epsilon}.
\end{itemize} 

\bigskip
\hskip-.1in\vbox{\hsize2.5in\obeylines\parskip -1pt %1\baselinestretch{0.8}
\footnotesize
Costante Bellettini
Department of Mathematics
University College London
London WC1E 6BT, United Kingdom
\vspace{4pt}
{\tt C.Bellettini@ucl.ac.uk}} 
\vbox{\hsize2.5in
\obeylines 
\parskip-1pt 
\footnotesize
Neshan Wickramasekera
DPMMS 
University of Cambridge 
Cambridge CB3 0WB, United Kingdom
\vspace{4pt}
{\tt N.Wickramasekera@dpmms.cam.ac.uk}}

\end{document}